\documentclass[12pt]{amsart}
\usepackage[utf8]{inputenc}
\usepackage[margin=2.5cm]{geometry}
\usepackage{amsmath}
\usepackage{amsfonts}
\usepackage{amssymb}
\usepackage{amsthm}
\usepackage{tikz-cd}
\usepackage{mathtools}
\usepackage{outlines}
\usepackage{verbatim}
\usepackage{multirow}
\usepackage{hyperref}
\usepackage{pdflscape}

\counterwithin{equation}{section}

\theoremstyle{plain}
\newtheorem*{thm*}{Theorem}
\newtheorem{thm}{Theorem}[section]
\newtheorem{prop}[thm]{Proposition}
\newtheorem{lem}[thm]{Lemma}
\newtheorem{cor}[thm]{Corollary}
\newtheorem{conj}[thm]{Conjecture}

\theoremstyle{definition}
\newtheorem{qstn}[thm]{Question}
\newtheorem{dfn}[thm]{Definition}
\newtheorem{ex}[thm]{Example}
\newtheorem{rem}[thm]{Remark}
\newtheorem{ntt}[thm]{Notation}
\newtheorem{case}{Case}

\makeatletter
    \@addtoreset{case}{thm}
    \@addtoreset{case}{prop}
    \@addtoreset{case}{lem}
    \@addtoreset{case}{cor}
\makeatother

\newcommand{\ZZ}{\mathbb{Z}}
\newcommand{\PP}{\mathbb{P}}
\newcommand{\NN}{\mathbb{N}}

\newcommand{\kk}{\Bbbk}

\newcommand{\OO}{\mathcal{O}}

\newcommand{\mc}{\mathcal}
\newcommand{\mf}{\mathfrak}
\newcommand{\mb}{\mathbb}

\newcommand{\ang}[1]{\langle #1 \rangle}

\DeclareMathOperator{\GKdim}{GKdim}

\DeclareMathOperator{\id}{id}
\DeclareMathOperator{\Hom}{Hom}

\DeclareMathOperator{\Aut}{Aut}
\DeclareMathOperator{\spn}{span}

\DeclareMathOperator{\Ext}{Ext}

\DeclareMathOperator{\ord}{ord}

\newcommand{\del}{\partial}

\newcommand{\wh}{\widehat}

\makeatletter
    \newcommand{\extp}{\@ifnextchar^\@extp{\@extp^{\,}}}
    \def\@extp^#1{\mathop{\bigwedge\nolimits^{\!#1}}}
\makeatother

\newcommand{\cov}{\mathrm{cov}}

\newcommand\restr[2]{{
  \left.\kern-\nulldelimiterspace % automatically resize the bar with \right
  #1 % the function
  \vphantom{\big|}
  \right|_{#2}
  }}

\newcommand{\ideal}{\unlhd}

\title[Dual reflection groups for 3-dimensional Artin--Schelter regular algebras]{Dual reflection groups for three-dimensional Artin--Schelter regular algebras}
\author{Lucas Buzaglo}
\author{Daniel Rogalski}
\date{}

\address{Department of Mathematics, UC San Diego, La Jolla, CA 92093-0112, USA}
\email{\href{mailto:lbuzaglo@ucsd.edu}{lbuzaglo@ucsd.edu}, \href{mailto:drogalski@ucsd.edu}{drogalski@ucsd.edu}}

\keywords{Artin--Schelter regular algebra, skew polynomial ring, group grading, nonabelian group, dual reflection group, Chevalley--Shephard--Todd theorem, noncommutative invariant theory}
\subjclass[2020]{16W22, 16W50 (primary); 16S37, 16S38, 20F05 (secondary)}

\begin{document}

\begin{abstract}
    By definition, a dual reflection group is a finite group which grades an Artin--Schelter regular algebra $A$ such that the identity component $A_e$ is again regular. In this paper, we completely classify the nonabelian dual reflection groups for regular algebras of dimension $3$: we first classify all possible nonabelian gradings which refine the natural $\NN$-grading of the algebra, and then determine which of these gradings yield dual reflection groups. We show that none of the cubic algebras admit dual reflection groups, while for quadratic algebras, the dual reflection groups arise either from Ore extensions of two-dimensional regular algebras, or from the Sklyanin algebras $S_{q,0,1}$.
    
    In the Sklyanin case, the dual reflection groups form a new infinite family of groups $\Delta_n$ of order $27n^3$, answering a question of Goetz, Kirkman, Moore, and Vashaw. We also extend the four-dimensional examples of the same authors to two new infinite families of dual reflection groups. Finally, we show that an AS regular algebra admitting a dual reflection group need not have binomial relations, answering another of their questions. However, we conjecture that the relations must be binomial when $A_e$ contains no elements of degree $1$, and we prove this for nonabelian gradings in dimension $3$.
\end{abstract}

\maketitle

\section{Introduction}

The celebrated Chevalley--Shephard--Todd theorem is a foundational result in invariant theory.  It states that the ring of invariant polynomials $\kk[V]^G$ is itself a regular polynomial ring if and only if the group $G$ is a \emph{reflection group} (generated by pseudo-reflections of $V$) \cite{ShephardTodd, Chevalley}.  Recently, many authors have attempted to generalize the Chevalley--Shephard--Todd theorem to the noncommutative setting. This involves replacing the (commutative) polynomial ring by its noncommutative analog, an \emph{Artin--Schelter (AS) regular algebra}. One can also replace the group action with the action of a Hopf algebra. Inspired by the Chevalley--Shephard--Todd theorem, we say that a Hopf algebra $H$ is a \emph{reflection Hopf algebra} if it acts on an AS regular algebra $A$ such that $A^H$ is again AS regular.

Thanks to work of Kirkman--Kuzmanovich--Zhang, the case where $H = \kk G$ is a group algebra is reasonably well understood, although some open questions remain \cite{KirkmanKuzmanovichZhang08, KirkmanKuzmanovichZhang}. The next simplest case is when $H$ is a (finite) \emph{dual group algebra}, in other words, $H = (\kk G)^*$ for some finite group $G$. In this case, an $H$-action on an algebra $A$ is equivalent to a $G$-grading of $A$, and the invariant ring $A^H$ is equal to $A_e$, the component of $A$ of degree $e$ (where $e$ is the identity element of $G$). Correspondingly, a group $G$ is a \emph{dual reflection group} if there exists a $G$-graded AS regular algebra $A$ such that $A_e$ is again AS regular. The case where $A$ has global dimension $2$ is completely understood \cite{Crawford}, but beyond this only scattered examples exist \cite{ChenKirkmanZhang, GoetzKirkmanMooreVashaw}.

The goal of this paper is to classify the nonabelian group gradings of three-dimensional AS regular algebras, and to determine which of these gradings give rise to dual reflection groups. We restrict to nonabelian groups because an abelian grading is equivalent to an action of the group, in which case the framework of \cite{KirkmanKuzmanovichZhang08} can be applied. We also assume throughout that the grading refines the natural $\NN$-grading of the AS regular algebra, which corresponds to the dual group action preserving the $\NN$-grading of the algebra. The following result summarizes our classification.

\begin{thm}[Theorems~\ref{thm:quadratic case} and \ref{thm:cubic}]\label{thm:intro classification}
    There are $6$ infinite families and $7$ exceptional three-dimensional quadratic AS regular algebras graded by nonabelian groups, and $4$ infinite families and $2$ exceptional three-dimensional cubic ones. Among these are the following.
    \begin{enumerate}
        \item \emph{Skew polynomial rings} $A_{pqr} \coloneqq \kk\ang{x,y,z}/(yx - p xy, zx - q xz, zy - r yz)$ with $(p,q,r) \in \{(-1,q,q), (-1,q,-q), (1,q,-q), (\zeta,\zeta^2,\zeta)\}$, where $q \in \kk^\times$ and $\zeta$ is a primitive third root of unity.
        \item Certain Ore extensions of the \emph{quantum plane} $\kk\ang{x,y}/(xy + yx)$.
        \item The \emph{Sklyanin algebras} $S_{q,0,1} \coloneqq \kk\ang{x,y,z}/(x^2 + q yz, y^2 + q zx, z^2 + qxy)$ for $q \in \kk^\times$.
    \end{enumerate}
    The full list of quadratic algebras and their nonabelian grading groups is given in Table~\ref{tab:classification}, and the cubic ones are in Table~\ref{tab:cubic classification}.
\end{thm}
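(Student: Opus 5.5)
The classification rests on reducing a nonabelian grading to a normal form on the degree-one generators. Suppose $A=T(V)/(R)$ is generated in degree $1$ and $G$ grades $A$ refining the $\NN$-grading. Then $V=A_1$ decomposes as $V=\bigoplus_{g\in G}V_g$. We may assume $G$ is generated by the degrees of the generators; otherwise we pass to the subgroup they generate. Since $G$ is nonabelian, $\dim V=3$ forces two cases:
\begin{itemize}
\item[(a)] $V=\kk x\oplus\kk y\oplus\kk z$ with $x,y,z$ homogeneous of distinct degrees $a,b,c$;
\item[(b)] $V=V_a\oplus V_b$ with $\dim V_a=2$.
\end{itemize}
In case (b), $G=\ang{a,b}$ is $2$-generated and nonabelian. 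The relation space $R\subseteq V^{\otimes 2}$ (or $V^{\otimes 3}$ in the cubic case) must be a graded subspace. Each monomial in $x,y,z$ has a $G$-degree, a word in $a,b,c$, and the relations must split into $G$-homogeneous pieces. The key point is that if two monomials lie in the same homogeneous piece, their degrees coincide, and this imposes group relations. Nonabelianness means the relations cannot all be commutation relations that identify $ab$ with $ba$, and so on.

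The plan is as follows. First, list all possible partitions of the monomials of degree $2$ (respectively $3$) into $G$-degree classes compatible with the group axioms and with $G$ being nonabelian. Second, for each partition, intersect with the known classification of three-dimensional AS regular algebras. For the quadratic ones this is Artin--Schelter--Tate--Van den Bergh, via the point-scheme description; for the cubic ones it is the analogous list. This determines which relation spaces of dimension $3$ (respectively $2$) spanned by homogeneous elements give regular algebras.

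Concretely, in case (a) the relations are spanned by elements involving only monomials of equal $G$-degree. For instance, $yx$ and $xy$ have the same degree only if $ab=ba$. If all three pairs $\{a,b\}$, $\{a,c\}$, $\{b,c\}$ commuted, $G$ would be abelian. So some relation must pair monomials such as $x^2$ with $yz$, giving $a^2=bc$, or $yx$ with $zz$, and so on. Going through these possibilities yields the skew polynomial families, where only certain commutations survive and $G$ is determined by the extra coincidences. It also yields the Sklyanin $S_{q,0,1}$, where $a^2=bc$, $b^2=ca$, $c^2=ab$. Case (b) produces the Ore extensions of two-dimensional algebras: the degree-$a$ part generates a two-dimensional regular subalgebra $B$, necessarily the quantum plane or the Jordan plane, and $z$ acts by a graded skew derivation.

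For each resulting algebra, the grading group is then read off as the universal group presented by the generator degrees modulo the identifications forced by the relations. Its nonabelian quotients, those still generated by the images, give the full list recorded in Table~\ref{tab:classification} and Table~\ref{tab:cubic classification}.

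The main obstacle is the completeness of the case analysis. Regularity is not visible from the combinatorics of degree classes alone, so each candidate homogeneous relation space must be checked against the AS classification, for example via the Hilbert series and the point scheme, or via the twisted superpotential. I would organize this around the superpotential: $R$ is the image of a twisted superpotential $w\in V^{\otimes 3}$, which must itself be $G$-homogeneous. Classifying homogeneous twisted superpotentials up to graded change of basis, using Mori--Smith's classification of superpotentials, then cuts down the cases. The cubic case is handled the same way with $w\in V^{\otimes 4}$ and $\dim V=2$; there only case (a) with two generators of distinct degrees occurs, so the analysis is shorter.
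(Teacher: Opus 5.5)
There is a genuine gap, and one concrete claim in your plan is false: case (b) does not produce the Ore extensions. In fact it produces no nonabelian gradings at all (this is Corollary~\ref{cor:quadratic m = 2}). Write $V=V_a\oplus\kk z$ with $V_a=\kk x+\kk y$ and $G=\ang{a,b}$ nonabelian. The degree-two monomials fall into the $G$-degree classes $V_aV_a$ (degree $a^2$), $V_az$ (degree $ab$), $zV_a$ (degree $ba$) and $z^2$ (degree $b^2$). Since $a\neq b$ and $ab\neq ba$, the only possible coincidence among these degrees is $a^2=b^2$. A homogeneous relation of degree $ab$ or $ba$ would then read $\ell z=0$ or $z\ell=0$ for some $0\neq\ell\in V_a$, which is impossible in a domain. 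Hence $R\subseteq V_aV_a\oplus\kk z^2$. But then $A$ surjects onto the monomial algebra $\kk\ang{x,y,z}/(x^2,xy,yx,y^2,z^2)$, whose nonzero words alternate between $z$ and a letter of $\{x,y\}$, so $A$ has exponential growth. Every Ore-extension row of Table~\ref{tab:classification} actually lives in your case (a). For $A_{-1,q,-q}$, for example, the homogeneous basis is $u=x+y$, $v=x-y$, $z$ with three distinct degrees. Here $B=\kk\ang{u,v}/(u^2-v^2)$ carries Crawford's $\Gamma$-grading and $\sigma$ swaps the lines $\kk u$ and $\kk v$; the Jordan plane never appears. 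So case (a) must produce every row of the table, including $A_{-1}[z;\sigma,\delta]$, $A_{-1}[z;\tau,\del]$, $\mc{A}$--$\mc{E}^\pm$ and $\mc{C}_q$, not just skew polynomial rings and $S_{q,0,1}$.

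Even within case (a), ``going through these possibilities'' and ``checking against the AS classification'' is the whole proof. Your plan gives no mechanism that makes the search finite or that decides regularity in a fixed homogeneous basis. The ATV classification is only up to isomorphism, and a classification of superpotentials up to $\mathrm{GL}(V)$ does not tell you in which bases $\omega$ is homogeneous for a nonabelian group.

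The missing idea is Corollary~\ref{cor:permute}. Write $\omega=\sum_i t_i\otimes y_i$; the $t_i$ are linearly independent (Lemma~\ref{lem:superpotential-property}). If $\omega$ is homogeneous of degree $h$, this forces $\deg_G\mu(y_i)=hg_ih^{-1}$, so the Nakayama automorphism permutes the homogeneous lines $\kk y_i$. Consequently $\phi$ permutes degree-three monomials up to scalars, and $\omega$ must be a combination of full $\phi$-orbit sums. Each orbit that occurs imposes equalities among products of the $g_i$. This rules out a $3$-cycle outright, and together with distinctness of the $g_i$ and nonabelianness it leaves only five orbit configurations. Each of these is a few-parameter family that can be normalized by explicit changes of variables.

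You would then still need the following:
\begin{itemize}
\item a separate regularity check for the new algebras $\mc{A}$--$\mc{E}^\pm$ and $\mc{C}_q$, via standard nondegenerate matrices; this check also excludes $\mc{C}_{\pm1}$;
\item the non-homogeneous isomorphism $S_{-1,0,1}\cong A_{\zeta,\zeta^2,\zeta}$, to see that the last skew polynomial ring in the statement is the Sklyanin case $q^3=-1$ and not a separate outcome of the case analysis.
\end{itemize}
The cubic case goes the same way: $\mu$ cannot swap the two lines, and the six $\phi$-orbits on degree-four monomials reduce to two configurations.
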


In \cite{BuzagloRogalskiManin}, using a different method from this paper, we classified coactions by general cocommutative Hopf algebras on the single-parameter skew polynomial ring $A_q$ (that is, $A_{qqq}$ in the notation above).  This allowed us to understand group gradings on those particular algebras.  Theorem~\ref{thm:intro classification} completes the picture of group gradings on $A_q$ given by \cite[Theorem 7.2]{BuzagloRogalskiManin}, which had excluded the case $q = -1$.

The proof of Theorem~\ref{thm:intro classification} is achieved using \emph{superpotentials}. If $A = T(V)/(R)$ is $m$-Koszul and AS regular, then the space of relations $R$ is recovered from a single element $\omega \in V^{\otimes \ell}$ by taking partial derivatives, where $\ell$ is the Gorenstein parameter of $A$. The element $\omega$ is a twisted superpotential,  that is, $\phi(\omega) = \omega$ under the linear map $\phi \colon v_1 \otimes \dots \otimes v_\ell \mapsto \mu(v_\ell) \otimes v_1 \otimes \dots \otimes v_{\ell - 1}$ for some twist $\mu \in \operatorname{GL}(V)$.     The algebra $A$ is $G$-graded if and only if $\omega$ is $G$-homogeneous (see Lemma~\ref{lem:superpotential-grading}), which reduces the grading condition to a single element of $V^{\otimes \ell}$. Every three-dimensional AS regular algebra generated in degree one is $2$- or $3$-Koszul, so these techniques are available for all the algebras we consider. 

The second key observation is that $\mu$ is forced to respect any $G$-grading of $A$: if $x \in A_1$ is $G$-homogeneous of degree $g$ and $\deg_G(\omega) = h$, then $\mu(x)$ is $G$-homogeneous of degree $hgh^{-1}$ (see Corollary~\ref{cor:permute}). In particular, $\mu$ permutes the $G$-homogeneous components of $A_1$.  Assuming that $\mu$ is diagonalizable, there is a $G$-homogeneous basis $y_1, \dots, y_n$ of $A_1$ with $\mu(y_i) = \lambda_i y_{\tau(i)}$ for some permutation $\tau$ and some $\lambda_i \in \kk^\times$.  The map $\phi$ above permutes the monomials in the $y_i$ of degree $\ell$ up to scalar, and thus $\omega$ must be a linear combination of the corresponding monomial orbit sums. Since all monomials occurring in $\omega$ have the same $G$-degree, every orbit which occurs imposes relations among the degrees $g_i \coloneqq \deg_G(y_i)$, and for most orbits these relations force $g_i = g_j$ for some $i \neq j$, or force the $g_i$ to commute. Only a few combinations of orbits survive, and in each case a change of variables puts $\omega$ into one of a short list of normal forms. This is how the algebras of Tables~\ref{tab:classification} and \ref{tab:cubic classification} arise.

We emphasize that our methods would also work to classify abelian gradings of $m$-Koszul AS regular algebras, but there are many more cases.  We opt to restrict ourselves to nonabelian gradings for simplicity and due to their relevance in the context of dual reflection groups.

Having classified the nonabelian group gradings, we then proceed to determine which of the gradings of Theorem~\ref{thm:intro classification} yield dual reflection groups.

\begin{thm}[Theorems~\ref{thm:Ore dual reflection}, \ref{thm:Sklyanin}, and \ref{thm:cubic invariant rings}, and Corollary~\ref{cor:binomial}]\label{thm:intro dual reflection}
    Let $A$ be a three-dimensional AS regular algebra generated in degree $1$ with a faithful grading by a finite nonabelian group $G$ refining its natural $\NN$-grading, and suppose that $A_e$ is AS regular. Then $A$ is quadratic, and one of the following holds.
    \begin{enumerate}
        \item $A$ is isomorphic to one of $A_{-1,q,-q}$, $A_{-1,q,q}$, $A_{1,q,-q}$, or $A_{-1}[z; \sigma_q]$ for some $q \in \kk^\times$, and $G$ is one of the groups $\Gamma_n \rtimes \ZZ_{2m}$, $\Gamma_n \times \ZZ_m$, or $(\ZZ_n \times \ZZ_n) \rtimes \ZZ_{2m}$ listed in Table~\ref{tab:Ore dual reflection}.\label{item:intro Ore extension}
        \item $A \cong S_{q,0,1}$ for some $q \in \kk^\times$, and
        $$G \cong \Delta_n \coloneqq \ang{a,b,c \mid ab = c^2, bc = a^2, ca = b^2, (acb)^n = e}$$
        for some positive integer $n$.
    \end{enumerate}
    In either case $A_e$ is a skew polynomial ring.
\end{thm}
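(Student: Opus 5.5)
The proof works through the classification of Theorem~\ref{thm:intro classification}, which we take as given. Every three-dimensional AS regular algebra with a faithful nonabelian grading refining the $\NN$-grading appears in Table~\ref{tab:classification} or Table~\ref{tab:cubic classification}, together with its grading group. So the statement becomes a finite case analysis: for each entry, decide when $A_e$ is AS regular.

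The main tool is the Hilbert series. Since $G$ is finite, $A$ is a finitely generated module over $A_e$. Hence if $A_e$ is AS regular, it has GK dimension $3$ and global dimension $3$. Its Hilbert series is then $1/p(t)$ for a polynomial $p$ whose roots are roots of unity, satisfying $p(1)=0$ to order $3$ and the Gorenstein symmetry. We can compute $H_{A_e}(t)$ directly by averaging characters: $H_{A_e}(t)=\frac{1}{|G|}\sum_{\chi}\chi(1)\,\mathrm{tr}$ applied to the graded character of $A$. An easier route uses a normal monomial basis when $A$ is a skew polynomial ring or an Ore extension. Then $A_e$ is spanned by the monomials $y_1^{a}y_2^{b}y_3^{c}$ whose $G$-degree is $e$. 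Since these algebras have PBW bases of monomials in $G$-homogeneous generators, this is a combinatorial condition on $(a,b,c)$.

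\textbf{Cubic case.} For each cubic family, I would show that $A_e$ fails to be regular. The natural obstruction is a low-degree count. Either $A_e$ needs too many generators relative to relations, contradicting the possible Hilbert series of a $3$-dimensional regular algebra, or $A_e$ has the wrong Gorenstein symmetry. For example, computing $\dim(A_e)_d$ for small $d$ and comparing with $1/\prod(1-t^{d_i})$-type series should suffice. Here we use that the Hilbert series of AS regular algebras of dimension $3$ have the form $1/((1-t^{a})(1-t^{b})(1-t^{c}))$ only in restricted shapes. This step gives ``$A$ is quadratic''.

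\textbf{Quadratic case.} For skew polynomial rings and Ore extensions, the degrees $g_i$ of the generators determine $A_e$ as a span of monomials. I would show $A_e$ is regular exactly when the set $\{(a,b,c):y_1^ay_2^by_3^c\in A_e\}$ is a free monoid on three elements. Such elements then generate a skew polynomial subring, which gives the groups of Table~\ref{tab:Ore dual reflection}. The converse, non-regularity otherwise, follows because a non-free monoid gives a Hilbert series that is not $1/p$ of the required form. Identifying the groups $\Gamma_n\rtimes\ZZ_{2m}$ and the others is then presentation bookkeeping.

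\textbf{Sklyanin case and the main obstacle.} For $S_{q,0,1}$ there is no PBW basis in homogeneous generators, and this is the step I expect to be hardest. I would first determine the universal grading group from the relations $x^2=-qyz$, $y^2=-qzx$, $z^2=-qxy$, which impose $a^2=bc$, $b^2=ca$, $c^2=ab$. That group is infinite, with quotients $\Delta_n$ obtained by killing $(acb)^n$. Next I would check that the element $acb$ generates a central subgroup. Then I would exhibit three elements of degree $e$, such as monomials of length $3n$ like $(xzy)^n$-type products, show they normal-commute up to scalars, and show that they generate $A_e$ with the Hilbert series $1/(1-t^{3n})^3$. This last point can be confirmed by a character-sum computation using the known Hilbert series $1/(1-t)^3$ of $S$. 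Finally, any other quotient of the universal group in which $acb$ has a different behaviour must be excluded by the same Hilbert series argument. Whenever $A_e$ is regular it is a skew polynomial ring, which gives the last sentence of the statement.
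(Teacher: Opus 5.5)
Your overall plan of running through the classification tables case by case matches the paper. There is, however, a genuine gap: you never exclude the algebras of Table~\ref{tab:classification} with trinomial relations, namely $A_{-1}[z;\sigma,\delta]$, $A_{-1}[z;\tau,\del]$, $\mc{A}$, $\mc{B}$, $\mc{C}_q$, $\mc{D}$, $\mc{E}^\pm$. The conclusion that $A$ is one of the binomial Ore-type algebras or $S_{q,0,1}$ depends entirely on ruling these out. Neither a homogeneous monomial basis nor your monoid criterion is available for them.

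The paper handles them with Theorem~\ref{thm:KKZ covariant Hilbert series}. If $A_e$ is regular, then $f_g\in A_{\ell(g)}$ and $\dim(A_g)_{\ell(g)}=1$. So when $(A_e)_1=0$, every relation of nontrivial $G$-degree is binomial (Corollary~\ref{cor:nonbinomial relations have degree e}). A trinomial relation must then have degree $e$, which forces $G$ to be cyclic. For $\mc{A},\mc{B},\mc{D},\mc{E}^\pm$, the normal element $z$ with $z^2\in\kk\ang{x,y}$ reduces the question to the cubic subalgebra $\kk\ang{x,y}$ (Proposition~\ref{prop:eliminates most trinomial cases}). That reduction, and the claim that $A$ is quadratic, both rely on the cubic case, including abelian gradings of the cubic subalgebra. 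In your proposal the cubic case is only a hope (``should suffice''). $G$ ranges over infinitely many finite quotients of $\widehat{\Gamma}$, $\Gamma$, $G_5$, and you give no uniform way to compute $\dim(A_e)_d$ for them. The paper uses KKZ's result for abelian groups and adapts the down-up algebra arguments of Chen--Kirkman--Zhang for the nonabelian ones.

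The converse directions in the remaining cases are also missing their key step.

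\emph{Ore-type rows.} Your claim of a PBW basis $y_1^\alpha y_2^\beta y_3^\gamma$ in $G$-homogeneous generators is false whenever the base is $\kk\ang{u,v}/(u^2\pm v^2)$. There $v^2=\pm u^2$, so the $u^\alpha v^\beta$ are dependent, and $B_e=\kk[(uv)^n,(vu)^n]$ is not generated by such monomials. Your criterion ``regular iff the exponent monoid is free'' is asserted without proof. What is actually needed is that regularity of $A_e$ forces $\ang{a,b}\cap\ang{c}=\{e\}$, hence $G=\ang{a,b}\rtimes\ang{c}$, and forces $B_e$ to be regular; then Crawford's and KKZ's two-dimensional results apply. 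The paper proves this in Proposition~\ref{prop:Ore any dimension}. It takes $h=\deg_G(b)=c^\ell\neq e$, notes that $b,z^\ell\in A_h=f_hA_e$, and compares $z$-degrees to force $f_h\in\kk$, a contradiction.

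\emph{Sklyanin case.} First, $acb$ is not central: $a(acb)a^{-1}=bac$, and only $\ang{acb,bac,cba}\cong\ZZ^3$ is normal. Second, a character sum cannot be run from the ordinary Hilbert series $1/(1-t)^3$. A nonabelian grading is not a $G$-action, so you would need the full $G$-graded Hilbert series and the representations of every finite quotient of $\Delta$. Third, the hard direction, that no proper quotient of $\Delta_n$ works, is left to an unspecified ``same Hilbert series argument.''

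The paper instead uses Stephenson's bound: a regular $A_e$ of dimension $3$ has at most three minimal generators. It then shows that none of $u^n,v^n,w^n$ is decomposable in $A_e$. Via $\Delta\hookrightarrow\ZZ^3\rtimes\ZZ_3$, any factorization $(acb)^n=\gamma\delta$ into degrees of nonzero elements has $\gamma\in\{(acb)^k,(acb)^ka,(acb)^kac\}$. If $\overline{a}$ or $\overline{b}$ lies in $\overline{H}$, then $G$ is abelian (Lemmas~\ref{lem:product equal to (acb)^n} and \ref{lem:Delta abelian criterion}). Hence $A_e=\kk\ang{u^n,v^n,w^n}$, which is exactly the $\Delta_n$-identity component, and so $G\cong\Delta_n$.
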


The groups in part~\eqref{item:intro Ore extension} come from a general construction. Each of the four algebras is a graded Ore extension $B[z;\sigma]$ of a two-dimensional regular algebra $B$, and we show that in this situation $A_e$ is also an Ore extension of $B_e$.

\begin{prop}[Proposition~\ref{prop:Ore any dimension}]
    Let $B$ be an AS regular domain which is generated in degree $1$, let $\sigma$ be a graded automorphism of $B$, and define $A \coloneqq B[z; \sigma]$. Suppose $G$ is a dual reflection group for $A$ such that in addition $B$ and $z$ are $G$-homogeneous. Then $B_e$ is AS regular, $A_e =  B_e[z^n; \restr{\sigma^n}{B_e}]$, and $G = H \rtimes \ang{c}$, where $H \coloneqq \{ g \in G \mid B_g \neq 0 \}$, $c \coloneqq \deg_G(z)$, and $n \coloneqq \ord(c)$.
\end{prop}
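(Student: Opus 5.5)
The plan is to first describe $A_e$ explicitly, and only then use regularity to get the group structure. Since $A = B[z;\sigma]$ is a free left $B$-module with basis $\{z^i\}_{i \ge 0}$, and $B$ and $z$ are $G$-homogeneous, every element of $A$ is uniquely $\sum_i b_i z^i$ with $b_i \in B$. Writing $b_i = \sum_g (b_i)_g$, the summand $(b_i)_g z^i$ is homogeneous of degree $g c^i$. Hence
\[
  A_e = \bigoplus_{i \ge 0} B_{c^{-i}}\, z^i .
\]
Also, $zbz^{-1} = \sigma(b)$ forces $\sigma(B_g) = B_{cgc^{-1}}$ whenever $B_g \neq 0$. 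So conjugation by $c$ preserves the set $H = \{g \mid B_g \ne 0\}$, and $\sigma^n$ preserves $B_e$.

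Next I will show that $B_{c^{-i}} = 0$ unless $n \mid i$; this is where the regularity of $A_e$ is needed, and it is the main obstacle. A first attempt: suppose $0 \neq b \in B_{c^{-i}}$ with $0<i<n$. Then $bz^i \in A_e$, and the degree-one piece $(A_e)_1 = B_{e,1} \oplus (\kk z$ if $c = e)$ is small. I will try a GK-dimension / Hilbert series comparison. $A_e$ is AS regular and $A$ is a finitely generated $A_e$-module (since $G$ is finite, $A$ is finite over $A_e$). Therefore $\GKdim A_e = \GKdim A = \GKdim B + 1$. On the other side, $B_e \subseteq A_e$ and $B_e[z^n;\sigma^n] \subseteq A_e$. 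I expect the cleanest route to be a domain and normal-element argument. The element $z^n \in A_e$ is normal and regular in $A_e$. Moreover $A_e/(z^n A \cap A_e)$ identifies with $\bigoplus_{0\le i<n} B_{c^{-i}} z^i$ with truncated multiplication. Since $z^nA\cap A_e = z^nA_e$, the standard fact that a quotient of an AS regular algebra by a regular normal element of positive degree is AS regular of one lower dimension shows this quotient is AS regular, hence a domain. But for $0<i<n$ it contains the nilpotent elements $bz^i$: indeed $(bz^i)^k$ lies in degree $z^{ik}$, which exceeds $z^{n-1}$ or hits $z^n$, giving $0$ in the truncation. This forces $B_{c^{-i}}=0$ for $0<i<n$. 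Periodicity mod $n$ then gives $A_e = \bigoplus_j B_e z^{nj} = B_e[z^n;\restr{\sigma^n}{B_e}]$, and the quotient above is just $B_e$, which is therefore AS regular.

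Finally, the group structure. $H$ is a subgroup of $G$: it is closed under multiplication because $B$ is a domain, so $B_gB_{g'}\neq 0$; it is finite, and it contains $e$. Faithfulness of the grading (i.e.\ $G$ generated by the degrees in $A_1$) gives $G = \ang{H, c}$. Since $cHc^{-1} = H$, $H$ is normal in $G$ and $G = H\ang{c}$. For $H \cap \ang{c} = \{e\}$: if $c^{-i} \in H$ with $0<i<n$, then $B_{c^{-i}}\neq 0$, which contradicts the previous step. Hence $G = H \rtimes \ang{c}$.
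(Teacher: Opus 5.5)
Your description $A_e = \bigoplus_{i \ge 0} B_{c^{-i}} z^i$, the normality of $z^n$ in $A_e$, the identity $z^nA \cap A_e = z^nA_e$, and the group-theoretic wrap-up are all correct. Your overall shape matches the paper's: everything hinges on showing $B_{c^{-i}} = 0$ for $0<i<n$, i.e.\ $H \cap \ang{c} = \{e\}$.

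\textbf{The gap.} Your argument for this key step rests on a false ``standard fact''. Regularity does not descend to the quotient by a normal regular element of positive degree. For example, $\kk[t]$ is AS regular and $t^2$ is normal and regular, yet $\kk[t]/(t^2)$ has infinite global dimension. Only the AS Gorenstein property descends; finite global dimension goes the other way, from $R/(x)$ up to $R$.

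In the situation you want to rule out, $A_e/z^nA_e$ contains the nonzero nilpotent ideal $\bigoplus_{0<i<n} B_{c^{-i}} z^i$. Nothing you have established prevents this, because regularity of $A_e$ alone does not force the quotient to be regular. So no contradiction is reached. Even granting regularity of the quotient, ``AS regular, hence a domain'' is an open problem in general, and the proposition is stated in arbitrary dimension.

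You invoke the same false principle at the end to conclude that $B_e$ is regular. There it is repairable, as in the paper. Since $A_e = B_e[z^n; \restr{\sigma^n}{B_e}]$ has finite global dimension, so does $B_e$ (McConnell--Robson, Theorem 7.5.3). The Gorenstein condition then passes to $A_e/(z^n) \cong B_e$.

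\textbf{The missing idea.} You need the structural consequence of $A_e$ being regular recorded in Theorem~\ref{thm:KKZ covariant Hilbert series}: $A_g = f_g A_e$ for a single homogeneous $f_g$, with $\deg f_g \geq 1$ when $g \neq e$. The argument then runs as follows.
\begin{enumerate}
\item Suppose $h = c^\ell \in H$ with $0<\ell<n$, and choose $0 \neq b \in B_h$. Then both $b$ and $z^\ell$ lie in $f_h A$.
\item Since $A$ is a domain, the minimal and maximal $z$-degrees of terms add under multiplication. Hence $z^\ell = f_h a$ forces $f_h = b' z^i$ and $a = b'' z^j$ with $b', b'' \in B$.
\item Because $b \in b'z^iA$ but $b \notin zA$, we get $i = 0$.
\item Then $z^\ell = b'b''z^\ell$ makes $b'$ a unit, so $f_h \in A_0$. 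This contradicts $\deg f_h \geq 1$.
\end{enumerate}
Once $H \cap \ang{c} = \{e\}$ is known this way, the rest of your argument goes through.
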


The computation of Theorem~\ref{thm:intro dual reflection}\eqref{item:intro Ore extension} therefore reduces to dimension $2$, where the answer is given by \cite{KirkmanKuzmanovichZhang} and \cite{Crawford}. The Sklyanin case requires more work.

To our knowledge, the Sklyanin algebras $S_{q,0,1}$ with $q^3 \neq -1$ are the first examples of AS regular algebras which do not admit reflection groups but do admit other reflection Hopf algebras (see Proposition~\ref{prop:Sklyanin no reflection groups}).

We also highlight that the Sklyanin case answers a question of Goetz--Kirkman--Moore--Vashaw from \cite[Section 5.1]{GoetzKirkmanMooreVashaw}: the authors ask whether there are infinite families of dual reflection groups other than the two families they construct, and the groups $\Delta_n$ give such a family. In Section~\ref{sec:four-dimensional} we produce two more infinite families, consisting of groups $\mf{M}_n$ and $Q_n$ (see Definition~\ref{def:dual reflection groups dimension 4}) which are dual reflection groups for the four-dimensional regular algebras of \cite{GoetzKirkmanMooreVashaw}, and which recover the examples of that paper when $n = 1$.

Comparing Theorem~\ref{thm:intro dual reflection} with Table~\ref{tab:classification}, the quadratic algebras which are ruled out are exactly those with at least one trinomial relation. This is an example of a more general phenomenon. The relations of the four-dimensional examples of \cite{GoetzKirkmanMooreVashaw} are also binomial, which is forced by the way they are constructed: the relations are obtained by equating the degree $2$ monomials of the same $G$-degree. Motivated by this, the authors ask whether an AS regular algebra admitting a dual reflection group must have binomial relations. We answer their question in the negative in Example~\ref{ex:non-binomial}, but the counterexample is degenerate, as some of the generators of the algebra have $G$-degree $e$. Ruling this out by requiring that $A_e$ contain no elements of degree $1$, we make the following conjecture.

\begin{conj}[Conjecture~\ref{conj:binomial}]\label{conj:intro binomial}
    Let $A$ be an AS regular algebra generated in degree $1$ faithfully graded by a finite group $G$, refining its $\NN$-grading, such that $A_e$ is AS regular. If $A_e$ contains no elements of degree $1$, then $A$ can be presented with binomial relations.
\end{conj}

We prove Conjecture~\ref{conj:intro binomial} for nonabelian gradings in dimension $3$ (see Corollary~\ref{cor:binomial}). The restriction given by Conjecture~\ref{conj:intro binomial} would cut down the search for dual reflection groups to algebras whose relations can be read off from the multiplication table of the grading group.

While we only study dual reflection groups here, our detailed classification of the nonabelian grading groups for regular algebras of dimension 3 should be a useful testing ground for other questions in noncommutative invariant theory as well.

The paper is organized as follows. Section~\ref{sec:prelim} collects background on AS regular algebras, gradings, superpotentials, and Nakayama automorphisms. We also describe in Proposition~\ref{prop:Ore-ext} how a grading of an Ore extension $A[x;\sigma]$ arises from a grading of $A$, which accounts for several rows of Table~\ref{tab:classification}. Sections~\ref{sec:quadratic} and \ref{sec:cubic} prove Theorem~\ref{thm:intro classification} for quadratic and cubic algebras, respectively.  Section~\ref{sec:binomial} proves some general results that are then used to prove that 
any quadratic regular algebra of dimension $3$ with a nonabelian dual reflection group must have binomial relations.  
Section~\ref{sec:invariant} classifies the dual reflection groups for the binomial quadratic algebras, proving Theorem~\ref{thm:intro dual reflection}.  Finally, Section~\ref{sec:four-dimensional} gives the families $\mf{M}_n$ and $Q_n$ of dual reflection groups in dimension $4$.

\subsection*{Acknowledgments}

We thank Ellen Kirkman and Simon Crawford for interesting discussions. The first-named author is supported by an AMS-Simons travel grant.

\subsection*{Conventions}

Throughout, $\kk$ denotes an algebraically closed field of characteristic zero, and all algebras, vector spaces, and unadorned tensor products are over $\kk$. All group gradings are assumed to be faithful and to refine the natural $\NN$-grading, as explained in Section~\ref{sec:prelim}.

The ``degree'' of an element $x$ of an AS regular algebra will always refer to the $\NN$-degree, which we denote by $\deg(x)$. When referring to the degree of $x$ with respect to a grading by a group $G$, we will always refer to this as the ``$G$-degree'' of $x$ and write $\deg_G(x)$. We use a similar convention for ``homogeneous'' vs ``$G$-homogeneous''.

\section{Preliminaries}\label{sec:prelim}

\subsection{Graded algebras}

We begin by briefly recalling the definition of a graded algebra. We also note that we must take some care in specifying which gradings we allow.

\begin{dfn}\label{dfn:grading}
    We say that a $\kk$-algebra $A$ is \emph{graded} by a monoid $G$, or is \emph{$G$-graded}, if $A$ can be decomposed into vector spaces as
    $$A = \bigoplus_{g \in G} A_g, \quad \text{where } A_g A_h \subseteq A_{gh},$$
    for all $g,h \in G$. If $A$ is $G$-graded, we say that the $G$-grading is \emph{faithful} if $\{g \in G \mid A_g \neq 0\}$ generates $G$.
\end{dfn}

We will always assume that group gradings are faithful, since this means that the algebra cannot be graded by a proper subgroup of the grading group.  Note that if a domain $A$ is faithfully graded by a \emph{finite} group $G$, then as $M = \{ g \in G \mid A_g \neq 0 \}$ is a submonoid of $G$, $M$ is in fact a subgroup of $G$ and so $M = G$.  Thus all graded pieces are nonzero in a faithful grading of a domain by a finite group.  

As all algebras in this paper are $\NN$-graded, we will always require that the group grading is compatible with the $\NN$-grading in the following sense. If $A = \bigoplus_{n \in \NN} A_n$ is $\NN$-graded and $G$-graded, we say that the $G$-grading \emph{refines} the $\NN$-grading if each $A_n$ is a sum of $G$-homogeneous components, in other words, if
$$A_n = \bigoplus_{g \in G} (A_n \cap A_g)$$
for all $n \in \NN$. Equivalently, $A$ is graded by the monoid $\NN \times G$. When $A$ is generated in degree $1$, a $G$-grading refining the $\NN$-grading is the same thing as a choice of $G$-homogeneous basis of $A_1$ for which the relations of $A$ are $G$-homogeneous.

\subsection{Artin--Schelter regular algebras}

Next, we state the definition of an Artin--Schelter (AS) regular algebra and recall some standard properties of three-dimensional AS regular algebras. For the next definition, we say that an $\NN$-graded $\kk$-algebra $A = \bigoplus_{n \in \NN} A_n$ is \emph{connected} if $A_0 = \kk$.

\begin{dfn}\label{dfn:AS regular}
    Let $A$ be a connected $\NN$-graded $\kk$-algebra. We say that $A$ is \emph{Artin--Schelter (AS) regular} (or simply \emph{regular}) of dimension $d$ if
    \begin{enumerate}
        \item $A$ has global dimension $d < \infty$.
        \item $A$ has finite GK-dimension.
        \item $A$ is \emph{AS Gorenstein}, meaning that
        $$\Ext^i_A(\kk, A) \cong \begin{cases}
            \kk(\ell) & \text{if } i = d, \\
            0 & \text{otherwise},
        \end{cases}$$
        for some $\ell \in \ZZ$, both as left and as right modules.
    \end{enumerate}
    The integer $\ell$ is called the \emph{Gorenstein parameter} of $A$.
\end{dfn}

AS regular algebras of dimension $3$ were given a geometric classification in \cite{ATV1}, and we will occasionally need some terminology and results from that paper. Let $F = T(V) = \kk \langle V \rangle$ be the tensor algebra on the vector space $V$. Suppose that $A = F/(R)$ is an AS regular algebra of global dimension $3$ generated in degree $1$. Then $A$ has $r$ degree $1$ generators and $r$ relations of degree $5 - r$, where $r \in \{2, 3\}$ \cite{ArtinSchelter}.  Let $x_1, \dots, x_r$ be a basis of $V$ and let $v = (x_1, \dots, x_r)$ be the corresponding $1 \times r$ row matrix.

Define a matrix $M \in M_r(T(V)_{4 - r})$ such that the $r$ entries of $M v^{\mathsf{T}}$ are a basis of $R$.  We say that $A$ is \emph{standard} if there is such an $M$ for which the entries of $v M$ are another basis for $R$.  We call such a matrix $M$ a \emph{standard matrix for $A$}. Furthermore, we say that $A$ is \emph{nondegenerate} if the nine conics in $\PP^2$ defined by the $2 \times 2$ minors of $M$ (if $r = 3$), or the four curves of bidegree $(1,1)$ in $\PP^1 \times \PP^1$ defined by the entries of $M$ (if $r = 2$) have no common zero.  The following result gives a straightforward way of verifying regularity.
\begin{prop}[{\cite[Theorem 1]{ATV1}}]\label{prop:ATV}
    The AS regular algebras of global dimension $3$ generated in degree $1$ are exactly the nondegenerate standard algebras.
\end{prop}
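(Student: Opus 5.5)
The plan is to prove the two directions separately, following the strategy of Artin--Tate--Van den Bergh. For \emph{regular $\Rightarrow$ standard and nondegenerate}, we start from the minimal free resolution of the trivial module. By \cite{ArtinSchelter} and the Gorenstein condition, for $A = F/(R)$ regular of global dimension $3$ with $r$ generators, the minimal resolution of ${}_A\kk$ has the form
$$0 \to A(-\ell) \xrightarrow{\ v\ } A(-\ell+1)^{r} \xrightarrow{\ M\ } A(-1)^r \xrightarrow{\ v^{\mathsf T}} A \to \kk \to 0,$$
with $\ell = r+1$ (so $\ell = 4$ if $r = 3$ and $\ell = 5$ if $r = 2$). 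Here $M v^{\mathsf T}$ lists a basis of $R$. Applying $\Hom_A(-,A)$ and using $\Ext^i_A(\kk,A) = 0$ for $i<3$ gives an exact dual complex, which is a resolution of the right module $\kk_A$. Reading off its last two maps, and possibly first replacing $M$ by $QM$ for an invertible scalar matrix $Q$ coming from the change of basis forced by the Gorenstein symmetry, shows that the entries of $vM$ also form a basis of $R$. Hence $A$ is standard. Equivalently, the relations come from a twisted superpotential $w = vMv^{\mathsf T}$.

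Nondegeneracy in this direction is argued by contradiction. Suppose the $2\times2$ minors of $M$ (or, for $r=2$, the entries of $M$) have a common zero. We use it to build a point module, or a truncated module, whose Hilbert series cannot be accounted for by the resolution above. In the quadratic case, a common zero $p \in \PP^2$ of all minors means that $M(p)$ has rank at most $1$. This produces a whole line's worth of points $q$ with $v(q)M(p)=0$, so the fibre of the point scheme over $p$ is a line. That contradicts the fact that $\dim A_n = \binom{n+2}{2}$, which is forced by the resolution. So first we derive that Hilbert series from the resolution, and then we show that a degenerate $M$ makes $\dim A_3$ too large.

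For \emph{standard and nondegenerate $\Rightarrow$ regular}, we form the complex above from $M$. Standardness gives the twisted superpotential $w$ and makes the sequence a complex, and we must show it is exact. We proceed as follows.
\begin{enumerate}
\item Define $\Gamma \subset \PP^{r-1}\times\PP^{r-1}$ as the zero locus of the relations, multilinearised. Nondegeneracy says that each projection of $\Gamma$ is an isomorphism onto a curve $E$ (a cubic in $\PP^2$, or a $(2,2)$-divisor in $\PP^1\times\PP^1$) or onto the whole space. Either way $\Gamma$ is the graph of an automorphism $\sigma$ of $E$.
\item Let $\mathcal L$ be the restricted line bundle and form the twisted homogeneous coordinate ring $B = B(E,\mathcal L,\sigma)$. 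Construct the natural surjection $A \to B$ and show that its kernel in the next degree is spanned by a normal element $g$ of degree $r+1$.
\item Use Riemann--Roch on $E$ to compute the Hilbert series of $B$, and argue by induction on degree that $g$ is regular, so $A/(g) \cong B$. This yields the Hilbert series of $A$ and, by counting dimensions, exactness of the complex.
\item Conclude that $A$ has global dimension $3$ and polynomial growth, and deduce the Gorenstein condition from the self-duality of the complex, which comes from standardness.
\end{enumerate}

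The main obstacle is step (3): showing that $g$ is a nonzerodivisor and that $A/(g) \to B$ is an isomorphism. This needs a careful analysis of $\sigma$ and of the cases where $E$ is degenerate (reducible or nonreduced) or where $\Gamma$ is all of $\PP^{r-1}\times\PP^{r-1}$. The latter case corresponds to twists of the polynomial ring or skew polynomial rings, which I would handle directly. I would first treat the generic case of a smooth elliptic curve, where the cohomology vanishing needed for the Hilbert series count is cleanest, and then extend it to the degenerate cases via the divisor-theoretic version of Riemann--Roch on Gorenstein curves, as ATV do.
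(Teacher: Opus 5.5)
The paper gives no proof of this statement; it quotes \cite[Theorem 1]{ATV1}. Your sketch of the converse (nondegenerate standard $\Rightarrow$ regular, via $\Gamma$ as the graph of an automorphism, the twisted homogeneous coordinate ring $B$, and a normal element $g$ with $A/gA\cong B$) follows the ATV route. The genuine gap is in the direction ``regular $\Rightarrow$ nondegenerate''.

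You propose to show that a common zero of the minors of $M$ makes $\dim A_3$ too large. But for any standard quadratic algebra
\[
\dim A_3 = 27-\dim(R\otimes V+V\otimes R) = 9+\dim\bigl((R\otimes V)\cap(V\otimes R)\bigr),
\]
and degeneracy does not force this intersection to be more than one-dimensional. For example, take $A=\kk\langle x,y,z\rangle/(yz,\ zx+y^2,\ xy+z^2)$. It has the standard matrix
\[
M=\begin{pmatrix} 0 & 0 & y\\ z & y & 0\\ 0 & x & z\end{pmatrix},
\]
since both $Mv^{\mathsf T}$ and $vM$ list the three relations. Evaluated at $[1:0:0]$, $M$ has rank $1$, so $A$ is degenerate, with a whole line in the fibre of $\Gamma$ over that point, exactly as in your scenario. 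Yet comparing coefficients gives $(R\otimes V)\cap(V\otimes R)=\kk\,(xyz+yzx+zxy+y^3+z^3)$, so $\dim A_3=10$. The Hilbert series only deviates in degree $4$, where $\dim A_4=17$; note also $y^3=y(zx+y^2)-(yz)x=0$ in $A$. So the line in the fibre gives no contradiction in degree $3$, and your sketch offers no other mechanism. The cubic case is not treated at all.

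It helps to see what degeneracy really is. Put $\omega=vMv^{\mathsf T}$, which is a twisted superpotential whenever $M$ is standard. Its twisted cyclic symmetry shows that the rank of $M(p)$ equals the rank of the relation $\sum_i p_i\,\delta_i(\omega)\in R\subseteq V\otimes V$. Hence $A$ is degenerate exactly when $R$ contains a nonzero product $uu'$ with $u,u'\in V$; in the cubic case, a relation $us$ with $0\neq u\in V$. What you must prove is therefore that a regular algebra admits no such zero-divisor relation, and this needs a real argument. You cannot simply quote that three-dimensional regular algebras are domains, because that fact is itself deduced from ATV's theorem.

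Separately, fix your degree bookkeeping.
\begin{itemize}
\item The Gorenstein parameter is $\ell=3$ for $r=3$ and $\ell=4$ for $r=2$, i.e.\ $\ell=6-r$, matching $\omega\in F_3$, resp.\ $F_4$.
\item The resolution is therefore $0\to A(-3)\to A(-2)^3\to A(-1)^3\to A\to\kk\to 0$, resp.\ $0\to A(-4)\to A(-3)^2\to A(-1)^2\to A\to\kk\to 0$.
\item $M$ has entries of degree $4-r$.
\item The normal element $g$ has degree $3$, resp.\ $4$, rather than $r+1$.
\end{itemize}
With your values the Hilbert series count in step (3) would not work.
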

A consequence of the classification is that any three-dimensional AS regular algebra generated in degree $1$ is a domain.

An important technique in \cite{ATV1} is the study of the \emph{point scheme} of a graded algebra.  Formally, the point scheme $X$ is a moduli space for the point modules of $A$, that is, those graded $A$-modules $P$ which are generated in degree $0$ with $\dim_{\kk} P_n = 1$ for all $n \geq 0$.  In practice, for a three-dimensional AS regular algebra $A$ the point scheme can be computed as follows.  Let $M$ be a standard matrix associated to $A$.  When $A$ has quadratic relations, the entries of $M$ lie in the three-dimensional vector space $V$.  Treating $V$ as homogeneous coordinates for $\mb{P}^2$, the vanishing of $\det M$ gives a subscheme $X \subseteq \mb{P}^2$ which is the point scheme.
When $A$ has cubic relations, then the entries of $M$ are in $V \otimes V$, where $V$ is two-dimensional.  Treating $V \otimes V$ as multi-homogeneous coordinates for $\mb{P}^1 \times \mb{P}^1$, the vanishing of $\det(M)$ again gives $X \subseteq \mb{P}^1 \times \mb{P}^1$ which is the point scheme.  

Given a point module $P$ we can consider the ``truncation shift" $P[1]_{\geq 0}$, which is again a point module since $A$ is generated in degree $1$.  For a three-dimensional regular algebra $A$, it is known that the truncation shift operation is a bijection on point modules, and so induces a unique automorphism $\rho$ of the point scheme $X$.   The pair $(X, \rho)$ is a useful invariant of the regular algebra up to isomorphism.

\subsection{(Dual) reflection groups}

Let $G$ be a finite group. A $G$-grading of a $\kk$-algebra $A$ is the same thing as a right $\kk G$-comodule algebra structure on $A$, and, dually, the same thing as an action of the Hopf algebra $(\kk G)^* = \Hom_\kk(\kk G, \kk)$ on $A$ \cite[Example 1.6.7]{Montgomery}. Under this correspondence, the invariant ring $A^{(\kk G)^*}$ is the identity component $A_e$ of the grading.   This is the point of view we take throughout: gradings are actions of the dual group algebra, which is a semisimple Hopf algebra.  The $G$-grading is faithful precisely when $A$ is a faithful $(\kk G)^*$-module, and the $G$-grading refines the $\NN$-grading precisely when each $\mb{N}$-graded piece $A_i$ is a $(\kk G)^*$-submodule.  In other papers, the term ``inner faithful" is often used for what we have called ``faithful" here, particularly when using the point of view of coactions by $\kk G$ instead of $G$-gradings.  Also, $(\kk G)^*$ is sometimes said to act (or $\kk G$ is said to coact) ``homogeneously" where we say that the $G$-grading refines the $\NN$-grading.

Recall that a semisimple Hopf algebra $H$ is a \emph{reflection Hopf algebra} if it acts homogeneously and inner faithfully on an AS regular algebra $A$ generated in degree $1$ such that $A^H$ is again AS regular \cite[Definition 3.2]{KirkmanKuzmanovichZhang2}. When $H = \kk G$ this recovers the notion of a \emph{reflection group} in the noncommutative setting, and when $H = (\kk G)^*$ it gives the following definition, which is the one we are concerned with. Note that if $G$ is abelian and $\kk$ is algebraically closed of characteristic zero, then $\kk G \cong (\kk G)^*$ as Hopf algebras, so the two notions coincide. This is the reason we restrict our attention to nonabelian $G$.

\begin{dfn}[{\cite[Definition 0.1]{KirkmanKuzmanovichZhang2}}]
    A finite group $G$ is a \emph{dual reflection group} if there exists an AS regular algebra $A$ generated in degree $1$ with a faithful $G$-grading, refining its natural $\NN$-grading, such that $A_e$ is again AS regular.
\end{dfn}

\subsection{\texorpdfstring{$m$}{m}-Koszul algebras}

The next concept we require is that of an $m$-Koszul algebra, defined below.

\begin{dfn}[{\cite[Definition 2.10]{Berger}}]
    Let $m \geq 2$ and let $A = T(V)/(R)$, where $R \subseteq V^{\otimes m}$. Define
    $$\nu(k) \coloneqq \begin{cases}
        \frac{km}{2} & \text{if } k \text{ is even}, \\
        \frac{(k - 1)m}{2} + 1 & \text{if } k \text{ is odd}.
    \end{cases}$$
    We say that $A$ is \emph{$m$-Koszul} if the trivial $A$-module $\kk$ admits a minimal graded free resolution
    $$\cdots \longrightarrow A(-\nu(2))^{b_2} \longrightarrow A(-\nu(1))^{b_1} \longrightarrow A \longrightarrow \kk \longrightarrow 0,$$
    that is, if the $k$\textsuperscript{th} term of the minimal free resolution of $\kk$ is generated in degree $\nu(k)$. For $m = 2$ this is the usual notion of a Koszul algebra.
\end{dfn}

\begin{rem}
    The definition of an $m$-Koszul algebra implicitly assumes that the algebra is generated in degree $1$.
\end{rem}

Let $F = T(V)$ be the tensor algebra on $V$, where 
$V$ has $\kk$-basis $\{x_1, x_2, \dots, x_n\}$.  Suppose that $A = F/(R)$ is an $m$-Koszul AS regular algebra of global dimension $d \geq 2$, as studied in \cite{MoriSmith}.  Here $R$ is a minimal set of homogeneous relations, which must satisfy $R \subseteq F_m$ by the $m$-Koszul condition.   The most important case is when $m = 2$, that is, when $A$ is Koszul with quadratic relations.  Stating our basic results in the $m$-Koszul setting is useful primarily to give a uniform framework which also incorporates the cubic regular algebras of dimension $d = 3$, which are $3$-Koszul. 

\begin{prop}[{\cite[Theorem (1.5)]{ArtinSchelter}}]
    Let $A$ be an AS regular algebra of global dimension $3$ generated in degree $1$. If $A$ is quadratic then $A$ is Koszul, and if $A$ is cubic then $A$ is $3$-Koszul.
\end{prop}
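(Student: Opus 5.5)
The plan is to compute the minimal graded free resolution of the trivial module $\kk$ explicitly, using the Gorenstein condition to force its shape. Then I will compare the degrees in which its terms are generated with the function $\nu$ in the definition of an $m$-Koszul algebra.

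Let $A = T(V)/(R)$ with $\dim V = r$, and let $R$ be a minimal space of homogeneous relations. Since $A$ has global dimension $3$, the minimal graded free resolution of ${}_A\kk$ has the form
$$0 \to P_3 \to P_2 \to P_1 \to P_0 \to \kk \to 0.$$
Here $P_0 = A$ and $P_1 = A(-1)^{r}$, because $A$ is generated in degree $1$. The term $P_2 = \bigoplus_j A(-d_j)$ has one summand for each minimal relation, with $d_j$ the degree of that relation.

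\textbf{Step 1: dualize.} Apply $\Hom_A(-,A)$ to the resolution. By the AS Gorenstein condition, the dualized complex $P_0^\vee \to P_1^\vee \to P_2^\vee \to P_3^\vee$ is exact except at the end, where its cohomology is $\kk_A(\ell)$. Since the original resolution was minimal, the dual complex is again minimal. It is therefore a minimal free resolution of the right module $\kk_A(\ell)$.

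\textbf{Step 2: match terms.} The minimal resolution of $\kk_A$ has the same shape on the right, with terms $A$, then $A(-1)^r$, and so on. Uniqueness of minimal resolutions then identifies the dual terms with shifted right-module terms:
\begin{itemize}
\item $P_3^\vee \cong A(\ell)$, so $P_3 = A(-\ell)$;
\item $P_2^\vee \cong A(\ell-1)^{r}$, so $P_2 = A(-(\ell-1))^r$.
\end{itemize}
In particular there are exactly $r$ minimal relations, all of the same degree $s \coloneqq \ell-1$. This is the key point: the Gorenstein symmetry forces all minimal relations into a single degree.

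\textbf{Step 3: read off the Koszul property.} By hypothesis the relations are quadratic or cubic, so $s = m \in \{2,3\}$. The resolution is therefore
$$0 \to A(-(m+1)) \to A(-m)^r \to A(-1)^r \to A \to \kk \to 0.$$
Its generating degrees are $0, 1, m, m+1$. From the definition, $\nu(0)=0$, $\nu(1)=1$, $\nu(2)=m$ and $\nu(3)=m+1$, so these degrees agree with $\nu(k)$ for $k \le 3$. All higher terms vanish. Hence $A$ is Koszul when $m=2$ and $3$-Koszul when $m=3$.

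\textbf{Main obstacle.} The main work is in Steps 1 and 2: justifying that the dual of a minimal resolution is again minimal, and that it is exactly the minimal resolution of $\kk_A(\ell)$. This uses the vanishing of $\Ext^i_A(\kk,A)$ for $i<3$ together with graded Nakayama, applied on both sides. One also uses that the right-module resolution has $P_1 = A(-1)^r$, which holds by the symmetric argument since $A$ is generated in degree $1$.
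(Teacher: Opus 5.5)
Your proof is correct, and it is the standard Gorenstein-duality argument of Artin--Schelter. Dualizing the minimal resolution of $\kk$ forces it to have the form $0 \to A(-s-1) \to A(-s)^r \to A(-1)^r \to A \to \kk \to 0$. This implicitly uses that the $P_i$ are finitely generated, so that the $P_i^\vee$ are free, which holds in this setting. The paper gives no proof of its own beyond citing \cite[Theorem (1.5)]{ArtinSchelter}, whose content is exactly this shape of resolution.

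The one difference is that the cited result also pins down $(r,s) \in \{(3,2),(2,3)\}$ using the Hilbert series and polynomial growth. You correctly skip that step, since the quadratic or cubic hypothesis already fixes $s$.
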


There are no known examples of $m$-Koszul AS regular algebras with $m > 2$ when $d > 3$, and there is strong evidence that they may not exist \cite{Kabbaj, Nakamura, ChenLiu}.

\subsection{Superpotentials and the Nakayama automorphism}

Given any linear bijection $\mu \colon V \to V$, it extends uniquely to a graded automorphism $\mu \colon F \to F$ of the free algebra.  An element $\omega \in F_{\ell}$ is called a \emph{twisted superpotential} with twist $\mu$ if the linear map on $V^{\otimes \ell}$ defined by $\phi \colon v_1 \otimes v_2 \otimes \dots \otimes v_{\ell} \mapsto \mu(v_{\ell}) \otimes v_1 \otimes \dots \otimes v_{\ell-1}$ satisfies $\phi(\omega) = \omega$.  If $A = F/(R)$ is an $m$-Koszul AS regular algebra of dimension $d$, the relations $R$ must arise in the following way from a twisted superpotential $\omega \in F_{\ell}$, where $\ell$ is the Gorenstein parameter of $A$.  Write
$$f = \sum_{1 \leq i_1, i_2, \dots, i_{\ell-m} \leq n} x_{i_1} x_{i_2} \dots x_{i_{\ell-m}} f_{i_1, i_2, \dots, i_{\ell-m}}$$
for some $f_{i_1, i_2, \dots, i_{\ell-m}} \in F_m$.  We define the $\kk$-linear \emph{partial derivative operator} $\delta_{i_1, i_2, \dots, i_{\ell-m}}\colon F_{\ell} \to F_m$ by the formula $f \mapsto f_{i_1, i_2, \dots, i_{\ell-m}}$. Then
$$R = \spn\{ \delta_{i_1, i_2, \dots, i_{\ell-m}}(\omega) \mid  1 \leq i_1, i_2, \dots, i_{\ell-m} \leq n\}.$$
By construction, the map $\mu \colon F \to F$ satisfies $\mu(R) = R$ and so induces an automorphism of $A$ called the \emph{Nakayama automorphism}. For a given $A = F/(R)$, the integer $\ell$, the map $\mu$, and the span of the superpotential $\kk \omega$ are uniquely determined.
In fact, $\kk \omega$ can be described in the following way:
\[
    \kk \omega = \bigcap_{s+t+m = \ell} V^{\otimes s} \otimes R \otimes V^{\otimes t}.
\]
The picture is especially simple when $d = 2$ or $d = 3$.  If $A$ is a two-dimensional AS  regular algebra, then $A$ is Koszul, and $A = \kk \langle x_1, x_2 \rangle/ (\omega)$ for some twisted superpotential $\omega \in F_2$, that is, $R = \kk \omega$. If $A$ is three-dimensional, then there are two possibilities:
\begin{itemize}
    \item $A$ is Koszul, $\omega \in F_3$, and $A$ is generated by $n = 3$ elements with $3$ quadratic relations. We call such algebras \emph{quadratic three-dimensional AS regular algebras}.
    \item $A$ is $3$-Koszul, $\omega \in F_4$, and $A$ is generated by $n = 2$ elements with $2$ cubic relations. We call such algebras \emph{cubic three-dimensional AS regular algebras}.
\end{itemize}
In either case $\omega$ is the unique element, up to scalar, of $(R \otimes V) \cap (V \otimes R) \subseteq V^{\otimes \ell}$. 

\subsection{Gradings and superpotentials}

We move on to a discussion of how to interpret a $G$-grading of an $m$-Koszul AS regular algebra in terms of its superpotential.  Given a group $G$ and a $G$-graded $\kk$-vector space $W$, a subspace $X \subseteq W$ is called \emph{$G$-graded} if it is spanned by $G$-homogeneous elements, not necessarily all of the same degree.  Writing $W = \bigoplus_{g \in G} W_g$, then $X \subseteq W$ is $G$-graded if and only if $X = \bigoplus_{g \in G} X_g$ with $X_g = (W_g \cap X)$ for all $g$.  If $X = \bigoplus_{g \in G} X_g$ and $Y = \bigoplus_{g \in G} Y_g$ are $G$-graded $\kk$-subspaces of $W$, then $X \cap Y = \bigoplus_{g \in G} (X_g \cap Y_g)$ is $G$-graded also.

The next result shows that an $m$-Koszul AS regular algebra is $G$-graded if and only if its superpotential is $G$-homogeneous.

\begin{lem}
\label{lem:superpotential-grading}
    Suppose that the free algebra $F = \kk \langle V \rangle$ is graded by the group $G$, where the $G$-grading refines the $\mathbb{N}$-grading, that is, each $F_n = V^{\otimes n}$ is $G$-graded. The $G$-grading on $F$ induces a $G$-grading on the $m$-Koszul AS regular algebra $A = F/(R)$ if and only if the associated superpotential $\omega \in F_{\ell}$ is $G$-homogeneous.
\end{lem}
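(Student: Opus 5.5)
The plan is to show that the $G$-grading descends to $A$ exactly when $R$ is a $G$-graded subspace of $F_m$, and then to translate this into a condition on $\omega$ using the two descriptions of $\omega$ and $R$ recalled above. First, since $A = F/(R)$ with $R \subseteq F_m$ and the grading refines the $\NN$-grading, the induced grading on $A$ is well defined if and only if the ideal $(R)$ is $G$-graded. Because $(R)_m = R$, this holds if and only if $R$ is a $G$-graded subspace of $F_m$. Conversely, if $R$ is $G$-graded, then $(R)_n = \sum_{s+t+m=n} V^{\otimes s}\otimes R\otimes V^{\otimes t}$ is a sum of products of $G$-graded subspaces, and so it is $G$-graded.

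For the forward direction, suppose $R$ is $G$-graded. Then each $V^{\otimes s}\otimes R\otimes V^{\otimes t}$ is $G$-graded: a tensor product of spans of $G$-homogeneous elements is spanned by $G$-homogeneous elements, since the grading of $F$ is multiplicative. By the remark that an intersection of $G$-graded subspaces is $G$-graded,
\[
\kk\omega=\bigcap_{s+t+m=\ell} V^{\otimes s}\otimes R\otimes V^{\otimes t}
\]
is a one-dimensional $G$-graded subspace. A one-dimensional graded subspace is spanned by a single homogeneous element, so $\omega$ is $G$-homogeneous.

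For the converse, suppose $\omega$ is $G$-homogeneous of degree $h$. Here I use $R=\spn\{\delta_{i_1,\dots,i_{\ell-m}}(\omega)\}$. The obstacle is that the basis $x_i$ used to define the derivatives need not be $G$-homogeneous. I would deal with this first by choosing a $G$-homogeneous basis of $V$. This is possible because $V=F_1$ is $G$-graded, and for $m$-Koszul algebras the span of all derivatives is independent of the basis chosen: it is the space $\{(\xi\otimes\mathrm{id})(\omega) : \xi\in (V^{*})^{\otimes(\ell-m)}\}$ of left contractions. With $x_{i_j}$ homogeneous of degrees $g_{i_j}$, the expansion $\omega=\sum x_{i_1}\cdots x_{i_{\ell-m}} f_{i_1\dots}$ is unique. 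Decomposing each $f_{i_1\dots}$ into $G$-components and comparing $G$-degrees then shows that $f_{i_1\dots}$ is homogeneous of degree $(g_{i_1}\cdots g_{i_{\ell-m}})^{-1}h$. Consequently $R$ is spanned by $G$-homogeneous elements, and the first paragraph finishes the proof.

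The only delicate point is this basis-independence of the derivative span, which follows since changing basis replaces the derivatives by invertible linear combinations of them.
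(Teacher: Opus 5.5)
Your proof is correct and follows the paper's argument essentially step for step. In one direction, $R$ graded implies each $V^{\otimes s}\otimes R\otimes V^{\otimes t}$ is graded, so their intersection $\kk\omega$ is graded. In the other direction, in a $G$-homogeneous basis, uniqueness of the expansion $\omega=\sum x_{i_1}\cdots x_{i_{\ell-m}}f_{i_1,\dots,i_{\ell-m}}$ forces each derivative to be homogeneous of degree $(g_{i_1}\cdots g_{i_{\ell-m}})^{-1}h$.

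Your explicit remark that the span of the derivatives does not depend on the choice of basis is a point the paper uses tacitly. That span is the space of left contractions of $\omega$, which is pure linear algebra and does not actually need $m$-Koszulity.
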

\begin{proof}
    Clearly the $G$-grading on $F$ descends to a $G$-grading on $A = F/(R)$ if and only if $R$ is a $G$-graded subspace of $F_m$.  
    
    If $R$ is $G$-graded, then so is the subspace $V^{\otimes s} \otimes R \otimes V^{\otimes t} \subseteq F_{\ell}$, for any $s, t \geq 0$.  Then $\kk \omega = \bigcap_{s + t = \ell -m} V^{\otimes s} \otimes R \otimes V^{\otimes t}$ is also a $G$-graded subspace of $F_{\ell}$. It follows that $\omega$ is a $G$-homogeneous element.
    
    Conversely, suppose that $x_1, x_2, \dots, x_n$ is a $G$-homogeneous basis of $V$, where $\deg_G x_i = g_i \in G$. For each $1 \leq i_1, \dots, i_{\ell-m} \leq n$, the space $x_{i_1}x_{i_2} \dots x_{i_{\ell-m}} F_m$ is a $G$-graded subspace of $F_{\ell}$.  Then 
    \[
    F_{\ell} = \bigoplus_{1 \leq i_1, \dots, i_{\ell-m} \leq n} x_{i_1}x_{i_2} \dots x_{i_{\ell-m}} F_m
    \]
    expresses $F_{\ell}$ as a direct sum of $G$-graded subspaces.  In particular, if $f \in F_{\ell}$ is a $G$-homogeneous element, say of degree $h$, then its projection onto the $(i_1, \dots, i_{\ell-m})$-summand, which is $x_{i_1}x_{i_2} \dots x_{i_{\ell-m}} f_{i_1, \dots, i_{\ell-m}}$, is also $G$-homogeneous of degree $h$.  Thus 
    $f_{i_1, \dots, i_{\ell-m}} = \delta_{i_1, \dots, i_{\ell-m}}(f)$ is $G$-homogeneous of degree $g_{i_{\ell-m}}^{-1} \dots g_{i_2}^{-1} g_{i_1}^{-1} h$.  

    Now if $\omega \in F_{\ell}$ is a $G$-homogeneous element, applying the argument above gives that $R$ is spanned by the $G$-homogeneous elements $\{ \delta_{i_1, \dots, i_{\ell-m}}(\omega) \mid 1 \leq i_1, \dots, i_{\ell-m} \leq n \}$.  So $R$ is a $G$-graded subspace.
\end{proof}

The following is a well-known basic property of the superpotential associated to a regular algebra.
\begin{lem}
\label{lem:superpotential-property}
    Let $V$ be a vector space with basis $x_1, \dots, x_n$ and let $A = T(V)/(R)$ be an $m$-Koszul AS regular algebra of dimension $d \geq 2$, with superpotential 
    $\omega \in T(V)_{\ell}$ and Nakayama automorphism $\mu$.  Write $\omega = \sum_{i=1}^n t_i \otimes x_i$ for unique $t_i \in T(V)_{\ell-1}$.  Then $\{t_1, \dots, t_n \}$ is a linearly independent set over $\kk$.
\end{lem}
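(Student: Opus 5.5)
We need to show the $t_i$ are linearly independent. Write $\omega = \sum_i t_i \otimes x_i$ and suppose $\sum_i c_i t_i = 0$ with not all $c_i$ zero. The plan is to turn such a dependence into a linear form killing a degree-one element, and then contradict minimality of the resolution.

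The first step is a change of basis. We choose a new basis $y_1,\dots,y_n$ of $V$ and dual coordinates so that the dependence becomes the vanishing of one of the new coefficients. Concretely, let $\lambda \in V^*$ be the nonzero functional with $\lambda(x_i) = c_i$. Then $(\id \otimes \lambda)(\omega) = \sum_i c_i t_i = 0$. Extend $\lambda$ to a dual basis $\lambda = \lambda_1, \lambda_2, \dots, \lambda_n$ of $V^*$, with corresponding basis $y_1,\dots,y_n$ of $V$. Then $\omega = \sum_j s_j \otimes y_j$ with $s_j = (\id\otimes\lambda_j)(\omega)$, so $s_1 = 0$. Hence $\omega \in V^{\otimes(\ell-1)} \otimes W$, where $W = \spn(y_2,\dots,y_n)$ is a proper subspace of $V$.

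The second step uses the twisted superpotential property to move this defect to the left. Since $\phi(\omega) = \omega$, we get
$$\omega = \phi^{\ell-1}(\omega) \in \mu(V)^{\otimes (\ell-1)}\otimes \mu(W)\otimes \cdots,$$
and more precisely $\phi^{\ell-1}$ sends $V^{\otimes(\ell-1)}\otimes W$ into $\mu(W)\otimes V^{\otimes(\ell-1)}$ (each application rotates the last factor to the front after applying $\mu$). Thus $\omega \in W' \otimes V^{\otimes(\ell-1)}$ with $W' = \mu(W)$ proper. It is enough to use one of these two containments; the right-hand form suffices.

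The third step derives the contradiction. The relations are the partial derivatives $\delta_{i_1,\dots,i_{\ell-m}}(\omega)$, taken from the left. By $\kk\omega = \bigcap_{s+t+m=\ell} V^{\otimes s}\otimes R\otimes V^{\otimes t}$, we have $\omega \in V^{\otimes(\ell-m)}\otimes R$, and $R$ is spanned by the left partial derivatives of $\omega$. If $\omega \in V^{\otimes(\ell-1)}\otimes W$, then every relation lies in $V^{\otimes (m-1)}\otimes W$. So $y_1$ never appears as the last letter of any relation, and the right ideal $y_1A$ is free: the map $A \to A$, $a \mapsto y_1 a$, is injective, which follows from the standard fact that a relation space with no right-occurrence of $y_1$ gives $y_1 A \cong A(-1)$ via the Diamond lemma / Koszul-type resolution.

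I expect this injectivity to be the main obstacle, since it needs some care to argue. A cleaner route is through the minimal resolution. For an $m$-Koszul AS regular algebra, the left resolution of $\kk$ ends with
$$0\to A(-\ell)\xrightarrow{\ \cdot (t\text{-data})\ } A(-\ell+1)^n \to \cdots,$$
where the last map is right multiplication by the column $(t_1,\dots,t_n)$, or more precisely the map $A\to A\otimes V$ given by $1\mapsto \omega$ read appropriately. By minimality and the Gorenstein condition, the dual of this last map is the beginning of a minimal resolution of the right module $\kk$. It must therefore be surjective onto the augmentation ideal in degree 1, so the degree-one components, which are the $x_i$-coefficients, must span $V$. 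Equivalently, $\Ext^d_A(\kk,A)\cong\kk(\ell)$ being one-dimensional and the dual complex being a resolution of $\kk_A$ forces the last letters of $\omega$ to span $V$. That is exactly $\omega \notin V^{\otimes(\ell-1)}\otimes W$, giving the contradiction. So in the final write-up I would invoke this duality of the Koszul (or $m$-Koszul) resolution, as in \cite{MoriSmith}, in place of the ad hoc injectivity claim.
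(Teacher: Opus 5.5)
Your approach differs from the paper's. The paper changes basis so that $t_n=0$. Then $\omega=\phi(\omega)=\sum_{i<n}\mu(x_i)\otimes t_i$ does not involve $\mu(x_n)$ at all, so $\mu(x_n)$ could be replaced by any vector completing $\mu(x_1),\dots,\mu(x_{n-1})$ to a basis. This contradicts the uniqueness of the twist (Lemma 4.4 of Mori--Smith). You aim instead to contradict the AS-Gorenstein condition directly. That can be made to work, but as written neither version of your Step 3 reaches a contradiction.

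In the first version, injectivity of $a\mapsto y_1a$ contradicts nothing: it holds automatically whenever $A$ is a domain, e.g.\ in dimension $3$. What your correct observation $R\subseteq V^{\otimes(m-1)}\otimes W$ actually yields is a nonzero $\Ext^1$. Take the minimal resolution $A\otimes R\to A\otimes V\to A\to\kk$ of the left module $\kk$. Its first map sends $a\otimes\sum_j r_j\otimes y_j$ to $\sum_j ar_j\otimes y_j$, which lies in $A\otimes W$. Hence the homomorphism $f\colon A\otimes V\to A$, $a\otimes y_j\mapsto\delta_{1j}a$, is a $1$-cocycle. It is not a coboundary, since coboundaries send $1\otimes y_1$ to $y_1b\in A_{\geq 1}$ for some $b\in A$, while $f(1\otimes y_1)=1$. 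So $\Ext^1_A(\kk,A)\neq 0$, which is impossible for $d\geq 2$. This closes your first route with no Diamond lemma needed.

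In the second version the sides are crossed. In the left resolution $A\otimes W_\bullet$ of ${}_A\kk$, the last differential is $a\otimes\omega\mapsto\sum_i ax_i\otimes w_i$, where $\omega=\sum_i x_i\otimes w_i$ (with $w_i\in W_{d-1}$) is the \emph{first}-letter decomposition. Its entries are the degree-one $x_i$; the $t_i$, of degree $\ell-1$, cannot be entries of a map $A(-\ell)\to A(-\ell+1)^n$. Dualizing this map and using $\Ext^d_A({}_A\kk,A)\cong\kk(\ell)$ proves that the $w_i$ are independent, i.e.\ that the first letters of $\omega$ span $V$. That is not the statement.

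To reach the $t_i$, use the right resolution $W_\bullet\otimes A$ of $\kk_A$, whose last differential is $\omega\otimes a\mapsto\sum_i t_i\otimes x_ia$. After applying $\Hom_A(-,A)$, its image is the left ideal $AU$ with $U=\{\sum_i\xi(t_i)x_i \mid \xi\in W_{d-1}^*\}$. So $\Ext^d_A(\kk_A,A)$ is a shift of $A/AU$, which is one-dimensional only if $U=V$, i.e.\ only if the $t_i$ are independent.

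Alternatively, keep the left resolution and transfer with your discarded Step 2. Note that it is $\phi$ itself, not $\phi^{\ell-1}$, that carries $V^{\otimes(\ell-1)}\otimes W$ onto $\mu(W)\otimes V^{\otimes(\ell-1)}$.

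Either repair gives a proof that uses only the AS-Gorenstein condition instead of the cited uniqueness of $\mu$. Independence of the $t_i$ immediately forces $\mu$ to be unique (compare coefficients in $\sum_i\mu(x_i)\otimes t_i$). So your route proves directly the nondegeneracy that the paper extracts from that uniqueness.
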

\begin{proof}
    Assume, for a contradiction, that $\{ t_1, \dots, t_n \}$ is dependent.  Changing the basis $\{x_i \}$ of $V$ if necessary, we may assume that $\omega = \sum_{i=1}^{n-1} t_i \otimes x_i$.  According to \cite[Lemma 4.4]{MoriSmith}, since $A$ is $m$-Koszul and AS regular, there is a unique twist $\mu \in \operatorname{GL}(V)$ such that $\omega = \sum_{i=1}^{n-1} \mu(x_i) \otimes t_i$ as well.  However, it is clear that $\mu(x_n)$ can be chosen to be any vector completing $\{ \mu(x_1), \dots, \mu(x_{n-1}) \}$ to a basis of $V$, contradicting the uniqueness of $\mu$.
\end{proof}

Next, we use the above properties to describe how the Nakayama automorphism of a regular algebra interacts with a grading of the algebra.

\begin{cor}
\label{cor:permute}
    Let $F = T(V)$ and let $A = F/(R)$ be an $m$-Koszul AS regular algebra graded by the group $G$, refining the $\mb{N}$-grading.  Let $\mu$ be the Nakayama automorphism of $A$, and let $\omega$ be the superpotential of $A$.  Let $x_1, x_2, \dots, x_n$ be a basis of $A_1 = V$ consisting of $G$-homogeneous elements. If $\deg_G \omega = h$ and $\deg_G x_i= g_i$ then $\deg_G \mu(x_i) = h g_i h^{-1}$.  In particular, $\{ \mu(x_1), \mu(x_2), \dots, \mu(x_n) \}$ is also a $G$-homogeneous basis of $V$. 
\end{cor}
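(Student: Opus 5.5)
We combine the two preceding lemmas: Lemma~\ref{lem:superpotential-grading} tells us that $\omega$ is $G$-homogeneous, and Lemma~\ref{lem:superpotential-property} tells us that the ``last-letter coefficients'' of $\omega$ are linearly independent.

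Since $A$ is $G$-graded, $\omega$ is $G$-homogeneous of some degree $h$. Write $\omega = \sum_{i=1}^n t_i \otimes x_i$ with $t_i \in V^{\otimes(\ell-1)}$. The decomposition $F_\ell = \bigoplus_i F_{\ell-1} \otimes x_i$ is a decomposition into $G$-graded subspaces. Hence each summand $t_i \otimes x_i$ is $G$-homogeneous of degree $h$, by the same projection argument as in the proof of Lemma~\ref{lem:superpotential-grading}. By Lemma~\ref{lem:superpotential-property}, every $t_i$ is nonzero. It follows that $t_i$ is $G$-homogeneous of degree $h g_i^{-1}$.

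Next we use the twisted superpotential condition $\phi(\omega)=\omega$. This gives
$$\omega = \sum_{i=1}^n \mu(x_i) \otimes t_i.$$
Expand each $\mu(x_i) = \sum_k c_{ik} x_k$, and compare both sides in the decomposition $F_\ell = \bigoplus_k x_k \otimes F_{\ell-1}$. Since $\omega$ is $G$-homogeneous of degree $h$, the component $\sum_i c_{ik}\, x_k \otimes t_i$ is $G$-homogeneous of degree $h$ for each $k$, so $u_k := \sum_i c_{ik} t_i$ is $G$-homogeneous of degree $g_k^{-1}h$ whenever $u_k \neq 0$.

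The main obstacle is to pass from this to information about each individual $\mu(x_i)$. Here linear independence of the $t_i$ does the work. Fix $k$ and decompose $u_k$ into $G$-components. Since each $t_i$ is homogeneous of degree $hg_i^{-1}$, the component of $u_k$ of degree $f$ is $\sum_{i:\, hg_i^{-1}=f} c_{ik} t_i$. When $f \neq g_k^{-1}h$, this component must vanish, and independence of the $t_i$ then forces $c_{ik}=0$ for all $i$ with $hg_i^{-1} = f$. Thus $c_{ik}\neq 0$ implies $hg_i^{-1} = g_k^{-1}h$, that is, $g_k = h g_i h^{-1}$. Therefore $\mu(x_i)$ is a combination of basis vectors all of $G$-degree $hg_ih^{-1}$, so $\mu(x_i)$ is $G$-homogeneous of that degree; it is nonzero because $\mu$ is invertible.

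Finally, $\mu \in \operatorname{GL}(V)$, so $\{\mu(x_1),\dots,\mu(x_n)\}$ is a basis of $V$. Each of its elements is $G$-homogeneous by the previous step, so it is a $G$-homogeneous basis.
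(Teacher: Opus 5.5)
Your proof is correct and follows the paper's argument: homogeneity of $\omega$ gives $\deg_G t_i = hg_i^{-1}$, and the linear independence of the $t_i$ from Lemma~\ref{lem:superpotential-property} is what isolates each $\mu(x_i)$ in $\omega = \sum_i \mu(x_i)\otimes t_i$. The only difference is bookkeeping: the paper projects $\omega$ directly onto the summands of the direct sum $\bigoplus_i V\otimes t_i$, whereas you expand $\mu(x_i)$ in coordinates and project onto $x_k\otimes F_{\ell-1}$ before invoking independence, which amounts to the same argument.
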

\begin{proof}
    Write $\deg_G(x_i) = g_i \in G$ for each $i$.  Let $\omega \in F_{\ell}$ be the superpotential for $A$.  By Lemma~\ref{lem:superpotential-grading}, $\omega$ is $G$-homogeneous, of degree $h$ say.  Write uniquely 
    $$\omega = t_1 \otimes x_1 + t_2 \otimes x_2 + \dots + t_n \otimes x_n$$
    with $t_i \in F_{\ell-1}$.  As $F_{\ell} = \bigoplus_{i=1}^n F_{\ell-1} \otimes x_i$ is a direct sum of $G$-graded subspaces, the projection of $\omega$ onto the $i$\textsuperscript{th} summand, namely $t_i \otimes x_i$, is again $G$-homogeneous of degree $h$.   Thus $t_i$ is $G$-homogeneous of degree $h g_i^{-1}$.  
    
    By the definition of the Nakayama automorphism, we have $\omega = \mu(x_1) \otimes t_1 + \dots + \mu(x_n) \otimes t_n$ as well.  As we saw in Lemma~\ref{lem:superpotential-property}, $\{t_1, \dots, t_n \}$ is $\kk$-linearly independent.  In the free algebra $T(V)$, this implies that the sum $\bigoplus_{i=1}^n V \otimes t_i$ is a direct sum of subspaces which are $G$-graded (since we just saw that the $t_i$ are $G$-homogeneous).  Once again since $\omega$ has degree $h$, its projection $\mu(x_i) \otimes t_i$ onto the $i$\textsuperscript{th} summand  has degree $h$.  Thus $\mu(x_i)$ is $G$-homogeneous of degree $h g_i h^{-1}$.
\end{proof}

\subsection{The \texorpdfstring{$2$}{2}-dimensional case}

Before turning to dimension $3$, we recall Crawford's classification of the nonabelian gradings of two-dimensional AS regular algebras \cite{Crawford}. We will use it repeatedly, as many of the algebras in our classification are Ore extensions of a two-dimensional regular algebra, and their gradings are built from a grading of the base, as we explain in the next subsection. We first fix notation for skew polynomial rings, which will also be needed later.

\begin{dfn}
    Let $n \geq 2$ be a positive integer and let $\*q = (q_{ij}) \in M_n(\kk^\times)$ be an $n \times n$ matrix with $q_{ii} = 1$ for all $i$ and $q_{ji} = q_{ij}^{-1}$. The \emph{(multi-parameter) skew polynomial ring with parameter $\*q$} is
    $$A_{\*q} \coloneqq \frac{\kk\ang{x_1,\dots,x_n}}{(x_j x_i - q_{ij} x_i x_j)}.$$
    If $n = 2$, we define $A_q$ to be the skew polynomial ring $A_{\*q}$ with $q_{12} = q$ (so $q_{21} = q^{-1}$). If $n = 3$, we define $A_{pqr}$ to be the skew polynomial ring $A_{\*q}$ with $q_{12} = p$, $q_{13} = q$, and $q_{23} = r$.
\end{dfn}

Note that $A_q$ is AS regular of dimension $2$ and that $A_{pqr}$ is AS regular of dimension $3$. The nonabelian gradings in dimension $2$ are very restricted: Crawford showed that only the algebra $A_{-1}$ admits one, and there is essentially only one such grading.

\begin{thm}[{\cite[Theorem 1.1]{Crawford}}]\label{thm:Simon}
    Let $A$ be a two-dimensional AS regular algebra generated in degree $1$ with a faithful $G$-grading which refines its natural $\NN$-grading, where $G$ is a nonabelian group. Then 
    \[
    A \cong A_{-1} = \kk \langle x, y \rangle/(xy+yx) \cong \kk \langle u, v \rangle/(u^2 -v^2), \ \text{where}\ u = x+y\ \text{and}\ v = x-y,
    \]
    and $G$ is a quotient of the group $\Gamma \coloneqq \ang{a,b \mid a^2 = b^2}$, where $\deg_\Gamma(u) = a$, $\deg_\Gamma(v) = b$.
\end{thm}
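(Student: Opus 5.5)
The plan is to reprove this with the superpotential technique of this section, which in dimension $2$ turns the problem into a short case check. Let $A = T(V)/(R)$ be two-dimensional AS regular and generated in degree $1$. Then $A$ is Koszul, $\dim V = 2$, and $R = \kk\omega$ for a twisted superpotential $\omega \in V^{\otimes 2}$. The $G$-grading refines the $\NN$-grading, so $V$ is $G$-graded and has a $G$-homogeneous basis $y_1, y_2$, with degrees $g_1, g_2$. By Lemma~\ref{lem:superpotential-grading}, $\omega$ is $G$-homogeneous, say of degree $h$.

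Two facts constrain $\omega$. First, $A$ is a domain faithfully graded by the finite group $G$, so $G$ is generated by $g_1$ and $g_2$. Since $G$ is nonabelian, $g_1 g_2 \neq g_2 g_1$, and in particular $g_1 \neq g_2$. Second, write $\omega = \sum_{i,j} c_{ij} y_i y_j = t_1 \otimes y_1 + t_2 \otimes y_2$. Lemma~\ref{lem:superpotential-property} says $t_1, t_2$ are linearly independent, so the coefficient matrix $C = (c_{ij})$ is invertible.

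Now compare the $G$-degrees of the four monomials: $y_1^2$ has degree $g_1^2$, $y_1y_2$ has degree $g_1g_2$, $y_2y_1$ has degree $g_2g_1$, and $y_2^2$ has degree $g_2^2$.
\begin{itemize}
\item $y_1y_2$ and $y_2y_1$ cannot both occur in $\omega$, since $g_1g_2 \neq g_2g_1$.
\item $y_1y_2$ cannot occur together with $y_1^2$: equal degrees would force $g_1 = g_2$. The same holds for $y_1y_2$ with $y_2^2$, and for $y_2y_1$ with either square.
\item Hence if a mixed monomial occurs, it is the only monomial in $\omega$, and $C$ has rank $1$. This contradicts invertibility.
\end{itemize}
So $\omega = \alpha y_1^2 + \beta y_2^2$ with $\alpha, \beta \neq 0$ (again by invertibility of $C$), and homogeneity forces $g_1^2 = g_2^2$. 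Rescale so that $\omega = u^2 - v^2$ with $u, v$ scalar multiples of $y_1, y_2$. Then $\deg_G u = g_1$ and $\deg_G v = g_2$ satisfy $g_1^2 = g_2^2$. Therefore $a \mapsto g_1$, $b \mapsto g_2$ defines a homomorphism $\Gamma \to G$, which is surjective because $g_1$ and $g_2$ generate $G$.

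Finally, put $x = \tfrac12(u+v)$ and $y = \tfrac12(u-v)$. Then
\[
xy + yx = \tfrac14\bigl((u+v)(u-v) + (u-v)(u+v)\bigr) = \tfrac12(u^2 - v^2),
\]
so $\kk\ang{u,v}/(u^2-v^2) \cong A_{-1}$, with $u = x+y$ and $v = x-y$.

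I do not expect a real obstacle here. The only step needing care is justifying that $C$ is invertible, and that is exactly what Lemma~\ref{lem:superpotential-property} provides. This step is what excludes the single-monomial superpotentials, such as $\omega = y_1y_2$, which would otherwise be compatible with a nonabelian grading.
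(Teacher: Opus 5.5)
Your proof is correct. There is no in-paper proof to compare it with: the paper quotes this statement from Crawford \cite[Theorem 1.1]{Crawford} and uses it as a black box. Your argument is the dimension-$2$ case of the superpotential method the paper develops for dimension $3$. Two facts do all the work: homogeneity of $\omega$ (immediate here, since $R = \kk\omega$) and invertibility of the coefficient matrix via Lemma~\ref{lem:superpotential-property}. With these, the case check on the degrees $g_1^2, g_1g_2, g_2g_1, g_2^2$ has the same flavour as the orbit analyses of Section~\ref{sec:quadratic}, and no Nakayama twist is needed.

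What your route buys is a self-contained proof inside the paper's framework. The only nontrivial input is Lemma~\ref{lem:superpotential-property}, which rests on Mori--Smith's uniqueness of the twist. In dimension $2$ you could replace it by an elementary observation. The two-dimensional regular algebras generated in degree $1$ are the quantum planes and the Jordan plane. Their relations ($yx - qxy$ and $yx - xy - x^2$) have coefficient matrices of determinant $q$ and $1$. Invertibility is preserved under change of basis ($C \mapsto P^{\mathsf{T}} C P$), so it holds for your $y_1, y_2$. Either way, your last remark is right: this is exactly what rules out rank-one relations such as $y_1 y_2$, which are compatible with a free-group grading.

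One small correction: the statement does not assume $G$ is finite, and indeed the infinite group $\Gamma$ itself is allowed. You also do not need finiteness or the domain property to get $G = \ang{g_1, g_2}$. It follows directly from faithfulness. Since $A$ is generated by $y_1, y_2$, every nonzero $G$-homogeneous component has degree a product of $g_1$ and $g_2$, while $g_1, g_2$ themselves occur. So the support of the grading generates $\ang{g_1, g_2}$.
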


Crawford also classified which quotients of $\Gamma$ give rise to dual reflection groups of $A_{-1}$, yielding a complete list of nonabelian dual reflection groups in dimension $2$.

\begin{prop}[{\cite[Theorem 5.5]{Crawford}}]
    \label{prop:Crawford}
    Let $A_{-1}$ be faithfully graded by a nonabelian group $G$, refining its natural $\NN$-grading. Then $A_e$ is AS regular if and only if 
    $$G \cong \Gamma_n \coloneqq \ang{a,b \mid a^2 = b^2, (ab)^n = e}$$
    for some $n \geq 2$, and in this case $A_e = \kk[(uv)^n, (vu)^n]$ is a commutative polynomial ring.
\end{prop}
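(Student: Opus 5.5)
We first reduce to a combinatorial description of $A_e$. By Theorem~\ref{thm:Simon} we may write $A = A_{-1} = \kk\ang{u,v}/(u^2 - v^2)$ with $\deg_G(u) = a$ and $\deg_G(v) = b$, where $G$ is a quotient of $\Gamma$. Put $z \coloneqq u^2 = v^2$. Then:
\begin{itemize}
    \item $z$ is central in $A$, and $a^2 = b^2$ is central in $\Gamma$.
    \item $A$ has a $\kk$-basis of monomials $z^k w$, where $w$ runs over the alternating words in $u$ and $v$.
    \item Every such monomial is $G$-homogeneous, so $A_e$ is spanned by those $z^k w$ whose $G$-degree is $e$.
\end{itemize}
Since $G$ is finite, $A$ is a finite module over $A_e$. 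Hence $\GKdim A_e = 2$, and if $A_e$ is AS regular it has global dimension $2$.

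Next we compute. Set $c \coloneqq ab$ and $c' \coloneqq ba = a^{-1}ca$. The even-degree part of $A$ is generated by $uv$, $vu$ and $z$, subject to $(uv)(vu) = (vu)(uv) = z^2$, and $uv$, $vu$ commute. The degrees are
\[
    \deg_G(uv) = c, \qquad \deg_G(vu) = c', \qquad \deg_G(z) = a^2, \qquad cc' = a^4 .
\]
Nonabelianness of $G$ forces $c \neq c'$, so $c \notin Z(G)$. Odd-degree monomials have degree in the coset $a\ang{c, a^2}$. We will show this coset never contains $e$ when $G$ is nonabelian: if $e \in a\ang{c,a^2}$ then $a \in \ang{c,a^2}$, and one checks that $a$ and $b$ then commute. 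So $A_e$ lies in the commutative subalgebra $\kk[uv, vu, z]/(uv\cdot vu - z^2)$. Its $e$-component is the ring of invariants of a finite diagonal abelian group acting on this $A_1$-singularity. It is spanned by the monomials $(uv)^i (vu)^j z^k$ of trivial degree, where we may take $\min(i,j)=0$.

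If $G \cong \Gamma_n$, we compute directly that the trivial-degree monomials are generated by $(uv)^n$ and $(vu)^n$. Here we use $c^n = e$, and check that no power $a^{2k}$ with $k$ smaller than $n$ lies in $\ang{c}$ except through $c^{i}c'^{i}$. This gives $A_e = \kk[(uv)^n,(vu)^n]$, a polynomial ring, so $A_e$ is AS regular.

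Conversely, suppose $A_e$ is AS regular, hence a commutative graded polynomial ring in two variables. If no power $z^k$ lies in $A_e$ except those coming from $(uv)^i(vu)^i$, then $A_e$ is generated by $(uv)^m$ and $(vu)^m$ with $m = \ord(c)$. The presentation then shows that $G$ is a quotient of $\Gamma_m$ in which no further relations hold, so $G \cong \Gamma_m$ with $m \geq 2$.

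The main obstacle is the remaining case, where some extra element $z^k (uv)^i$ with $k$ odd or $i$ small has trivial degree. Then $A_e$ is the invariant ring of a cyclic group acting on the $A_1$-singularity $\kk[uv,vu,z]/(uv\cdot vu - z^2)$, and we must show it is never a polynomial ring. We plan to do this via the Hilbert series. An AS regular commutative ring of dimension $2$ has Hilbert series $1/((1-t^p)(1-t^q))$, and we will compare this with the Molien-type count of trivial-degree monomials. Alternatively, we can exhibit three minimal generators, e.g.\ $(uv)^m$, $(vu)^m$ and $z^k(uv)^i$, which are forced to satisfy a nontrivial relation, making $A_e$ a hypersurface rather than a polynomial ring. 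Ruling out every such configuration of $\ord(c)$ and $\ord(a^2)$ is where we expect the real work to lie.
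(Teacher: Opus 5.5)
Your reduction is correct. Since $N=\ang{ab,a^2}$ is abelian and normal of index at most $2$, an odd monomial of trivial degree would put $a$ in $N$ and force $ab=ba$. So $A_e$ sits inside $C=\kk[X,Y,Z]/(XY-Z^2)$, with $X=uv$, $Y=vu$, $Z=u^2$, and is spanned by its trivial-degree monomials. (The paper only quotes this result from Crawford, so your argument has to stand on its own.)

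The genuine gap is the ``only if'' direction. You leave as a plan the case where some monomial outside $\kk[X^m,Y^m]$, with $m=\ord(ab)$, has trivial degree, and that case is the whole content of the statement. Your case split is also wrong as stated. The condition that $z^k\in A_e$ only for $2m\mid k$ does not give $A_e=\kk[X^m,Y^m]$, because $\ord(a^2)=2m$ does not force $\ang{ab}\cap\ang{a^2}=\{e\}$. For example, there is a nonabelian quotient of $\Gamma$ of order $48$ with $\ord(ab)=6$, $\ord(a^2)=12$ and $(ab)^2=a^{16}$, and in it $(uv)^2z^4\in A_e$.

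The ``if'' direction has a smaller gap of the same kind. There you need exactly $\ord(a^2)=2n$ and $\ang{ab}\cap\ang{a^2}=\{e\}$ in $\Gamma_n$; note that $z^n$ has degree $a^{2n}\neq e$. Relations only bound orders from above, so this needs a concrete model. One choice: take $\ZZ_n\times\ZZ_{2n}=\ang{c}\times\ang{s}$ and extend by $a$ with $a^2=s$ and $a^{-1}ca=c^{-1}s^2$, which is an involution fixing $s$. This group has order $4n^2$ and satisfies the relations of $\Gamma_n$ with $b=a^{-1}c$.

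The missing idea is short and needs neither Hilbert series nor case analysis.
\begin{itemize}
\item Identify the monomials of $C$ with $\{p^\alpha q^\beta : \alpha\equiv\beta \pmod 2\}$ via $X=p^2$, $Y=q^2$, $Z=pq$. These monomials form a basis of $C$, and the $G$-degree is a monoid map to $N$.
\item The only factorizations of $X^m=p^{2m}$ in this monoid are $X^iX^{m-i}$. Since $X^i\notin A_e$ for $0<i<m$, we get $X^m\notin(A_e^+)^2$, and likewise $Y^m\notin(A_e^+)^2$. So $(uv)^m$ and $(vu)^m$ always belong to a minimal generating set of $A_e$.
\item If $A_e$ is regular, it is a commutative polynomial ring of GK-dimension $2$, so it has exactly two minimal generators. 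Hence $A_e=\kk[X^m,Y^m]$, and your ``remaining case'' never occurs. This is the same mechanism the paper uses for $S_{q,0,1}$ in Theorem~\ref{thm:Sklyanin}.
\item The trivial-degree monomials are now exactly those of $\kk[X^m,Y^m]$. Reading them off gives $\ord(a^2)=2m$ and $\ang{ab}\cap\ang{a^2}=\{e\}$, so $|G|=2|N|=4m^2$.
\item $G$ is a quotient of $\Gamma_m$, and the relations give $|\Gamma_m|\le 4m^2$ because $a^{4m}=((ab)(ba))^m=e$. Therefore $G\cong\Gamma_m$, with $m\ge 2$ since $G$ is nonabelian.
\end{itemize}
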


\subsection{Ore extensions and gradings}

One very natural way of obtaining group gradings on AS regular algebras is to start with a $G'$-graded regular algebra $A'$, and consider gradings on a graded Ore extension $A'[x; \sigma]$ built out of the given grading on $A'$.  This idea has appeared in \cite[Example 2.1.7]{GoetzKirkmanMooreVashaw}, for example.  Several examples in our classification of nonabelian gradings on regular algebras of dimension $3$ can be constructed as extensions of gradings on regular algebras of dimension $2$ in this way. The following result gives the general method.  A version of this result was proved by Fuxiang Yang in \cite{YangThesis}.

\begin{prop}
\label{prop:Ore-ext}
    Let $A'$ be an AS regular algebra which is generated in degree $1$ with a faithful grading by a group $G'$, refining the $\mb{N}$-grading.  Let $A = A'[x; \sigma]$ be an $\mb{N}$-graded Ore extension, so $\sigma$ is an $\mb{N}$-graded automorphism of $A'$.  Suppose that there is $\rho \in \Aut(G')$ such that for every $G'$-homogeneous $a \in A'_1$, $\sigma(a)$ is again $G'$-homogeneous, with $\deg_{G'}(\sigma(a)) = \rho(\deg_{G'}(a))$.  Let $\mb{Z}$ be an infinite cyclic group, written multiplicatively with $\mb{Z} = \langle h \rangle$, and let $\varphi\colon \langle h \rangle \to \operatorname{Aut}(G')$ be the homomorphism with $\varphi(h) = \rho$. 
    
    Then $A$ is faithfully $G \coloneqq G' \rtimes_{\varphi} \mb{Z}$-graded, refining the $\mb{N}$-grading, where identifying $G'$ and $\mb{Z}$ with subgroups of $G$, $\deg_G x = h$ and $\deg_{G}(a) = \deg_{G'}(a)$ for all $G'$-homogeneous $a \in A'$.
\end{prop}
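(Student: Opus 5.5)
The plan is to build the $G$-grading on the free algebra, check that the Ore relations are $G$-homogeneous, and then verify faithfulness and refinement of the $\NN$-grading.

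\textbf{Setting up the grading.} Write $A' = T(V')/(R')$ with $V' = A'_1$, and fix a $G'$-homogeneous basis $a_1, \dots, a_n$ of $V'$ with $\deg_{G'}(a_i) = g_i$. The graded Ore extension has the presentation
$$A = T(V' \oplus \kk x)/(R', \; x a_i - \sigma(a_i) x \mid 1 \le i \le n).$$
No $\sigma$-derivation appears, since $A$ is an $\NN$-graded Ore extension with $\deg x = 1$ as in the statement. Grade the free algebra $T(V' \oplus \kk x)$ by $G = G' \rtimes_\varphi \ZZ$, setting $\deg_G(a_i) = g_i$ and $\deg_G(x) = h$. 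Since each generator has a single $G$-degree and $\NN$-degree $1$, this is a grading by $\NN \times G$.

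\textbf{Homogeneity of the relations.} It suffices to check that each defining relation is $G$-homogeneous.
\begin{itemize}
\item The relations in $R'$ are $G'$-homogeneous, because $A'$ is $G'$-graded. Since $G' \le G$ and the degrees of the $a_i$ agree, they are also $G$-homogeneous.
\item For the Ore relation, $x a_i$ has $G$-degree $h g_i$. By hypothesis $\sigma(a_i)$ is $G'$-homogeneous of degree $\rho(g_i)$, so $\sigma(a_i)x$ has degree $\rho(g_i) h$.
\item In the semidirect product, $h g h^{-1} = \varphi(h)(g) = \rho(g)$, so $h g_i = \rho(g_i) h$.
\end{itemize}
Hence each Ore relation is $G$-homogeneous, and the grading descends to $A$. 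It refines the $\NN$-grading because the relations are $\NN$-homogeneous and $G$-homogeneous simultaneously, so $A$ is $\NN \times G$-graded.

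\textbf{Faithfulness and compatibility with $A'$.} The support of the grading contains $\deg_G(x) = h$. By faithfulness of the $G'$-grading on $A'$, it also contains a generating set of $G'$. Together these generate $G' \rtimes \langle h \rangle = G$.

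For elements of $A'$, note that $A' \subseteq A$ is a subalgebra generated by the $a_i$. Hence a $G'$-homogeneous element of $A'$, being a linear combination of $G'$-homogeneous monomials in the $a_i$ of the same degree, has the same $G$-degree.

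\textbf{Main obstacle.} The only subtle point is the convention for the semidirect product: the multiplication must give $h g h^{-1} = \rho(g)$ rather than $\rho^{-1}(g)$. With the wrong convention one would need to use $\varphi(h) = \rho^{-1}$ instead. The statement's choice $\varphi(h) = \rho$ matches the standard convention $(g,h^k)(g',h^l) = (g\,\varphi(h^k)(g'), h^{k+l})$.

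A further point is that $\rho$ must be a genuine automorphism of $G'$ for this to make sense. This is given by hypothesis, although $\rho$ is only prescribed on the degrees of the generators.
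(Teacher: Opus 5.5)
Your proof is correct and takes essentially the paper's route. The paper grades the free product $A' * \kk[x]$ by $G' * \langle h\rangle$, imposes the relations $hg_i = \rho(g_i)h$ forced by the Ore relations, and uses that the $g_i$ generate $G'$ to identify the resulting group with $G'\rtimes_\varphi \ZZ$; you grade directly by the semidirect product and check homogeneity, which is the same verification. One small correction: there is no skew derivation because the notation $A'[x;\sigma]$ says so, not because $\deg x = 1$, since graded Ore extensions with $\deg x = 1$ can have nonzero $\delta$ (e.g.\ $A_{-1}[z;\sigma,\delta]$ in the paper).
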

\begin{proof}
    It is obvious that the free product of $\kk$-algebras $A' * \kk[x]$ has a unique grading by the free product of groups $\wh{G} = G' * \langle h \rangle$ such that $\deg_{\wh{G}} x = h$, and $\deg_{\wh{G}}(a) = \deg_{G'}(a)$ for $G'$-homogeneous $a \in A'$.  Let $\{y_1, \dots, y_m \}$ be a $G'$-homogeneous $\kk$-basis of $A'_1$, with $\deg_{G'}(y_i) = g_i$.  We have $A = A'[x; \sigma] = (A' * \kk[x])/(R)$ where $R = \{ x y_i - \sigma(y_i) x \mid 1 \leq i \leq m \}$.  
    
    Thus if we impose the set of relations $S= \{ h g_i = \rho(g_i) h \mid 1 \leq i \leq m \}$ on $\wh{G}$, then $A$ will be naturally $G$-graded, where $G$ is the group $\wh{G}$ with the additional relations $S$.  Since by the faithfulness assumption $G'$ is generated by the $g_i$, it is easy to see that $G \cong G' \rtimes_{\varphi} \langle h \rangle$.  Note that by construction $\deg_G(x) = h$ and $\deg_G(a) = \deg_{G'}(a)$ for $G'$-homogeneous $a \in A'$.
\end{proof}

\begin{ex}\label{ex:multi-parameter nonabelian grading}
    Consider the skew polynomial ring $A = A_{-1, q, -q}$ for some $q \in \kk^{\times}$, so that
    $$A = \frac{\kk\ang{x, y, z}}{(xy + yx,\ zx - q xz,\ zy + q yz)}.$$
    Then Theorem~\ref{thm:Simon} implies that $A' = \kk\ang{x, y}/(xy + yx)$ is graded by the group $G' \coloneqq \Gamma = \ang{a,b \mid a^2 = b^2}$. Specifically, the elements $u \coloneqq x + y$ and $v \coloneqq x - y$ are $\Gamma$-homogeneous with
    $$\deg_\Gamma(u) = a, \quad \deg_\Gamma(v) = b.$$
    Note that $A \cong A'[z;\sigma]$, where $\sigma(x) = q x$ and $\sigma(y) = -q y$.
    We see that $\sigma(u) = qv$ and $\sigma(v) = qu$.  Letting $\rho$ be the automorphism of $\Gamma$ with $\rho(a) = b$ and $\rho(b) = a$, by Proposition~\ref{prop:Ore-ext} it follows that the algebra $A$ is graded by the group
    $$G \coloneqq \Gamma \rtimes_\varphi \ZZ \cong \ang{a,b,c \mid a^2 = b^2, ca = bc, cb = ac},$$
    where $\ZZ = \ang{c}$ and $\varphi \colon \ZZ \to \Aut(\Gamma)$ is given by $\varphi(c) = \rho$. This can be easily seen directly if we present $A$ as follows:
    $$A = \frac{\kk\ang{u,v,z}}{(u^2 - v^2, zu - q vz, zv - q uz)},$$
    with $\deg_G(u) = a$, $\deg_G(v) = b$, and $\deg_G(z) = c$.
\end{ex}

\begin{ntt}\label{ntt:Gamma hat}
    We define $\widehat{\Gamma} \coloneqq \Gamma \rtimes_\varphi \ZZ$.  
\end{ntt}

\begin{ex}\label{ex:Gamma direct Z}
    Consider the skew polynomial ring $A = A_{-1, q, q}$ for some $q \in \kk^{\times}$, so that
    $$A = \frac{\kk\ang{x, y, z}}{(x y + y x,\ zx - q xz,\ zy - q yz)}.$$
    By a minor variation of the argument in Example~\ref{ex:multi-parameter nonabelian grading}, we see that 
    $A \cong A'[z; \sigma]$ with $\sigma(x) = qx$, $\sigma(y) = qy$, and that
    $A$ is graded by $\Gamma \times \mb{Z}$ with $\deg_{\Gamma \times \ZZ}(x + y) = a$, $\deg_{\Gamma \times \ZZ}(x - y) = b$, $\deg_{\Gamma \times \ZZ}(z) = c$.
\end{ex}

\begin{ex}\label{ex:Z^2 semidirect Z}
    For another example of this type, consider $A = A_{1, q, -q}$ for some $q \in \kk^{\times}$, so that
    $$A = \frac{\kk\ang{x, y, z}}{(x y - y x,\ zx - q xz,\ zy + q yz)}.$$
    We have $A \cong \kk[x, y][z; \sigma]$ with $\sigma(x) = qx$, $\sigma(y) = -qy$.
    Setting $u = x + y$, $v = x - y$, we have $\sigma(u) = qv$, $\sigma(v) = qu$.  By Proposition~\ref{prop:Ore-ext}, $A$ is graded by $\ZZ^2 \rtimes_\psi \ZZ$, where $\psi \colon \ZZ \to \operatorname{Aut}(\ZZ^2)$ sends a generator to the automorphism $\rho \colon \ZZ^2 \to \ZZ^2$ given by $\rho(n,m) = (m,n)$.
\end{ex}

Using Theorem~\ref{thm:Simon}, it is not hard to see that the three examples above are the only ways (up to isomorphism) to get a grading by a nonabelian group on a skew polynomial ring $A_{pqr}$ by writing $A_{pqr} \cong A_p[z; \sigma]$ and applying Proposition~\ref{prop:Ore-ext}.  We will prove a more general result in Corollary~\ref{cor:skew polynomial examples} below.

\section{The quadratic case}\label{sec:quadratic}

We now classify nonabelian gradings on three-dimensional AS regular algebras. First, we consider the quadratic algebras.

\subsection{The quadratic graded algebras}

We begin by defining some quadratic three-dimensional AS regular algebras graded by nonabelian groups.

\begin{ntt}\label{ntt:quadratic regular algebras}
    Let $p,q \in \kk^\times$. Define the following algebras:
    \begin{gather*}
        \mathcal{A} \coloneqq \frac{\kk\ang{x,y,z}}{(xz - zx, yz + zy, x^2 + y^2 + z^2)}, \qquad \mathcal{B} \coloneqq \frac{\kk\ang{x,y,z}}{(xz - zy, yz + zx, x^2 + y^2 + z^2)}, \\
        \mathcal{C}_q \coloneqq \frac{\kk\ang{x,y,z}}{(xz + zy + q yx, yz + zx + q xy, x^2 + y^2 + z^2)}, \\
        \mc{D} \coloneqq \frac{\kk\ang{x,y,z}}{(xz + zy, yz - zx, xy - yx + z^2)}, \qquad \mc{E}^\pm \coloneqq \frac{\kk\ang{x,y,z}}{(xz \pm zx, yz \pm zy, x^2 + y^2 + z^2)}.
    \end{gather*}
    We further define the following automorphisms and skew derivations of $A_{-1}$:
    \begin{itemize}
        \item $\sigma_q \in \Aut(A_{-1})$ defined by $\sigma_q(x) = q y$ and $\sigma_q(y) = q x$.
        \item $\sigma \in \Aut(A_{-1})$ defined by $\sigma(x) = x$ and $\sigma(y) = -y$, and $\delta$ is the $\sigma$-derivation defined by
        $$\delta(x) = x^2 - y^2, \qquad \delta(y) = -2 xy.$$
        \item $\tau \in \Aut(A_{-1})$ defined by $\tau(x) = -y$ and $\tau(y) = -x$, and $\del$ is the $\tau$-derivation defined by
        $$\del(x) = x^2 + y^2 + 2xy, \qquad \del(y) = x^2 + y^2 - 2xy.$$
    \end{itemize}
\end{ntt}

It is straightforward to check that $\delta$ (respectively, $\del$) is a well-defined $\sigma$-derivation (respectively, $\tau$-derivation), yielding the Ore extensions $A_{-1}[z; \sigma, \delta]$ and $A_{-1}[z; \tau, \del]$.

We must also recall the definition of the \emph{Sklyanin algebras}.

\begin{dfn}\label{def:Sklyanin}
    Let $[a : b : c] \in \PP^2$. The \emph{Sklyanin algebra with parameters $[a : b : c]$}, denoted by $S_{abc}$, is the $\kk$-algebra generated by variables $x,y,z$ subject to the relations
    $$a yz + b zy + c x^2 = 0, \qquad a zx + b xz + c y^2 = 0, \qquad a xy + b yx + c z^2 = 0.$$
\end{dfn}

Table~\ref{tab:classification} gives a list of quadratic three-dimensional AS regular algebras with a grading by a nonabelian group. The algebras in the table are defined in Notation~\ref{ntt:quadratic regular algebras} and Definition~\ref{def:Sklyanin}.  The parameter $q$ is any element in $\kk^{\times}$, except for the case $\mc{C}_q$ where $q \in \kk^{\times} \setminus \{\pm 1\}$ as indicated.  In some cases the homogeneous basis in degree $1$ is given by a change of variable from $x, y$ to $u,v$ as given in the table ($z$ is always unchanged). The presentation in terms of a homogeneous basis is given except when it was already explicit in Notation~\ref{ntt:quadratic regular algebras}.  The maximal grading groups are presented below with generators $(a,b,c)$ which are the degrees of $(u,v,z)$ (or $(x,y,z)$ if there is no change of variable) in the same order.

\begin{table}[ht!]
\centering
\caption{The quadratic AS regular algebras faithfully graded by nonabelian groups.}
    \begin{tabular}{|c|c|c|c|}\hline
        Algebra & $u$, $v$ & Homogeneous relations & Maximal grading group \\\hline\hline
        $A_{-1,q,-q}$ & \begin{tabular}{c}
           $x + y$ \\ $x - y$
        \end{tabular} & \begin{tabular}{c}
           $u^2 = v^2$ \\ $zu = q vz$ \\ $zv = q uz$
        \end{tabular}
        & $\widehat{\Gamma}$ \\\hline
        $A_{-1,q,q}$ & \begin{tabular}{c}
           $x + y$ \\ $x - y$
        \end{tabular} & \begin{tabular}{c}
           $u^2 = v^2$ \\ $zu = q uz$ \\ $zv = q vz$
        \end{tabular} & $\Gamma \times \ZZ$ \\\hline
        $A_{1,q,-q}$ & \begin{tabular}{c}
           $x + y$ \\ $x - y$
        \end{tabular} & \begin{tabular}{c}
           $vu = uv$ \\ $zu = q vz$ \\ $zv = q uz$
        \end{tabular} & $\ZZ^2 \rtimes_\psi \ZZ$  \\\hline
        \multirow{7}{*}{$A_{-1}[z;\sigma_q]$} & \begin{tabular}{c}
           $x + iy$ \\ $ix + y$
        \end{tabular} & \begin{tabular}{c}
           $u^2 + v^2 = 0$ \\ $zu = qvz$ \\ $zv = quz$
        \end{tabular} & $\widehat{\Gamma}$ \\\cline{2-4}
         & \begin{tabular}{c}
           $x + y$ \\ $x - y$
        \end{tabular} & \begin{tabular}{c}
           $u^2 = v^2$ \\ $zu = quz$ \\ $zv = -qvz$
        \end{tabular} & $\Gamma \times \ZZ$ \\\cline{2-4}
         & & \begin{tabular}{c}
           $yx = -xy$ \\ $zx = qyz$ \\ $zy = qxz$
        \end{tabular} & $\ZZ^2 \rtimes_\psi \ZZ$ \\\hline
        $A_{-1}[z; \sigma, \delta]$ & \begin{tabular}{c}
           $x + y$ \\ $x - y$
        \end{tabular} & \begin{tabular}{c}
           $u^2 = v^2$ \\ $zu = vz + uv$ \\ $zv = uz + vu$
        \end{tabular} & $G_1$ \\\hline
        $A_{-1}[z; \tau, \del]$ & \begin{tabular}{c}
           $(i - 1)(x + iy)$ \\ $(1 - i)(ix + y)$
        \end{tabular} & \begin{tabular}{c}
           $u^2 + v^2 = 0$ \\ $zu = vz + uv$ \\ $zv = uz - vu$
        \end{tabular} & $G_1$ \\\hline
        $S_{q,0,1}$ & & \begin{tabular}{c}
           $q yz + x^2 = 0$ \\ $q zx + y^2 = 0$ \\ $q xy + z^2 = 0$
        \end{tabular} & $\Delta$ \\\hline
        \multirow{5}{*}{$\mc{A}$} & \begin{tabular}{c}
           $\frac{1}{\sqrt{2}}(x + y)$ \\ $\frac{1}{\sqrt{2}}(x - y)$
        \end{tabular} & \begin{tabular}{c}
           $u^2 + v^2 + z^2 = 0$ \\ $zu = vz$ \\ $zv = uz$
        \end{tabular} & $G_2$ \\\cline{2-4}
         & \begin{tabular}{c}
           $\frac{1}{\sqrt{2}}(x + iy)$ \\ $\frac{1}{\sqrt{2}}(x - iy)$
        \end{tabular} & \begin{tabular}{c}
           $uv + vu + z^2 = 0$ \\ $zu = vz$ \\ $zv = uz$
        \end{tabular} & $G_3$ \\\cline{2-4}
         & & \multirow{6}{*}{See Notation~\ref{ntt:quadratic regular algebras}.} & $G_4$ \\\cline{1-2}\cline{4-4}
        $\mc{B}$ & & & $G_2$ \\\cline{1-2}\cline{4-4}
        \begin{tabular}{c}
           $\mc{C}_q$ \\ ($q \neq \pm 1$)
        \end{tabular} & &  & $G_1$ \\\cline{1-2}\cline{4-4}
        $\mc{D}$ & & & $G_3$ \\\cline{1-2}\cline{4-4}
        $\mc{E}^\pm$ &  & & $G_4$ \\\hline
    \end{tabular}
    \label{tab:classification}
\end{table}

The groups appearing in Table~\ref{tab:classification} have the following presentations:
\begin{align*}
    \widehat{\Gamma} &= \ang{a,b,c \mid a^2 = b^2, cb = ac, ca = bc}, \\
    \Gamma \times \ZZ &= \ang{a,b,c \mid a^2 = b^2, c\ \text{central}}, \\
    \ZZ^2 \rtimes_\psi \ZZ &= \ang{a,b,c \mid ab = ba, cb = ac, ca = bc}, \\
    \Delta &\coloneqq \ang{a, b, c \mid bc = a^2, ca = b^2, ab = c^2}, \\
    G_1 &\coloneqq \ang{a, b, c \mid a^2 = b^2, ca = bc = ab, cb = ac = ba}, \\
    G_2 &\coloneqq \ang{a,b,c \mid a^2 = b^2 = c^2, ca = bc, cb= ac}, \\
    G_3 &\coloneqq \ang{a,b,c \mid ab = ba = c^2, ca = bc, cb = ac}, \\
    G_4 &\coloneqq \ang{a,b,c \mid c\ \text{central}, a^2 = b^2 = c^2}.
\end{align*}

\begin{rem}
    The notation $\Delta$ was chosen due to its relationship with the \emph{triangle group} $\Delta(3,3,3)$. Specifically, the element $a^3 \in \Delta$ is central, and the quotient $\Delta/\ang{a^3}$ is
    $$\Delta/\ang{a^3} \cong \ang{a,b,c \mid a^3 = b^3 = c^3 = abc = e},$$
    which is precisely the standard presentation of the \emph{von Dyck group} $D(3,3,3)$, a subgroup of $\Delta(3,3,3)$ of index $2$.
\end{rem}

Our main theorem is that the examples in Table~\ref{tab:classification} are exhaustive.

\begin{thm}\label{thm:quadratic case}
    Let $A$ be a quadratic three-dimensional AS regular algebra faithfully graded by a nonabelian group $G$ refining its natural $\NN$-grading. Then $A$ is isomorphic to one of the algebras in a row of Table~\ref{tab:classification}, and the $G$-grading corresponds under this isomorphism to the grading by a quotient of the maximal grading group in that row.
\end{thm}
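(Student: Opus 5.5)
We follow the strategy sketched in the introduction. Let $A = T(V)/(R)$ with $\dim V = 3$, superpotential $\omega \in V^{\otimes 3}$ of $G$-degree $h$, and Nakayama automorphism $\mu$. Choose a $G$-homogeneous basis of $V$, with degrees $g_1,g_2,g_3$. Since $G$ is generated by the $g_i$ and is nonabelian, at least two of them fail to commute. By Corollary~\ref{cor:permute}, $\mu$ carries $V_g$ onto $V_{hgh^{-1}}$. Hence $\mu$ permutes the nonzero $G$-homogeneous components of $V$, and these have dimensions summing to $3$.

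\textbf{Reduction to a monomial twist.} First we reduce to the case where $\mu$ acts monomially on a homogeneous basis. Inside the subgroup generated by conjugation by $h$, $\mu$ permutes the homogeneous components. On a component stabilized by the induced permutation, $\mu$ restricts to an endomorphism, and we diagonalize it if possible. On components exchanged by a cycle, we choose bases compatibly, so that $\mu(y_i) = \lambda_i y_{\tau(i)}$ for a permutation $\tau$.

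If $\mu$ is not diagonalizable on some component, that component has dimension at least $2$. Then at most two distinct degrees occur, and one of them is $1$-dimensional. Faithfulness together with nonabelianness then forces two non-commuting degrees, and hence two $1$-dimensional components exchanged or fixed appropriately. We must check this case separately, using Lemma~\ref{lem:superpotential-property}; we expect it either to be impossible or to fall under the Jordan-block analysis for $\mathcal{A}$-type algebras.

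Rescaling the $y_i$ within $\tau$-cycles, we may normalize the $\lambda_i$. Since $\phi(\omega)=\omega$, the space $\kk\omega$ is spanned by $\phi$-invariant combinations of monomial orbit sums. Here $\phi$ acts on degree-$3$ monomials in the $y_i$ by a cyclic rotation composed with $\tau$ on the moved letter, up to scalar.

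\textbf{Case analysis.} Next we split by the cycle type of $\tau$ on three letters: identity, a transposition, or a $3$-cycle. For each case we list the $\phi$-orbits of monomials. Each orbit occurring in $\omega$ forces its monomials to have the common degree $h$, which yields equations such as $g_ig_jg_k = h$ and, with $\tau$, relations like $g_{\tau(i)} = hg_ih^{-1}$.

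We then discard orbit combinations that force $g_i=g_j$ with $i\ne j$, or force all $g_i$ to commute; both contradict nonabelianness. We also discard combinations where the surviving $\omega$ is degenerate, meaning the relations $\delta_i(\omega)$ fail to span a $3$-dimensional space or the algebra is not nondegenerate in the sense of Proposition~\ref{prop:ATV}. For these checks we use the point scheme or the determinant of the standard matrix.

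The expected outcomes are as follows:
\begin{itemize}
\item The $3$-cycle case should give the Sklyanin-type orbit $\{y_1^3\text{-rotations}, y_1y_2y_3\text{-rotations}\}$, leading to $S_{q,0,1}$ and the group $\Delta$.
\item The transposition case should give the Ore extensions of $A_{-1}$ and the algebras $\mathcal{A},\dots,\mathcal{E}^\pm$, with their groups $\widehat\Gamma, G_1,\dots,G_4$.
\item The identity case should give the skew polynomial rings, with the $2$-dimensional subalgebra graded as in Theorem~\ref{thm:Simon}.
\end{itemize}
In each surviving case we change variables within the homogeneous components, using the remaining scalar freedom, to bring $\omega$ to a normal form from Table~\ref{tab:classification}. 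We then read off the relations $\delta_i(\omega)$.

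\textbf{Maximality of the grading group.} Finally, for each normal form we show that the universal group is the one listed. This group is generated by the degrees of the basis elements, subject only to the equalities of degrees of the monomials occurring in each relation. Any faithful $G$-grading in which that basis is homogeneous therefore factors through a quotient of it. This requires checking that the homogeneous basis is essentially forced, i.e. unique up to scaling and the normalization already made.

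\textbf{Main obstacle.} We expect the main difficulty to be the bookkeeping in the transposition case. There many orbit combinations survive the group-theoretic constraints. Distinguishing the genuinely different algebras ($\mathcal{A}$, $\mathcal{B}$, $\mathcal{C}_q$, the two Ore extensions, $\mathcal{E}^\pm$), and recognizing isomorphic ones, will require invariants such as the point scheme $(X,\rho)$ and careful tracking of the parameters (e.g.\ excluding $q=\pm1$ for $\mathcal{C}_q$, where it degenerates or coincides with another row). We would first enumerate orbits by computer-style case lists, and then use degeneracy to prune.
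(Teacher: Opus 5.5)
Your overall method is the paper's. The superpotential $\omega$ is $G$-homogeneous, and $\mu$ permutes the homogeneous components of $A_1$ by Corollary~\ref{cor:permute}. Monomials are then sorted into $\phi$-orbits, orbit combinations are pruned by the degree equalities they force, and the survivors are normalized and their universal grading groups read off.

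\textbf{The gap: only two distinct degrees.} Your pruning step declares that an orbit combination forcing $g_i = g_j$ ($i \neq j$) contradicts nonabelianness. That is false a priori: $g_1 = g_2 \neq g_3$ with $g_1 g_3 \neq g_3 g_1$ generates a nonabelian group. That a repeated degree forces $G$ to be abelian (Corollary~\ref{cor:quadratic m = 2}) is exactly what must be proved, and your proposal only defers it. The deferred sketch is also muddled. With components of dimensions $2$ and $1$ there is only one $1$-dimensional component. Moreover, this case does not lead to $\mathcal{A}$; it is impossible.

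\textbf{The missing argument.} It needs no diagonalization of $\mu$. Suppose $V = W_1 \oplus W_2$ with $\dim W_1 = 2$ and $W_2 = \kk y_3$. Then $\mu$ preserves each $W_i$ because they have different dimensions. Writing $W_{ijk} \coloneqq W_i \otimes W_j \otimes W_k$, the map $\phi$ cycles $W_{112} \to W_{211} \to W_{121} \to W_{112}$, and likewise $W_{122}, W_{212}, W_{221}$. A $\phi$-fixed $\omega$ with a nonzero component in one of these pieces has nonzero components in all three. $G$-homogeneity then gives $g_1^2 g_2 = g_1 g_2 g_1$ (respectively $g_1 g_2^2 = g_2 g_1 g_2$), i.e.\ $g_1 g_2 = g_2 g_1$, so $G$ is abelian. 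Hence $\omega \in W_{111} \oplus \kk y_3^3$, and the only relation involving $y_3$ is $\delta_3(\omega) = B y_3^2$. So either $A$ is not a domain or it has fewer than three relations. After this, all three degrees are distinct and every component is $1$-dimensional, so your monomial-twist reduction is automatic.

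\textbf{Further corrections.} Your predicted outcomes are misassigned, which suggests the case analysis was not actually run.
\begin{itemize}
\item The $3$-cycle case is empty (Lemma~\ref{lem:quadratic m = 3 three-cycle}): each of the three nine-element $\phi$-orbits forces $g_1 = g_2 = g_3$.
\item $S_{q,0,1}$ comes from $\tau = \mathrm{id}$, via the orbits $\{y_1^3\}$, $\{y_2^3\}$, $\{y_3^3\}$ and the rotations of $y_1y_2y_3$, with all $\lambda_i = 1$.
\item The identity case also gives $A_{-1,q,q}$, $A_{-1}[z;\sigma_q]$, $\mathcal{A}$ and $\mathcal{E}^\pm$.
\item The transposition case gives $A_{-1,q,-q}$, $A_{1,q,-q}$, $A_{-1}[z;\sigma_q]$, the two Ore extensions with skew derivations, $\mathcal{A}$, $\mathcal{B}$, $\mathcal{C}_q$ and $\mathcal{D}$.
\end{itemize}
Finally, the statement needs neither distinguishing these algebras up to isomorphism nor uniqueness of the homogeneous basis. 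Uniqueness in fact fails: $\mathcal{A}$ occurs in three rows, with groups $G_2$, $G_3$, $G_4$. You only need to trace the basis produced in each case to its row.
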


The proof of Theorem~\ref{thm:quadratic case} will occupy the remainder of this section.  

Of course, the skew polynomial rings and the algebras defined via Ore extensions in Table~\ref{tab:classification} are AS regular. Furthermore, it is well known that a Sklyanin algebra $S_{abc}$ is AS regular if and only if
$$[a : b : c] \notin \{[1 : 0 : 0], [0 : 1 : 0], [0 : 0 : 1]\} \cup \{[r : s : t] \mid r^3 = s^3 = t^3\},$$
see \cite[p.~38]{ATV1}.  In particular, $S_{q,0,1}$ is AS regular for any $q \in \kk^\times$.

On the other hand, it is not immediately clear that the algebras from Notation~\ref{ntt:quadratic regular algebras} are AS regular, so this is the first step in the proof of Theorem~\ref{thm:quadratic case}.

\begin{prop}\label{prop:regularity of quadratic algebras}
    The algebras $\mc{A}$, $\mc{B}$, $\mc{C}_q$, $\mc{D}$, and $\mc{E}^\pm$ are AS regular for all $q \in \kk^\times$, except for $\mc{C}_{\pm 1}$.
\end{prop}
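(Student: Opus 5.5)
We will verify regularity via Proposition~\ref{prop:ATV}: for each algebra we exhibit a standard matrix $M$ (a $3\times 3$ matrix with entries in $V=\kk x+\kk y+\kk z$) and check nondegeneracy. Equivalently, and more conveniently, we write down a twisted superpotential $\omega\in V^{\otimes 3}$ whose three left partial derivatives span the given relations. We then take $M$ to be the matrix with $\omega = v M v^{\mathsf T}$, where $v=(x,y,z)$. Concretely, $M_{ij}$ is the coefficient of $x_i\,\cdot\,x_j$ in $\omega$, i.e.\ $\omega=\sum_{i,j} x_i M_{ij} x_j$. Then the entries of $Mv^{\mathsf T}$ are the derivatives $\delta_i(\omega)$, and the entries of $vM$ are the right derivatives. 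Standardness amounts to the right derivatives spanning the same space $R$, which holds exactly when $\omega$ is a twisted superpotential, i.e.\ $\phi(\omega)=\omega$ for some $\mu\in\operatorname{GL}(V)$.

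For each algebra the superpotential is essentially forced by the relations. For example, for $\mc{A}$ we take
\[
\omega = x(xz-zx)\cdot\alpha + \dots,
\]
but more systematically we solve for $\omega\in(R\otimes V)\cap(V\otimes R)$. Taking $x,y,z$-coefficients, we want
\[
\omega = x r_1 + y r_2 + z r_3
\]
with $r_1,r_2,r_3$ the given relations up to an invertible linear recombination, such that $\omega$ also lies in $R\otimes V$. For $\mc{A}$, the candidate is
\[
\omega = x(xz-zx)\cdot 0+\dots;
\]
rather than guess, we use the shape of the relations. The quadric $x^2+y^2+z^2$ must come from the terms $xxz, yyz, zzz$, and the commutation relations from the mixed terms. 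We will find, e.g.\ for $\mc{E}^\pm$,
\[
\omega = z(x^2+y^2+z^2) \pm x(xz\pm zx) \pm y(yz\pm zy)
\]
up to sign adjustments, with Nakayama automorphism $\mu$ diagonal (signs). Analogous ansätze work for $\mc{A}$ and $\mc{B}$, where $\mu$ swaps $x,y$ up to scalar, and for $\mc{D}$. For $\mc{C}_q$ we add the cyclic terms $q(xyx\text{-type})$. In each case $\phi(\omega)=\omega$ is a direct finite check on the $27$ monomial coefficients.

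Nondegeneracy is the remaining check: we compute the nine $2\times2$ minors of $M$ and show they have no common zero in $\PP^2$. Equivalently, we compute the point scheme $\det M=0$ and check that $A$ is not one of the degenerate cases. In practice the cleanest route is to show that the minors cut out the empty set: for instance, restrict to the coordinate hyperplanes and use the quadric $x^2+y^2+z^2$ entries. For $\mc{C}_q$ this is the main obstacle. We expect the minors to acquire a common zero precisely when $q=\pm1$, for example at a point like $[1:\mp1:0]$ or $[1:1:\lambda]$, and showing absence of common zeros for other $q$ requires a careful elimination (a resultant computation in $q$). We will first try to exploit the symmetry $x\leftrightarrow y$ together with the Nakayama automorphism to reduce to points with $x=\pm y$ or $z=0$, where the minors become one-variable polynomials whose common roots can be read off.

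An alternative for $\mc{A}$, $\mc{B}$, $\mc{E}^\pm$ and $\mc{D}$, which we will use as a sanity check or replacement, is to recognize them as Ore extensions or twists. For instance, $\mc{E}^\pm$ is a central or normal extension: $z$ is normal with $z^2$ related to $x^2+y^2$. Such algebras are Zhang twists or normal extensions of known regular algebras such as the $\ZZ_2$-twist of a skew polynomial ring, and regularity would then be inherited. However, the uniform superpotential-plus-minors argument handles all cases at once, so that is the primary plan.
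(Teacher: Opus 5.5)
Your route is the paper's: build a standard matrix from $\omega=\sum_{i,j}x_iM_{ij}x_j$ and apply Proposition~\ref{prop:ATV} by showing the $2\times 2$ minors have no common zero. Your $\omega$ for $\mathcal{E}^\pm$ (with both outer signs $+$) is exactly the superpotential of the paper's matrix. But the computations that make up the proof are left undone, and the one concrete tactic you propose for $\mathcal{C}_q$ is not valid. The zero locus of the minors is invariant under $x\leftrightarrow y$, but that does not let you restrict to points with $x=\pm y$ or $z=0$. Indeed, at $q=1$ the common zeros are $[1:\zeta^k:\zeta^{2k}]$ with $\zeta^3=1$, and two of them satisfy neither condition. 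No resultant is needed. The matrix
\[
M_1=\begin{pmatrix} qy & z & x\\ z & qx & y\\ x & y & z\end{pmatrix}
\]
is standard for $\mathcal{C}_q$, and its minors contain the pairs $qx^2-yz,\ x^2-qyz$; $qy^2-xz,\ y^2-qxz$; $z^2-q^2xy,\ z^2-xy$. So a common zero satisfies $(q^2-1)x^2=(q^2-1)y^2=(q^2-1)z^2=0$. For $\mathcal{A},\mathcal{B},\mathcal{D},\mathcal{E}^\pm$ the check is immediate once the matrix is written down: the $\{1,2\}\times\{1,2\}$ minor is $\pm z^2$, and two further minors then give $x^2$ and $y^2$. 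Also, in the given basis the Nakayama automorphism of $\mathcal{A}$ is $\operatorname{diag}(-1,1,1)$, not a swap; the swap occurs for $\mathcal{B}$, where $\mu(x)=y$ and $\mu(y)=-x$.

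The exception $\mathcal{C}_{\pm1}$ is also part of the claim (the paper proves these algebras are not regular), and you only conjecture degeneracy there. Your candidate $[1:\mp1:0]$ is not a common zero. In fact no common zero has $z=0$, since $x^2-qyz$ and $y^2-qxz$ would then force $x=y=0$. The actual zeros are $[1:\zeta^k:q\zeta^{2k}]$. Even with these, a degenerate standard matrix rules out regularity only once you observe that a regular algebra has $(R\otimes V)\cap(V\otimes R)=\kk\omega$, so its standard matrix is unique up to scalar. The paper argues more directly. In $\mathcal{C}_1$, $(x+y+z)^2$ is the sum of the three relations. In $\mathcal{C}_{-1}$, $(x+y-z)^2=(x^2+y^2+z^2)-(xz+zy-yx)-(yz+zx-xy)$. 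So neither algebra is a domain. Finally, computing $\det M$ is not ``equivalent'' to checking nondegeneracy: $\det M$ can vanish identically for a nondegenerate algebra, as for $\kk[x,y,z]$, whose point scheme is $\PP^2$.
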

\begin{proof}
    We will use Proposition~\ref{prop:ATV} to prove that the algebras are AS regular: it suffices to realize them as standard nondegenerate algebras.

    First, let $q \in \kk^\times$ and consider $\mc{C}_q$. The algebra $\mc{C}_q$ has the following standard matrix:
    $$M_1 \coloneqq \begin{pmatrix}
        q y & z & x \\
        z & q x & y \\
        x & y & z
    \end{pmatrix}.$$
    The $2 \times 2$ minors of $M_1$ define the following conics in $\PP^2$:
    \begin{align*}
        q x^2 &= yz, & q y^2 &= xz, & z^2 &= q^2 xy, \\
        x^2 &= q yz, & y^2 &= q xz, & z^2 &= xy.
    \end{align*}
    The remaining three minors of $M_1$ do not give any new conics. The above equations imply that
    $$(q^2 - 1)x^2 = (q^2 - 1)y^2 = (q^2 - 1)z^2 = 0.$$
    Therefore, if $q^2 \neq 1$ then $\mc{C}_q$ is nondegenerate. We conclude that $\mc{C}_q$ is AS regular for $q \neq \pm 1$, by Proposition~\ref{prop:ATV}.

    Next, we check that the algebras $\mc{C}_{\pm 1}$ are not AS regular. Consider $x + y + z \in \mc{C}_1$, so that
    \begin{align*}
        (x + y + z)^2 &= x^2 + xy + xz + yx + y^2 + yz + zx + zy + z^2 \\
        &= (x^2 + y^2 + z^2) + (xz + zy + yx) + (yz + zx + xy) = 0,
    \end{align*}
    by the relations of $\mc{C}_1$. It follows that $\mc{C}_1$ is not a domain, so it cannot be AS regular. The case of $\mc{C}_{-1}$ is similar using the nilpotent element $x + y - z$.
    
    Next, we consider $\mc{A}$ and $\mc{E}^\pm$: choosing the matrices
    $$M_2 \coloneqq \begin{pmatrix}
        -z & 0 & x \\
        0 & z & y \\
        x & y & z
    \end{pmatrix} \qquad M_3 \coloneqq \begin{pmatrix}
        \pm z & 0 & x \\
        0 & \pm z & y \\
        x & y & z
    \end{pmatrix}$$
    realizes $\mc{A}$ and $\mc{E}^\pm$ as standard algebras, so it suffices to check that they are nondegenerate. It is straightforward to check that the $2 \times 2$ minors of $M_2$ and $M_3$ do not have a common zero in $\PP^2$, so the algebras $\mc{A}$ and $\mc{E}^\pm$ are nondegenerate. We conclude that $\mc{A}$ and $\mc{E}^\pm$ are AS regular by Proposition~\ref{prop:ATV}.

    Next, the matrix
    $$M_4 \coloneqq \begin{pmatrix}
        0 & -z & x \\
        z & 0 & y \\
        x & y & z
    \end{pmatrix}$$
    is standard for the algebra $\mc{B}$. As above, we can easily check that the $2 \times 2$ minors of $M_4$ do not have a common zero in $\PP^2$, so $\mc{B}$ is AS regular by Proposition~\ref{prop:ATV}.

    Now consider $\mc{D}$: the matrix
    $$M_5 \coloneqq \begin{pmatrix}
        -z & 0 & y \\
        0 & -z & -x \\
        -y & x & z
    \end{pmatrix}$$
    is easily seen to be standard for $\mc{D}$, and one can check that $\mc{D}$ is nondegenerate. By Proposition~\ref{prop:ATV}, we conclude that $\mc{D}$ is AS regular. This finishes the proof.
\end{proof}

\subsection{Restrictions from the Nakayama automorphism and the superpotential}

Let $A = T(V)/(R)$ be any quadratic regular algebra of dimension $3$. By \cite[Theorem (1.5)]{ArtinSchelter}, this means that $\dim(V) = 3$. Let $\omega \in V^{\otimes 3}$ be the superpotential of $A$, and let $\mu$ be the Nakayama automorphism of $A$, so the linear map $\phi \colon V^{\otimes 3} \to V^{\otimes 3}$ defined by $v_1 \otimes v_2 \otimes v_3 \mapsto \mu(v_3) \otimes v_1 \otimes v_2$ satisfies $\phi(\omega) = \omega$.

Suppose that $A$ is faithfully graded by a nonabelian group $G$, refining the $\mb{N}$-grading.  Let $V = A_1 = \bigoplus_{i=1}^m V_{g_i}$ where the $g_i$ are the distinct group elements grading degree one elements of $A$, and $V_{g_i}$ is the subspace of $g_i$-homogeneous elements. If $m = 1$ (meaning all of $V$ has degree $g_1$), then from the faithful condition we deduce that $G = \ang{g_1}$ is cyclic, thus abelian, a contradiction.

We know from Corollary~\ref{cor:permute} that there is a permutation $\tau$ such that $\mu(V_{g_i}) = V_{g_{\tau(i)}}$, where $g_{\tau(i)} = h g_i h^{-1}$ for $h \coloneqq \deg_G(\omega)$.  If $m = 2$ then potentially after relabeling, $\dim_{\kk} V_{g_1} =2 $ and $\dim_{\kk} V_{g_2} = 1$, say. This forces $\tau$ to be the identity. The last possibility is $m = 3$ and $\dim_{\kk} V_{g_i} = 1$ for $1 \leq i \leq 3$. In this case a priori $\tau$ could be any element of $S_3$, but in the next result we rule out the case of a $3$-cycle.

\begin{lem}\label{lem:quadratic m = 3 three-cycle}
    Assume the setup above.  Suppose that there is a $G$-homogeneous basis $\{ y_1, y_2, y_3 \}$ of $A_1$ such that $\mu(y_i) = \lambda_i y_{i + 1}$ for some scalars $0 \neq \lambda_i \in \kk$, taking the indices $i$ mod $3$.  Then $\deg_G(y_1) = \deg_G(y_2) = \deg_G(y_3)$, so $G$ is cyclic.
    In particular, we cannot have $m = 3$ with $\tau$ a $3$-cycle.
\end{lem}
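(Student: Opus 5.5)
The plan is a direct analysis of the monomials in the superpotential $\omega$: it reduces to three orbit computations. Set $g_i \coloneqq \deg_G(y_i)$ and $h \coloneqq \deg_G(\omega)$. I work with the monomial basis $y_iy_jy_k$ of $V^{\otimes 3}$, indexing monomials by triples $(i,j,k)$ with entries taken mod $3$. Since $\mu(y_k) = \lambda_k y_{k+1}$, the map $\phi$ sends each monomial to a nonzero scalar multiple of another monomial:
$$\phi(y_iy_jy_k) = \lambda_k\, y_{k+1}y_iy_j.$$
So $\phi$ permutes the $27$ monomials up to scalars.

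The first step is a coefficient argument. Write $\omega = \sum c_{ijk}\, y_iy_jy_k$. Comparing coefficients in $\phi(\omega) = \omega$ shows that, within a single $\phi$-orbit of monomials, either all the $c_{ijk}$ are zero or all are nonzero. By Lemma~\ref{lem:superpotential-grading}, $\omega$ is $G$-homogeneous of degree $h$. Because $\omega \neq 0$, some orbit occurs in it, and every monomial in that orbit has $G$-degree $h$. This gives a chain of equalities of products $g_ig_jg_k$ in $G$.

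The second step is to list the orbits. On triples the map is $(i,j,k) \mapsto (k+1,i,j)$, and $\phi^3$ shifts every index by $1$, so every orbit has length $9$. There are therefore exactly three orbits:
\begin{align*}
O_1 &= \{(1,1,1),(2,1,1),(2,2,1),(2,2,2),(3,2,2),(3,3,2),(3,3,3),(1,3,3),(1,1,3)\},\\
O_2 &= \{(1,2,3),(1,1,2),(3,1,1),(2,3,1),(2,2,3),(1,2,2),(3,1,2),(3,3,1),(2,3,3)\},\\
O_3 &= \{(1,3,2),(3,1,3),(1,3,1),(2,1,3),(1,2,1),(2,1,2),(3,2,1),(2,3,2),(3,2,3)\}.
\end{align*}
In each orbit I look for two monomials that differ in only one letter, so that cancellation in $G$ forces two of the $g_i$ to agree:
\begin{itemize}
\item[] In $O_1$, the monomials $(1,1,1)$ and $(2,1,1)$ give $g_1^3 = g_2g_1^2$, hence $g_1 = g_2$.
\item[] In $O_2$, the monomials $(1,2,3)$ and $(1,2,2)$ give $g_1g_2g_3 = g_1g_2g_2$, hence $g_3 = g_2$.
\item[] In $O_3$, the monomials $(2,1,3)$ and $(2,1,2)$ give $g_3 = g_2$.
\end{itemize}
Whichever orbit occurs, we therefore get $g_j = g_{j+1}$ for some $j$.

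The third step propagates this equality to all three degrees. By Corollary~\ref{cor:permute}, $g_{i+1} = hg_ih^{-1}$ for all $i$. Conjugating $g_j = g_{j+1}$ by $h$ gives $g_{j+1} = g_{j+2}$, so $g_1 = g_2 = g_3$. Since the grading is faithful, $G$ is generated by this single element and is cyclic. As $G$ is assumed nonabelian, the case $m = 3$ with $\tau$ a $3$-cycle cannot occur; in that case one can choose the $y_i$ spanning the $V_{g_i}$ exactly as in the hypothesis.

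The only step needing care is the orbit bookkeeping: verifying the three orbits above and locating the pair of monomials in each that differ in a single letter. The rest is formal.
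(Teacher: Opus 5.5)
Your proof is correct and follows the paper's approach: you list the three $9$-element $\phi$-orbits of monomials, and the $G$-homogeneity of $\omega$ forces the $g_i$ to coincide in whichever orbit occurs. The only difference is minor. The paper reads two equalities directly off each orbit (e.g.\ $g_1^3 = g_2g_1^2 = g_1^2g_3$ in $\mathcal{O}_1$), whereas you read off one equality and propagate it using $g_{i+1} = hg_ih^{-1}$ from Corollary~\ref{cor:permute}; this step is equally valid.
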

\begin{proof}
    Assume that we have such a basis $\{ y_1, y_2, y_3 \}$. Consider the superpotential $\omega$ expressed in terms of the $y_i$. We see that the degree $3$ monomials in the $y_i$ are permuted by $\phi \colon y_{i_1}y_{i_2}y_{i_3} \mapsto \mu(y_{i_3})y_{i_1}y_{i_2}$, up to scalar, in three $9$-cycles:
    \begin{gather*}
        \mc{O}_1 = \{ y_1^3, y_2 y_1^2, y_2^2 y_1, y_2^3, y_3 y_2^2, y_3^2 y_2, y_3^3, y_1 y_3^2, y_1^2 y_3 \}, \\
        \mc{O}_2 = \{ y_1^2 y_2, y_3 y_1^2, y_2 y_3 y_1, y_2^2 y_3, y_1 y_2^2, y_3 y_1 y_2, y_3^2 y_1, y_2 y_3^2, y_1 y_2 y_3 \}, \\
        \mc{O}_3 = \{y_1 y_2 y_1, y_2 y_1 y_2, y_3 y_2 y_1, y_2 y_3 y_2, y_3 y_2 y_3, y_1 y_3 y_2, y_3 y_1 y_3, y_1 y_3 y_1, y_2 y_1 y_3 \}.
    \end{gather*}
    Thus as soon as any monomial appears with nonzero coefficient in $\omega$, so do all of the other monomials in the respective $9$-cycle.  All of the monomials appearing with nonzero coefficient in $\omega$ must have the same degree.  If the orbit $\mc{O}_1$ appears in $\omega$, then
    $$\deg_G y_1^3 = \deg_G y_2 y_1^2 = \deg_G y_1^2 y_3$$
    forces $\deg_G(y_1) = \deg_G(y_2) = \deg_G(y_3)$.  A similar conclusion is obtained if $\mc{O}_2$ appears from $\deg_G y_1^2 y_2 = \deg_G y_1 y_2^2 = \deg_G y_1y_2y_3$, or if $\mc{O}_3$ appears from $\deg_G y_3 y_2 y_3 = \deg_G y_3 y_1 y_3 = \deg_G y_3 y_2 y_1$.  Since some orbit must appear with nonzero coefficient we conclude that $G$ is cyclic.

    For the last sentence, note that if $m = 3$ with $\tau$ a $3$-cycle, we can take $\tau = (1 \ 2 \ 3)$ without loss of generality.  Since $\dim V = 3$ we have $\dim V_{g_i} = 1$ for all $i$.  Then choosing $y_i$ with $V_{g_i} = \kk y_i$, since $\mu(\kk y_i) = \kk y_{i+1}$ the action of $\mu$ on $A_1$ does have the given form, forcing all $\deg_G(y_i)$ to be equal and $m =1$, a contradiction. 
\end{proof} 

We saw in the previous result that if we can understand the orbits of the action of $\phi$ on $V^{\otimes 3}$ we can restrict the possible gradings.  We use this idea repeatedly to analyze other cases below, the main difference being that we generally have more possible orbits and more cases to consider.

\subsection{\texorpdfstring{$m = 3$}{m = 3}, \texorpdfstring{$\tau$}{tau} a 2-cycle}

Suppose that $V = V_{g_1} \oplus V_{g_2} \oplus V_{g_3}$ where the $g_i$ are distinct and 
so $\dim_{\kk} V_{g_i} = 1$ for all $i$.  Consider now the case that $\tau$ is a $2$-cycle, which without loss of generality we take to be $\tau = (1\ 2)$.  Again choose a $G$-homogeneous $\kk$-basis $\{ y_1, y_2, y_3 \}$ of $V$ with $V_{g_i} = \kk y_i$; so $\mu(y_1) = \lambda_1 y_2$, $\mu(y_2) = \lambda_2 y_1$, and $\mu(y_3) = \lambda_3 y_3$ for some $\lambda_i \in \kk^\times$.

One immediately checks that 
$\phi \colon y_{i_1}y_{i_2}y_{i_3} \mapsto \mu(y_{i_3})y_{i_1}y_{i_2}$ acts on the monomials up to scalar with orbits
\begin{gather*}
    \mc{O}_1 = \{ y_1^3, y_2 y_1^2, y_2^2 y_1, y_2^3, y_1 y_2^2, y_1^2 y_2 \}, \\
    \mc{O}_2 = \{y_1 y_3^2, y_3 y_1 y_3, y_3^2 y_1, y_2 y_3^2, y_3 y_2 y_3, y_3^2 y_2\}, \\
    \mc{O}_3 = \{y_1^2 y_3, y_3 y_1^2, y_2 y_3 y_1, y_2^2 y_3, y_3 y_2^2, y_1 y_3 y_2\}, \\
    \mc{O}_4 = \{y_1y_2y_3, y_3y_1y_2, y_1 y_3 y_1, y_2 y_1 y_3, y_3 y_2 y_1, y_2 y_3 y_2\}, \\
    \mc{O}_5 = \{ y_1 y_2 y_1, y_2 y_1 y_2 \}, \qquad \mc{O}_6 = \{y_3^3\}.
\end{gather*}
For brevity, we write $A \in \OO_{i_1} \lor \dots \lor \OO_{i_k}$ to mean that the superpotential of $A$ contains only a subset of the orbits $\OO_{i_1}, \dots, \OO_{i_k}$, and write $A \in \OO_{i_1} \land \dots \land \OO_{i_k}$ to mean that the superpotential of $A$ contains \emph{all} of the orbits $\OO_{i_1}, \dots, \OO_{i_k}$ and no other orbits.

We now eliminate several possibilities of combinations of orbits. The presence of $\mc{O}_1$ forces
$$g_1^3 = \deg_G y_1^3 = \deg_G y_1^2y_2 = g_1^2 g_2,$$
so $g_1 = g_2$, and $\mc{O}_2$ forces $\deg_G y_1 y_3^2 = \deg_G y_2 y_3^2$ so $g_1 = g_2$. Thus, $\OO_1$ and $\OO_2$ cannot appear in $\omega$, since we are assuming that $g_1 \neq g_2$. Having both $\mc{O}_3$ and $\mc{O}_4$ occur in $\omega$ forces
$$g_1^2 g_3 = \deg_G y_1^2 y_3 = \deg_G y_1 y_2 y_3 = g_1 g_2 g_3,$$
and so $g_1 = g_2$, and similarly 
having both $\mc{O}_4$ and $\mc{O}_5$ forces
$$g_1 g_2 g_3 = \deg_G y_1 y_2 y_3 = \deg_G y_1 y_2 y_1 = g_1 g_2 g_1,$$
and so $g_1 = g_3$. All of these contradict that $m = 3$ (meaning the $g_i$ are distinct). We are left with only the possibilities $\OO_3 \lor \OO_5 \lor \OO_6$ and $\OO_4 \lor \OO_6$. We analyze $\OO_3 \lor \OO_5 \lor \OO_6$ first.

\begin{prop}\label{prop:O356 algebras}
    Let $A \in \OO_3 \lor \OO_5 \lor \OO_6$ and let $\omega$ be the superpotential of $A$. Then $A$ can only be AS regular if $\omega$ has a nonzero contribution of $\OO_3$.  Assuming this is the case, and recalling Notation~\ref{ntt:quadratic regular algebras},
    \begin{enumerate}
        \item If $A \in \OO_3$, then there exists $q \in \kk^\times$ such that either $A$ is isomorphic to the skew polynomial ring $A_{-1,q,-q}$, or $A$ is isomorphic to the Ore extension $A_{-1}[z;\sigma_q]$.\label{item:356 C = D = 0}
        \item If $A \in \OO_3 \land \OO_5$, then $A$ is isomorphic to one of the Ore extensions $A_{-1}[z; \sigma, \delta]$ or $A_{-1}[z; \tau, \del]$.\label{item:356 D = 0}
        \item If $A \in \OO_3 \land \OO_6$, then $A \cong \mathcal{A}$ or $\mc{B}$.\label{item:356 C = 0}
        \item If $A \in \OO_3 \land \OO_5 \land \OO_6$, then there exists $q \in \kk^\times$ such that $A \cong \mathcal{C}_q$.\label{item:356 C and D both nonzero}
    \end{enumerate}
\end{prop}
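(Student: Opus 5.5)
We write the superpotential explicitly in the basis $y_1,y_2,y_3$ and read off the relations as its partial derivatives. Then, in each of the four cases, we use the remaining freedom (rescaling the $y_i$, and the single condition the twist imposes) to reach the normal forms of Notation~\ref{ntt:quadratic regular algebras}. We rename $y_3=z$, and $y_1,y_2$ become the $u,v$ (or $x,y$) of Table~\ref{tab:classification}.

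\emph{Shape of $\omega$.} Since $\phi$ permutes monomials up to scalar, $\omega=\omega_3+\omega_5+\omega_6$ with $\omega_j$ supported on $\mc{O}_j$. Within an orbit, $\phi$-invariance fixes all coefficients from one of them. For instance, in $\mc{O}_3$,
$$\phi(y_1^2y_3)=\lambda_3y_3y_1^2,\quad \phi(y_3y_1^2)=\lambda_1y_2y_3y_1,\quad \phi(y_2y_3y_1)=\lambda_1y_2^2y_3,$$
$$\phi(y_2^2y_3)=\lambda_3y_3y_2^2,\quad \phi(y_3y_2^2)=\lambda_2y_1y_3y_2,\quad \phi(y_1y_3y_2)=\lambda_2y_1^2y_3.$$
So a nonzero $\omega_3$ exists only if $(\lambda_1\lambda_2\lambda_3)^2=1$. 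Similarly, $\omega_5\neq0$ forces $\lambda_1\lambda_2=1$, and $\omega_6\neq 0$ forces $\lambda_3=1$. Writing $\omega=\sum_i y_i\,\delta_i(\omega)$, the relations have the form
$$y_1y_3+\alpha\, y_3y_2+C\,y_2y_1,\qquad y_2y_3+\beta\, y_3y_1+C'\,y_1y_2,\qquad y_1^2+\gamma\, y_2^2+D\,y_3^2,$$
with $\alpha,\beta,\gamma$ nonzero monomials in the $\lambda_i$. Here $C,C'$ come from $\omega_5$ and $D$ from $\omega_6$.

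If $\omega_3=0$, the relations are spanned by some of $y_2y_1,\ y_1y_2,\ y_3^2$. This gives fewer than three relations, or zero divisors such as $y_3^2=0$. So $A$ is not a domain (or has the wrong Hilbert series) and cannot be AS regular by Proposition~\ref{prop:ATV}. This proves the first sentence.

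\emph{Normalizing.} We may scale $\omega_3$ to have coefficient $1$ on $y_1^2y_3$. Replacing $y_1,y_2,y_3$ by nonzero multiples rescales $\lambda_1\mapsto\lambda_1 s_2/s_1$, $\lambda_2\mapsto\lambda_2 s_1/s_2$, and leaves $\lambda_3$ unchanged. This normalizes $\lambda_1=1$ and makes the coefficient $\gamma$ of $y_2^2$ equal to $\pm1$; we then pass to $u,v$ with $u^2=v^2$ or $u^2+v^2=0$.

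The constraint $\lambda_2\lambda_3=\pm\lambda_1^{-1}$, coming from $(\lambda_1\lambda_2\lambda_3)^2=1$, leaves one free scalar $q$ together with a sign. In case~\eqref{item:356 C = D = 0} we get $u^2=\pm v^2$, $zu=q\,vz$, $zv=\pm q\,uz$. The sign combinations give $A_{-1,q,-q}$ (written as in Example~\ref{ex:multi-parameter nonabelian grading}) or $A_{-1}[z;\sigma_q]$ (the first row of that entry in the table).

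In cases~\eqref{item:356 D = 0}--\eqref{item:356 C and D both nonzero}, the extra orbits force $\lambda_1\lambda_2=1$ and/or $\lambda_3=1$, which pins down the sign and $q$. The remaining coefficients $C$ and/or $D$ are then scaled to $1$ by rescaling $z$ (for $D$, using $z\mapsto tz$) or jointly $u,v$ (for $C$).
\begin{itemize}
\item In case~\eqref{item:356 D = 0} this should produce exactly the two presentations of $A_{-1}[z;\sigma,\delta]$ and $A_{-1}[z;\tau,\del]$ in Table~\ref{tab:classification}, according to the sign in $u^2=\pm v^2$.
\item In case~\eqref{item:356 C = 0} it should produce $\mc{A}$ (from $u^2=v^2$ after rotating to $x,y$) and $\mc{B}$.
\item In case~\eqref{item:356 C and D both nonzero}, the coefficient $C$ cannot be absorbed once $D$ and the $\mc{O}_3$ part are normalized, so it survives as the parameter $q$ of $\mc{C}_q$.
\end{itemize}

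\emph{Main obstacle.} The hard part is case~\eqref{item:356 C and D both nonzero}, together with the bookkeeping of the sign choices in all four cases. We must check that both sign branches there are isomorphic to the single family $\mc{C}_q$, presumably via a substitution like $v\mapsto iv$ combined with a linear change $x=\tfrac{1}{\sqrt2}(u+v)$, $y=\tfrac{1}{\sqrt 2}(u-v)$. The trinomial relations $xz+zy+q\,yx$ and $yz+zx+q\,xy$ must come out with matching coefficients.

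I would carry out these changes of variables explicitly, checking each target presentation against the relations derived from $\omega$. Whenever a sign branch does not match, I would look for a zero divisor, as was done for $\mc{C}_{\pm1}$ in Proposition~\ref{prop:regularity of quadratic algebras}, to show that branch is not AS regular.
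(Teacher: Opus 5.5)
Your plan follows the paper. You write $\omega=B\,\omega_3+C\,\omega_5+D\,\omega_6$, normalized so that $y_1^2y_3$, $y_1y_2y_1$, $y_3^3$ have coefficient $1$. You read off the three derivatives, use the constraints $(\lambda_1\lambda_2\lambda_3)^2=1$, $\lambda_1\lambda_2=1$, $\lambda_3=1$, and then normalize. Your proof of the first sentence of the statement is correct. However, items (1)--(4) are only asserted as what the computation ``should produce'', and the bookkeeping you do supply puts the case splits in the wrong places, so the identifications are not yet proved.

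\textbf{Case (1).} Rescaling the $y_i$ leaves $\lambda_3$ and $\lambda_1\lambda_2$ invariant. The $y_2^2$-coefficient in $\delta_3(\omega)$, relative to $y_1^2$, is $\lambda_1^2\lambda_3$. Here $\lambda_3$ is arbitrary (for $A_{-1,q,-q}$ one finds $\lambda_3=-q^{-2}$), so you cannot set $\lambda_1=1$ and make that coefficient $\pm1$ at the same time; normalize the coefficient only. The resulting sign in $u^2=\pm v^2$ is not an invariant, since $v\mapsto iv$ flips it. What decides the outcome is $\lambda_1\lambda_2\lambda_3=\pm1$ (note $\det(\mu|_{A_1})=-\lambda_1\lambda_2\lambda_3$). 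The value $-1$ gives $A_{-1,q,-q}$ and $+1$ gives $A_{-1}[z;\sigma_q]$; this is exactly the paper's split.

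\textbf{Cases (2)--(4).} The extra orbits do not ``pin down the sign'' in (2) and (3), and in (4) there is nothing left to pin.
\begin{itemize}
\item In (2), $\lambda_1\lambda_2=1$ still leaves $\lambda_3=\pm1$. After normalizing $\lambda_1=\lambda_2=1$, take $u=y_1$, $v=-y_2$, $z=\lambda_3(B/C)\,y_3$. This gives exactly the table presentation of $A_{-1}[z;\sigma,\delta]$ when $\lambda_3=-1$, and of $A_{-1}[z;\tau,\del]$ when $\lambda_3=1$.
\item In (3), $\lambda_3=1$ but $\lambda_1\lambda_2=\pm1$ survives. This, not a sign in the quadratic relation, is the $\mc{A}/\mc{B}$ dichotomy. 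With $x=y_1$, $y=\lambda_1y_2$, $z=\alpha y_3$, $\alpha^2=D/B$, the relations become $xz+(\lambda_1\lambda_2)^{-1}zy$, $zx+yz$, $x^2+y^2+z^2$. For $\lambda_1\lambda_2=-1$ this is $\mc{B}$. For $\lambda_1\lambda_2=1$ it becomes $\mc{A}$ via the variables $x-y$, $x+y$, $\sqrt2\,z$.
\item In (4), which you single out as the main obstacle, both $\lambda_1\lambda_2=1$ and $\lambda_3=1$ are forced, so there is a single branch. The same diagonal substitution gives $\mc{C}_q$ at once, with $q=\alpha C/(B\lambda_1)$. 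No rotation, no $v\mapsto iv$, and no zero-divisor fallback is needed.
\end{itemize}
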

\begin{proof}
    Write
    \begin{multline*}
        \omega = B(y_1^2 y_3 + \lambda_3 y_3 y_1^2 + \lambda_1\lambda_3 y_2 y_3 y_1 + \lambda_1^2 \lambda_3 y_2^2 y_3 + \lambda_1^2 \lambda_3^2 y_3 y_2^2 + \lambda_1^2 \lambda_3^2 \lambda_2 y_1 y_3 y_2) \\
        + C(y_1 y_2 y_1 + \lambda_1 y_2 y_1 y_2) + D y_3^3
    \end{multline*}
    for some $B, C, D \in \kk$. If $B \neq 0$, then we must have that
    $$\lambda_1^2 \lambda_2^2 \lambda_3^2 y_1^2 y_3 = \phi(\lambda_1^2 \lambda_2 \lambda_3^2 y_1 y_3 y_2) = y_1^2 y_3,$$
    since $\omega$ satisfies $\phi(\omega) = \omega$. This implies that $\lambda_1^2 \lambda_2^2 \lambda_3^2 = 1$ if $B \neq 0$.
    
    Similarly, if $C \neq 0$ we require $\lambda_1 \lambda_2 = 1$, and if $D \neq 0$ we must have $\lambda_3 = 1$. These conditions guarantee that $\omega$ is a $\mu$-twisted superpotential, and the corresponding algebra is $A = \kk \langle y_1, y_2, y_3 \rangle/(r_1, r_2, r_3)$ with 
    \begin{gather*}
        r_1 = B(y_1 y_3 + \lambda_2^{-1} y_3 y_2) + C y_2 y_1, \\
        r_2 = B \lambda_1 \lambda_3 (y_3 y_1 + \lambda_1 y_2 y_3) + C\lambda_1 y_1 y_2, \\
        r_3 = B \lambda_3 (y_1^2 + \lambda_1^2 \lambda_3 y_2^2) + D y_3^2.
    \end{gather*}
    Certainly, this algebra can only be AS regular if $B \neq 0$, in other words, if $\omega$ has a nonzero contribution of the orbit $\OO_3$. We now assume that $B \neq 0$ and proceed by proving each of the cases.
    
    \eqref{item:356 C = D = 0} Suppose $C = D = 0$. In this case, the relations of $A$ have the following form:
    $$y_1 y_3 + \lambda_2^{-1} y_3y_2 = 0, \qquad y_3 y_1 + \lambda_1 y_2 y_3 = 0, \qquad y_1^2 + \lambda_1^2 \lambda_3 y_2^2 = 0.$$
    Let $\alpha \in \kk$ such that $\alpha^2 = -\lambda_3$, and define
    $$x \coloneqq y_1 + \alpha \lambda_1 y_2, \qquad y \coloneqq y_1 - \alpha \lambda_1 y_2, \qquad z \coloneqq y_3.$$
    Then
    $$xy = y_1^2 + \alpha \lambda_1 (y_2 y_1 - y_1 y_2) + \lambda_1^2 \lambda_3 y_2^2.$$
    Using that $\lambda_1^2 \lambda_3 y_2^2 = -y_1^2$ in $A$, we deduce that $xy = \alpha \lambda_1 (y_2 y_1 - y_1 y_2)$. Similarly, we also get that $yx = \alpha \lambda_1 (y_1 y_2 - y_2 y_1) = -xy$.

    Next, we have
    $$zx = y_3 y_1 + \alpha \lambda_1 y_3 y_2 = -\lambda_1 y_2 y_3 - \alpha \lambda_1 \lambda_2 y_1 y_3,$$
    where we used that $y_3 y_1 = -\lambda_1 y_2 y_3$ and $y_3 y_2 = -\lambda_2 y_1 y_3$ in $A$. Since $y_1 = \frac{1}{2}(x + y)$ and $y_2 = \frac{1}{2} (\alpha \lambda_1)^{-1} (x - y)$, we get
    \begin{align*}
        zx &= -\frac{1}{2}\Big(\alpha^{-1}(x - y)z + \alpha \lambda_1 \lambda_2 (x + y)z\Big) \\
        &= -\frac{1}{2}\Big((\alpha^{-1} + \alpha \lambda_1 \lambda_2)xz - (\alpha^{-1} - \alpha \lambda_1 \lambda_2)yz\Big) \\
        &= -\frac{1}{2\alpha}\Big((1 - \lambda_1 \lambda_2 \lambda_3)xz - (1 + \lambda_1 \lambda_2 \lambda_3)yz\Big),
    \end{align*}
    where we used that $\alpha^2 = -\lambda_3$. Similarly, we have
    $$zy = -\frac{1}{2\alpha}\Big((1 + \lambda_1 \lambda_2 \lambda_3)xz - (1 - \lambda_1 \lambda_2 \lambda_3)yz\Big).$$
    Now, recall that we have $(\lambda_1 \lambda_2 \lambda_3)^2 = 1$, so there are two cases to consider.

    \begin{case}
        $\lambda_1 \lambda_2 \lambda_3 = -1$.
    \end{case}

    Letting $q \coloneqq -\alpha^{-1}$, we see that
    $$yx = -xy, \qquad zx = q xz, \qquad zy = -q yz.$$
    Therefore, $A \cong A_{-1,q,-q}$ is a skew polynomial ring in this case.

    \begin{case}
        $\lambda_1 \lambda_2 \lambda_3 = 1$.
    \end{case}

    In this case, we get the relations
    $$yx = -xy, \qquad zx = \alpha^{-1} yz, \qquad zy = -\alpha^{-1} xz.$$
    Letting $\widetilde{y} \coloneqq iy$ and $q \coloneqq -i \alpha^{-1}$ (where $i^2 = -1$), the relations become
    $$\widetilde{y}x = -x\widetilde{y}, \qquad zx = q \widetilde{y}z, \qquad z\widetilde{y} = q xz,$$
    and thus $A \cong A_{-1}[z;\sigma_q]$ in this case. This concludes the proof of \eqref{item:356 C = D = 0}.

    \eqref{item:356 D = 0} Suppose $C \neq 0$ and $D = 0$. In this case, we have $(\lambda_1 \lambda_2 \lambda_3)^2 = 1$ and $\lambda_1 \lambda_2 = 1$, which means that $\lambda_3^2 = 1$. Thus, $\lambda_3^{-1} = \lambda_3$. Letting $\gamma \coloneqq \frac{C}{B}$, the relations of $A$ have the following form:
    $$y_1 y_3 + \lambda_1 y_3y_2 + \gamma y_2 y_1 = 0, \qquad y_3 y_1 + \lambda_1 y_2 y_3 + \gamma \lambda_3 y_1 y_2 = 0, \qquad y_1^2 + \lambda_1^2 \lambda_3 y_2^2 = 0.$$
    As in \eqref{item:356 C = D = 0}, we let $\alpha \in \kk$ such that $\alpha^2 = -\lambda_3$, and define
    $$x \coloneqq y_1 + \alpha \lambda_1 y_2, \qquad y \coloneqq y_1 - \alpha \lambda_1 y_2, \qquad z \coloneqq y_3,$$
    so that $yx = -xy$. We now have
    \begin{align*}
        zx &= y_3(y_1 + \alpha \lambda_1 y_2) = y_3 y_1 + \alpha \lambda_1 y_3 y_2 \\
        &= -(\lambda_1 y_2 y_3 + \gamma \lambda_3 y_1 y_2 + \alpha y_1 y_3 + \alpha \gamma y_2 y_1) \\
        &= -(\lambda_1 y_2 + \alpha y_1)y_3 - \gamma (\lambda_3 y_1 y_2 + \alpha y_2 y_1),
    \end{align*}
    where we used the relations of $A$ in the third equality. Using that $y_1 = \frac{1}{2}(x + y)$ and $y_2 = \frac{1}{2}(\alpha \lambda_1)^{-1}(x - y)$, we get
    \begin{align*}
        zx &= -\frac{1}{2}\Big(\alpha^{-1}(x - y) + \alpha(x + y)\Big)z + \frac{\gamma}{4 \lambda_1}\Big(\alpha (x + y)(x - y) - (x - y)(x + y)\Big) \\
        &= -\frac{1}{2 \alpha}\Big((1 - \lambda_3)x - (1 + \lambda_3)y\Big)z + \frac{\gamma}{4 \lambda_1}\Big((\alpha - 1) (x^2 - y^2) - 2(\alpha + 1)xy\Big).
    \end{align*}
    Similarly, we compute
    $$zy = -\frac{1}{2 \alpha}\Big((1 + \lambda_3)x - (1 - \lambda_3)y\Big)z + \frac{\gamma}{4 \lambda_1}\Big((\alpha + 1) (x^2 - y^2) - 2(\alpha - 1)xy\Big).$$
    
    We now split the proof into cases depending on the values of $\lambda_3$ and $\alpha$, recalling that $\lambda_3 = \pm 1$ and $\alpha^2 = -\lambda_3$.

    \setcounter{case}{0}

    \begin{case}
        $\lambda_3 = -1$.
    \end{case}

    In this case, we have $\alpha = \pm 1$. Switching the roles of $x$ and $y$ if necessary, we may assume that $\alpha = -1$. Letting $\widetilde{z} \coloneqq -\frac{2\lambda_1}{\gamma} z$, we have
    $$yx = -xy, \qquad \widetilde{z}x = x\widetilde{z} + x^2 - y^2, \qquad \widetilde{z}y = -y\widetilde{z} - 2 xy.$$
    It is now clear that $A \cong A_{-1}[z;\sigma, \delta]$.

    \begin{case}
        $\lambda_3 = 1$.
    \end{case}

    In this case, we have
    $$zx = -\alpha yz + \frac{\gamma}{4\lambda_1}(\alpha - 1)(x^2 - y^2 + 2 \alpha xy), \qquad zy = \alpha \Big(xz - \frac{\gamma}{4\lambda_1}(\alpha - 1)(x^2 - y^2 - 2 \alpha xy)\Big),$$
    since $\alpha^2 = -\lambda_3 = -1$. Defining $\widetilde{y} \coloneqq \alpha y$ and $\beta \coloneqq \frac{\gamma}{4\lambda_1}(\alpha - 1)$, we see that
    $$zx = -\widetilde{y}z + \beta(x^2 + \widetilde{y}^2 + 2 x\widetilde{y}), \qquad z\widetilde{y} = -xz + \beta(x^2 + \widetilde{y}^2 - 2 x\widetilde{y}).$$
    Rescaling $\widetilde{z} \coloneqq \beta^{-1} z$, we get the relations
    $$\widetilde{z}x = -\widetilde{y} \, \widetilde{z} + x^2 + \widetilde{y}^2 + 2 x\widetilde{y}, \qquad \widetilde{z} \, \widetilde{y} = -x\widetilde{z} + x^2 + \widetilde{y}^2 - 2 x\widetilde{y}.$$
    We conclude that $A \cong A_{-1}[z;\tau,\del]$.

    \eqref{item:356 C = 0} Suppose $C = 0$ and $D \neq 0$, which implies that $\lambda_3 = 1$ and $\lambda_1 \lambda_2 = \pm 1$. Then the algebra $A$ has the following relations:
    $$y_1 y_3 + \lambda_2^{-1} y_3 y_2 = 0, \qquad y_3 y_1 + \lambda_1 y_2 y_3 = 0, \qquad y_1^2 + \lambda_1^2 y_2^2 + \gamma y_3^2 = 0,$$
    where $\gamma \coloneqq \frac{D}{B}$. Let $\alpha \in \kk$ such that $\alpha^2 = \gamma$, and define
    $$x \coloneqq y_1, \qquad y \coloneqq \lambda_1 y_2, \qquad z \coloneqq \alpha y_3.$$
    Then the relations of $A$ become
    $$xz + (\lambda_1 \lambda_2)^{-1} zy = 0, \qquad zx + yz = 0, \qquad x^2 + y^2 + z^2 = 0.$$
    Recalling that $(\lambda_1 \lambda_2)^2 = 1$, if $\lambda_1 \lambda_2 = -1$ we get $A \cong \mc{B}$.

    So, assume that $\lambda_1 \lambda_2 = 1$. Letting $u \coloneqq x - y$, $v \coloneqq x + y$, and $w \coloneqq \sqrt{2} z$, we have
    $$uw - wu = 0, \qquad vw + wv = 0, \qquad u^2 + v^2 + w^2 = 0.$$
    We conclude that $A \cong \mc{A}$ in this case.

    \eqref{item:356 C and D both nonzero} Suppose $C,D \neq 0$, which implies that $\lambda_1 \lambda_2 = 1$ and that $\lambda_3 = 1$. Letting $\lambda \coloneqq \lambda_1 = \lambda_2^{-1}$, $\gamma \coloneqq \frac{C}{B}$, and $\eta \coloneqq \frac{D}{B}$, the relations of $A$ have the following form:
    $$y_1 y_3 + \lambda y_3 y_2 + \gamma y_2 y_1 = 0, \qquad y_3 y_1 + \lambda y_2 y_3 + \gamma y_1 y_2 = 0, \qquad y_1^2 + \lambda^2 y_2^2 + \eta y_3^2 = 0.$$
    Let $\alpha \in \kk$ such that $\alpha^2 = \eta$, let $q \coloneqq \alpha \gamma \lambda^{-1}$, and define
    $$x \coloneqq y_1, \qquad y \coloneqq \lambda y_2, \qquad z \coloneqq \alpha y_3.$$
    Then the relations of $A$ become
    $$xz + zy + q yx = 0, \qquad zx + yz + q xy = 0, \qquad x^2 + y^2 + z^2 = 0,$$
    so $A \cong \mc{C}_q$. This concludes the proof.
\end{proof}

It remains to consider $A \in \OO_4 \lor \OO_6$, which we do next.

\begin{prop}\label{prop:O46 algebras}
     Let $A \in \OO_4 \lor \OO_6$ and let $\omega$ be the superpotential of $A$. Then $A$ can only be AS regular if $\OO_4$ appears in $\omega$.  Assume that $A$ has nonzero contribution from $\OO_4$, and recall Notation~\ref{ntt:quadratic regular algebras}.
    \begin{enumerate}
        \item If $A \in \OO_4$, then there exists $q \in \kk^\times$ such that $A$ is isomorphic to $A_{1,q,-q}$ or $A_{-1}[z;\sigma_q]$.\label{item:46 C = 0}
        \item If $A \in \OO_4 \land \OO_6$, then $A \cong \mc{A}$ or $\mc{D}$.\label{item:46 C nonzero}
    \end{enumerate}
\end{prop}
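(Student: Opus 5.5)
I will follow the template of Proposition~\ref{prop:O356 algebras}. First I will write the superpotential explicitly as a combination of the two orbit sums. Applying $\phi$ repeatedly to $y_1y_2y_3$, with $\mu(y_1)=\lambda_1y_2$, $\mu(y_2)=\lambda_2y_1$, $\mu(y_3)=\lambda_3y_3$, gives
\[
\omega = B\bigl(y_1y_2y_3 + \lambda_3 y_3y_1y_2 + \lambda_2\lambda_3 y_2y_3y_1 + \lambda_1\lambda_2\lambda_3 y_1y_3y_2 + \lambda_1\lambda_2\lambda_3^2 y_3y_2y_1 + \lambda_1^2\lambda_2\lambda_3^2 y_2y_1y_3\bigr) + D y_3^3 .
\]
(I will recheck the order of the orbit, since it affects the scalars.) The condition $\phi(\omega)=\omega$ imposes closure conditions on the twist. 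Closing the $6$-cycle should force $(\lambda_1\lambda_2)^2\lambda_3^2 = 1$ or a similar constraint whenever $B\ne0$. The condition $D\neq 0$ forces $\lambda_3=1$.

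Next I will take the partial derivatives $\delta_i$ to get the three relations:
\begin{itemize}
\item $r_1 = \delta_1\omega$, a combination of $y_2y_3$ and $y_3y_2$;
\item $r_2 = \delta_2\omega$, a combination of $y_3y_1$ and $y_1y_3$;
\item $r_3 = \delta_3\omega$, a combination of $y_1y_2$ and $y_2y_1$, together with $Dy_3^2$.
\end{itemize}
If $B=0$, the only relation is $y_3^2$, which does not give three relations, so $A$ cannot be regular. This proves the first claim.

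Now assume $B\ne0$ and write $\epsilon=\lambda_1\lambda_2\lambda_3=\pm1$.

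\emph{Case $D=0$.} After rescaling $y_1,y_2$, the relations become:
\begin{itemize}
\item $y_3y_1 = c\,y_2y_3$ and $y_3y_2 = c'\,y_1y_3$, so $z=y_3$ swaps the lines $\kk y_1$ and $\kk y_2$ up to scalars;
\item $y_1y_2 = \kappa\, y_2y_1$ with $\kappa$ determined by the $\lambda$'s, and $\kappa=\pm1$ since $\kappa^2$ is a power of $(\lambda_1\lambda_2\lambda_3)^2$.
\end{itemize}
If $\kappa=-1$, the subalgebra on $y_1,y_2$ is $A_{-1}$ and $z$ acts by $y_1\mapsto qy_2$, $y_2\mapsto q'y_1$. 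Rescaling $y_2$ makes this $\sigma_q$, so $A\cong A_{-1}[z;\sigma_q]$. If $\kappa=1$, I will pass to $x=y_1+\beta y_2$ and $y=y_1-\beta y_2$ with $\beta$ chosen so that $z$ scales $x$ and $y$ by $q$ and $-q$. This diagonalizes the swap and gives $A_{1,q,-q}$. It also confirms the $\ZZ^2\rtimes_\psi\ZZ$ rows of Table~\ref{tab:classification}.

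\emph{Case $D\ne0$.} Here $\lambda_3=1$, and after rescaling the relations are
\[
y_3y_1 = \pm y_2y_3,\qquad y_1y_3=\pm y_3y_2,\qquad y_1y_2 \pm y_2y_1 + z^2 = 0 .
\]
The sign in the last relation is governed by $\lambda_1\lambda_2=\pm1$. When $y_1y_2+y_2y_1+z^2=0$, I will substitute $y_1=u+iv$ and $y_2=u-iv$ (up to scale). This gives $u^2+v^2+z^2$ together with $zu=\pm uz$ and $zv=\mp vz$, recovering $\mc{A}$; this matches the $G_3$ row. When the last relation is the commutator $y_1y_2-y_2y_1+z^2$, a rescaling of the variables should match $\mc{D}$ directly.

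The main obstacle is bookkeeping the scalar constraints from closing the orbit and tracking the signs. In particular, I need to determine which combination of signs leads to $\mc{A}$ versus $\mc{D}$. I also need to rule out spurious sign combinations; I expect these to coincide with the listed algebras after swapping $y_1$ and $y_2$ or replacing $z$ by $iz$.
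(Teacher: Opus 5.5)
Your overall strategy is the paper's: expand $\omega$ as an orbit sum, impose the closing condition, differentiate, discard $B=0$, then split on $D=0$ versus $D\neq 0$ and on a sign. Your case analysis, once the relations are in hand, is essentially the paper's. However, the first step, on which everything rests, is wrong, and the error is in the monomials themselves, not just their order or scalars.

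\textbf{The orbit is miscomputed.} Since $\mu(y_2)=\lambda_2 y_1$, we get $\phi(y_3y_1y_2)=\mu(y_2)\,y_3y_1=\lambda_2\,y_1y_3y_1$, which is not a multiple of $y_2y_3y_1$. The correct cycle is
\[
y_1y_2y_3 \mapsto y_3y_1y_2 \mapsto y_1y_3y_1 \mapsto y_2y_1y_3 \mapsto y_3y_2y_1 \mapsto y_2y_3y_2 \mapsto y_1y_2y_3 ,
\]
picking up the scalars $\lambda_3,\lambda_2,\lambda_1,\lambda_3,\lambda_1,\lambda_2$ at the successive steps. This is the paper's $\OO_4$. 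Your list consists of the six multilinear monomials, which is what you would get for diagonal $\mu$. Your $y_2y_3y_1$ and $y_1y_3y_2$ actually lie in $\OO_3$, and your $\omega$ is not $\phi$-fixed.

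\textbf{The relations you use are not derived from your $\omega$.} Your bullet list ($r_1\in\kk y_2y_3+\kk y_3y_2$, $r_2\in\kk y_3y_1+\kk y_1y_3$) is exactly what differentiating your $\omega$ gives. Those relations say $y_3$ normalizes $\kk y_1$ and $\kk y_2$ separately. They are incompatible with the swap relations $y_3y_1=c\,y_2y_3$ and $y_3y_2=c'\,y_1y_3$ that both of your cases then use. So, as written, the case analysis does not follow from the superpotential you computed.

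\textbf{The fix.} Use the correct orbit. Then
\[
r_1=B(y_2y_3+\lambda_2\lambda_3\,y_3y_1),\qquad r_2=B\lambda_1\lambda_2\lambda_3(y_1y_3+\lambda_1\lambda_3\,y_3y_2),\qquad r_3=B\lambda_3(y_1y_2+\lambda_1\lambda_2\lambda_3\,y_2y_1)+Dy_3^2,
\]
with $(\lambda_1\lambda_2\lambda_3)^2=1$ when $B\neq 0$ and $\lambda_3=1$ when $D\neq 0$.

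\emph{Case $D=0$.} Your $\kappa$ equals $-\lambda_1\lambda_2\lambda_3$. Your two subcases go through as in the paper: $\lambda_1\lambda_2\lambda_3=1$ gives $A_{-1}[z;\sigma_q]$ after rescaling $y_2$, and $\lambda_1\lambda_2\lambda_3=-1$ gives $A_{1,q,-q}$ via $y_1\pm\beta y_2$.

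\emph{Case $D\neq 0$.} Set $Y_1=\beta^{-1}y_1$ and $Y_2=\beta y_2$ with $\beta^2=\lambda_1$. This gives $y_3Y_1=-(\lambda_1\lambda_2)^{-1}Y_2y_3$ and $y_3Y_2=-Y_1y_3$. So the swap signs are forced by $\lambda_1\lambda_2$, and there are no spurious sign combinations to rule out. The value $\lambda_1\lambda_2=1$ gives the anticommutator relation and $\mc{A}$; the value $\lambda_1\lambda_2=-1$ gives the commutator relation and $\mc{D}$.
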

\begin{proof}
    Write
    \begin{multline*}
        \omega = B(y_1 y_2 y_3 + \lambda_3 y_3 y_1 y_2 + \lambda_2 \lambda_3 y_1 y_3 y_1 + \lambda_1 \lambda_2 \lambda_3 y_2 y_1 y_3 + \lambda_1 \lambda_2 \lambda_3^2 y_3 y_2 y_1 + \lambda_1^2 \lambda_2 \lambda_3^2 y_2 y_3 y_2) \\
        + C y_3^3
    \end{multline*}
    for some $B, C \in \kk$. In order for $\omega$ to be a $\mu$-twisted superpotential, we require $\lambda_1^2 \lambda_2^2 \lambda_3^2 = 1$ if $B \neq 0$, and $\lambda_3 = 1$ if $C \neq 0$. The corresponding algebra is $A = \kk \langle y_1, y_2, y_3 \rangle/(r_1, r_2, r_3)$ with 
    \begin{gather*}
        r_1 = B(y_2 y_3 + \lambda_2\lambda_3 y_3y_1), \\
        r_2 = B\lambda_1 \lambda_2 \lambda_3 (y_1 y_3 + \lambda_1 \lambda_3 y_3 y_2), \\
        r_3 = B\lambda_3 (y_1 y_2 + \lambda_1 \lambda_2 \lambda_3 y_2 y_1) + C y_3^2.
    \end{gather*}
    It is now clear that $A$ can only be AS regular if $B \neq 0$.  Assuming this is the case, we now proceed with the two cases $C = 0$ and $C \neq 0$.
    
    \eqref{item:46 C = 0} Suppose $C = 0$, so the relations of $A$ are
    $$y_2 y_3 + \lambda_2\lambda_3 y_3 y_1 = 0, \qquad y_1 y_3 + \lambda_1 \lambda_3 y_3 y_2 = 0, \qquad y_1 y_2 + \lambda_1 \lambda_2 \lambda_3 y_2 y_1 = 0.$$
    Recall that, since $B \neq 0$, we must have $\lambda_1 \lambda_2 \lambda_3 = \pm 1$. Let $\alpha \in \kk^\times$ such that $\alpha^2 = \frac{\lambda_2}{\lambda_1}$, and let $p \coloneqq -\alpha \lambda_1 \lambda_3$. Notice that $\alpha p = -\alpha^2 \lambda_1 \lambda_3 = -\lambda_2 \lambda_3$. Thus, letting $x \coloneqq y_1$, $y \coloneqq \alpha^{-1} y_2$, $z \coloneqq y_3$, the relations of $A$ can be written as follows:
    $$yz - p zx = 0, \qquad xz - p zy = 0, \qquad xy + \lambda_1 \lambda_2 \lambda_3 yx = 0.$$
    Let $q \coloneqq p^{-1}$, so that the first two relations become $zx = q yz$ and $zy = q xz$. We must now split this in two cases.

    \begin{case}
        $\lambda_1 \lambda_2 \lambda_3 = 1$.
    \end{case}
    
    In this case, the relations are
    $$yx = -xy, \qquad zx = q yz, \qquad zy = q xz,$$
    and thus $A \cong A_{-1}[z;\sigma_q]$.

    \begin{case}
        $\lambda_1 \lambda_2 \lambda_3 = -1$.
    \end{case}

    Defining $u \coloneqq x + y$, $v \coloneqq x - y$, and $w \coloneqq z$, we get
    $$wu = z(x + y) = q(y + x)z = q uw, \qquad wv = z(x - y) = q(y - x)z = -q vw.$$
    In summary, the relations of $A$ become
    $$uv = vu, \qquad wu = q uw, \qquad wv = -q vw,$$
    so $A \cong A_{1,q,-q}$.

    \eqref{item:46 C nonzero} Now suppose $C \neq 0$, so that $\lambda_3 = 1$ and $(\lambda_1 \lambda_2)^2 = 1$. In this case, the relations of $A$ are
    $$y_2 y_3 + \lambda_2 y_3 y_1 = 0, \qquad y_1 y_3 + \lambda_1 y_3 y_2 = 0, \qquad y_1 y_2 + \lambda_1 \lambda_2 y_2 y_1 + \gamma y_3^2 = 0,$$
    where $\gamma \coloneqq \frac{C}{B}$. Choose $\alpha, \beta \in \kk$ such that $\alpha^2 = \gamma$ and $\beta^2 = \lambda_1$.

    \setcounter{case}{0}

    \begin{case}
        $\lambda_1 \lambda_2 = 1$.
    \end{case}

    In this case, we set $x \coloneqq \frac{i}{\sqrt{2}} (\beta^{-1} y_1 - \beta y_2)$, $y \coloneqq \frac{1}{\sqrt{2}} (\beta^{-1} y_1 + \beta y_2)$, and $z \coloneqq \alpha y_3$. Then, we have
    \begin{align*}
        x^2 + y^2 + z^2 &= -\frac{1}{2} \lambda_1^{-1} (y_1 - \lambda_1 y_2)^2 + \frac{1}{2} \lambda_1^{-1} (y_1 + \lambda_1 y_2)^2 + \gamma y_3^2 \\
        &= \frac{1}{2} \lambda_1^{-1} (-y_1^2 + \lambda_1 y_1 y_2 + \lambda_1 y_2 y_1 - \lambda_1^2 y_2^2 + y_1^2 + \lambda_1 y_1 y_2 + \lambda_1 y_2 y_1 + \lambda_1^2 y_2^2) + \gamma y_3^2 \\
        &= y_1 y_2 + y_2 y_1 + \gamma y_3^2 = 0.
    \end{align*}
    A similar computation using that $\lambda_2 = \lambda_1^{-1} = \beta^{-2}$ yields $zx = xz$ and $zy = -yz$. Summarizing, the relations of $A$ have the following form:
    $$zx = xz, \qquad zy = -yz, \qquad x^2 + y^2 + z^2 = 0.$$
    We conclude that $A \cong \mc{A}$.

    \begin{case}
        $\lambda_1 \lambda_2 = -1$.
    \end{case}
    
    Letting $x \coloneqq \beta^{-1} y_1$, $y \coloneqq \beta y_2$, and $z \coloneqq \alpha y_3$, the relations of $A$ can be written as
    $$yz - zx = 0, \qquad xz + zy = 0, \qquad xy - yx + z^2 = 0,$$
    where we used that $\lambda_2 = -\lambda_1^{-1} = -\beta^{-2}$. It follows that $A \cong \mc{D}$, which concludes the proof.
\end{proof}

\subsection{\texorpdfstring{$m = 3$, $\tau = 1$}{m = 3, tau = 1}}
\label{subsec: m3 tauid}

The remaining possibility when $m = 3$ is for $\mu$ to act as the identity permutation on the graded pieces.  So again $V = V_{g_1} \oplus V_{g_2} \oplus V_{g_3}$ where the $g_i$ are distinct and  
so $V_{g_i} = \kk y_i$.  We have $\mu(y_i) = \lambda_i y_i$ for all $i$, some $0 \neq \lambda_i \in \kk$.

In this case we have the largest number of orbits of
the action of $\phi: y_{i_1}y_{i_2}y_{i_3} \mapsto \mu(y_{i_3})y_{i_1}y_{i_2}$ on the monomials up to scalar:
\begin{equation}\label{eq:quadratic orbits tau = 1}
    \begin{gathered}
        \mc{O}_1 = \{ y_1^3\}, \qquad \mc{O}_2 = \{ y_2^3\}, \qquad \mc{O}_3 = \{y_3^3\}, \\
        \mc{O}_4 = \{y_1 y_2^2, y_2y_1y_2, y_2^2y_1\}, \quad \mc{O}_5 = \{ y_1 y_3^2, y_3y_1y_3, y_3^2 y_1 \}, \quad \mc{O}_6 = \{y_2 y_1^2, y_1y_2y_1,y_1^2y_2\}, \\
        \mc{O}_7 = \{ y_2 y_3^2, y_3 y_2 y_3, y_3^2 y_2\}, \quad \mc{O}_8 = \{ y_3 y_1^2, y_1y_3y_1, y_1^2 y_3 \}, \quad \mc{O}_9 = \{ y_3 y_2^2, y_2 y_3 y_2, y_2^2 y_3\}, \\ 
        \mc{O}_{10} = \{ y_1 y_2 y_3, y_3 y_1 y_2, y_2 y_3 y_1\}, \qquad \mc{O}_{11} = \{ y_2 y_1 y_3, y_3 y_2 y_1, y_1 y_3 y_2\} 
    \end{gathered}
\end{equation}
Note that we may permute the variables $y_i$ to avoid repetition up to isomorphism.
\begin{center}
    \begin{tabular}{|c|c|}  \hline 
    Permutation on variables & Effect on orbits \\ \hline \hline 
    $(1 \ 2)$ & $(\OO_1 \ \OO_2)(\OO_4 \ \OO_6)(\OO_5 \ \OO_7)(\OO_8 \ \OO_9)(\OO_{10} \ \OO_{11})$\\  \hline 
    $(1 \ 3)$ & $(\OO_1 \ \OO_3)(\OO_4 \ \OO_9)(\OO_5 \ \OO_8)(\OO_6 \ \OO_7)(\OO_{10} \ \OO_{11})$ \\ \hline 
    $(2 \ 3)$ & $(\OO_2 \ \OO_3)(\OO_4 \ \OO_5)(\OO_6 \ \OO_8)(\OO_7 \ \OO_9)(\OO_{10} \ \OO_{11})$ \\ \hline 
    $(1 \ 2 \ 3)$ & $(\OO_1 \ \OO_2 \ \OO_3)(\OO_4 \ \OO_7 \ \OO_8)(\OO_5 \ \OO_6 \ \OO_9)$  \\ \hline 
    $(1 \ 3 \ 2)$ & $(\OO_1 \ \OO_3 \ \OO_2)(\OO_4 \ \OO_8 \ \OO_7)(\OO_5 \ \OO_9 \ \OO_6)$ \\ \hline 
    \end{tabular}
\end{center}
This means that, for example, $\OO_1 \lor \OO_2 \lor \OO_5 \cong \OO_1 \lor \OO_2 \lor \OO_7$ by permuting $y_1$ and $y_2$, and that $\OO_1 \lor \OO_4 \lor \OO_7 \cong \OO_2 \lor \OO_7 \lor \OO_8$ via the permutation $(1 \ 2 \ 3)$. In particular, any combination of orbits containing at least one of $\OO_2$ or $\OO_3$ is isomorphic to one containing $\OO_1$.

There are many restrictions on which of these orbits can appear in order to avoid any of the group elements $g_i$ being equal. Having $\mc{O}_1 \land \mc{O}_6$ forces $\deg_G y_1^3 = \deg_G y_2 y_1^2$ and thus $g_1 = g_2$. By permuting the three variables this also prevents $\mc{O}_1 \land \mc{O}_8$, $\mc{O}_2 \land \mc{O}_4$, $\mc{O}_2 \land \mc{O}_9$, $\mc{O}_3 \land \mc{O}_5$, and $\mc{O}_3 \land \mc{O}_7$.

Next, $\mc{O}_4 \land \mc{O}_6$ forces $\deg_G y_1 y_2^2 = \deg_G y_1 y_2 y_1$ and the contradiction $g_1 = g_2$. Permuting variables, $\mc{O}_7 \land \mc{O}_9$ and $\mc{O}_5 \land \mc{O}_8$ are also prevented. The combination $\mc{O}_6 \land \mc{O}_8$ forces $g_2 = g_3$, and by permutation we also eliminate $\mc{O}_4 \land \mc{O}_9$ and $\mc{O}_5 \land \mc{O}_7$. 

Finally, since $y_1y_2y_3 \in \mc{O}_{10}$ we cannot have any of the monomials $y_2^2y_3$, $y_3y_2y_3$, $y_1^2 y_3$, $y_1 y_3^2$, $y_1y_2y_1$, or $y_1 y_2^2$ appear in $\omega$ if $\omega$ contains $\OO_{10}$. This eliminates $\mc{O}_{10} \land \mc{O}_i$ for $4 \leq i \leq 9$.  Similarly, $\mc{O}_{11} \land \mc{O}_i$ cannot occur for $4 \leq i \leq 9$.

We get further reductions by restricting to nonabelian gradings. For example, if we have $\OO_4 \land \OO_7$, then we get
$$g_1 g_2^2 = g_2 g_1 g_2 = g_2 g_3^2 = g_3 g_2 g_3,$$
which implies that $g_2$ is central and that $g_1 = g_2^{-1} g_3^2$. This implies that $g_3^2$ commutes with $g_1$, so $g_3^2$ is central. But now $g_1 = g_2^{-1} g_3^2$ is the product of two central elements, so $g_1$ is also central. It follows that the group must be abelian in this case. Permuting variables, this also excludes $\OO_4 \land \OO_8$ and $\OO_7 \land \OO_8$. Similarly, $\OO_{10} \land \OO_{11}$ only produces abelian gradings.

Therefore, we are left with the following cases, up to permutation of the variables $y_i$:
\begin{enumerate}
    \item $\OO_1 \lor \OO_4 \lor \OO_5$.
    \item $\OO_1 \lor \OO_2 \lor \OO_5$.
    \item $\OO_1 \lor \OO_2 \lor \OO_3 \lor \OO_{10}$.
\end{enumerate}
We now analyze each of these three situations, starting with $\OO_1 \lor \OO_4 \lor \OO_5$.

\begin{prop}\label{prop:O145 algebras}
      Let $A \in \OO_1 \lor \OO_4 \lor \OO_5$ and let $\omega$ be the superpotential of $A$. Then $A$ can only be AS regular if both $\OO_4$ and $\OO_5$ appear in $\omega$. Assuming this is the case, then: 
    \begin{enumerate}
        \item If $A \in \OO_4 \land \OO_5$, then there exists $q \in \kk^\times$ such that either $A$ is isomorphic to the skew polynomial ring $A_{-1,q,q}$, or $A$ is isomorphic to the Ore extension $A_{-1}[z;\sigma_q]$. \label{item:145 B = 0}
        \item If $A \in \OO_1 \land \OO_4 \land \OO_5$, then $A$ is isomorphic to one of the algebras $\mc{A}$ or $\mc{E}^\pm$ from Notation~\ref{ntt:quadratic regular algebras}.  \label{item:145 B nonzero}
    \end{enumerate}
\end{prop}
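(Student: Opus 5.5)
We will write the superpotential explicitly and read off the relations as partial derivatives. Since $\mu(y_i) = \lambda_i y_i$, we have $\phi(y_1y_2^2) = \lambda_2 y_2y_1y_2$, $\phi(y_2y_1y_2) = \lambda_2 y_2^2y_1$ and $\phi(y_2^2y_1) = \lambda_1 y_1y_2^2$, and similarly for $\OO_5$. Hence
$$\omega = A\,y_1^3 + B\big(y_1y_2^2 + \lambda_2 y_2y_1y_2 + \lambda_2^2 y_2^2y_1\big) + C\big(y_1y_3^2 + \lambda_3 y_3y_1y_3 + \lambda_3^2 y_3^2y_1\big).$$
The condition $\phi(\omega) = \omega$ imposes the following constraints:
\begin{itemize}
\item $\lambda_1^3 = 1$ if $A \neq 0$;
\item $\lambda_1\lambda_2^2 = 1$ if $B \neq 0$;
\item $\lambda_1\lambda_3^2 = 1$ if $C \neq 0$.
\end{itemize}
Taking the partial derivatives $\delta_1, \delta_2, \delta_3$ gives the relation space $R$, spanned by
$$A y_1^2 + B y_2^2 + C y_3^2, \qquad B(y_1y_2 + \lambda_2 y_2y_1), \qquad C(y_1y_3 + \lambda_3 y_3y_1).$$
If $B = 0$ or $C = 0$, then $\dim R < 3$, so $A$ is not AS regular. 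This proves the first claim. From now on $B, C \neq 0$, so $\lambda_1 = \lambda_2^{-2} = \lambda_3^{-2}$, and therefore $\lambda_3 = \pm\lambda_2$.

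For \eqref{item:145 B = 0}, take $A = 0$. We rescale $y_3$ so that the third relation becomes $y_2^2 = y_3^2$, and set $z \coloneqq y_1$, $x \coloneqq y_2 + y_3$, $y \coloneqq y_2 - y_3$. Then $xy + yx = 0$, exactly as in Theorem~\ref{thm:Simon}, and the other two relations read $y_2 z = -\lambda_2^{-1} z y_2$ and $y_3 z = -\lambda_3^{-1} z y_3$. There are two cases.
\begin{itemize}
\item If $\lambda_3 = \lambda_2$, then $z$ skew-commutes with $x$ and $y$ by the same scalar $q$, and $A \cong A_{-1,q,q}$.
\item If $\lambda_3 = -\lambda_2$, then $zy_2 = q y_2 z$ and $zy_3 = -q y_3 z$. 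This is the $\Gamma \times \ZZ$ presentation of $A_{-1}[z;\sigma_q]$ in Table~\ref{tab:classification}. To identify it, we pass back to $x, y$ and check that $zx = q yz$ and $zy = q xz$.
\end{itemize}

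For \eqref{item:145 B nonzero}, take $A \neq 0$, so also $\lambda_1^3 = 1$. Rescaling $y_2, y_3$ puts the relations in the form
$$y_1^2 + y_2^2 + y_3^2 = 0, \qquad y_1y_2 = -\lambda_2 y_2y_1, \qquad y_1y_3 = -\lambda_3 y_3y_1.$$
The main obstacle is to show that regularity forces $\lambda_2 = \pm 1$, equivalently $\lambda_1 = 1$. The constraints alone allow $\lambda_1$ to be a primitive cube root of unity with $\lambda_2^2 = \lambda_1^{-1}$.

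I would attack this with Proposition~\ref{prop:ATV}. I would write down a standard matrix in the style of $M_2$ and $M_3$ in Proposition~\ref{prop:regularity of quadratic algebras}: its third row and column are $(y_2, y_3, y_1)$, and its upper-left $2\times 2$ block is diagonal with entries in $y_1$ twisted by $\lambda_2$ and $\lambda_3$. Standardness should itself impose compatibility conditions on the $\lambda_i$. If it does not, I would look for a common zero of the $2\times 2$ minors when $\lambda_2 \neq \pm 1$, for instance a point with $y_1 = 0$ and $y_2^2 + y_3^2 = 0$. Failing that, I would compare the Hilbert series in degree $3$ via an overlap-ambiguity computation. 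It should also help to use that the twist must equal the Nakayama automorphism computed from the relations: $\mu$ is determined by $R$, and that may already force $\lambda_1 = 1$.

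Once $\lambda_2, \lambda_3 \in \{\pm 1\}$, the three sign patterns give the remaining algebras, with $z \coloneqq y_1$:
\begin{itemize}
\item $\lambda_2 = \lambda_3 = -1$: both $y_2$ and $y_3$ commute with $y_1$, giving $\mc{E}^-$.
\item $\lambda_2 = \lambda_3 = 1$: both anticommute with $y_1$, giving $\mc{E}^+$.
\item mixed signs: one commutes and one anticommutes, giving $\mc{A}$.
\end{itemize}
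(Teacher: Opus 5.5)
\textbf{There is a gap in part (2).} Your first claim and part (1) are correct and follow the paper's argument essentially step for step. Part (2) is not closed, and the reason is a miscomputation of the twisted-superpotential condition on the singleton orbit $\OO_1$.

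Since $\phi(v_1\otimes v_2\otimes v_3)=\mu(v_3)\otimes v_1\otimes v_2$ applies $\mu$ to only one tensor factor, you get $\phi(y_1^3)=\mu(y_1)\,y_1y_1=\lambda_1 y_1^3$. No other monomial is sent by $\phi$ to a multiple of $y_1^3$. So $\phi(\omega)=\omega$ forces $\lambda_1=1$ whenever the coefficient of $y_1^3$ is nonzero, not merely $\lambda_1^3=1$. You applied the rule correctly for the $3$-element orbits $\OO_4,\OO_5$, where the scalar picked up after going once around the cycle is $\lambda_1\lambda_2^2$. For a fixed point of the permutation, however, there is no ``going around three times.''

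With $\lambda_1=1$, the conditions $\lambda_1\lambda_2^2=\lambda_1\lambda_3^2=1$ immediately give $\lambda_2,\lambda_3\in\{\pm1\}$. This is exactly how the paper proceeds, so your ``main obstacle'' does not actually exist.

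As written, though, your proposal only lists possible attacks (standard matrices, common zeros of minors, Hilbert series, the Nakayama automorphism) without carrying any of them out. So the reduction to $\lambda_2,\lambda_3=\pm1$ is unproved.

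If you prefer to argue from regularity rather than from the superpotential, there is also a one-line argument. Put $c:=\lambda_2^2=\lambda_3^2$. Your relations give $y_1y_2^2=c\,y_2^2y_1$ and $y_1y_3^2=c\,y_3^2y_1$. Hence
\[
0=y_1(y_1^2+y_2^2+y_3^2)=y_1^3+c\,(y_2^2+y_3^2)\,y_1=(1-c)\,y_1^3 .
\]
If $c\neq1$, this makes $y_1$ nilpotent, which is impossible in a three-dimensional regular algebra since such algebras are domains.

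Once $\lambda_2,\lambda_3\in\{\pm1\}$, your identification of the three sign patterns with $\mc{E}^-$, $\mc{E}^+$ and $\mc{A}$ is correct.
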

\begin{proof}
    Since $A \in \OO_1 \lor \OO_4 \lor \OO_5$, we can write the superpotential of $A$ as
    \begin{gather*}
        \omega = B y_1^3 + C(y_1 y_2^2 + \lambda_2 y_2 y_1 y_2 + \lambda_2^2 y_2^2 y_1) + D(y_1y_3^2 + \lambda_3 y_3 y_1 y_3 + \lambda_3^2 y_3^2 y_1)
    \end{gather*}
    for some $B, C, D \in \kk$. Clearly in order to avoid a $0$ relation we need $C, D \neq 0$, meaning both $\OO_4$ and $\OO_5$ appear in $\omega$. So, assume $C, D \neq 0$.
    
    \eqref{item:145 B = 0} Suppose $B = 0$, so the relations of $A$ are
    $$y_2^2 + \gamma y_3^2 = 0, \qquad y_1 y_2 + \lambda_2 y_2 y_1 = 0, \qquad y_1 y_3 + \lambda_3 y_3 y_1 = 0,$$
    where $\gamma \coloneqq \frac{D}{C}$.  
    
    Let $\alpha \in \kk$ such that $\alpha^2 = -\gamma$, and define $x \coloneqq y_2 + \alpha y_3$, $y \coloneqq y_2 - \alpha y_3$ and $z \coloneqq y_1$. Note that
    \begin{align*}
        xy + yx &= (y_2 + \alpha y_3)(y_2 - \alpha y_3) + (y_2 - \alpha y_3)(y_2 + \alpha y_3) \\
        &= y_2^2 + \gamma y_3^2 - \alpha(y_2 y_3 - y_3 y_2) + y_2^2 + \gamma y_3^2 + \alpha(y_2 y_3 - y_3 y_2) = 0.
    \end{align*}
    In this case we have $\lambda_2^2 = \lambda_3^2 = \lambda_1^{-1}$, and thus $\lambda_3 = \pm \lambda_2$. Therefore, we have
    \begin{align*}
        zx &= y_1(y_2 + \alpha y_3) = (-\lambda_2 y_2 - \alpha \lambda_3 y_3)y_1 = -\lambda_2(y_2 \pm \alpha y_3)y_1 \\
        zy &= y_1(y_2 - \alpha y_3) = (-\lambda_2 y_2 + \alpha \lambda_3 y_3)y_1 = -\lambda_2(y_2 \mp \alpha y_3)y_1.
    \end{align*}
    So, we define $q \coloneqq -\lambda_2$ and split the rest of the proof into two cases.

    \begin{case}
        $\lambda_3 = \lambda_2 = -q$.
    \end{case}

    In this case, the relations of $A$ are
    $$yx = -xy, \qquad zx = q xz, \qquad zy = q yz,$$
    so $A \cong A_{-1,q,q}$.

    \begin{case}
        $\lambda_3 = -\lambda_2 = q$.
    \end{case}

    In this case, the relations of $A$ are
    $$yx = -xy, \qquad zx = q yz, \qquad zy = q xz,$$
    so $A \cong A_{-1}[z;\sigma_q]$.
    
    \eqref{item:145 B nonzero} Suppose $B \neq 0$, so that $\lambda_1 = 1$ and $\lambda_2^2 = \lambda_3^2 = 1$. Then the relations of $A$ are
    $$B y_1^2 + C y_2^2 + D y_3^2 = 0, \qquad y_1 y_2 + \lambda_2 y_2 y_1 = 0, \qquad y_1 y_3 + \lambda_3 y_3 y_1 = 0.$$
    Let $\alpha, \beta, \gamma \in \kk$ such that $\alpha^2 = B$, $\beta^2 = C$, and $\gamma^2 = D$, and define $x \coloneqq \beta y_2$, $y \coloneqq \gamma y_3$, and $z \coloneqq \alpha y_1$. Then the relations of $A$ have the following form:
    $$x^2 + y^2 + z^2 = 0, \qquad zx + \lambda_2 xz = 0, \qquad zy + \lambda_3 yz = 0.$$
    Recall the requirement that $\lambda_2, \lambda_3 \in \{\pm 1\}$. If $\lambda_2 = \lambda_3$, then we get one of the algebras $\mc{E}^\pm$. If $\lambda_2 = -\lambda_3$, then we get the algebra $\mc{A}$. This is because if $\lambda_2 = 1$ and $\lambda_3 = -1$, then the resulting algebra is still isomorphic to $\mc{A}$ by permuting $x$ and $y$.
\end{proof}

Next, we show that $\OO_1 \lor \OO_2 \lor \OO_5$ does not yield AS regular algebras.

\begin{lem}\label{lem:O125 not regular}
    Let $A \in \OO_1 \lor \OO_2 \lor \OO_5$. Then $A$ is not AS regular.
\end{lem}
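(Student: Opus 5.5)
Write the superpotential in the form forced by the orbits $\OO_1$, $\OO_2$, $\OO_5$ from \eqref{eq:quadratic orbits tau = 1}:
$$\omega = B y_1^3 + C y_2^3 + D(y_1 y_3^2 + \lambda_3 y_3 y_1 y_3 + \lambda_3^2 y_3^2 y_1)$$
for some $B, C, D \in \kk$, in the same way as in the proof of Proposition~\ref{prop:O145 algebras}. We then read off the relations by taking the left partial derivatives $\delta_1, \delta_2, \delta_3$ of $\omega$. These are
$$\delta_1(\omega) = B y_1^2 + D y_3^2, \qquad \delta_2(\omega) = C y_2^2, \qquad \delta_3(\omega) = D(\lambda_3 y_1 y_3 + \lambda_3^2 y_3 y_1).$$
The key point is the second relation. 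Either it is zero, or, when $C \neq 0$, it is the monomial relation $y_2^2 = 0$. We treat these two cases separately.

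If $C = 0$, the relation space $R$ has dimension at most $2 < 3$. A quadratic three-dimensional AS regular algebra has exactly three linearly independent relations \cite{ArtinSchelter}, so $A$ is not AS regular. Equivalently, $\omega$ does not involve $y_2$ at all, and this contradicts Lemma~\ref{lem:superpotential-property}, since then $t_2 = 0$.

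If $C \neq 0$, then $y_2^2 = 0$ holds in $A$, where $y_2 \neq 0$ is a degree one element. So $A$ is not a domain. Every three-dimensional AS regular algebra generated in degree $1$ is a domain, as noted after Proposition~\ref{prop:ATV}, so again $A$ is not AS regular. This is the same argument the paper uses for $\mc{C}_{\pm 1}$.

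There is no real obstacle. The only care needed is to record the partial derivatives correctly, and in particular to check that no term of $\omega$ other than $C y_2^3$ begins with $y_2$. The orbits $\OO_1$ and $\OO_5$ involve only $y_1$ and $y_3$, so this holds. Hence $\delta_2(\omega)$ is indeed a scalar multiple of $y_2^2$.
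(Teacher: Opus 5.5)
Your proof is correct and follows the paper's argument: the same form of $\omega$, the same three partial derivatives, and the same case split ($C = 0$ leaves fewer than three independent relations, while $C \neq 0$ gives the relation $y_2^2 = 0$, so $A$ is not a domain). Your alternative justification of the case $C = 0$ via Lemma~\ref{lem:superpotential-property} is also valid, though the paper does not need it.
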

\begin{proof}
    Write
    $$\omega = B y_1^3 + C y_2^3 + D(y_1y_3^2 + \lambda_3 y_3 y_1 y_3 + \lambda_3^2 y_3^2 y_1)$$
    for some $B, C, D \in \kk$. The corresponding algebra is $A = \kk \langle y_1, y_2, y_3 \rangle/(r_1, r_2, r_3)$ with 
    \begin{gather*}
        r_1 = B y_1^2 + D y_3^2, \\
        r_2 = C y_2^2,  \\
        r_3 = D(\lambda_3 y_1 y_3 + \lambda_3^2 y_3y_1).
    \end{gather*}
    This algebra cannot be regular, as either $C = 0$ and there are only two independent degree $2$ relations, or else $C \neq 0$ and there is a relation $y_2^2 = 0$ which shows that $A$ is not a domain.
\end{proof}

The final case is $A \in \OO_1 \lor \OO_2 \lor \OO_3 \lor \OO_{10}$, which, as we show next, implies that $A$ is a Sklyanin algebra of the form $S_{q,0,1}$.

\begin{prop}\label{prop:O12310 algebras}
    Let $A \in \OO_1 \lor \OO_2 \lor \OO_3 \lor \OO_{10}$.  Then $A$ can only be AS regular if $A \in \OO_1 \land \OO_2 \land \OO_3 \land \OO_{10}$, in which case $A \cong S_{q,0,1}$ for some $q \in \kk^\times$, where $S_{abc}$ is the Sklyanin algebra from Definition~\ref{def:Sklyanin}.
\end{prop}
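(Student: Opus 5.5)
Following the proofs of Propositions~\ref{prop:O145 algebras} and \ref{prop:O46 algebras}, we write the superpotential as
$$\omega = B y_1^3 + C y_2^3 + D y_3^3 + E(y_1 y_2 y_3 + \lambda_3 y_3 y_1 y_2 + \lambda_2 \lambda_3 y_2 y_3 y_1)$$
for some $B, C, D, E \in \kk$. Here $\mu(y_i) = \lambda_i y_i$ with $\phi(\omega) = \omega$. This forces $\lambda_i = 1$ whenever the coefficient of $y_i^3$ is nonzero, and $\lambda_1\lambda_2\lambda_3 = 1$ if $E \neq 0$. Taking the partial derivatives $\delta_i(\omega)$, i.e.\ the left factor of $x_i$, the relations are
$$r_1 = B y_1^2 + E y_2 y_3, \qquad r_2 = C y_2^2 + E \lambda_2\lambda_3\, y_3 y_1, \qquad r_3 = D y_3^2 + E\lambda_3\, y_1 y_2.$$

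The first step is to show that all four coefficients are nonzero. If $E = 0$, the relations are just $y_i^2$ (or fewer than three relations), so $A$ is not a domain or has too few relations; hence $A$ is not regular, since regular algebras of dimension $3$ are domains. If $E \neq 0$ but, say, $B = 0$, then $y_2 y_3 = 0$ is a relation, again contradicting the domain property. The cases $C = 0$ and $D = 0$ are symmetric. Therefore $A \in \OO_1 \land \OO_2 \land \OO_3 \land \OO_{10}$. In this case $\lambda_1 = \lambda_2 = \lambda_3 = 1$, so $\mu = \id$ and the relations become
$$B y_1^2 + E y_2 y_3 = 0, \qquad C y_2^2 + E y_3 y_1 = 0, \qquad D y_3^2 + E y_1 y_2 = 0.$$

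The second step is a diagonal rescaling $y_i \mapsto \alpha_i y_i$ that brings these to the form $x^2 + q yz$, $y^2 + q zx$, $z^2 + q xy$, which is exactly $S_{q,0,1}$ with $a = q$, $b = 0$, $c = 1$. We need $B\alpha_1^2 : E\alpha_2\alpha_3 = C\alpha_2^2 : E\alpha_3\alpha_1 = D\alpha_3^2 : E\alpha_1\alpha_2 = 1 : q$. Equivalently, $B\alpha_1^3 = C\alpha_2^3 = D\alpha_3^3$ (after multiplying each ratio by $\alpha_1\alpha_2\alpha_3$), which is solvable since $\kk$ is algebraically closed. Then $q$ is determined as $E\alpha_1\alpha_2\alpha_3/(B\alpha_1^3)$, which is nonzero.

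The only mildly delicate point is the rescaling bookkeeping in the second step. The conclusion that such an $A$ is actually regular is already recorded via the Sklyanin regularity criterion, so no further check is needed there. Note also that the $\OO_{10}$ coefficient has to be kept consistent with the partial-derivative convention. If the cyclic order makes the relations come out as $y_3y_2$ rather than $y_2y_3$, the argument is identical after the analogous rescaling; alternatively, the relabeling $y_2 \leftrightarrow y_3$ swaps $\OO_{10}$ and $\OO_{11}$ and matches the convention of Definition~\ref{def:Sklyanin}.
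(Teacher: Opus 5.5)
Your proposal is correct and follows essentially the same route as the paper. You write $\omega$ in terms of the four orbits and use the domain property to force $E \neq 0$ and then $B, C, D \neq 0$, which gives $\lambda_1=\lambda_2=\lambda_3=1$. You then rescale diagonally subject to $B\alpha_1^3 = C\alpha_2^3 = D\alpha_3^3$ to reach $S_{q,0,1}$. Your relations $r_i$ are exactly the paper's, obtained by stripping the leftmost variable, so the closing caveat about derivative conventions is unnecessary.
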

\begin{proof}
    Write the superpotential of $A$ as
    $$\omega = B y_1^3 + C y_2^3 + D y_3^3 + E(y_1 y_2 y_3 + \lambda_3 y_3 y_1 y_2 + \lambda_2 \lambda_3 y_2 y_3 y_1)$$
    for some $B, C, D, E \in \kk$. It is easy to see that if $E = 0$ then we get a non-domain if $B \neq 0$, or fewer than three relations if $B = 0$. Therefore, we need $E \neq 0$ to get an AS regular algebra. To avoid any monomial relations, which again would produce zero-divisors in $A$, we must further have all of $B, C, D \neq 0$.
    
    Now, in order for $\omega$ to be a $\mu$-twisted superpotential, we must necessarily have $\lambda_1 = \lambda_2 = \lambda_3 = 1$. Thus, the algebra $A$ has relations
    $$y_1^2 + \nu_1 y_2 y_3 = 0, \qquad y_2^2 + \nu_2 y_3 y_1 = 0, \qquad y_3^2 + \nu_3 y_1 y_2 = 0,$$
    where $\nu_1 \coloneqq \frac{E}{B}$, $\nu_2 \coloneqq \frac{E}{C}$, and $\nu_3 \coloneqq \frac{E}{D}$.

    Let $x \coloneqq \alpha y_1$, $y \coloneqq \beta y_2$, and $z \coloneqq \gamma y_3$, where $\alpha, \beta, \gamma \in \kk^\times$ are scalars to be determined. Then the relations of $A$ become
    $$x^2 + \frac{\alpha^2}{\beta \gamma} \nu_1 yz = 0, \qquad y^2 + \frac{\beta^2}{\alpha \gamma} \nu_2 zx = 0, \qquad z^2 + \frac{\gamma^2}{\alpha \beta} \nu_3 xy = 0.$$
    For these relations to look like the Sklyanin relations, we need
    \begin{equation}\label{eq:Sklyanin coefficient condition}
        \frac{\alpha^2}{\beta \gamma} \nu_1 = \frac{\beta^2}{\alpha \gamma} \nu_2 = \frac{\gamma^2}{\alpha \beta} \nu_3.
    \end{equation}
    This happens if and only if $\alpha^3 \nu_1 = \beta^3 \nu_2 = \gamma^3 \nu_3$. For example, \eqref{eq:Sklyanin coefficient condition} is satisfied if we choose $\alpha, \beta, \gamma \in \kk^\times$ so that $\alpha^3 = \nu_1^{-1}$, $\beta^3 = \nu_2^{-1}$, and $\gamma^3 = \nu_3^{-1}$. Letting $q \coloneqq (\alpha \beta \gamma)^{-1}$, we then see that the relations of $A$ are
    $$x^2 + q yz = 0, \qquad y^2 + q zx = 0, \qquad z^2 + q xy = 0,$$
    and thus $A \cong S_{q,0,1}$, as required.
\end{proof}

\subsection{\texorpdfstring{$m = 2$}{m = 2}}
\label{subsec: m2-diag}
We are left with the case that $m = 2$, that is, there are two distinct elements $g_i$ grading degree $1$ elements of $A_1 = V$.  Without loss of generality we can assume that $\dim_{\kk} V_{g_1} = 2$ and $\dim_{\kk} V_{g_2} = 1$.
The Nakayama automorphism $\mu$ permutes these two spaces, so it must fix them since they have different sizes. Choose a basis $\{y_1, y_2, y_3 \}$ of $V$ such that $V_{g_1} = \kk y_1 + \kk y_2$ and $V_{g_2} = \kk y_3$.  

In this case there is the possibility that $\mu$ acts on $V_{g_1}$ as a Jordan block, in which case the map 
$\phi \colon y_{i_1}y_{i_2}y_{i_3} \mapsto \mu(y_{i_3})y_{i_1}y_{i_2}$ does not permute the monomials up to scalar as in previous cases.  However, a less detailed analysis of the action of $\phi$ will suffice to understand this case.

Write $W_i = V_{g_i}$ for $i = 1,2$ and $W_{ijk} = W_i \otimes W_j \otimes W_k$.  
Note that 
\[
V^{\otimes 3} = W_{111} \oplus  (W_{112} \oplus W_{211} \oplus W_{121}) \oplus ( W_{122} \oplus W_{212} \oplus W_{221}) \oplus W_{222}. 
\]
Each of the 4 main summands is $\phi$-stable, and moreover $\phi$ permutes the 3 components of each of the middle summands in a 3-cycle: for example, $\phi(W_{112}) = W_{211}$, $\phi(W_{211}) = W_{121}$, and $\phi(W_{121}) = W_{112}$.  Since a superpotential $\omega$ is $\phi$-fixed, if $\omega$ contains a nonzero term in $W_{112}$, $W_{211}$, or $W_{121}$, it contains a nonzero term in all three of them.  This clearly will force $g_1^2g_2 = g_2g_1^2 = g_1g_2g_1$, so $g_1$ and $g_2$ commute, and thus the grading group $G$ is abelian.  A similar argument applies to the summand $( W_{122} \oplus W_{212} \oplus W_{221})$:  once $\omega$ contains a nonzero term in this summand, $G$ will be abelian.  

Thus assuming that $G$ is nonabelian, $\omega \in W_{111} \oplus W_{222}$.  Since $W_{222} = \kk y_3^3$, this means that 
a term of the form $B y_3^3$ will be the only term in the superpotential $\omega$ which contains $y_3$.  Therefore the algebra $A$ will have the relation $\delta_{3} \omega = B y_3^2$, and so $A$ is not a domain if $B \neq 0$ or has fewer than three relations if $B = 0$, both of which yield contradictions.

We summarize our findings below.

\begin{cor}\label{cor:quadratic m = 2}
    Let $A = \kk\ang{y_1,y_2,y_3}/(R)$ be a quadratic AS regular algebra faithfully graded by a group $G$ such that the $y_i$ are $G$-homogeneous with $\deg_G(y_i) = \deg_G(y_j)$ for some $i \neq j$. Then $G$ is abelian.
\end{cor}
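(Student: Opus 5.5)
The plan is to reduce to the situation analyzed just before the statement in Section~\ref{subsec: m2-diag}. Let $g_i = \deg_G(y_i)$ and suppose $g_i = g_j$ for some $i \neq j$. If all three of the $g_i$ coincide, then faithfulness forces $G = \ang{g_1}$ to be cyclic, hence abelian, and we are done. So we may assume that exactly two distinct group elements grade $V = A_1$.

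In that case the homogeneous decomposition is $V = V_{g} \oplus V_{g'}$ with $\dim V_g = 2$ and $\dim V_{g'} = 1$. By Corollary~\ref{cor:permute}, the Nakayama automorphism $\mu$ permutes the homogeneous components. Because the two components have different dimensions, $\mu$ must preserve each of them. Write $W_1 = V_g$ and $W_2 = V_{g'}$, and decompose $V^{\otimes 3}$ into the blocks $W_{ijk}$ as in Section~\ref{subsec: m2-diag}. The map $\phi$ cyclically permutes $W_{112}, W_{211}, W_{121}$, and likewise $W_{122}, W_{212}, W_{221}$. The superpotential $\omega$ is $\phi$-fixed, so if it has a nonzero component in one block of a triple, it has nonzero components in all three. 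By Lemma~\ref{lem:superpotential-grading}, $\omega$ is $G$-homogeneous, so those components share a $G$-degree. This gives $g^2g' = g'g^2 = gg'g$ (or the analogous identities with $g'$ squared), so $g$ and $g'$ commute and $G = \ang{g,g'}$ is abelian.

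If $G$ were nonabelian we would therefore have $\omega \in W_{111} \oplus W_{222}$, with $W_{222} = \kk y^3$ for a spanning vector $y$ of $W_2$. The partial derivatives of $\omega$ would then contain the relation $\delta_y(\omega) = By^2$, and no other relation would involve $y$. If $B \neq 0$, $A$ is not a domain. If $B = 0$, the derivatives span at most two dimensions, because they come only from $W_{111}$ with derivatives in the two variables of $W_1$. Either outcome contradicts the AS regularity of $A$: regular algebras of dimension $3$ are domains with three quadratic relations. Hence $G$ is abelian.

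I expect the main obstacle to be a point already handled in the text: $\mu$ need not be diagonalizable on $W_1$, so we cannot use monomial orbits. Using the block decomposition, with $\phi$ permuting the blocks, sidesteps this difficulty.
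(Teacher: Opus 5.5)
Your proposal is correct and follows the paper's proof essentially step for step. In both, the all-equal case gives a cyclic group; otherwise $\mu$ fixes both components, $\phi$ cyclically permutes the mixed blocks $W_{ijk}$, so any mixed term forces the degrees to commute, and the remaining case $\omega \in W_{111} \oplus W_{222}$ yields the relation $By^2$ (or too few relations), contradicting regularity — the same block decomposition the paper uses to handle a non-diagonalizable $\mu$ on $W_1$.
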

\begin{proof}
    The result is clear if $\deg_G(y_1) = \deg_G(y_2) = \deg_G(y_3)$, so we may assume without loss of generality that $\deg_G(y_1) = \deg_G(y_2) \neq \deg_G(y_3)$.  As we saw above, assuming $G$ is nonabelian leads to a contradiction, so $G$ must be abelian.
\end{proof}

We are now ready to prove Theorem~\ref{thm:quadratic case}.

\begin{proof}[Proof of Theorem~\ref{thm:quadratic case}]
    Let $A$ be a quadratic AS regular algebra of dimension $3$, faithfully graded by a nonabelian group $G$, refining the $\mb{N}$-grading.  We keep the notation from above: $y_1, y_2, y_3 \in A_1$ are linearly independent $G$-homogeneous elements. By Corollary~\ref{cor:quadratic m = 2}, we must have $\deg_G(y_i) \neq \deg_G(y_j)$ for all $i \neq j$. Then Lemmas~\ref{lem:quadratic m = 3 three-cycle} and \ref{lem:O125 not regular} together with Propositions~\ref{prop:O356 algebras}, \ref{prop:O46 algebras}, \ref{prop:O145 algebras}, and \ref{prop:O12310 algebras} show that $A$ is isomorphic to one of the algebras in the first column of Table~\ref{tab:classification}.  For each case in one of these propositions, we make a simple choice of superpotential $\omega$ that leads to the algebra and then trace back any change of variables done in the proof to determine the original $G$-homogeneous basis $y_1, y_2, y_3$.  This produces the change of variables listed in column 2 of  Table~\ref{tab:classification}, and a routine calculation leads to the presentations in column 3 of the algebras in terms of the $G$-homogeneous basis.  Finally, the maximal grading group $G$ in column 4 can be simply determined by assigning free degrees $a, b, c$ to the $G$-homogeneous generators and writing all group relations determined by the requirement that all monomials in each algebra relation have the same degree.
\end{proof}

As an application of Theorem~\ref{thm:quadratic case}, we characterize the skew polynomial rings which have a faithful grading by a nonabelian group. This also completes \cite[Theorem~7.2]{BuzagloRogalskiManin}: the only three-variable \emph{one-parameter} skew polynomial ring (meaning $q_{ij} = q_{k\ell}$ for all $i < j$ and $k < \ell$) graded by a nonabelian group is $A_{-1,-1,-1}$, graded by the group $\Gamma \times \ZZ$.

\begin{cor}\label{cor:skew polynomial examples}
    Up to isomorphism, the only three-dimensional skew polynomial rings which have a faithful grading by a nonabelian group are $A_{-1,q,-q}$, $A_{-1,q,q}$, $A_{1,q,-q}$, and $A_{\zeta, \zeta^2, \zeta}$, where $q \in \kk^\times$ and $\zeta$ is a primitive third root of unity.
\end{cor}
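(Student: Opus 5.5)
Two things need to be shown: that the four listed families really do admit faithful nonabelian gradings, and that no other three-dimensional skew polynomial ring does.

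For existence, the first three families are handled by Examples~\ref{ex:multi-parameter nonabelian grading}, \ref{ex:Gamma direct Z} and \ref{ex:Z^2 semidirect Z}. These give faithful gradings by $\widehat{\Gamma}$, $\Gamma \times \ZZ$ and $\ZZ^2 \rtimes_\psi \ZZ$, and each of these groups is nonabelian.

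For $A_{\zeta,\zeta^2,\zeta}$, I would identify it with a Sklyanin algebra $S_{q,0,1}$, which carries the $\Delta$-grading of Table~\ref{tab:classification}.
\begin{itemize}
\item Its relations are $yx=\zeta xy$, $zx=\zeta^2 xz$, $zy=\zeta yz$.
\item The algebra $S_{q,0,1}$ has point scheme $x^3+y^3+z^3 + (\text{const})\,xyz=0$ by the usual computation. For special $q$ this cubic degenerates to a triangle of lines, and the skew polynomial ring $A_{pqr}$ has point scheme the triangle $xyz=0$.
\item Better, I will search directly for a linear change of variables. I will try $u_i=\sum_j \zeta^{ij}x_j$ (a discrete Fourier transform on the cyclic symmetry $x\to y\to z$) and check that the relations $x^2+qyz$ and its cyclic shifts become binomial skew-commutation relations with parameters $(\zeta,\zeta^2,\zeta)$ for a suitable $q$ (presumably $q$ with $q^3=-8$ or similar).
\item Which $q$ works is the one computation I must actually carry out. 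I expect this identification, together with nonabelianness of $\Delta$ (it surjects onto the von Dyck group $D(3,3,3)$), to be the main obstacle.
\end{itemize}

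For the converse, suppose $A_{pqr}$ has a faithful nonabelian grading. By Theorem~\ref{thm:quadratic case}, $A_{pqr}$ is isomorphic to an algebra in Table~\ref{tab:classification}. I will rule out each non-skew-polynomial row using isomorphism invariants, chiefly the point scheme and its automorphism, or more simply the number of point modules. The point scheme of $A_{pqr}$ is $\PP^2$ if $pqr=1$ and otherwise the triangle $xyz=0$, possibly with the automorphism acting by scalings that fix or preserve the lines.
\begin{itemize}
\item For the Ore extensions $A_{-1}[z;\sigma_q]$, $A_{-1}[z;\sigma,\delta]$ and $A_{-1}[z;\tau,\del]$, and for $\mc{A},\dots,\mc{E}^\pm$ and $\mc{C}_q$, I compute $\det$ of the standard matrices $M_i$ from Proposition~\ref{prop:regularity of quadratic algebras}. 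For example, $\det M_2$ for $\mc{A}$ is $-z(z^2-y^2)-x^2 z$ up to sign, i.e.\ a line plus a conic, which is not a triangle and not $\PP^2$.
\item Alternatively, and more cheaply, I can use the normal elements in degree $1$. A skew polynomial ring has three linearly independent normal degree-one generators, and I can check that most table algebras have fewer. For instance, in $\mc{A}$ and $\mc{E}^\pm$, $z$ is normal but $x^2+y^2+z^2=0$ prevents $x$ and $y$ from being normal.
\item The exception is where such a row coincides with a skew polynomial ring for special parameters, e.g.\ $A_{-1}[z;\sigma_q]$. There I check directly which skew polynomial rings arise, and these should land in the listed families. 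For $A_{-1}[z;\sigma_q]$, the diagonalization $x\pm y$ gives $(x+y)^2$ versus $(x-y)^2$ non-binomial relations, but the first row of the table shows it is $A_{-1,q,-q}$-like after a twist by $i$, so I will verify whether it is genuinely skew polynomial.
\end{itemize}

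Finally, for the Sklyanin row, $S_{q,0,1}$ is a skew polynomial ring only when its point scheme is a triangle or $\PP^2$, that is for finitely many $q$. For those $q$ the identification of the existence step yields exactly $A_{\zeta,\zeta^2,\zeta}$, since the Nakayama automorphism cycles the coordinates and so forces $p=r$ and $q=p^2$ with $p^3=1$, $p\neq 1$.
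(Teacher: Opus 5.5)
You are right to get existence from the rows of Table~\ref{tab:classification} and to identify $A_{\zeta,\zeta^2,\zeta}$ with a $\Delta$-graded Sklyanin algebra by a discrete Fourier transform; this is what the paper does. The parameter is $q^3=-1$, not $-8$. For $q=-1$ the relations are $x^2=yz$, $y^2=zx$, $z^2=xy$, and $u=x+y+z$, $v=x+\zeta y+\zeta^2 z$, $w=x+\zeta^2 y+\zeta z$ satisfy $vu=\zeta uv$, $wu=\zeta^2 uw$, $wv=\zeta vw$.

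The genuine gap is in the converse. The invariants you name do not separate the critical rows, and the facts you quote about them are wrong. The standard matrix of $S_{q,0,1}$ has determinant $(1+q^3)xyz$. So its point scheme is the triangle $xyz=0$ for \emph{every} $q$ with $q^3\neq -1$, and $\PP^2$ otherwise. It is never a smooth cubic, and it is the same as the point scheme of a generic $A_{pqr}$. Hence ``skew polynomial for only finitely many $q$'' does not follow. Counting point modules is also useless, since all these algebras have infinitely many.

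Your fallback is false as well. The Nakayama automorphism of $S_{q,0,1}$ is the identity ($\lambda_1=\lambda_2=\lambda_3=1$ in Proposition~\ref{prop:O12310 algebras}). That only forces a putative isomorphic $A_{pqr}$ to be some $A_{p,p^{-1},p}$, with $p$ arbitrary. The cyclic symmetry $x\mapsto y\mapsto z$ is an automorphism of every $A_{p,p^{-1},p}$ too, so nothing yields $p^3=1$.

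The missing ingredient is the \emph{automorphism} of the point scheme. For $A_{pqr}$ with triangular point scheme it fixes each line. For $S_{q,0,1}$ with $q^3\neq-1$ it permutes the three lines cyclically \cite[(1.7)]{ATV1}, and this is how the paper excludes those $q$. Your degree-one normal-element test would also work here, since Proposition~\ref{prop:Sklyanin no reflection groups} shows there are none when $q^3\neq -1$, but you do not apply it. For $\mathcal{A}$ and $\mathcal{E}^\pm$, showing that $x$ and $y$ are not normal is not enough; you must rule out all linear combinations.

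The same problem affects $A_{-1}[z;\sigma_q]$, which you leave open and even suggest might be skew polynomial. Its point scheme is $z(x^2+y^2)=0$, again a triangle. It differs from the skew polynomial case only in that the point-scheme automorphism swaps the lines $x=\pm iy$. Likewise, for $\mathcal{C}_q$ with $q^2=-8$, $\det M_1=(q^2+2)xyz-q(x^3+y^3)-z^3$ factors into three lines.

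The paper avoids this with a cheaper invariant, $d_A=\det(\mu|_{A_1})$. For $A_{pqr}$ the Nakayama automorphism is $\mathrm{diag}(pq,\,p^{-1}r,\,q^{-1}r^{-1})$, so $d_A=1$. The twists found in Section~\ref{sec:quadratic} give $d_A=-1$ for $A_{-1}[z;\sigma_q]$, $\mathcal{C}_q$, $\mathcal{A}$ and $A_{-1}[z;\tau,\del]$. Point schemes are then needed only for:
\begin{itemize}
\item $A_{-1}[z;\sigma,\delta]$ (three concurrent lines);
\item $\mathcal{B}$ (a triple line);
\item $\mathcal{D}$ and $\mathcal{E}^\pm$ (a conic plus a line);
\item $S_{q,0,1}$ (handled as above).
\end{itemize}

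Finally, with the paper's convention the point scheme of $A_{pqr}$ is $\PP^2$ exactly when $q=pr$, not when $pqr=1$. For example, $A_{\zeta,\zeta^2,\zeta}$ has point scheme $\PP^2$ although $pqr=\zeta$. This is consistent with its being isomorphic to $S_{-1,0,1}$, whose point scheme is $\PP^2$, not a triangle.
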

\begin{proof}
    We have already seen that the first three examples have a faithful grading by a nonabelian group; see the first three rows of Table~\ref{tab:classification}.

    We will show that none of the other algebras appearing in Theorem~\ref{thm:quadratic case} are skew polynomial rings, with the exception of $S_{q, 0, 1}$ in case $q^3 = -1$, which turns out to be isomorphic to $A_{\zeta, \zeta^2, \zeta}$.  
    
    One invariant of an AS regular algebra $A$ is the quantity $d_A \coloneqq \det(\restr{\mu}{A_1})$, 
    where $\mu$ is the Nakayama automorphism of $A$.  For the algebra $A_{pqr}$, one has $\mu(x_1) = pq x_1$, $\mu(x_2) = p^{-1}r x_2$, and $\mu(x_3) = q^{-1} r^{-1} x_3$.
    In particular, $d_{A_{pqr}}= 1$.

    The quantity $d_A$ is easy to calculate for the examples in Theorem~\ref{thm:quadratic case}, as $\restr{\mu}{A_1}$ is generally clear from the analysis of each case of the proof.  In particular, one easily checks that $d_{\mc{C}_q} = d_{\mc{A}} = d_{A_{-1}[z; \tau, \del]} = d_{A_{-1}[z; \sigma_q]} = -1$, eliminating those examples from consideration.

    To exclude the remaining examples, we consider point schemes and the associated automorphisms, as described in Section~\ref{sec:prelim}.  It is well known that the point scheme of the skew polynomial ring $A_{pqr}$ is either a triangle of three lines intersecting in 3 distinct points (if $q \neq pr$), with automorphism sending each line to itself, or $\PP^2$ (if $q = pr$), with diagonalizable automorphism in $\operatorname{PGL}_{3}(\kk)$.  

    We claim that the point schemes for some of the other examples occurring in Theorem~\ref{thm:quadratic case} have the following form: 
    \begin{center}
        \begin{tabular}{|c|c|c|}\hline
            Algebra & Condition & Point scheme \\\hline\hline
            $A_{-1}[z; \sigma, \delta]$ & & Three lines intersecting in one point \\\hline
            $\mc{B}$ & & Triple line \\\hline
            $\mc{D}$ & & Union of a conic and a line \\\hline
            $\mc{E}^\pm$ & & Union of a conic and a line \\\hline
            \multirow{2}{*}{$S_{q,0,1}$} & $q^3 + 1 \neq 0$ & Triangle of three lines \\\cline{2-3}
             & $q^3 + 1 = 0$ & $\PP^2$ \\\hline
        \end{tabular}
    \end{center}
    For $A_{-1}[z;\sigma, \delta]$, we use the presentation given in the third column of Table~\ref{tab:classification}, with generators $u, v, z$ and relations 
    $$u^2 = v^2, \qquad zu - vz - uv = 0, \qquad zv - uz - vu = 0.$$
    We therefore have the following standard matrix:
    $$\begin{pmatrix}
        -v & z & -u \\
        -z & u & v \\
        u & -v & 0
    \end{pmatrix},$$
    so the point scheme is defined by the vanishing of its determinant, namely
    $$(u - v)(u - \zeta v)(u - \zeta^2 v) = 0,$$
    where $\zeta$ is a primitive third root of unity. This is the union of three lines intersecting at the single common point $[0 : 0 : 1]$.

    The standard matrices for $\mc{B}, \mc{D}$, and $\mc{E}^{\pm}$ were given in the proof of Proposition~\ref{prop:regularity of quadratic algebras}, and the calculation of the associated point scheme is straightforward.  The structure of the point scheme of $S_{q, 0, 1}$ is given in \cite[p.~38]{ATV1}.  This verifies the table.

    For $A_{-1}[z; \sigma, \delta]$, $\mc{B}$, $\mc{D}$, and $\mc{E}^{\pm}$, the table shows that each of these examples has a point scheme which is different from that of any skew polynomial ring.  In the case of $S_{q,0,1}$ with $q^3 + 1 \neq 0$, the associated automorphism can be found in \cite[(1.7)]{ATV1}.  One sees that the automorphism permutes the three lines in a 3-cycle, which does not match the associated automorphism of $A_{pqr}$ in the case its point scheme is a triangle.
    
    However, if $q^3 = -1$, then we claim that $S_{q,0,1} \cong A_{\zeta, \zeta^2, \zeta}$. Certainly, the point schemes are both isomorphic to $\mb{P}^2$.  We now exhibit an explicit isomorphism between these algebras. First, note that $S_{-1,0,1} \cong S_{-\zeta,0,1} \cong S_{-\zeta^2,0,1}$, where the isomorphisms are achieved by rescaling $x$ by $\zeta$ or $\zeta^2$. We may therefore assume that $q = -1$, so we have the relations
    $$x^2 = yz, \qquad y^2 = zx, \qquad z^2 = xy.$$
    Now define
    $$u \coloneqq x + y + z, \qquad v \coloneqq x + \zeta y + \zeta^2 z, \qquad w \coloneqq x + \zeta^2 y + \zeta z.$$
    A straightforward calculation gives $vu = \zeta uv$, $wu = \zeta^2 uw$, and $wv = \zeta vw$. So we see that $S_{-1,0,1} \cong A_{\zeta, \zeta^2, \zeta}$.
\end{proof}

\section{The cubic case}\label{sec:cubic}

Having classified the quadratic three-dimensional AS regular algebras graded by nonabelian groups, we now move on to the cubic case.

\subsection{The cubic graded algebras}

As we did in Section~\ref{sec:quadratic}, we begin by defining the cubic algebras graded by nonabelian groups.

\begin{ntt}\label{ntt:cubic algebras}
    Let $q \in \kk^\times$ and define the following algebras:
    \begin{align*}
        \mc{F}_q^\pm &\coloneqq \frac{\kk\ang{x,y}}{(x y^2 + q y^2 x, x^2 y \pm q y x^2)}, & \mc{G}^\pm &\coloneqq \frac{\kk\ang{x,y}}{(x^3 + x y^2 + y^2 x, x^2 y \pm y x^2)}, \\
        \mc{H}_q &\coloneqq \frac{\kk\ang{x,y}}{(x y^2 + y^2 x + q x^3, x^2 y + y x^2 + q y^3)}, & \mc{I}_q &\coloneqq \frac{\kk\ang{x,y}}{(x^3 + q yxy, y^3 + q xyx)}.
    \end{align*}
\end{ntt}

Here is the analog of Theorem~\ref{thm:quadratic case} for cubic regular algebras. 

\begin{thm}\label{thm:cubic}
    Let $A$ be a cubic three-dimensional AS regular algebra faithfully graded by a nonabelian group $G$, and recall Notation~\ref{ntt:cubic algebras}. Then $A$ is graded isomorphic to one of the examples in Table~\ref{tab:cubic classification}, in which $x$ and $y$ are $G$-homogeneous with $\deg_G(x) = a$ and $\deg_G(y) = b$, and $G$ is a quotient of the indicated maximal grading group.
\end{thm}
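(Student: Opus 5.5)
We will mirror the quadratic argument, now with $V = A_1$ two-dimensional and superpotential $\omega \in V^{\otimes 4}$, twisted by the Nakayama automorphism $\mu$. First, $V$ must split as $V_{g_1} \oplus V_{g_2}$ with $g_1 \neq g_2$ and $\dim V_{g_i} = 1$. Otherwise all of $V$ has a single $G$-degree and $G$ is cyclic by faithfulness. Choose $y_1, y_2$ spanning these lines, and put $a = g_1$, $b = g_2$. By Corollary~\ref{cor:permute}, $\mu$ permutes $\{\kk y_1, \kk y_2\}$ via conjugation by $h = \deg_G \omega$. So either $\tau = 1$, with $\mu(y_i) = \lambda_i y_i$, or $\tau = (1\,2)$, with $\mu(y_1) = \lambda_1 y_2$ and $\mu(y_2) = \lambda_2 y_1$. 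In both cases $\phi\colon v_1v_2v_3v_4 \mapsto \mu(v_4)v_1v_2v_3$ permutes the $16$ degree-$4$ monomials up to scalar. Hence $\omega$ is a combination of orbit sums, and all of its monomials share one $G$-degree.

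For $\tau = 1$, the orbits are the cyclic rotation classes: $\{y_1^4\}$, $\{y_2^4\}$, $\{y_1y_2y_1y_2, y_2y_1y_2y_1\}$, the class of $y_1^3y_2$, the class of $y_1y_2^3$, and the class of $y_1^2y_2^2$. For $\tau = (1\,2)$, the orbits are larger: the class of $y_1^4$ has length $8$ and contains $y_2y_1^3$, and so on. For each orbit we record the group relations forced by equal degrees. The class of $y_1^3y_2$ forces $a^3b = a^2ba = aba^2 = ba^3$, so $a$ and $b$ commute, and that orbit is excluded. Likewise the class of $y_1y_2^3$ is excluded, and any orbit that mixes $y_1^4$ with $y_2y_1^3$ forces $a = b$. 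The surviving combinations are of the following kinds: $y_1^4$ or $y_2^4$ together with the $y_1^2y_2^2$ orbit, which forces $a^2 = b^2$; the $y_1^2y_2^2$ orbit alone, where $a^2$ and $b^2$ are central; and the $(y_1y_2)^2$ orbit together with $y_1^4$ and $y_2^4$, which forces $a^4 = (ab)^2 = b^4$, and so on. We then apply the same degree-comparison test pairwise between orbits, as in Section~\ref{subsec: m3 tauid}, and discard every combination that forces $a = b$ or $ab = ba$.

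For each surviving combination we proceed in three steps. First, write $\omega$ with unknown coefficients and impose $\phi(\omega) = \omega$, which gives conditions on the $\lambda_i$, such as $\lambda_1^4 = 1$ or $(\lambda_1\lambda_2)^2 = 1$. Second, take the two partial derivatives $\delta_1\omega$ and $\delta_2\omega$ as the cubic relations. Third, discard the cases that are visibly not regular: fewer than two independent relations, or monomial relations such as $y_1^3 = 0$, which give zero divisors since regular algebras of dimension $3$ are domains. The remaining cases we rescale, and where necessary change variables, until they match $\mc{F}_q^\pm$, $\mc{G}^\pm$, $\mc{H}_q$, $\mc{I}_q$, or the Ore-type and exceptional members of Table~\ref{tab:cubic classification}. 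Regularity of the resulting algebras will be checked via Proposition~\ref{prop:ATV}, by exhibiting a $2\times 2$ standard matrix with entries in $V\otimes V$ and verifying that its four $(1,1)$-curves in $\PP^1\times\PP^1$ have no common zero. This excludes degenerate parameter values, analogous to $\mc{C}_{\pm 1}$. Finally, the maximal grading group in each row is read off by assigning free degrees $a, b$ to $x, y$ and imposing homogeneity of the relations.

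The main obstacle is the bookkeeping in the orbit elimination. There are more mixed orbits here than in the quadratic case, several combinations survive the $a \neq b$ test and are excluded only by the nonabelian requirement, and the $\tau = (1\,2)$ case gives twisted orbits whose normal forms need nontrivial changes of variables, as in Proposition~\ref{prop:O356 algebras}. We will use the swap symmetry $y_1 \leftrightarrow y_2$ to halve the cases. For the $\tau = (1\,2)$ case specifically, we will first try $u = y_1 \pm \alpha y_2$ with $\alpha^2$ determined by $\lambda_1/\lambda_2$, which should diagonalize $\mu$ and reduce that case to one already handled, or to an $\mc{F}$ or $\mc{G}$ type presentation.
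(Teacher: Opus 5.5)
Your proposal follows the paper's proof. The steps are the same: a $G$-homogeneous basis with $a \neq b$, then Corollary~\ref{cor:permute} to reduce to $\tau \in \{1, (1\,2)\}$, then the $\phi$-orbits on degree-$4$ monomials. Orbits are eliminated by degree comparison and the nonabelian requirement, and the survivors are normalized. Your pairwise test does catch $\mathcal{O}_3 \land \mathcal{O}_4$, since $a^2b^2 = abab$ forces $ab = ba$. So the survivors are exactly $\mathcal{O}_1 \lor \mathcal{O}_3 \lor \mathcal{O}_6$ and $\mathcal{O}_1 \lor \mathcal{O}_4 \lor \mathcal{O}_6$ in the notation of \eqref{eq:cubic orbits}, and these give $\mathcal{F}_q^\pm$, $\mathcal{G}^\pm$, $\mathcal{H}_q$, $\mathcal{I}_q$.

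Your expectation about the case $\tau = (1\,2)$ is wrong. That case is empty: there are only two orbits, each of length $8$, and both force $a = b$. The orbit through $y_1^4$ contains $y_2y_1^3$, and the orbit through $y_1y_2y_1y_2$ contains $y_1y_2y_1^2$. So your closing plan to diagonalize $\mu$ via $y_1 \pm \alpha y_2$ is unnecessary. It would also not be legitimate: those elements are not $G$-homogeneous, so the $\tau = 1$ degree analysis cannot be transferred to them. Your own within-orbit degree test already disposes of this case.

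Three smaller inaccuracies:
\begin{itemize}
\item Table~\ref{tab:cubic classification} contains only $\mathcal{F}_q^\pm$, $\mathcal{G}^\pm$, $\mathcal{H}_q$, $\mathcal{I}_q$. There are no Ore-type rows.
\item There are no excluded parameter values analogous to $\mathcal{C}_{\pm 1}$; every $q \in \kk^\times$ is allowed.
\item The orbit $\{y_1^4\}$ forces $\lambda_1 = 1$, not $\lambda_1^4 = 1$.
\end{itemize}
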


\begin{table}[htbp]
\centering
\caption{The cubic AS regular algebras faithfully graded by nonabelian groups.}
    \begin{tabular}{|c|c|}\hline
        Algebra & Maximal grading group \\\hline\hline
        $\mc{F}_q^\pm$ & $\widehat{\Gamma} = \ang{a, b \mid a^2, b^2\ \text{are central}}$ \\\hline
        $\mc{G}^\pm$ & $\Gamma = \ang{a, b \mid a^2 = b^2}$ \\\hline
        $\mc{H}_q$ & $\Gamma$ \\\hline
        $\mc{I}_q$ & $ G_5 \coloneqq \ang{a,b \mid a^3 = bab, b^3 = aba}$ \\\hline
    \end{tabular}
    \label{tab:cubic classification}
\end{table}

\begin{rem}
    Note that $\mc{F}_1^+ \cong \mc{I}_{-1}$, so this algebra admits two different nonabelian group gradings. See \cite[Lemma 1.5]{ChenKirkmanZhang} for more details.
\end{rem}

As in the quadratic case, the first step in proving Theorem~\ref{thm:cubic} is showing that the algebras from Notation~\ref{ntt:cubic algebras} are AS regular.

\begin{prop}
    Let $q \in \kk^\times$. Then the algebras $\mc{F}_q^\pm$, $\mc{G}^\pm$, $\mc{H}_q$, and $\mc{I}_q$ are AS regular.
\end{prop}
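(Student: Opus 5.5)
I will follow the strategy of Proposition~\ref{prop:regularity of quadratic algebras} and apply Proposition~\ref{prop:ATV}. For each algebra I will exhibit a standard matrix and then check nondegeneracy. In the cubic case $V=\kk x\oplus\kk y$, and we need $M\in M_2(V\otimes V)$ such that both $M(x,y)^{\mathsf T}$ and $(x,y)M$ have entries spanning the relation space $R$. The matrix $M$ can be read off from the superpotential. Writing $\omega=x\,t_x+y\,t_y$ with $t_x,t_y\in V^{\otimes 3}$ and $t_x=f_{xx}x+f_{xy}y$, $t_y=f_{yx}x+f_{yy}y$, the matrix $M=(f_{ij})$ is standard precisely when $\omega$ is a twisted superpotential. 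So the first step is to write down $\omega$ for each family and confirm $\phi(\omega)=\omega$ for a suitable $\mu$.

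\emph{Superpotentials and matrices.} For $\mc{F}_q^\pm$ the relations come from orbits of the form $xy^2x+\dots$ and $yx^2y+\dots$, mirroring the quadratic computations. For $\mc{F}_q^{\pm}$, for instance, I will try the matrix
$$M=\begin{pmatrix} q y^2 & xy \\ \pm q yx & x^2\end{pmatrix}$$
up to rescaling entries, and adjust the signs and powers of $q$ until both $Mv^{\mathsf T}$ and $vM$ give the relations. For $\mc{I}_q$ the candidate is $M=\begin{pmatrix} x^2 & q yx\\ q xy & y^2\end{pmatrix}$. With this choice, $Mv^{\mathsf T}$ has entries $x^3+qyxy$ and $qxyx+y^3$, and $vM$ has entries $x^3+qyxy$ and $qxyx+y^3$, so $M$ is standard. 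Matrices for $\mc{G}^\pm$ and $\mc{H}_q$ are built the same way; for $\mc{H}_q$ the diagonal entries will contain $qx^2$ and $qy^2$ terms.

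\emph{Nondegeneracy.} Each entry of $M$ defines a $(1,1)$ curve in $\PP^1\times\PP^1$ by evaluating a word $ab$ at $([a_0:a_1],[b_0:b_1])$. I will check that the four entries have no common zero. For $\mc{I}_q$, the entry $x^2$ forces $a_0=0$ or $b_0=0$, and the entry $y^2$ forces $a_1=0$ or $b_1=0$. The only surviving possibilities are $([0:1],[1:0])$ and $([1:0],[0:1])$, and these are killed by $yx$ and by $xy$ respectively. The other families go by the same short case analysis.

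\emph{Main obstacle.} The hardest part should be $\mc{H}_q$, and $\mc{G}^\pm$ to a lesser extent. There the entries are sums, and nondegeneracy could fail for special $q$, so that case analysis has to be done carefully. If a special $q$ does appear to give a common zero, I will double-check standardness instead, or fall back to a direct argument: show the algebra is an Ore extension or a twist of a known regular algebra, or compute the Hilbert series $1/((1-t)^2(1-t^2))$ and use the resolution $0\to A(-4)\to A(-3)^2\to A(-1)^2\to A\to\kk\to 0$ coming from $\omega$. The statement asserts regularity for all $q\in\kk^\times$, so I expect the minors to have no common zero.
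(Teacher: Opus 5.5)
Your approach is the paper's: write down a standard matrix for each algebra (your $\mc{I}_q$ matrix is exactly the paper's, and for $\mc{H}_q$ the paper uses $\begin{pmatrix} qx^2+y^2 & xy\\ yx & x^2+qy^2\end{pmatrix}$) and check via Proposition~\ref{prop:ATV} that the four entries have no common zero in $\PP^1\times\PP^1$.

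Two small corrections to the details. First, your trial matrix for $\mc{F}_q^\pm$ is not standard for general $q$; multiplying its bottom row by $\pm q$ fixes this and gives $q$ times the paper's $\begin{pmatrix} y^2 & q^{-1}xy\\ qyx & \pm x^2\end{pmatrix}$. Second, no special values of $q$ can arise for $\mc{G}^\pm$ or $\mc{H}_q$: the entries $xy$ and $yx$ already restrict the common zeros to $([1:0],[1:0])$ and $([0:1],[0:1])$, and the top-left entry is nonzero at both points.
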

\begin{proof}
    First, consider $\mc{F}_q^\pm$. It is straightforward to check that the matrix
    $$M_1 \coloneqq \begin{pmatrix}
        y^2 & q^{-1} xy \\
        q yx & \pm x^2
    \end{pmatrix}$$
    is standard for $\mc{F}_q^\pm$. The entries of $M_1$ define the following curves in $\PP^1 \times \PP^1$ (with coordinates $([x_1 : y_1], [x_2 : y_2])$):
    $$y_1 y_2 = 0, \qquad x_1 y_2 = 0, \qquad y_1 x_2 = 0, \qquad x_1 x_2 = 0.$$
    It is straightforward to check that these curves have no common zero in $\PP^1 \times \PP^1$, so $\mc{F}_q^\pm$ is nondegenerate. It now follows by Proposition~\ref{prop:ATV} that $\mc{F}_q^\pm$ is AS regular.

    Next, one can easily see that the matrix
    $$M_2 \coloneqq \begin{pmatrix}
        x^2 + y^2 & xy \\
        yx & \pm x^2
    \end{pmatrix}$$
    is standard for $\mc{G}^\pm$, yielding the following curves in $\PP^1 \times \PP^1$:
    $$x_1 x_2 + y_1 y_2 = 0, \qquad x_1 y_2 = 0, \qquad y_1 x_2 = 0, \qquad x_1 x_2 = 0.$$
    Once again, these curves have no common zero in $\PP^1 \times \PP^1$, so $\mc{G}^\pm$ is nondegenerate and therefore AS regular by Proposition~\ref{prop:ATV}.

    For $\mc{H}_q$, the matrix
    $$M_3 \coloneqq \begin{pmatrix}
        q x^2 + y^2 & xy \\
        yx & x^2 + q y^2
    \end{pmatrix}$$
    is standard, and we get the following curves in $\PP^1 \times \PP^1$:
    $$q x_1 x_2 + y_1 y_2 = 0, \qquad x_1 y_2 = 0, \qquad y_1 x_2 = 0, \qquad x_1 x_2 + q y_1 y_2 = 0.$$
    These curves have no common zero in $\PP^1 \times \PP^1$, so $\mc{H}_q$ is AS regular by Proposition~\ref{prop:ATV}.

    Finally, the matrix
    $$M_4 \coloneqq \begin{pmatrix}
        x^2 & q yx \\
        q xy & y^2
    \end{pmatrix}$$
    is standard for $\mc{I}_q$. The entries of $M_4$ define the same curves in $\PP^1 \times \PP^1$ as the entries of $M_1$, so we already know that these curves have no common zero. It follows that $\mc{I}_q$ is AS regular by Proposition~\ref{prop:ATV}.
\end{proof}

\subsection{Orbits from the Nakayama automorphism}

We can analyze group gradings on cubic AS regular algebras of dimension $3$ by the same method as for quadratic regular algebras. Because cubic algebras have only two generators, the number of cases is much smaller.  

Let $A = \kk \langle y_1, y_2 \rangle/(r_1, r_2)$ be a cubic regular algebra.  Here $r_1, r_2$ are cubic relations coming from the two derivatives of a degree $4$ superpotential $\omega \in \kk \langle y_1, y_2 \rangle$.  By Lemma~\ref{lem:superpotential-grading}, $A$ is graded by a group $G$ under the assignment $\deg_G y_1 = g_1, \deg_G y_2 = g_2$, if and only if $\omega$ is $G$-homogeneous.  We are interested in gradings by nonabelian groups, so 
we assume that $g_1 \neq g_2$.  The Nakayama automorphism $\mu$ must either preserve or switch the two $G$-homogeneous subspaces $\kk y_1$, $\kk y_2$.

Suppose first that $\mu(y_1) = \lambda_1 y_2$ and $\mu(y_2) = \lambda_2 y_1$ for some $\lambda_1, \lambda_2 \in \kk^\times$.
The orbits of the action of $\phi$ on the monomials in $\kk \langle y_1, y_2 \rangle_4$ (up to scalar) are the following:
\begin{gather*}
    \mc{O}_1 = \{ y_1^4, y_2 y_1^3, y_2^2 y_1^2, y_2^3 y_1, y_2^4, y_1 y_2^3, y_1^2 y_2^2, y_1^3 y_2 \}, \\
    \mc{O}_2 = \{ y_1y_2y_1y_2, y_1^2y_2y_1, y_2 y_1^2 y_2, y_1 y_2 y_1^2, y_2 y_1 y_2y_1, y_2^2 y_1 y_2, y_1 y_2^2 y_1, y_2y_1y_2^2 \}.
\end{gather*}
If $\mc{O}_1$ occurs, then $y_1^4$ and $y_2 y_1^3$ imply that $g_1 = g_2$.  Similarly, if $\mc{O}_2$ occurs then $y_1y_2y_1y_2$ and $y_1 y_2 y_1^2$ imply $g_1 = g_2$.  So there are no superpotentials in this case satisfying our restriction that $g_1 \neq g_2$.

Now assume that $\mu(y_1) = \lambda_1 y_1$ and $\mu(y_2) = \lambda_2 y_2$.  The orbits of the $\phi$-action up to scalar are
\begin{equation}\label{eq:cubic orbits}
    \begin{gathered}
        \mc{O}_1 = \{ y_1^4 \}, \quad \mc{O}_2 = \{ y_1^3 y_2,  y_2 y_1^3,  y_1 y_2 y_1^2, y_1^2 y_2 y_1 \}, \quad \mc{O}_3 = \{y_1^2 y_2^2, y_2 y_1^2 y_2, y_2^2 y_1^2, y_1 y_2^2 y_1 \}, \\
        \mc{O}_4 = \{ y_1 y_2 y_1 y_2, y_2 y_1 y_2 y_1 \}, \qquad \mc{O}_5 = \{ y_2^3 y_1, y_1 y_2^3, y_2 y_1 y_2^2, y_2^2 y_1 y_2 \}, \qquad \mc{O}_6 = \{ y_2^4 \}.
    \end{gathered}
\end{equation}
It is easy to see that to avoid $g_1 = g_2$ we cannot have $\mc{O}_1 \land \mc{O}_2$, $\mc{O}_2 \land \mc{O}_3$, $\mc{O}_3 \land \mc{O}_5$, $\mc{O}_5 \land \mc{O}_6$, $\mc{O}_2 \land \mc{O}_4$ or $\mc{O}_4 \land \mc{O}_5$.

Furthermore, the requirement that $G$ is nonabelian (so that $g_1 g_2 \neq g_2 g_1$) means that $\OO_5$ cannot occur. Indeed, if $\OO_5$ did occur, then we would have
$$g_2^3 g_1 = \deg_G(y_2^3 y_1) = \deg_G(y_2^2 y_1 y_2) = g_2^2 g_1 g_2,$$
from which it follows that $g_2 g_1 = g_1 g_2$. Permuting the variables, this also means that $\OO_2$ cannot occur. A similar analysis further excludes $\OO_3 \land \OO_4$.

This leaves us with only the possibilities $\OO_1 \lor \OO_3 \lor \OO_6$ and $\OO_1 \lor \OO_4 \lor \OO_6$, which we now proceed to analyze. Note that, by permuting the variables, we have $\OO_1 \land \OO_3 \cong \OO_3 \land \OO_6$, so these cases do not need to be studied separately.

\begin{prop}\label{prop:O136 algebras}
    Let $A \in \OO_1 \lor \OO_3 \lor \OO_6$.  Then $A$ can only be AS regular if $\OO_3$ appears in $\omega$ with nonzero contribution.  Recalling Notation~\ref{ntt:cubic algebras}, the following hold.
    \begin{enumerate}
        \item If $A \in \OO_3$, then $A \cong \mc{F}_q^\pm$ for some $q \in \kk^\times$.\label{item:O3 algebra}
        \item If $A \in \OO_1 \land \OO_3$ or $A \in \OO_3 \land \OO_6$, then $A \cong \mc{G}^\pm$.\label{item:O13 or O36 algebra}
        \item If $A \in \OO_1 \land \OO_3 \land \OO_6$, then $A \cong \mc{H}_q$ for some $q \in \kk^\times$.\label{item:O136 algebra}
    \end{enumerate}
\end{prop}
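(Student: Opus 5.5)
Following the pattern of Propositions~\ref{prop:O356 algebras} and \ref{prop:O145 algebras}, I will write the superpotential explicitly in the orbit basis and read off the relations. With $\mu(y_i)=\lambda_i y_i$, tracing $\phi$ around $\OO_3$ gives
$$\omega = B y_1^4 + C\big(y_1^2y_2^2 + \lambda_2 y_2y_1^2y_2 + \lambda_2^2 y_2^2y_1^2 + \lambda_1\lambda_2^2 y_1y_2^2y_1\big) + D y_2^4.$$
The cycle conditions are: $\lambda_1^2\lambda_2^2=1$ if $C\neq 0$, $\lambda_1=1$ if $B\neq 0$, and $\lambda_2=1$ if $D\neq 0$. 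Taking the left partial derivatives $\delta_{y_1}$ and $\delta_{y_2}$ gives the two cubic relations
$$r_1 = B y_1^3 + C(y_1y_2^2 + \lambda_1\lambda_2^2 y_2^2 y_1),\qquad r_2 = C(\lambda_2 y_1^2 y_2 + \lambda_2^2 y_2 y_1^2) + D y_2^3.$$
If $C=0$, these are monomial relations (or zero), so $A$ is not a domain or has too few relations, hence is not regular. This proves the first assertion, and from now on I assume $C\neq 0$ and normalize $C=1$.

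For \eqref{item:O3 algebra}, with $B=D=0$ the relations become $y_1y_2^2 + \lambda_1\lambda_2^2 y_2^2y_1 = 0$ and $y_1^2y_2 + \lambda_2 y_2y_1^2=0$. Setting $\varepsilon=\lambda_1\lambda_2=\pm1$, the first coefficient is $\varepsilon\lambda_2$ and the second is $\lambda_2$. I put $q\coloneqq\varepsilon\lambda_2$, so the second coefficient is $\pm q$ with sign $\varepsilon$. This is exactly $\mc{F}_q^\pm$ with $x=y_1$, $y=y_2$.

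For \eqref{item:O13 or O36 algebra}, by the symmetry noted before the statement I treat only $B\neq0=D$. Then $\lambda_1=1$ and $\lambda_2=\pm1$, so $\lambda_2^2=1$ and the relations are $By_1^3 + y_1y_2^2 + y_2^2y_1$ and $y_1^2y_2 \pm y_2y_1^2$. Rescaling $y_1$ by $\beta$ with $\beta^2 = B^{-1}$ (which scales the first relation uniformly after dividing) puts the first relation in the form $x^3+xy^2+y^2x$, giving $\mc{G}^\pm$. In the symmetric case I would check that swapping variables is compatible with the same forms.

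For \eqref{item:O136 algebra}, we have $\lambda_1=\lambda_2=1$, so the relations are $By_1^3 + y_1y_2^2+y_2^2y_1$ and $y_1^2y_2+y_2y_1^2+Dy_2^3$. Substituting $x=\alpha y_1$ and $y=\beta y_2$ turns the coefficients into $B\alpha^{-2}\beta^2$ and $D\alpha^2\beta^{-2}$ (after normalizing). These are made equal by choosing $t=\beta^2/\alpha^2$ with $t^2 = D/B$, which gives the common value $q=Bt$ and hence $\mc{H}_q$.

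The main obstacle is bookkeeping rather than ideas: getting the scalar coefficients along the $\phi$-orbit right, with the correct convention for $\phi$ and the partial derivatives, and then checking the sign in the $\pm$ cases.
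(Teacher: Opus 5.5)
Your proposal is correct and follows the paper's proof essentially line for line. It uses the same superpotential, relations $r_1, r_2$, twist conditions, dismissal of $C=0$ and three-way case split, differing only in which generator is rescaled: the paper rescales $y_2$ with $\alpha^2 = C/B$ in case (2), and only $y_1$ with $\alpha^4 = B/D$ in case (3). One small precision: in case (2) the substitution must be $y_1 = \beta x$ rather than $x = \beta y_1$, since only the former, with $\beta^2 = B^{-1}$, gives coefficient $1$ on $x^3$.
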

\begin{proof}
    Write
    $$\omega = B y_1^4 + C(y_1^2y_2^2 + \lambda_2 y_2 y_1^2 y_2 + \lambda_2^2 y_2^2 y_1^2 + \lambda_1 \lambda_2^2 y_1 y_2^2 y_1) + D y_2^4$$
    for some $B, C, D \in \kk$. The corresponding algebra is $A = \kk \langle y_1, y_2 \rangle/(r_1, r_2)$ with 
    \begin{gather*}
        r_1 = B y_1^3 + C( y_1 y_2^2 + \lambda_1 \lambda_2^2 y_2^2 y_1), \\
        r_2 = C\lambda_2 (y_1^2 y_2 + \lambda_2 y_2y_1^2) + D y_2^3.
    \end{gather*}
    It is now clear that if $C = 0$, then $A$ cannot be AS regular. So, assume $C \neq 0$. The following restrictions apply in order for $\omega$ to be a $\mu$-twisted superpotential: if $B \neq 0$, then we need $\lambda_1 = 1$; since $C \neq 0$, we must have $\lambda_1^2 \lambda_2^2= 1$; and if $D \neq 0$ we need $\lambda_2 = 1$. We now analyze the three cases of the statement separately.
    
    \eqref{item:O3 algebra} Suppose $B = D = 0$. Since $(\lambda_1 \lambda_2)^2 = 1$, it follows that $\lambda_1 \lambda_2 = \pm 1$. Therefore, the relations of $A$ are
    $$y_1 y_2^2 \pm \lambda_2 y_2^2 y_1 = 0, \qquad y_1^2 y_2 + \lambda_2 y_2 y_1^2 = 0.$$
    The result now follows by setting $x \coloneqq y_1$, $y \coloneqq y_2$, and $q \coloneqq \pm \lambda_2$.

    \eqref{item:O13 or O36 algebra} As mentioned above, the cases $B \neq 0$, $D = 0$ and $B = 0$, $D \neq 0$ are symmetric. Therefore, we may assume without loss of generality that $B \neq 0$ and $D = 0$. This implies that $\lambda_1 = 1$ and $\lambda_2^2 = 1$, so the relations of $A$ are
    $$y_1^3 + \gamma(y_1 y_2^2 + y_2^2 y_1) = 0, \qquad y_1^2 y_2 \pm y_2 y_1^2 = 0,$$
    where $\gamma \coloneqq \frac{C}{B}$. Letting $x \coloneqq y_1$ and $y \coloneqq \alpha y_2$, where $\alpha^2 = \gamma$, we get
    $$x^3 + x y^2 + y^2 x = 0, \qquad x^2 y \pm y x^2 = 0,$$
    so $A \cong \mc{G}^\pm$, as required.

    \eqref{item:O136 algebra} Now suppose $B,D \neq 0$, so that $\lambda_1 = \lambda_2 = 1$ and the relations of $A$ are
    $$\gamma_1 y_1^3 + y_1 y_2^2 + y_2^2 y_1 = 0, \qquad y_1^2 y_2 + y_2 y_1^2 + \gamma_2 y_2^3 = 0,$$
    where $\gamma_1 \coloneqq \frac{B}{C}$ and $\gamma_2 \coloneqq \frac{D}{C}$. Let $\alpha \in \kk$ such that $\alpha^4 = \gamma_1 \gamma_2^{-1}$, and define $x \coloneqq \alpha y_1$ and $y \coloneqq y_2$. Then the relations of $A$ become
    $$\alpha^{-2} \gamma_1 x^3 + x y^2 + y^2 x = 0, \qquad x^2 y + y x^2 + \alpha^2 \gamma_2 y^3 = 0.$$
    Let $q \coloneqq \alpha^2 \gamma_2$. The condition that $\alpha^4 = \gamma_1 \gamma_2^{-1}$ implies that $\alpha^{-2} \gamma_1 = \alpha^2 \gamma_2 = q$, and thus $A$ has relations
    $$q x^3 + x y^2 + y^2 x = 0, \qquad x^2 y + y x^2 + q y^3 = 0,$$
    meaning $A \cong \mc{H}_q$.
\end{proof}

\begin{prop}\label{prop:O146 algebra}
    Let $A \in \OO_1 \lor \OO_4 \lor \OO_6$.   Then $A$ can only be AS regular if $A \in \OO_1 \land \OO_4 \land \OO_6$.  In this case, $A \cong \mc{I}_q$ for some $q \in \kk^\times$.
\end{prop}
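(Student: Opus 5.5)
Following the pattern of Proposition~\ref{prop:O136 algebras}, I will write the superpotential as
$$\omega = B y_1^4 + C(y_1 y_2 y_1 y_2 + \lambda_2 y_2 y_1 y_2 y_1) + D y_2^4$$
for some $B, C, D \in \kk$. The orbit $\OO_4$ has length $2$: applying $\phi$ sends $y_1y_2y_1y_2 \mapsto \lambda_2 y_2y_1y_2y_1$ and then back to $\lambda_1\lambda_2\, y_1y_2y_1y_2$. So if $C \neq 0$ the twisted superpotential condition $\phi(\omega) = \omega$ forces $\lambda_1\lambda_2 = 1$. Likewise $B \neq 0$ forces $\lambda_1 = 1$, and $D \neq 0$ forces $\lambda_2 = 1$. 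Taking the partial derivatives $\delta_1$ and $\delta_2$ (left derivatives with respect to the first letter), the relations are
$$r_1 = B y_1^3 + C y_2 y_1 y_2, \qquad r_2 = C\lambda_2\, y_1 y_2 y_1 + D y_2^3.$$

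Next I rule out the degenerate cases. If $C = 0$, the relations are monomial, $y_1^3$ and $y_2^3$, or some of them vanish. Then either there are fewer than two independent cubic relations, or $A$ has a zero divisor. Since three-dimensional regular algebras are domains, $A$ is not AS regular. If $C \neq 0$ but $B = 0$, then $r_1 = C y_2y_1y_2$, so $y_2 \cdot y_1 y_2 = 0$ and $A$ is not a domain. The same argument with $D = 0$ gives $y_1 \cdot y_2 y_1 = 0$. Hence regularity forces $B, C, D$ all nonzero, that is, $A \in \OO_1 \land \OO_4 \land \OO_6$. In that case $\lambda_1 = \lambda_2 = 1$, and the relations become $y_1^3 + \gamma_1 y_2y_1y_2 = 0$ and $y_2^3 + \gamma_2 y_1y_2y_1 = 0$, with $\gamma_1 = C/B$ and $\gamma_2 = C/D$.

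Finally I rescale to reach $\mc{I}_q$. Set $x = \alpha y_1$ and $y = \beta y_2$. The relations become
$$x^3 + \frac{\alpha^3}{\alpha\beta^2}\gamma_1\, yxy = 0, \qquad y^3 + \frac{\beta^3}{\alpha^2\beta}\gamma_2\, xyx = 0.$$
The two coefficients are $\alpha^2\beta^{-2}\gamma_1$ and $\beta^2\alpha^{-2}\gamma_2$, and they are equal exactly when $(\alpha/\beta)^4 = \gamma_2/\gamma_1$. Since $\kk$ is algebraically closed, such a ratio exists. Taking $q$ to be the common value gives $A \cong \mc{I}_q$ with $q \in \kk^\times$.

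There is no serious obstacle. The only points needing care are computing the $\phi$-orbit scalars correctly, to see that the conditions on the $\lambda_i$ are consistent, and justifying the exclusion of the degenerate cases via the domain property.
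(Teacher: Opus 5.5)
Your proof is correct and follows essentially the same route as the paper. You write $\omega$ as a combination of the three orbit sums, use the domain property to force $B,C,D\neq 0$ (hence $\lambda_1=\lambda_2=1$), and rescale to equalize the two coefficients; the only cosmetic difference is that you rescale both generators, whereas the paper rescales only $y_1$ by $\alpha$ with $\alpha^4=\gamma_1^{-1}\gamma_2$, which amounts to the same condition on $\alpha/\beta$.
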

\begin{proof}
    Write
    $$\omega = B y_1^4 + C(y_1y_2 y_1y_2 + \lambda_2 y_2 y_1y_2y_1) + D y_2^4$$
    for some $B, C, D \in \kk$. The corresponding algebra is $A = \kk \langle y_1, y_2 \rangle/(r_1, r_2)$ with 
    \begin{gather*}
        r_1 = B y_1^3 + C y_2 y_1 y_2, \\
        r_2 = C \lambda_2 y_1 y_2 y_1 + D y_2^3.
    \end{gather*}
    Because regular algebras of dimension $3$ are domains, it is easy to see that $A$ can only be AS regular if all $B,C,D \neq 0$, so assume this is the case.  In order for $\omega$ to be a $\mu$-twisted superpotential, we must have $\lambda_1 = \lambda_2 = 1$, so the relations can be written as
    $$y_1^3 + \gamma_1 y_2 y_1 y_2 = 0, \qquad \gamma_2 y_1 y_2 y_1 + y_2^3 = 0,$$
    where $\gamma_1 \coloneqq \frac{C}{B}$ and $\gamma_2 \coloneqq \frac{C}{D}$. Let $\alpha \in \kk$ such that $\alpha^4 = \gamma_1^{-1} \gamma_2$, and define $x \coloneqq \alpha y_1$ and $y \coloneqq y_2$. Then the relations of $A$ have the following form:
    $$x^3 + \alpha^2 \gamma_1 yxy = 0, \qquad \alpha^{-2} \gamma_2 xyx + y^3 = 0.$$
    Letting $q \coloneqq \alpha^2 \gamma_1$, the condition that $\alpha^4 = \gamma_1^{-1} \gamma_2$ implies that $q = \alpha^2 \gamma_1 = \alpha^{-2} \gamma_2$, and thus the relations of $A$ are
    $$x^3 + q yxy = 0, \qquad q xyx + y^3 = 0.$$
    It follows that $A \cong \mc{I}_q$, which concludes the proof.
\end{proof}

\begin{proof}[Proof of Theorem~\ref{thm:cubic}]
    Let $y_1, y_2$ be linearly independent $G$-homogeneous elements of $A_1$. Since $G$ is nonabelian, we must have $\deg_G(y_1) \neq \deg_G(y_2)$. By Corollary~\ref{cor:permute}, it follows that $\mu(y_i)$ is $G$-homogeneous. Furthermore, the discussion from the beginning of the subsection implies that $\deg_G(\mu(y_i)) = \deg_G(y_i)$ and that $A \in \OO_1 \lor \OO_3 \lor \OO_6$ or $A \in \OO_1 \lor \OO_4 \lor \OO_6$, where the $\OO_i$ are as in \eqref{eq:cubic orbits}.

    If $A \in \OO_1 \lor \OO_3 \lor \OO_6$, then Proposition~\ref{prop:O136 algebras} shows that $A$ is isomorphic to $\mc{F}_q^{\pm}$, $\mc{G}^{\pm}$, or $\mc{H}_q$.  If $A \in \OO_1 \lor \OO_4 \lor \OO_6$, $A \cong \mc{I}_q$ by Proposition~\ref{prop:O146 algebra}.  It is clear from the proofs of these propositions that we can choose the isomorphism to take the $G$-homogeneous basis $\{y_1, y_2\}$ to the basis $\{x, y\}$ in Notation~\ref{ntt:cubic algebras}.  Then it is routine to write down presentations of the maximal grading groups with $\deg_G(x) = a$, $\deg_G(y) = b$, as given in the second column in Table~\ref{tab:cubic classification}.  The fact that the group 
    $\ang{a, b \mid a^2, b^2\ \text{are central}}$ is isomorphic to the already defined group $\widehat{\Gamma}$ is 
    easy to check, by using the equation $b = c^{-1}ac$ to remove $b$ from the presentation $\widehat{\Gamma} = \ang{a,b,c \mid a^2 = b^2, cb = ac, ca = bc}$.
\end{proof}

\section{Binomial relations}\label{sec:binomial}

Our classification in dimension $3$ relies on the fact that a nonabelian grading forces the relations of the algebra to be very restricted, and the same phenomenon should be expected in higher dimensions, where no classification of AS regular algebras is available. This section gives some evidence in that direction, by showing that it is difficult for an AS regular algebra with non-binomial relations to admit a dual reflection group. In Theorem~\ref{thm:almost rigid} below, we make this precise for quadratic algebras of dimension $3$.

\subsection{Some general results and questions}

We start by proving some general results about algebras admitting a dual reflection group. The main tool is the machinery developed by Kirkman, Kuzmanovich, and Zhang in \cite{KirkmanKuzmanovichZhang2}, which we recall below.

\begin{dfn}
    Given an algebra $A$ graded by a group $G$, the \emph{covariant ring} of the $G$-grading on $A$ is $A^\cov = A^{\cov \, G} \coloneqq A/(A_e^+)$, where $A_e^+$ is the set of elements of positive $\NN$-degree in $A_e$.
\end{dfn}

Note that, since $(A_e^+)$ is generated by elements of trivial $G$-degree, the covariant ring $A^\cov$ inherits the $G$-grading.

For the rest of this section, we make the following assumptions.
\begin{enumerate}
    \item $A = T(V)/(R)$ is an AS regular domain generated in degree $1$.\label{item:assumption AS regular domain}
    \item $A$ has a faithful grading by a finite group $G$ which refines its natural $\NN$-grading.
    \item $A_e$ is AS regular.\label{item:assumption dual reflection}
\end{enumerate}

The relations of all three four-dimensional regular algebras appearing in \cite{GoetzKirkmanMooreVashaw} are binomial. Indeed, this is forced by the way they are constructed, since the relations are obtained by equating the degree $2$ monomials having the same $G$-degree. Motivated by this, Goetz, Kirkman, Moore, and Vashaw ask whether the same is true in general \cite[Section 5.1]{GoetzKirkmanMooreVashaw}. We say that $A$ \emph{has binomial relations} if there is a $G$-homogeneous basis $y_1, \dots, y_n$ of $A_1$ for which $A$ can be presented by relations which are all linear combinations of two monomials in the $y_i$.

\begin{qstn}[{\cite[Section 5.1]{GoetzKirkmanMooreVashaw}}]\label{qstn:binomial}
     Does the algebra $A$ have binomial relations?
\end{qstn}

The next example answers this question in the negative.

\begin{ex}\label{ex:non-binomial}
    Let $B$ be any AS regular algebra and define $A \coloneqq B \otimes \frac{\kk\ang{u,v}}{(u^2 - v^2)}$. Since $A$ is an iterated Ore extension of $B$ (we have $A \cong B[x][y;\sigma]$ with $\sigma(x) = -x$), it follows that $A$ is AS regular.

    Fix an integer $n \geq 2$. Then the algebra $A$ is faithfully graded by the nonabelian group $\Gamma_n$, by setting $\deg_{\Gamma_n}(u) = a$, $\deg_{\Gamma_n}(v) = b$, and $\deg_{\Gamma_n}(B) = e$. Under this grading, it is easy to see that $A_e = B[(uv)^n,(vu)^n] \cong B[x,y]$ is isomorphic to the polynomial ring in two variables over $B$, which is AS regular. Therefore, $\Gamma_n$ is a dual reflection group for $A$.

    If we choose $B$ to be an algebra which cannot be presented with binomial relations (for example, most Sklyanin algebras, or a down-up algebra $\mb{D}(\alpha,\beta)$ with $\alpha,\beta \neq 0$ as in the next subsection), then $A$ has all the properties above, but does not have binomial relations.

    This also shows that algebras which admit dual reflection groups need not have quadratic relations.
\end{ex}

However, the algebra $A$ appearing in Example~\ref{ex:non-binomial} is somewhat pathological: the grading of $A$ is fundamentally a grading of $A_{-1}$, since $B$ does not interact with this grading at all. A natural refinement of Question~\ref{qstn:binomial} is therefore to ask whether $A$ has binomial relations whenever $A_e$ contains no elements of degree $1$, but we first need some notation.

\begin{ntt}
    Fix the following notation.
    \begin{enumerate}
        \item Define $I \coloneqq (A_e^+)$.
        \item Given $g \in G$ and $d \in \NN$, define $(A_g)_d \coloneqq A_g \cap A_d$ and $(I_g)_d \coloneqq I \cap (A_g)_d$.
        \item For $g \in G$, write $n_g \coloneqq \dim(A_g)_1$. 
        \item Let $y_1, \dots, y_n$ be a $G$-homogeneous basis of $A_1$ and $g_i \coloneqq \deg_G(y_i)$.
        \item Define
        $$\mf{R} \coloneqq \{g \in G \mid n_g > 0, g \neq e\} = \{g_i \mid i \in \{1,\dots,n\} \text{ with } g_i \neq e\},$$
        and $\ell(g) \coloneqq \min\{k \mid h_1 \dots h_k = g \text{ for some } h_i \in \mf{R}\}$. The quantity $\ell(g)$ is called the \emph{length} of $g$.
    \end{enumerate}
\end{ntt}

We can now state the aforementioned refinement of Question~\ref{qstn:binomial} using the notation above.

\begin{conj}\label{conj:binomial}
    If $n_e = 0$, then $A$ has binomial relations.
\end{conj}

We do not prove Conjecture~\ref{conj:binomial} in general, but we do prove it for nonabelian gradings in dimension $3$ later in this section, and we give some evidence for it in general below. We will use the following results of Kirkman--Kuzmanovich--Zhang, which we state under the standing assumptions \eqref{item:assumption AS regular domain}--\eqref{item:assumption dual reflection} above.

\begin{thm}[{\cite[Theorem 3.5]{KirkmanKuzmanovichZhang2}}]\label{thm:KKZ covariant Hilbert series}
    The following hold.
    \begin{enumerate}
        \item There is a set of $G$-homogeneous elements $\{f_{g} \mid g \in G\} \subseteq A$ with $f_e = 1$ such that $A_g = f_g \cdot A_e = A_e \cdot f_g$ for all $g \in G$.\label{item:A_g = A_e f_g}
        \item We have $\dim(A_g)_{\ell(g)} = 1$. In particular, if $g \in \mf{R}$, then $n_g = 1$.\label{item:minimal component of A_g is one-dimensional}
        \item The set $\mf{R}$ generates $G$.
        \item $I$ is equal to the left ideal of $A$ generated by $A_e^+$ and $A^\cov = A/I \cong \bigoplus_{g \in G} \kk \overline{f}_g$, where $\overline{f}_g \coloneqq f_g + I$.\label{item:left ideal I}
        \item The covariant ring $A^\cov$ is Frobenius, with Hilbert series given by
        $$H_{A^\cov}(t) = \sum_{g \in G} t^{\ell(g)}.$$\label{item:covariant Hilbert series}
    \end{enumerate}
\end{thm}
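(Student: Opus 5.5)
Since this statement is quoted from \cite[Theorem 3.5]{KirkmanKuzmanovichZhang2}, we only sketch how we would reprove it under the standing assumptions \eqref{item:assumption AS regular domain}--\eqref{item:assumption dual reflection}.

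\textbf{Step 1: freeness of each $A_g$.} We would first show that each $A_g$ is a free $A_e$-module of rank one, on either side. Since $G$ is finite, $A$ is module-finite over $A_e$; this is a Noether-type argument, using that $\prod_{g}$ of any homogeneous element lies in a sufficiently high power landing in $A_e$. Because $A$ and $A_e$ are both AS regular, $A$ is Cohen--Macaulay and $A_e$ has finite global dimension. A graded Auslander--Buchsbaum argument then shows that $A$ is a graded free $A_e$-module, on the left and on the right. Each $A_g$ is an $A_e$-bimodule direct summand of $A$, so each $A_g$ is also free. For the rank, pass to the graded quotient division ring $Q$ of the domain $A$; it inherits the $G$-grading, and $Q_e$ is the quotient ring of $A_e$. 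For any nonzero $s\in A_g$ we have $Q_g = sQ_e = Q_es$, since $Q_e$ contains $s^{-1}t$ for all $t\in Q_g$. Hence $A_g$ has rank $1$ on each side, and we may write $A_g=f_gA_e$. Comparing Hilbert series on the two sides, and noting $A_ef_g\subseteq A_g$, should give $A_g=A_ef_g$ as well. This proves \eqref{item:A_g = A_e f_g}, and we expect it to be the main technical point.

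\textbf{Step 2: degrees, lengths and generation of $G$.} From $(A_g)_d=f_g(A_e)_{d-\deg f_g}$, the lowest nonzero degree of $A_g$ is $\deg f_g$, and that component is one-dimensional. On the other hand, the lowest degree of $A_g$ equals $\ell(g)$. Indeed, $A_g$ is spanned by monomials $y_{i_1}\cdots y_{i_k}$ whose $G$-degrees multiply to $g$. Deleting the letters of degree $e$ does not change the $G$-degree, so the minimum is attained by a word in elements of $\mf R$. Such a monomial is nonzero because $A$ is a domain. This gives \eqref{item:minimal component of A_g is one-dimensional}, and the special case $g\in\mf R$ gives $n_g=1$. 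For (3), faithfulness says that the degrees of $A_1$ generate $G$, and discarding $e$ changes nothing.

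\textbf{Step 3: the covariant ring and its Hilbert series.} Using $A_ef_g=f_gA_e$, we compute
\[
AA_e^+=\textstyle\bigoplus_g A_ef_gA_e^+=\bigoplus_g f_gA_e^+ .
\]
This is also $A_e^+A$, so it is a two-sided ideal and equals $I$. Therefore $A/I$ has basis $\{\overline f_g\}$, which is \eqref{item:left ideal I}. Since $\deg f_g=\ell(g)$, we get $H_{A^\cov}(t)=\sum_g t^{\ell(g)}$, the first half of \eqref{item:covariant Hilbert series}.

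\textbf{Step 4: the Frobenius property.} Since $A$ is free over $A_e$, we have $A^\cov=A\otimes_{A_e}\kk$. We would compute $\operatorname{Ext}$ over $A$ by base change from $A_e$, and combine the AS Gorenstein conditions of $A$ and $A_e$, which have the same dimension. This should give $\operatorname{Hom}_{\kk}(A^\cov,\kk)\cong A^\cov$ up to shift, on both sides. In particular $A^\cov$ has a one-dimensional socle in top degree $\ell_A-\ell_{A_e}$, where $\ell_A$ and $\ell_{A_e}$ are the Gorenstein parameters of $A$ and $A_e$. This duality step is the second delicate point. If the direct Ext computation becomes awkward, we would instead invoke the general result that a module-finite extension of AS Gorenstein algebras which is free yields a Frobenius fiber.
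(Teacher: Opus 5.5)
\textbf{Overall assessment.} The paper gives no proof of this statement; it is quoted from \cite{KirkmanKuzmanovichZhang2}. Your architecture is the standard one: freeness of $A$ over $A_e$, rank one from the domain property, lowest degree of $A_g$ equal to $\ell(g)$, $AA_e^+=A_e^+A$, and Frobenius via Gorenstein duality. Steps 2 and 3 are correct as written. The plan in Step 4 also works: $\Ext^i_A(A^{\cov},A)\cong\Ext^i_{A_e}(\kk,A)\cong\Ext^i_{A_e}(\kk,A_e)\otimes_{A_e}A$ is $A^{\cov}$ up to shift and is concentrated in $i=d$. Gorenstein duality then identifies $\Ext^d_A(M,A)$ with $M^*$, up to shift and Nakayama twist, for finite-dimensional $M$.

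\textbf{The gap: module-finiteness in Step 1.} Noether's argument is commutative: it bounds the words needed by reordering monomials. Noncommutatively, knowing $a^{\ord(g)}\in A_e$ for all $a\in A_g$ gives no such bound. For instance, grade $\kk\langle x,y\rangle$ by $\ZZ_2\times\ZZ_2$ with $\deg x=(1,0)$ and $\deg y=(0,1)$. No nonempty suffix of $y^kx$ has trivial degree, so $y^kx\notin AA_e^+$ for all $k$, and $A$ is not finitely generated as a right $A_e$-module. You need an extra hypothesis such as $A$ noetherian. This is not among the standing assumptions (1)--(3), though it holds for every three-dimensional regular algebra. With it the repair is short. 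For a right $A_e$-submodule $N\subseteq A_g$ one has $(NA)_g=NA_e=N$. So $N\mapsto NA$ embeds the submodule lattice of $A_g$ into the lattice of right ideals of $A$. Hence $A_g$ is a noetherian, and so finitely generated, right $A_e$-module; the same argument works on the left.

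\textbf{The Cohen--Macaulay step also needs justification.} ``$A$ is Cohen--Macaulay'' is a statement about $A$ over itself. Auslander--Buchsbaum over $A_e$ instead needs the local cohomology $H^i_{A_e^+}(A)$ to vanish for $i<\operatorname{gldim}A_e$. This is where module-finiteness is actually used.
\begin{enumerate}
\item Module-finiteness makes the $A_e^+$-torsion and $A^+$-torsion functors agree on $A$-modules. So $H^i_{A_e^+}(A)\cong H^i_{A^+}(A)$, which vanishes for $i\neq\operatorname{gldim}A$.
\item Since $A_e$ is an $A_e$-module direct summand of $A$, $H^i_{A_e^+}(A_e)$ is a summand of $H^i_{A_e^+}(A)$. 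This forces $\operatorname{gldim}A_e=\operatorname{gldim}A$, an equality you also use in Step 4 without justification.
\end{enumerate}

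\textbf{Minor points.}
\begin{enumerate}
\item The socle of $A^{\cov}$ lies in degree $\ell_{A_e}-\ell_A$, not $\ell_A-\ell_{A_e}$. For the $\Gamma_n$-grading of $A_{-1}$ it is $4n-2$.
\item Rank one needs no quotient division ring. Left multiplication by $s^{\ord(g)-1}$ embeds $A_g$ into $A_e$ as right $A_e$-modules, and the Ore domain $A_e$ has uniform dimension one.
\end{enumerate}
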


Part~\eqref{item:covariant Hilbert series} is the source of most of our restrictions: it forces the Hilbert series of $A^\cov$, which is determined by the algebra, to agree with the Poincar\'e polynomial of $G$ with respect to $\mf{R}$, which is determined by the group. Furthermore, although not explicitly proved in \cite{KirkmanKuzmanovichZhang2}, the following result follows immediately from Theorem~\ref{thm:KKZ covariant Hilbert series}.

\begin{cor}\label{cor:f_g lives in A_l(g)}
    Let $g \in G$ and $d \in \NN$. Then the following hold.
    \begin{enumerate}
        \item We have
        $$(I_g)_d = \sum_{k = 1}^d (A_g)_{d - k} (A_e)_k.$$\label{item:I_gd}
        \item We have $f_g \in A_{\ell(g)}$. Consequently, $(A_g^\cov)_d = 0$ if $d \neq \ell(g)$.\label{item:f_g lives in A_l(g)}\qed
    \end{enumerate}
\end{cor}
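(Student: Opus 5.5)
For part~\eqref{item:I_gd}, I would use Theorem~\ref{thm:KKZ covariant Hilbert series}\eqref{item:left ideal I}: $I$ is the left ideal generated by $A_e^+$, so $I = A \cdot A_e^+$. Since $A_e^+$ is $G$-homogeneous of degree $e$ and $\NN$-graded, $I$ is spanned by products $a b$ with $a \in (A_h)_{d-k}$ and $b \in (A_e)_k$, $k \geq 1$. Such a product has $G$-degree $h$ and $\NN$-degree $d$. Taking the $(g,d)$-component of $I = \sum_{h,k} (A_h)_{d-k}(A_e)_k$ keeps exactly the terms with $h = g$. The inclusion $\supseteq$ is immediate, because each $(A_g)_{d-k}(A_e)_k$ with $k \geq 1$ lies in $A \cdot A_e^+ = I$. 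This gives the stated formula.

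For part~\eqref{item:f_g lives in A_l(g)}, I would first reduce to the case that $f_g$ is $\NN$-homogeneous. The $f_g$ given by the theorem are $G$-homogeneous but perhaps not $\NN$-homogeneous. Since $A^\cov = A/I$ is $\NN\times G$-graded and $(A^\cov)_g = \kk \overline{f}_g$ is one-dimensional by \eqref{item:left ideal I}, $(A^\cov)_g$ is concentrated in a single $\NN$-degree $d_g$. So I may replace $f_g$ by its $\NN$-homogeneous component of degree $d_g$. This new element still generates $A_g$ over $A_e$: write $A_g = f_g A_e$ and note that the other components of $f_g$ lie in $I_g = A_g A_e^+$. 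A graded Nakayama argument then gives $A_g = f'_g A_e + A_g A_e^+$, and hence $A_g = f'_g A_e$.

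Next I would show $d_g = \ell(g)$. The lower bound $d_g \geq \ell(g)$ holds because every nonzero element of $(A_g)_d$ is a sum of monomials $y_{i_1}\cdots y_{i_d}$ whose degrees multiply to $g$. Dropping the factors of degree $e$ shows $d \geq \ell(g)$, so $(A_g)_d = 0$ for $d < \ell(g)$.

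For the reverse inequality, I would use Theorem~\ref{thm:KKZ covariant Hilbert series}\eqref{item:covariant Hilbert series}, which gives $H_{A^\cov}(t) = \sum_{g} t^{\ell(g)}$. Then $\sum_g t^{d_g} = \sum_g t^{\ell(g)}$ with $d_g \geq \ell(g)$ for every $g$. Evaluating the derivative at $t=1$ gives $\sum_g d_g = \sum_g \ell(g)$, which forces $d_g = \ell(g)$ for all $g$.

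Therefore $f_g \in A_{\ell(g)}$, and $(A^\cov_g)_d = 0$ for $d \neq \ell(g)$. The only delicate point is the Nakayama step in choosing an $\NN$-homogeneous $f_g$. Everything else is bookkeeping.
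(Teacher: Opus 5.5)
Your proof is correct. The paper gives no written proof and calls the corollary an immediate consequence of Theorem~\ref{thm:KKZ covariant Hilbert series}. The way it is used later, in the proof of Proposition~\ref{prop:eliminates most trinomial cases}, suggests a more direct route for part~(2) than yours.

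\textbf{The direct route.}
\begin{enumerate}
\item Since $A_e$ is connected and $A_g = f_g A_e$ with $f_g$ homogeneous, the lowest $\NN$-degree in which $A_g$ is nonzero is $\deg f_g$.
\item Your monomial argument shows $(A_g)_d = 0$ for $d < \ell(g)$.
\item Part~(2) of Theorem~\ref{thm:KKZ covariant Hilbert series} gives $(A_g)_{\ell(g)} \neq 0$.
\item Hence $\deg f_g = \ell(g)$, and the vanishing of $(A^{\cov}_g)_d$ for $d \neq \ell(g)$ then follows from $(A^{\cov})_g = \kk \overline{f}_g$.
\end{enumerate}

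\textbf{Your route.} You determine the degree $d_g$ of the one-dimensional space $(A^{\cov})_g$ by comparing $\sum_g t^{d_g}$ with the Hilbert series in part~(5). You then use $d_g \geq \ell(g)$ termwise, finishing neatly with the derivative at $t=1$. This relies on the stronger Frobenius/Hilbert-series statement, but never needs to know that $(A_g)_{\ell(g)}$ is nonzero. Both arguments are sound.

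\textbf{The Nakayama step.} Your replacement of $f_g$ is valid but unnecessary under the standing assumptions. Because $A$ is a domain, any $f \in A_g$ with $A_g = fA_e$ is automatically $\NN$-homogeneous. To see this, write a nonzero homogeneous $m \in A_g$ as $m = fb$ with $b \in A_e$. The lowest and highest $\NN$-components of $fb$ are the nonzero products of the corresponding components of $f$ and $b$. Since $m$ is homogeneous, these must sit in the same degree, which forces $f$ to be homogeneous. So the original $f_g$ itself lies in $A_{\ell(g)}$, not merely a homogeneous replacement. Strictly, your argument only shows that such a replacement exists, and this remark closes that small interpretive gap.
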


We can now give our first piece of evidence for Conjecture~\ref{conj:binomial}. If $g$ has length $2$, then $(A_g)_2$ is one-dimensional by Theorem~\ref{thm:KKZ covariant Hilbert series}\eqref{item:minimal component of A_g is one-dimensional}, so any two degree $2$ monomials of $G$-degree $g$ are proportional in $A$. This forces the relations of $G$-degree $g$ to be binomial.

\begin{prop}\label{prop:binomial}
    Let $g \in G$ with $\ell(g) = 2$. If $(R_g)_2 \neq 0$, then $(R_g)_2$ has a basis consisting of binomials in the $y_i$, where $(R_g)_2$ denotes the space of quadratic relations of $A$ of $G$-degree $g$.
\end{prop}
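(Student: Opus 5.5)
The plan is to compare two dimensions: that of the degree $2$, $G$-degree $g$ part of the free algebra, and that of $(A_g)_2$. Since $A = T(V)/(R)$ with $R$ a $G$-graded subspace (the grading refines the $\NN$-grading), we have the exact sequence
$$0 \to (R_g)_2 \to (T(V)_g)_2 \to (A_g)_2 \to 0,$$
where $(T(V)_g)_2$ is spanned by the monomials $y_iy_j$ with $g_ig_j = g$. Let $M_g \coloneqq \{(i,j) \mid g_i g_j = g\}$. These monomials form a basis of $(T(V)_g)_2$, so $\dim (T(V)_g)_2 = |M_g|$.

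The key input is Theorem~\ref{thm:KKZ covariant Hilbert series}\eqref{item:minimal component of A_g is one-dimensional}. Since $\ell(g) = 2$, it gives $\dim(A_g)_2 = 1$, and therefore $\dim (R_g)_2 = |M_g| - 1$. In particular $(A_g)_2 = \kk \overline{f}$ for some nonzero element, and each monomial $y_iy_j$ with $(i,j) \in M_g$ maps in $A$ to $c_{ij}\, w$ for a fixed spanning vector $w$ of $(A_g)_2$ and scalars $c_{ij} \in \kk$.

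I would then show that every $c_{ij}$ is nonzero. This is where I use that $A$ is a domain, which holds by the standing assumption \eqref{item:assumption AS regular domain}. The element $y_iy_j$ is a product of two nonzero elements of $A$, so it is nonzero in $A$. Also, $(R_g)_2 \neq 0$ forces $|M_g| \ge 2$, so the set of monomials is nontrivial.

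Now fix one pair $(i_0,j_0) \in M_g$. For each other pair $(i,j) \in M_g$, form the binomial
$$c_{i_0 j_0} y_i y_j - c_{ij} y_{i_0} y_{j_0}.$$
Each of these lies in $(R_g)_2$, since it maps to zero in $(A_g)_2$. There are $|M_g|-1$ of them, and they are linearly independent in $T(V)$, because each involves a distinct monomial $y_iy_j$ with nonzero coefficient apart from the common monomial $y_{i_0}y_{j_0}$. By the dimension count they therefore form a basis of $(R_g)_2$, and each is a binomial in the $y_i$.

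The main obstacle is only the verification that each monomial has nonzero image, which the domain hypothesis handles; the rest is a dimension count.
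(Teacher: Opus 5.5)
Your proposal is correct and follows essentially the same route as the paper. Both arguments use $\dim(A_g)_2 = 1$ from Theorem~\ref{thm:KKZ covariant Hilbert series}, write each monomial $y_iy_j$ of $G$-degree $g$ as a nonzero multiple of a spanning vector (nonzero by the domain hypothesis), and conclude by a dimension count that the $|M_g|-1$ independent binomials span $(R_g)_2$. The only cosmetic difference is that you fix one base pair $(i_0,j_0)$ to get an explicit basis, whereas the paper takes all pairwise binomials and observes that their span has dimension $N-1$.
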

\begin{proof}
    By Theorem~\ref{thm:KKZ covariant Hilbert series}\eqref{item:minimal component of A_g is one-dimensional}, we have $\dim(A_g)_2 = 1$, so we can write $(A_g)_2 = \kk z$ for some $z \in (A_g)_2$. Define
    $$S \coloneqq \{(i,j) \mid g = g_i g_j\}.$$
    Then for each $(i,j) \in S$, there exists $\lambda_{ij} \in \kk^\times$ such that $y_i y_j = \lambda_{ij} z$. Note that the binomial
    $$\lambda_{k\ell} y_i y_j - \lambda_{ij} y_k y_\ell$$
    is a relation of $A$ of $G$-degree $g$, for all $(i,j), (k,\ell) \in S$. In other words, $R_g$ contains all binomials $\lambda_{k\ell} y_i y_j - \lambda_{ij} y_k y_\ell$ for $(i,j), (k,\ell) \in S$.

    Letting $N \coloneqq |S|$, the span of all such binomials in $V \otimes V$ has dimension $N - 1$. On the other hand, $(V \otimes V)_g$ has dimension $N$, since $(V \otimes V)_g$ is spanned by the monomials $y_i y_j$ with $(i,j) \in S$. Note that
    $$\dim\left(\frac{(V \otimes V)_g}{(R_g)_2}\right) = \dim(A_g)_2 = 1,$$
    so the binomials $\lambda_{k\ell} y_i y_j - \lambda_{ij} y_k y_\ell$ for $(i,j), (k,\ell) \in S$ span $(R_g)_2$.
\end{proof}

\begin{rem}
    Proposition~\ref{prop:binomial} does not imply Conjecture~\ref{conj:binomial} in the case where $A$ is quadratic. This is because $A$ could have a non-binomial $G$-homogeneous relation whose $G$-degree has length less than $2$, and this would not contradict Proposition~\ref{prop:binomial}. However, as the next result shows, the only possibility for non-binomial relations is that their $G$-degree is $e$.
\end{rem}

\begin{cor}
    \label{cor:nonbinomial relations have degree e}
    Suppose $A$ has quadratic relations and $n_e = 0$. Then all $G$-homogeneous relations of $A$ of nontrivial $G$-degree can be chosen to be binomial.
\end{cor}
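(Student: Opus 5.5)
The plan is to reduce to Proposition~\ref{prop:binomial} by showing that any $g \neq e$ for which $(R_g)_2 \neq 0$ must have length exactly $2$. Since $A$ is quadratic, all relations lie in $V \otimes V$, and $R = \bigoplus_{g} (R_g)_2$ decomposes into $G$-homogeneous pieces, because $R$ is a $G$-graded subspace (Lemma~\ref{lem:superpotential-grading}, or directly from the grading descending to $A$). It therefore suffices to treat each $G$-degree $g \neq e$ separately and to produce a binomial basis of $(R_g)_2$ whenever it is nonzero.

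First I will rule out $\ell(g) = 1$. If $g \in \mf{R}$ and $(R_g)_2 \neq 0$, then some nonzero combination of monomials $y_iy_j$ with $g_ig_j = g$ vanishes in $A$, so $(A_g)_2 \neq A_g \cap (V\otimes V)$ image of full dimension; this alone is not a contradiction. The correct tool is Corollary~\ref{cor:f_g lives in A_l(g)}\eqref{item:f_g lives in A_l(g)}. For $g \in \mf{R}$ we have $f_g \in A_1$ and $A_g = A_e f_g$. Because $n_e = 0$, we get $(A_e)_1 = 0$, so $(A_g)_2 = (A_e)_1 f_g = 0$. Hence every degree $2$ monomial $y_iy_j$ of $G$-degree $g$ is zero in $A$. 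This contradicts $A$ being a domain unless no such monomials exist, in which case $(V\otimes V)_g = 0$ and $(R_g)_2 = 0$ trivially. So for $\ell(g)=1$ there are no relations in degree $g$.

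Next I will rule out $\ell(g) \geq 3$. If $(V\otimes V)_g \neq 0$, then $g = g_ig_j$ for some $i,j$. Since $n_e = 0$, every $g_i$ lies in $\mf{R}$, so $\ell(g) \le 2$. Thus if $\ell(g) \ge 3$, then $(V\otimes V)_g = 0$ and again $(R_g)_2=0$. The case $\ell(g) = 0$ means $g = e$, which is excluded. The only remaining case, where nonzero relations can occur, is $\ell(g) = 2$. There Proposition~\ref{prop:binomial} gives a binomial basis of $(R_g)_2$. Taking the union of these bases over all $g \neq e$ yields the claim.

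The step I expect to need the most care is the length-one case. Specifically, I must check that $A_g = A_e f_g$ with $f_g$ of degree $\ell(g)=1$ really forces $(A_g)_2 = (A_e)_1 f_g$. This follows because $A_e$ and $f_g$ are $\NN$-homogeneous and $A_e$ is connected graded. The domain property then finishes the argument, and it holds by the standing assumption \eqref{item:assumption AS regular domain}.
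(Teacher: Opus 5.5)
Your proof is correct and is essentially the paper's argument: when $\ell(g)=2$ you apply Proposition~\ref{prop:binomial}, and you exclude $\ell(g)=1$ by showing $(A_g)_2=0$, which contradicts $y_iy_j\neq 0$ in the domain $A$. The only difference is in how you get $(A_g)_2=0$: you read off $(A_g)_2=(A_e)_1f_g=0$ directly from $A_g=A_ef_g$ with $f_g\in A_1$, whereas the paper uses $(A_g)_2=(I_g)_2=(A_g)_1(A_e)_1+(A_g)_0(A_e)_2$. Your explicit remark that $n_e=0$ forces $\ell(g)\le 2$ whenever $(V\otimes V)_g\neq 0$ is left implicit in the paper, and the garbled aside about ``image of full dimension'' in your second paragraph should simply be deleted.
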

\begin{proof}
    Let $r \in R_g$ be a $G$-homogeneous relation of $G$-degree $g \in G \setminus \{e\}$. If $\ell(g) = 2$, then we are done by Proposition~\ref{prop:binomial}, so assume $\ell(g) = 1$ (meaning $g \in \mf{R}$). Then Corollary~\ref{cor:f_g lives in A_l(g)} implies that $(A_g^\cov)_2 = 0$, meaning
    $$(A_g)_2 = (I_g)_2 = (A_g)_1 (A_e)_1 + (A_g)_0 (A_e)_2 = 0,$$
    since $(A_e)_1 = 0$ by assumption and $(A_g)_0 = 0$ (as $g \neq e$). This is a contradiction, as any monomial $y_i y_j$ appearing in $r$ must necessarily be a nonzero element of $(A_g)_2$ (recall that we are assuming that $A$ is a domain, so $y_i y_j \neq 0$).
\end{proof}

\subsection{The cubic algebras of dimension \texorpdfstring{$3$}{3}}

Kirkman, Kuzmanovich, and Zhang have shown that for nontrivial group \emph{actions} on cubic regular algebras, the fixed ring is never regular \cite[Proposition 6.4]{KirkmanKuzmanovichZhang08}. It is unclear whether this holds for general Hopf actions, but we are able to verify here the case of an action by the dual of a group algebra.

\begin{thm}\label{thm:cubic invariant rings}
    Let $A$ be a three-dimensional cubic AS regular algebra faithfully graded by a nontrivial finite group $G$, refining its $\mb{N}$-grading. Then $A_e$ is not AS regular.
\end{thm}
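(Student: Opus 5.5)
The plan is to split on whether $G$ is abelian. For abelian $G$, use the classical invariant theory. For nonabelian $G$, use the classification of Theorem~\ref{thm:cubic} together with the Kirkman--Kuzmanovich--Zhang machinery of Theorem~\ref{thm:KKZ covariant Hilbert series} and Corollary~\ref{cor:f_g lives in A_l(g)}.

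\emph{Abelian case.} If $G$ is abelian, then since $\kk$ is algebraically closed of characteristic zero we have $(\kk G)^* \cong \kk \widehat{G}$ as Hopf algebras. The $G$-grading is then the same thing as a faithful action of the nontrivial abelian group $\widehat{G}$ by graded automorphisms, with $A_e = A^{\widehat{G}}$. By \cite[Proposition 6.4]{KirkmanKuzmanovichZhang08}, the fixed ring of a nontrivial group action on a cubic three-dimensional regular algebra is never regular, so $A_e$ is not AS regular. (Faithfulness of the grading corresponds to faithfulness of the action, so that result applies.)

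\emph{Nonabelian case: setup and degree $2$.} Now let $G$ be nonabelian and suppose, for a contradiction, that $A_e$ is AS regular. By Theorem~\ref{thm:cubic}, $A$ is one of $\mc{F}^\pm_q$, $\mc{G}^\pm$, $\mc{H}_q$, $\mc{I}_q$, with $x,y$ homogeneous of distinct degrees $a,b$. In particular $n_e = 0$ and $\mf{R} = \{a,b\}$. Since the relations are cubic, the four monomials $x^2, xy, yx, y^2$ are linearly independent in $A_2$. Corollary~\ref{cor:f_g lives in A_l(g)} then gives two facts.
\begin{itemize}
\item If $g \in \mf{R}$, then $(A_g)_2 = (A_g)_1(A_e)_1 = 0$. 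So none of $a^2, ab, ba, b^2$ lies in $\{a,b\}$.
\item If $\ell(g) = 2$, then $\dim(A_g)_2 = 1$ by Theorem~\ref{thm:KKZ covariant Hilbert series}. So the four monomials have pairwise distinct $G$-degrees, except that several of them may have degree $e$.
\end{itemize}
Since $ab \neq ba$, this pins down the set of elements of length $\le 2$ and the value $c \coloneqq \dim (A_e)_2 = \#\{a^2, b^2\} \cap \{e\}$. The covariant ring therefore has Hilbert series beginning $1 + 2t + (4-c)t^2 + \cdots$.

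\emph{Nonabelian case: Hilbert series contradiction.} Next I use the freeness $A = \bigoplus_g f_g A_e$, where each summand is free of rank one because $A$ is a domain. This gives
$$H_A(t) = \frac{1}{(1-t)^2(1-t^2)} = p(t)\, H_{A_e}(t), \qquad p(t) = \sum_{g \in G} t^{\ell(g)}.$$
Here $p$ is a palindromic polynomial (since $A^\cov$ is Frobenius) with nonnegative coefficients and $p(1) = |G|$. Because $A_e$ is regular of GK-dimension $3$ with no degree $1$ elements, $1/H_{A_e}(t)$ is a polynomial with constant term $1$ and no $t$ term. Comparing the factorizations, $p(t)(1-t)^2(1-t^2)$ must equal such a polynomial. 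I would work through the possibilities $c \in \{0,1,2\}$.
\begin{itemize}
\item $c = 0$: we need $a^2, b^2 \neq e$, so $A_e$ has no generators in degree $2$ at all. The $t^2$ coefficient of $1/H_{A_e}$, computed from the expansion of $p(t)(1-t)^2(1-t^2)$, then forces a contradiction once degree $3$ is also examined.
\item $c \in \{1,2\}$: I use the explicit group presentations in Table~\ref{tab:cubic classification}. For $\widehat{\Gamma}$ with $a^2, b^2$ central, and for $\Gamma$ and $G_5$, the conditions $a^2 = e$ or $b^2 = e$, combined with the distinctness constraints above, collapse $G$ to an abelian group or make $ab = ba$. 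For example, in $\Gamma$ the condition $a^2 = e$ forces $b^2 = e$, giving $c = 2$. In that case the degree $3$ data of $A^\cov$ must be compared with the cubic relations.
\end{itemize}

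\emph{Expected obstacle.} The hardest step is turning the Hilbert series identity into a genuine contradiction in the surviving cases. I do not want to rely on a general structure theorem for Hilbert series of regular algebras not generated in degree $1$. My first attempt is to use only degrees $\le 4$: compute $(A_g)_3$ and $(A_g)_4$ directly from the explicit cubic relations of each algebra in Notation~\ref{ntt:cubic algebras}. Then check that the count of elements of $G$ of lengths $3$ and $4$, which is forced by the group presentation and by $\dim(A_g)_{\ell(g)} = 1$, is incompatible with the coefficients $\dim A_3 = 6$ and $\dim A_4 = 9$ of $H_A$ after subtracting $\dim (I)_d$ as computed by Corollary~\ref{cor:f_g lives in A_l(g)}\eqref{item:I_gd}.
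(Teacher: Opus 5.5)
Your abelian case is exactly the paper's argument. The nonabelian case has a genuine gap: you never reach a contradiction, and the tools you propose cannot reach one.

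The constraint you impose on $p(t)(1-t)^2(1-t^2)=p(t)(1-2t+2t^3-t^4)$ holds automatically. It is a polynomial with constant term $1$, and its $t$-coefficient is $p_1-2=0$. Degree by degree, the identity $H_A=p\,H_{A_e}$ only restates $\dim A^{\mathrm{cov}}_d=\#\{g:\ell(g)=d\}$. In degree $3$, for instance, it reads $p_3=6-\dim(A_e)_3$, so nothing is forced in your case $c=0$. Your claim that $a^2=e$ or $b^2=e$ collapses $G$ to an abelian group is also false. Every dihedral group $D_{2m}$ is a quotient of both $\widehat{\Gamma}$ and $\Gamma$, with $a,b$ sent to two generating reflections. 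Likewise $D_8$ is a quotient of $G_5$ with $a^2=e$ and $b$ of order $4$.

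Your fallback of checking only degrees $\le 4$ provably fails. Take $A=\mathcal{F}_q^+\cong\mathbb{D}(0,-q)$ graded by $D_{2m}$ with $m\ge 5$, where $x,y$ have as degrees the two reflections.
\begin{itemize}
\item The polynomial $p(t)=(1+t)(1+t+\dots+t^{m-1})$ is palindromic, and $p(t)(1-t)^2(1-t^2)=(1-t^2)^2(1-t^m)$. This is the inverse Hilbert series of a polynomial ring on generators of degrees $2,2,m$, so the Hilbert series test detects nothing.
\item Using the PBW basis $\{x^i(yx)^jy^k\}$, one checks that $\dim(A_g)_{\ell(g)}=1$ and $\dim A^{\mathrm{cov}}_d=\#\{g:\ell(g)=d\}$ for all $d\le 4$. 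For example, $A^{\mathrm{cov}}_4$ is spanned by the images of $xyxy$ and $yxyx$, matching the two elements $abab\neq baba$ of length $4$.
\item The obstruction first appears in degree $m$. The longest element $w_0$ of $D_{2m}$ is the $G$-degree of both $(xy)^{m/2}$ and $(yx)^{m/2}$, or of $(xy)^{(m-1)/2}x$ and $(yx)^{(m-1)/2}y$ when $m$ is odd. These are distinct PBW basis elements, contradicting $\dim(A_{w_0})_{\ell(w_0)}=1$.
\end{itemize}

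So what is missing is an argument that works in arbitrarily high degree. It must apply uniformly to every finite nonabelian quotient of $\widehat{\Gamma}$, $\Gamma$ and $G_5$, for each of the four families. The paper supplies this step by citing Chen--Kirkman--Zhang's proofs for the down-up algebras $\mathbb{D}(0,-q)\cong\mathcal{F}_q^+$, $\mathbb{D}(0,-1)\cong\mathcal{I}_{-1}$ and $\mathbb{D}(-2,-1)\cong\mathcal{H}_{-2}$ (their Propositions 1.4, 1.8 and 1.10). It observes that these proofs carry over unchanged to $\mathcal{F}_q^{\pm}$, $\mathcal{I}_q$, and $\mathcal{G}^{\pm},\mathcal{H}_q$ respectively.
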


To prove Theorem~\ref{thm:cubic invariant rings}, we will use results from \cite{ChenKirkmanZhang} on \emph{down-up algebras}, which we now define. They were first introduced in \cite{BenkartRoby}.

\begin{dfn}
    Given $\alpha, \beta \in \kk$, the \emph{down-up algebra} $\mathbb{D}(\alpha, \beta)$ is generated by $u$ and $d$, subject to relations
    $$u^2 d = \alpha udu + \beta d u^2, \qquad u d^2 = \alpha dud + \beta d^2 u.$$
\end{dfn}

Notice the following isomorphisms:
\begin{itemize}
    \item $\mc{F}_q^+ \cong \mathbb{D}(0,-q)$.
    \item $\mc{H}_{-2} \cong \mathbb{D}(-2,-1)$.
    \item $\mc{I}_{-1} \cong \mathbb{D}(0,-1)$.
\end{itemize}
Therefore, the above algebras have already been studied in \cite{ChenKirkmanZhang}. In fact, the proofs of the results in \cite{ChenKirkmanZhang} can be applied without major changes to all the other algebras in Theorem~\ref{thm:cubic}.

\begin{proof}[Proof of Theorem~\ref{thm:cubic invariant rings}]
    If $G$ is abelian, then the $G$-grading on $A$ is equivalent to a $G$-action on $A$, as $(\kk G)^* \cong \kk G$. The result therefore follows by \cite[Proposition 6.4]{KirkmanKuzmanovichZhang08} in this case, so we now assume that $G$ is nonabelian.

    By Theorem~\ref{thm:cubic}, the only possibility is that $A$ is one of the algebras $\mc{F}_q^\pm$, $\mc{G}^\pm$, $\mc{H}_q$, or $\mc{I}_q$, for some $q \in \kk^\times$. Furthermore, in each of these cases, the generators $x$ and $y$ of $A$ are $G$-homogeneous.

    If $A \cong \mc{F}_q^\pm$, then the proof follows in exactly the same way as \cite[Proposition 1.4]{ChenKirkmanZhang}. If $A \cong \mc{G}^\pm$ or $\mc{H}_q$, then the proof is identical to \cite[Proposition 1.10]{ChenKirkmanZhang}. Finally, if $A \cong \mc{I}_q$, then the proof is the same as \cite[Proposition 1.8]{ChenKirkmanZhang}.
\end{proof}

\subsection{The quadratic algebras of dimension \texorpdfstring{$3$}{3}}

The goal here is to show that if $A$ is a three-dimensional quadratic regular algebra with a grading by a nonabelian finite group $G$ as a dual reflection group, then $A$ has binomial relations, proving Conjecture~\ref{conj:binomial} in the nonabelian three-dimensional case.

\begin{thm}
    \label{thm:almost rigid}
    Let $A$ be a quadratic regular algebra of dimension $3$, faithfully graded by a nontrivial finite group $G$ refining the $\mb{N}$-grading.  Assume that $A$ is of types $A_{-1}[z; \sigma, \delta]$, $A_{-1}[z; \tau, \del]$, or $\mc{A}$--$\mc{E}$ in Table~\ref{tab:classification}, and that $A_1$ has a $G$-homogeneous basis given in one of the rows of the table.
    \begin{enumerate}
        \item If $A$ is of type $A_{-1}[z; \sigma, \delta]$, $A_{-1}[z; \tau, \del]$, or $\mc{C}_q$, then $A_e$ is not regular.\label{item:almost rigid Ore and Cq}
        \item If $A$ is of type $\mc{A}, \mc{B}, \mc{D}$, or $\mc{E}$, then $A_e$ is regular only if $G \cong \mb{Z}_2$ with $\deg_G(x) = \deg_G(y) = e$.\label{item:almost rigid ABDE}
    \end{enumerate}
\end{thm}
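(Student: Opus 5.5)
The plan is to combine the explicit presentations in Table~\ref{tab:classification} with the numerical restrictions of Theorem~\ref{thm:KKZ covariant Hilbert series}. Write $a,b,c$ for the images in $G$ of the degrees of the homogeneous basis $u,v,z$ (or $x,y,z$). The grading group $G$ is then a finite quotient of $G_1$, $G_2$, $G_3$ or $G_4$ in which these images satisfy the defining relations of that group. Assume $A_e$ is regular. Theorem~\ref{thm:KKZ covariant Hilbert series}\eqref{item:minimal component of A_g is one-dimensional} gives $n_g = 1$ for every $g \in \mf{R}$. So no two of the basis vectors can share the same \emph{nontrivial} degree. The case split is according to how many of $a,b,c$ equal $e$.

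\emph{Step 1: no basis vector has degree $e$}, so $n_e = 0$ and $a,b,c$ are distinct nontrivial elements. Every algebra in the theorem has a relation with at least three monomials, and its $G$-degree $g$ is nontrivial: it is $a^2=b^2(=c^2)$ for the $x^2+y^2+z^2$ or $u^2 \pm v^2$ type relations, or $ab$ for the mixed trinomials $zu = vz + uv$, $xz+zy+qyx$, and so on. By Corollary~\ref{cor:nonbinomial relations have degree e}, every relation of nontrivial degree must be spanned by binomials. I will contradict this by direct linear algebra in each case.

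For each algebra I list the monomials of $(V\otimes V)_g$ together with the relations of $R$ of degree $g$. I then check that the quotient has dimension $\ge 2$, or else that $R_g$ has no binomial basis. This uses Proposition~\ref{prop:binomial} when $\ell(g)=2$. When $\ell(g)=1$ it uses the argument in the proof of Corollary~\ref{cor:nonbinomial relations have degree e}, which forces $(A_g)_2 = 0$; that is impossible in a domain. One subtlety is that in a quotient of the maximal group, extra monomials may acquire degree $g$ (for instance if $a^2 = bc$ in $G$). This only enlarges $(V\otimes V)_g$ without adding relations, so the dimension count still works. This settles part~\eqref{item:almost rigid Ore and Cq} in the $n_e=0$ case and the corresponding part of~\eqref{item:almost rigid ABDE}.

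\emph{Step 2: some basis vector has degree $e$.} Because $n_g \le 1$ for $g \neq e$, either exactly one or exactly two of $a,b,c$ are trivial. If all three were trivial, $G$ would be trivial.

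For $G_1$ (the Ore extensions and $\mc{C}_q$), the relations $ca = bc = ab$ and $cb = ac = ba$ collapse immediately. For instance, $c = e$ gives $a = b$, so both are trivial and $G$ is trivial; the cases $a=e$ and $b=e$ are similar. So a nontrivial grading of these algebras never has a generator of degree $e$, and part~\eqref{item:almost rigid Ore and Cq} follows.

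For $G_2,G_3,G_4$ I run the same collapse analysis. The only configuration that does not degenerate is $a=b=e$ with $c \neq e$. Here $c^2 = a^2 = e$ (or $c^2=ab=e$), so $G = \ang{c} \cong \ZZ_2$. Every other pattern either forces $G$ to be trivial or forces two nontrivial basis vectors to have the same degree, contradicting $n_g=1$.

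The main obstacle I expect is in Step 1, for quotients $G$ of the maximal groups in which $g = a^2$ has small length or coincides with a product of other generators. There the bookkeeping of which monomials lie in $(V\otimes V)_g$ changes, and for $\mc{A}$ in particular there are two different homogeneous bases to treat. If the binomial argument becomes awkward there, my fallback is to compare Hilbert series directly. I would compute $\dim (A^{\cov})_2$ from the relations and compare it with the number of $g \in G$ of length $2$, given by Theorem~\ref{thm:KKZ covariant Hilbert series}\eqref{item:covariant Hilbert series}. I would also use that $A^{\cov}$ is Frobenius, so its Hilbert series is palindromic with one-dimensional top degree.
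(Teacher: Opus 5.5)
\textbf{Part (1): sound, by a different route.} Your argument for part~(1) works. You use $n_g=1$ for $g\in\mf{R}$ to make $a,b,c$ distinct. Then the mixed trinomial of degree $ac=cb=ba$ cannot have degree $e$, because $ac=ba=e$ would give $b=c$. A dimension count on $(A_g)_2$ finishes; it survives other relations falling into degree $g$ because the three relations have disjoint monomial supports. The paper instead uses Corollary~\ref{cor:nonbinomial relations have degree e} to force that trinomial into degree $e$, which makes $G$ cyclic and $A_e$ a Veronese ring.

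\textbf{Part (2), Step 1: the key claim is false.} You assert that the trinomial ($x^2+y^2+z^2$, $u^2+v^2+z^2$, $uv+vu+z^2$ or $xy-yx+z^2$) has nontrivial $G$-degree. In finite quotients of $G_2,G_3,G_4$, however, one can have $a^2=e$ (resp.\ $ab=c^2=e$) with $a,b,c$ distinct and nontrivial. For instance, $D_{12}=\ang{r,s}$ grades $\mc{B}$ (and $\mc{A}$ in its first homogeneous basis) via $a=s$, $b=r^{-2}s$, $c=r^{-1}s$. Then:
\begin{itemize}
\item $n_e=0$, and the trinomial lies in $R_e$;
\item the other two relations are binomials;
\item every $(A_g)_2$ with $\ell(g)=2$ is one-dimensional;
\item $\sum_g t^{\ell(g)}=1+3t+4t^2+3t^3+t^4$ is palindromic, and $H_A(t)/\sum_g t^{\ell(g)}=1/((1-t^2)^2(1-t^3))$.
\end{itemize}
So Corollary~\ref{cor:nonbinomial relations have degree e}, Proposition~\ref{prop:binomial} and your degree-two/Frobenius fallback all yield nothing here.

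\textbf{Part (2), Step 2: the collapse analysis is wrong for $G_4$.} This affects types $\mc{E}^\pm$, and $\mc{A}$ in the basis of Notation~\ref{ntt:quadratic regular algebras}. Taking $c=e$ with $a,b$ distinct involutions gives $G=D_{2k}$ with $n_g\le 1$ for $g\ne e$, so nothing collapses. Similarly, $a=e$ with $G\cong\ZZ_2^2$ does not collapse; note that $G$ may be abelian in this theorem. For the dihedral case with $k\ge 3$, every check you propose passes again, and $H_A(t)/\sum_g t^{\ell(g)}=1/((1-t)(1-t^2)(1-t^k))$.

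\textbf{The missing idea} is the structural reduction of Proposition~\ref{prop:eliminates most trinomial cases}. Since $zV=Vz$ and $z^2\in B\coloneqq\kk\ang{x,y}$, one has $A=B\oplus Bz$ with $B$ a cubic AS regular algebra.
\begin{itemize}
\item If $c\notin\ang{a,b}$, then $A_e=B_e$.
\item If $c\in\ang{a,b}$, one shows $f_{c^{-1}}\in B$. This gives $A_e=B_e\oplus B_e f_{c^{-1}}z$, which is free over $B_e$, so $B_e$ is regular.
\end{itemize}
Either way, Theorem~\ref{thm:cubic invariant rings} (which also covers abelian gradings) forces $a=b=e$, and then $c^2=e$ gives $G\cong\ZZ_2$. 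Without this reduction, your bookkeeping would have to exclude infinitely many dihedral-type quotients one at a time, degree by degree, and you give no uniform mechanism for doing so.
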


To prove Theorem~\ref{thm:almost rigid}, we require the following general result.

\begin{prop}
    \label{prop:eliminates most trinomial cases}
    Let $A = \kk \ang{x, y, z}/(r_1, r_2, r_3)$ be a quadratic regular algebra.  Assume that $A$ is faithfully graded by a nontrivial finite group $G$ refining the $\mb{N}$-grading, with $x, y, z$, and $r_1, r_2, r_3$ $G$-homogeneous.  Write $\deg_G(x) = a$, $\deg_G(y) = b$, $\deg_G(z) = c$.  Assume further that 
    \begin{enumerate}
        \item[(i)] $z V = Vz$, where $V = \kk x + \kk y$; in particular, the element $z$ is normal in $A$.
        \item[(ii)] One of the $r_i$ has the form $z^2 = f(x, y)$ for some $f(x, y) \in \kk \ang{x, y}$.
    \end{enumerate}
    If $a = b = e$, then $A_e$ is a $2$-generated cubic regular algebra of dimension $3$.  Otherwise the invariant ring $A_e$ is not regular.
\end{prop}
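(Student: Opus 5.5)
The plan is to exploit the decomposition $A = B + Bz$, where $B$ is the subalgebra of $A$ generated by $x$ and $y$. By (i), $zB = Bz$. By (ii), $z^2 = f(x,y) \in B$. Hence every monomial reduces to the form $b$ or $bz$ with $b \in B$, so $A = B + Bz$. Comparing Hilbert series shows that this sum is direct with $H_B(t) = 1/((1-t)^2(1-t^2))$: the bound $H_A \le H_B(1+t)$ together with $H_A = (1-t)^{-3}$ forces $H_B$ to be at least this large. A lower bound on $B$ is also available from viewing $A$ as a free module over the subalgebra generated by $B$ and the normal element $z^2$.

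Since $x$, $y$, $z$ are $G$-homogeneous and $zV = Vz$, the $G$-degree $ca$ of $zx$ is the degree of an element of $Vz$. Hence $ca \in \{ac, bc\}$, so conjugation by $c$ permutes $\{a,b\}$. Also $c^2 = \deg_G f$ lies in the monoid generated by $a$ and $b$.

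\emph{Case $a = b = e$.} Faithfulness gives $G = \ang{c}$. Since $z^2 \in B \subseteq A_e$ we get $c^2 = e$, and $G \cong \ZZ_2$ because $G$ is nontrivial. Then $A_e = B$, since $Bz$ has degree $c \neq e$. The grading is the same as the action of the automorphism $x \mapsto x$, $y \mapsto y$, $z \mapsto -z$, and $A = A_e \oplus A_e z$ is free of rank $2$ over $A_e$, with $A_e$ a bimodule direct summand. From this I would deduce $\operatorname{gldim} A_e < \infty$. The AS Gorenstein property would follow either from the Kirkman--Kuzmanovich--Zhang theory, using that this automorphism is a quasi-reflection (its trace series $1/((1-t)^2(1+t))$ has a pole of order $2$ at $t=1$), or directly by viewing $A$ as a $\ZZ_2$-graded free extension of $A_e$. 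Finally, $A_e = B$ is generated by $x, y$ in degree $1$ and has Hilbert series $1/((1-t)^2(1-t^2))$. Therefore it is a three-dimensional regular algebra with $2$ generators, and by the Artin--Schelter numerology it has $2$ cubic relations.

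\emph{Case $a \neq e$ or $b \neq e$.} Assume for contradiction that $A_e$ is regular, and use Theorem~\ref{thm:KKZ covariant Hilbert series}. By part~\eqref{item:minimal component of A_g is one-dimensional}, each nontrivial degree-one $G$-degree has $n_g = 1$, so $a = b \neq e$ is impossible. Moreover, $A$ is free over $A_e$ on the $f_g$, so $H_A = H_{A_e} \cdot P(t)$ with $P(t) = \sum_g t^{\ell(g)}$ palindromic. Writing $H_{A_e} = 1/\prod_{i=1}^3 (1 - t^{d_i})$, since a regular algebra of GK-dimension $3$ whose Hilbert series divides $(1-t)^{-3}$ in this way has three factors, we get $P(t) = \prod_i (1 + t + \dots + t^{d_i - 1})$. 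Hence $|G| = \prod d_i$, the coefficient of $t$ in $P$ equals $|\mf{R}| = \#\{i : d_i \geq 2\}$, and $\deg P = \sum_i (d_i - 1)$.

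I would then split into subcases: exactly one of $a, b$ equal to $e$; $a \neq b$ both nontrivial with $c = e$; and $a, b, c$ all distinct and nontrivial. In each subcase, $c$ either fixes or swaps $a, b$. I would combine the constraints $\dim (A_g)_{\ell(g)} = 1$ and $(A_g^{\cov})_d = 0$ for $d \neq \ell(g)$ from Corollary~\ref{cor:f_g lives in A_l(g)} with the explicit element $z^2 = f(x,y)$. The element $z^2$ forces $c^2$ to be a product of $a$'s and $b$'s in degree $2$. Combined with $B \cap Bz = 0$, this should make some $(A_g)_2$ with $\ell(g) = 2$ at least two-dimensional, or should produce a degree-$2$ covariant element in a component of length $1$. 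Either outcome contradicts Theorem~\ref{thm:KKZ covariant Hilbert series}.

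I expect this last case analysis to be the main obstacle. In particular, it is not clear that the Hilbert-series and length constraints alone rule out every configuration. If they do not, my fallback is to compute $A_e$ directly from $A = B \oplus Bz$: one has $A_e = B_e \oplus (Bz)_e$, and I would show that $A_e$ needs generators in too many degrees relative to its Hilbert series, contradicting the product form $1/\prod_{i=1}^3(1 - t^{d_i})$.
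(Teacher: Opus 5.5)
Your case $a=b=e$ is essentially fine, apart from one step. Your Hilbert series comparison only gives $H_B \geq H_A/(1+t)$; it does not show that $A = B + Bz$ is direct. Directness comes for free from the $\ZZ_2$-grading with $x,y$ even and $z$ odd, which exists whenever (i) and (ii) hold, since all three relations are homogeneous for it.

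The real gap is the second case, which is the actual content of the proposition. There you only outline a case analysis and concede you do not know whether it closes. The counting constraints you list cannot close it on their own. Take $\mc{A}$, graded by $\ZZ_2 \times \ZZ_2 = \ang{\beta,\gamma}$ with $\deg_G(x) = e$, $\deg_G(y) = \beta$, $\deg_G(z) = \gamma$. This satisfies (i) and (ii), and $P(t) = (1+t)^2$. Then $H_A/P = 1/((1-t)(1-t^2)^2)$, so $|G| = \prod d_i$, $|\mf{R}| = \#\{d_i \geq 2\}$ and $\deg P = \sum (d_i - 1)$ all hold. What fails in this example is the finer condition $(A^{\cov}_\beta)_2 = 0$, because $xy$ and $yx$ are independent. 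So each configuration needs separate structural input, and your ``fallback'' is not yet an argument.

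The missing idea is to push the problem down to the cubic regular algebra $B = \kk\ang{x,y} \subseteq A$ and invoke Theorem~\ref{thm:cubic invariant rings}: a cubic regular algebra has no nontrivial dual reflection group. Let $H = \ang{a,b}$, the support of the grading on $B$, and split on whether $c \in H$.

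\emph{If $c \notin H$:} every $G$-homogeneous element of $Bz$ has degree in $Hc \neq H$, so $A_e = B_e$. Then $H$ is a dual reflection group for $B$, so $H$ is trivial and $a = b = e$.

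\emph{If $c \in H$:} then $G = H$ and $A_g = B_g \oplus B_{gc^{-1}}z$. Since $(A_g)_{\ell(g)}$ is one-dimensional, $f_{c^{-1}}$ lies in $B_{c^{-2}}z$ or in $B_{c^{-1}}$.
\begin{itemize}
\item Suppose $f_{c^{-1}} = rz$ with $r \in B_{c^{-2}}$, and write $zr = r'z$. Then $A_{c^{-1}} = A_e f_{c^{-1}}$ gives $B_{c^{-1}} = B_{c^{-1}} r' z^2$. This is impossible in the lowest degree where $B_{c^{-1}} \neq 0$.
\item Suppose instead $f_{c^{-1}} \in B_{c^{-1}}$. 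Then the same comparison gives $B_{c^{-1}} = B_e f_{c^{-1}}$, so $A_e = B_e \oplus B_e f_{c^{-1}} z$ is free over $B_e$. By \cite[Lemma 1.10]{KirkmanKuzmanovichZhang08}, $B_e$ is regular. Since $G$ faithfully grades $B$, Theorem~\ref{thm:cubic invariant rings} forces $G$ to be trivial, a contradiction.
\end{itemize}
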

\begin{proof}
    Without loss of generality, assume that $r_1$ is the relation of the form $z^2 = f(x, y)$ as in (ii).  Hypothesis (i) gives two more independent quadratic relations, so this accounts for $r_2$ and $r_3$.  Now from the form of the relations there is clearly a $K = \ang{d \mid d^2 = e} \cong \mb{Z}_2$-grading on $A$ for which 
    $\deg_K(x) = \deg_K(y) = e$ and $\deg_K(z) = d$.  Let $B$ be the subalgebra of $A$ generated by $x$ and $y$.  Since $z$ is normal, we have $A = \sum_{n \geq 0} B z^n = B+ Bz$, using that $z^2 \in B$ by (ii).  Since in the $K$-grading $B \subseteq A_e$ and $Bz \subseteq A_d$, obviously $A = B \oplus Bz$ and $B = A_e$, $Bz = A_d$.
    Now since $A$ is AS regular and $A$ is a graded free $B$-module, $B$ is also AS regular \cite[Lemma 1.10(a)(c)]{KirkmanKuzmanovichZhang08}.  From $A = B \oplus Bz$ we deduce 
    \[
        H_B(t) = \displaystyle \frac{H_A(t)}{1+t} = \frac{1}{(1-t)^3(1+t)} = \frac{1}{(1-t)^2(1-t^2)},
    \]
    so $B$ must be a cubic regular algebra of dimension $3$.

    Now let $A$ be graded by $G$ as in the hypothesis of the proposition, and suppose that $A_e$ is AS regular. 
    Note that $B$ is faithfully graded by the subgroup $H = \ang{a,b}$ of $G$. 
    
    Consider the case that $c \not \in H$.  
    Obviously $B_e \subseteq A_e$.  Conversely, any $G$-homogeneous element of $Bz$ has degree in the coset $Hc \neq H$, so $A_e \subseteq B$, as $A = B \oplus Bz$.  But then $A_e \subseteq B_e$, so $B_e = A_e$.  Now $H$ is a dual reflection group for $B$; but since $B$ is a cubic regular algebra, we have seen that no nontrivial such groups $H$ exist in Theorem~\ref{thm:cubic invariant rings}.  Thus $H$ is trivial, that is, $a = b= e$.  The relation in (ii) implies that $c^2 = e$, and then $G = K$ grades $A$ as in the first paragraph of the proof; in this case the invariant ring $A_e = B$ is regular.

    From now on we assume that $c \in H$, so $G = H$, and will reach a contradiction. By Theorem~\ref{thm:KKZ covariant Hilbert series}, for each $g \in G$ there is a $G$-homogeneous element $f_g \in A_g$, unique up to scalar, such that $A_g = f_g A_e = A_e f_g$.   In the $\mb{N}$-grading we have $f_g \in A_{\ell(g)}$, where $\ell(g)$ is the length of $g$ with respect to the generating set $\{a, b, c\}$ of $G$.  In particular, $\ell(g)$ is the smallest integer degree in which $A_g$ is nonzero, and $\dim_{\kk} (A_g)_{\ell(g)} = 1$.

    Note that $B$ is a $G$-graded subalgebra of $A$, and for any $g \in G$, we have $A_g = B_g \oplus B_{gc^{-1}} z$.  Since the lowest degree piece of $A_g$ in the $\mb{N}$-grading is 1-dimensional, either $(A_g)_{\ell(g)} = (B_g)_{\ell(g)}$ or $(A_g)_{\ell(g)}  = (B_{gc^{-1}} z)_{\ell(g)}$.  Thus $f_g$ must lie in either $B_g$ or $B_{gc^{-1}} z$. 
    
    Consider $f_{c^{-1}}$. By the above, there are two cases: either $f_{c^{-1}} \in B_{c^{-1}}$ or $f_{c^{-1}} \in B_{c^{-2}} z$. Assume the latter, for a contradiction. Write $f_{c^{-1}} = rz$ for some $r \in B_{c^{-2}}$. Then
    $$B_{c^{-1}} \oplus B_{c^{-2}} z = A_{c^{-1}} = A_e f_{c^{-1}} = A_e rz = (B_e \oplus B_{c^{-1}} z)rz = B_e rz \oplus B_{c^{-1}} r' z^2,$$
    where $zr = r'z$ for some $r' \in B$ by normality of $z$.  Using that $z^2 \in B$ and matching up the summands via the direct sum decomposition $A = B \oplus Bz$, it follows that $B_{c^{-1}} = B_{c^{-1}} r' z^2$.  Now as $B$ is a domain faithfully graded by the finite group $H = G$, we have $B_{c^{-1}} \neq 0$.  If $i$ is the minimum $\mb{N}$-degree in which $(B_{c^{-1}})_i \neq 0$, then $(B_{c^{-1}} r' z^2)_i = 0$, as $\deg(r' z^2) > 0$.  This is a contradiction. 
        
    We conclude that $f_{c^{-1}} \in B_{c^{-1}}$. For simplicity, write $r \coloneqq f_{c^{-1}}$. Then
    $$B_{c^{-1}} \oplus B_{c^{-2}} z = A_{c^{-1}} = A_e f_{c^{-1}} = A_e r = (B_e \oplus B_{c^{-1}} z) r = B_e r \oplus B_{c^{-1}} r' z,$$
    where again we used the normality of $z$ to get $zr = r'z$. Therefore, $B_{c^{-1}} = B_e r$.
    Now we get that $A_e = B_e \oplus B_{c^{-1}} z = B_e \oplus B_e rz$ is a free left $B_e$-module.  By \cite[Lemma 1.10(a)(c)]{KirkmanKuzmanovichZhang08} we deduce that $B_e$ is AS regular.  However, $B_e$ is the identity component of a faithful grading of $B$ by the group $H = G$.   
    Theorem~\ref{thm:cubic invariant rings} showed that the cubic regular algebra $B$ has no nontrivial gradings with a regular invariant ring, forcing $H = G$ to be trivial, contradicting the hypothesis.
\end{proof}

\begin{proof}[Proof of Theorem~\ref{thm:almost rigid}]
    \eqref{item:almost rigid Ore and Cq} In all of these cases we see from Table~\ref{tab:classification} that $A$ has a $G$-homogeneous trinomial relation containing the monomials $xz, zy, yx$. Let $\deg_G(x) = a$, $\deg_G(y) = b$, $\deg_G(z) = c$.  Suppose that one of these is $e$, say $a = e$.  Then the relation forces $c = b = cb$, so $b = c= e$ as well and $G$ is trivial, a contradiction.  Similarly if $b = e$ or $c = e$ we get a trivial grading.
    
    Thus we can assume that $n_e = 0$.  Now if $A_e$ is regular then Corollary~\ref{cor:nonbinomial relations have degree e} implies that any $G$-homogeneous trinomial relation of $A$ must have degree $e$.  Thus our relation has degree $e$ and so $ac = cb = ba = e$.  This shows that $c = bac = b = acb = a$ so $G = \ang{a}$ must be a finite cyclic group, say $G \cong \mb{Z}_n$ for some $n \geq 2$. Because $G$ refines the $\mb{N}$-grading, this implies that $A_e = \bigoplus_{i \geq 0} A_{in} = A^{(n)}$ is the $n$\textsuperscript{th} Veronese ring of $A$.  But this cannot be a regular algebra because $A^{\cov} = A/I = \bigoplus_{0 \leq i \leq n-1} A_i$, which does not have a symmetric Hilbert series as demanded by Theorem~\ref{thm:KKZ covariant Hilbert series}\eqref{item:covariant Hilbert series}. 

    \eqref{item:almost rigid ABDE} From Table~\ref{tab:classification}, it is easy to see that the hypotheses of Proposition~\ref{prop:eliminates most trinomial cases} are satisfied in these cases (for type $\mc{A}$, for any of the $3$ possible $G$-homogeneous bases).
\end{proof}

We finish this section by proving that the algebras of Table~\ref{tab:classification} with trinomial relations do not admit nonabelian dual reflection groups.

\begin{cor}
    \label{cor:binomial}
    Let $A$ be a quadratic regular algebra of dimension $3$, faithfully graded by a finite nonabelian group $G$ refining the $\mb{N}$-grading.
    Assume that $G$ is a dual reflection group for $A$, meaning $A_e$ is again regular.  Then  $A$ is not of types $A_{-1}[z; \sigma, \delta]$, $A_{-1}[z; \tau, \del]$, or $\mc{A}$--$\mc{E}$ in the classification in Table~\ref{tab:classification}.  In particular, $A$ has a presentation 
    with quadratic relations which are $G$-homogeneous and binomial.  
\end{cor}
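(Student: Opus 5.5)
By Theorem~\ref{thm:quadratic case}, after a change of variables $A$ is one of the algebras in Table~\ref{tab:classification}. The $G$-grading corresponds to a grading by a quotient of the maximal grading group in that row, and the $G$-homogeneous basis of $A_1$ is the one listed there. The key observation is that $G$ is nonabelian. Corollary~\ref{cor:quadratic m = 2} then forces the three $G$-degrees of the basis elements to be pairwise distinct, so the homogeneous basis is exactly one of those arising in the proof of Theorem~\ref{thm:quadratic case} and recorded in the table. This puts us in the setting of Theorem~\ref{thm:almost rigid}.

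Suppose $A$ is of type $A_{-1}[z;\sigma,\delta]$, $A_{-1}[z;\tau,\del]$, or $\mc{C}_q$. Then Theorem~\ref{thm:almost rigid}\eqref{item:almost rigid Ore and Cq} says directly that $A_e$ is not regular, which contradicts the assumption that $G$ is a dual reflection group.

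Suppose instead $A$ is of type $\mc{A}$, $\mc{B}$, $\mc{D}$, or $\mc{E}^\pm$. Then Theorem~\ref{thm:almost rigid}\eqref{item:almost rigid ABDE} shows that regularity of $A_e$ forces $G \cong \ZZ_2$, which is abelian. This contradicts the hypothesis. The only care needed in this case is for type $\mc{A}$, which appears with three different homogeneous bases. The proof of Theorem~\ref{thm:almost rigid} already noted that Proposition~\ref{prop:eliminates most trinomial cases} applies to each of these bases, so all three are covered.

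The remaining rows of Table~\ref{tab:classification} are $A_{-1,q,-q}$, $A_{-1,q,q}$, $A_{1,q,-q}$, the three gradings of $A_{-1}[z;\sigma_q]$, and $S_{q,0,1}$. Reading off the third column, each of these has all three relations $G$-homogeneous binomials in the listed homogeneous basis: $u^2=v^2$, $zu=qvz$, $yx=-xy$, $x^2+qyz=0$, and so on. Since the isomorphism of Theorem~\ref{thm:quadratic case} carries the $G$-grading to the grading of that row, this gives the required binomial presentation.

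The main obstacle is not any new computation. It is making sure that the hypotheses of Theorem~\ref{thm:almost rigid} hold exactly, in particular that the $G$-homogeneous basis of $A_1$ really is one listed in the table. This is where nonabelianness enters, through Corollary~\ref{cor:quadratic m = 2}: it rules out repeated degrees, and hence rules out any other homogeneous bases.
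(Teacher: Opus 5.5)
Your proposal is correct and follows essentially the same route as the paper. You place $A$ in Table~\ref{tab:classification} via Theorem~\ref{thm:quadratic case}, rule out the trinomial types by Theorem~\ref{thm:almost rigid} (where the only regular identity component comes from an abelian $\ZZ_2$-grading), and read off $G$-homogeneous binomial relations from the remaining rows; your extra appeal to Corollary~\ref{cor:quadratic m = 2} is harmless but redundant, since Theorem~\ref{thm:quadratic case} already guarantees the homogeneous basis is one listed in the table.
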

\begin{proof}
    By Theorem~\ref{thm:quadratic case}, the faithful grading by a nonabelian group implies that $A$ must be listed in  Table~\ref{tab:classification}, with $G$-homogeneous basis for $A_1$ given in some row of the table.  Theorem~\ref{thm:almost rigid} implies that if $A$ is of types $A_{-1}[z; \sigma, \delta]$, $A_{-1}[z; \tau, \del]$, or $\mc{A}$--$\mc{E}$, then either there is no such $G$-grading such that $A_e$ is regular, or else the only grading for which $A_e$ is regular is $G \cong \mb{Z}_2$.  Since we assume $G$ is nonabelian here, none of these cases can occur.  Every other case in Table~\ref{tab:classification} has binomial $G$-homogeneous relations, as required.
\end{proof}

\begin{rem}
    Since the classification in Theorem~\ref{thm:quadratic case} depends on the assumption that some grading by a nonabelian group exists, a classification of abelian group gradings for AS regular algebras of dimension $3$ would contain many algebras not listed in Table~\ref{tab:classification}.  Even for the algebras in the table, there might be choices of homogeneous basis not listed that allow for abelian group gradings.   For example, we have seen that $A_{-1}[z; \sigma, \delta]$ has no dual reflection groups $G$ for which the basis in Table~\ref{tab:classification} is homogeneous.  But if we present this algebra using the basis $x, y, z$ with which it is originally defined, we have 
    \[
        A = A_{-1}[z; \sigma, \delta] = \kk \ang{x, y,z}/(xy + yx, zx -xz - x^2 + y^2, zy + yz + 2xy),
    \]
    and the hypotheses of Proposition~\ref{prop:eliminates most trinomial cases} apply, with $y$ as the normal element.  In particular,  there is a grading by $G = \mb{Z}_2 = \ang{d \mid d^2 = e}$ for which $\deg_G(x) = \deg_G(z) = e$, $\deg_G(y) = d$, and for which $A_e$ is a cubic regular algebra. 
\end{rem}

\section{Dual reflection groups}\label{sec:invariant}

Equipped with our classification of Theorems~\ref{thm:quadratic case} and \ref{thm:cubic}, we can now classify the nonabelian dual reflection groups for three-dimensional AS regular algebras.  We will see that for each of the quadratic regular algebras appearing in Theorems~\ref{thm:quadratic case} that has binomial relations,  there exists an infinite family of dual reflection groups.

\subsection{Dual reflection groups for the Ore extensions}
\label{subsec:Ore dual reflection}

Many of the algebras in Table~\ref{tab:classification} are graded Ore extensions $A = B[z; \sigma]$ of a two-dimensional regular algebra $B$, with maximal grading group a (semi)direct product $H \rtimes \ZZ$ of a grading group $H$ of $B$ with $\ZZ$. We aim to show that in this situation, if some factor group $G$ of $H \rtimes \ZZ$ is a dual reflection group for $A$, then $A_e$ is itself an Ore extension of $B_e$.  This reduces the computation of dual reflection groups of all of these algebras to dimension two. In fact, we prove a general result that is useful in any dimension.

\begin{prop}\label{prop:Ore any dimension}
    Let $B$ be an AS regular domain which is generated in degree $1$, let $\sigma$ be a graded automorphism of $B$, and define $A \coloneqq B[z; \sigma]$.  Extend the $\NN$-grading of $B$ to $A$ by setting $\deg(z) = 1$. Suppose that $A$ is faithfully graded by a finite group $G$, refining the $\NN$-grading, such that in addition $z$ is $G$-homogeneous and $B$ is a $G$-graded subalgebra of $A$.  Then the following are equivalent.
    \begin{enumerate}
        \item[(i)] The identity component $B_e$ is AS regular and $G =  H \rtimes \ang{c}$, where $H \coloneqq \{ g \in G \mid B_g \neq 0 \}$, and $c \coloneqq \deg_G(z)$.
        \item[(ii)] The identity component $A_e$ is AS regular.
    \end{enumerate}
    When either of these equivalent conditions holds, we have $A_e =  B_e[z^n; \restr{\sigma^n}{B_e}]$, where $n \coloneqq \ord(c)$.
\end{prop}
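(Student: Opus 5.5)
The plan is to work throughout with the decomposition $A=\bigoplus_{k\ge 0} Bz^k$ and to track $G$-degrees coset by coset.

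\textbf{Preliminary structure.} Since $B$ is a domain faithfully graded by a finite group (inside $G$), the set $H=\{g \mid B_g\neq 0\}$ is a subgroup, by the remark after Definition~\ref{dfn:grading}. The relation $zb=\sigma(b)z$ for $G$-homogeneous $b\in B_1$ gives $\deg_G\sigma(b)=c\deg_G(b)c^{-1}$. Hence $c$ normalizes $H$, $\sigma(B_h)=B_{chc^{-1}}$, and faithfulness gives $G=H\langle c\rangle$. Consequently
$$A_g=\bigoplus_{k\ge0} B_{gc^{-k}}z^k, \qquad\text{in particular}\qquad A_e=\bigoplus_{k\ge 0}B_{c^{-k}}z^k,$$
where the summand $B_{c^{-k}}$ is nonzero exactly when $c^{k}\in H$. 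Let $m\ge1$ be minimal with $c^m\in H$. Then $m\mid n$, and $G=H\rtimes\langle c\rangle$ if and only if $m=n$.

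\textbf{(i)$\Rightarrow$(ii).} Suppose $m=n$. The only surviving summands of $A_e$ are those with $n\mid k$, and for these $c^{-k}=e$. So $A_e=\bigoplus_j B_e z^{jn}$. Since $c^n=e$, the automorphism $\sigma^n$ preserves $B_e$, so $A_e=B_e[z^n;\sigma^n|_{B_e}]$. A graded Ore extension of an AS regular algebra by a graded automorphism is AS regular, so $A_e$ is regular. This argument also gives the final formula for $A_e$ once (ii)$\Rightarrow$(i) is proved.

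\textbf{(ii)$\Rightarrow$(i).} There are two parts: showing $m=n$, and then deducing that $B_e$ is regular.

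For $B_e$, I would note that $z^n\in A_e$ is a normal element of $A_e$ and is regular, since $A$ is a domain. The ideal $A_e\cap Az=\bigoplus_{k\ge1}B_{c^{-k}}z^k$ contains $z^nA_e$. Once $m=n$ is known, this ideal equals $z^nA_e$, so $A_e/(z^n)\cong B_e$. The quotient of an AS regular algebra by a normal regular element of positive degree is AS regular, so $B_e$ is regular.

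The main obstacle is proving $m=n$. Suppose for contradiction that $m<n$, and put $h:=c^{-m}\in H\setminus\{e\}$. Then $B_hz^m\subseteq A_e$, and $B_h\neq 0$. I would use Theorem~\ref{thm:KKZ covariant Hilbert series}, which gives $A_g=A_ef_g=f_gA_e$ with $f_g$ $G$-homogeneous, unique up to scalar, and lying in degree $\ell(g)$. I would mimic the argument in the proof of Proposition~\ref{prop:eliminates most trinomial cases}, now using the $z$-degree. Because $(A_g)_{\ell(g)}$ is one-dimensional, $f_g$ lies in a single summand $B_{gc^{-k}}z^k$.

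Apply this to $g=h^{-1}=c^m$. If $f_g\in B_gz^0$, compare the $z$-degree-$m$ components of $A_g=A_ef_g$. The left side contains $B_{gc^{-m}}z^m=B_ez^m$, which contains $z^m$. The right side gives $B_hz^m\,\sigma^m(f_g)$, using $z^mf_g=\sigma^m(f_g)z^m$. This forces $z^m\in B_hz^m\sigma^m(f_g)$, which is impossible because $f_g\notin\kk$ (as $g\neq e$) and $B$ is a domain, so degrees cannot match. If instead $f_g\in B_{gc^{-k}}z^k$ with $k>0$, then a minimal-$\NN$-degree comparison in the $z$-degree-$0$ component $B_g\neq 0$ of $A_g$ gives a contradiction, exactly as in the proof of Proposition~\ref{prop:eliminates most trinomial cases}. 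Therefore $m=n$ and $G=H\rtimes\langle c\rangle$.

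The point most likely to need care is getting the degree bookkeeping right in the first case, in particular whether to use $g=c^m$ or $c^{-m}$ and left or right multiplication. If the direct comparison fails, my fallback is to compare Hilbert series of $A^{\cov}$ coset by coset and use the symmetry in Theorem~\ref{thm:KKZ covariant Hilbert series}\eqref{item:covariant Hilbert series}.
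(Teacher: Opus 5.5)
Your proof that $m=n$ is correct and is essentially the paper's argument. You write $A_{c^m}=A_ef_{c^m}$ and use the decomposition $A=\bigoplus_k Bz^k$ by $z$-degree. Then $z^m$ rules out $f_{c^m}$ having $z$-degree $0$, and $B_{c^m}\neq 0$ rules out positive $z$-degree. The paper works with $f_hA_e$ for $h=c^\ell$ and places $f_h$ in a single $z$-degree by comparing minimal and maximal $z$-degrees in the domain $A$. You instead get this from $\dim(A_g)_{\ell(g)}=1$, which works equally well. Two small corrections:
\begin{itemize}
\item The $z$-degree-$m$ part of $A_ef_g$ is $B_h\sigma^m(f_g)z^m$, not $B_hz^m\sigma^m(f_g)$. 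With this, the bookkeeping you were worried about (using $g=c^m$ and left multiplication) goes through.
\item In the second case no minimal-degree comparison is needed. If $f_g=rz^k$ with $k>0$, then $A_ef_g\subseteq Az^k$ has zero $z$-degree-$0$ component, whereas $A_g\supseteq B_g\neq 0$.
\end{itemize}

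The genuine gap is the step showing that $B_e$ is AS regular. You use the principle that the quotient of an AS regular algebra by a normal regular element of positive degree is AS regular, and this is false. For example, $t^2$ is a central regular element of $\kk[t]$, but $\kk[t]/(t^2)$ has infinite global dimension. Finite GK-dimension and the AS Gorenstein condition do pass to $A_e/(z^n)$, but finite global dimension need not, so it has to be supplied separately. In this situation it follows from the Ore extension structure you already have: $A_e=B_e[z^n;\restr{\sigma^n}{B_e}]$. By \cite[Theorem 7.5.3]{McConnellRobson}, the global dimension of $B_e$ is at most that of $A_e$, hence finite. Only after this does the normal-regular-element argument with $A_e/(z^n)\cong B_e$ show that $B_e$ is AS regular, which is exactly how the paper finishes the proof.
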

\begin{proof}
    (i) $\implies$ (ii).  Let $g \in G$. If $b \in B_g$, then $b z^k$ has degree $(g,c^k)$, which is trivial if and only if $g = e$ and $n$ divides $k$. It follows that $A_e = \bigoplus_{i \in \NN} B_e z^{ni}$.  Since this is a subring of $A$, we must have $z^n B_e = \sigma^n(B_e) z^n \subseteq A_e$, so $\sigma^n(B_e) \subseteq B_e$.  An injective endomorphism of an $\mb{N}$-graded ring with finite-dimensional graded pieces is an automorphism.  Thus $\restr{\sigma^n}{B_e}$ is an automorphism of $B_e$ and $A_e$ is the Ore extension $B_e[z^n; \restr{\sigma^n}{B_e}]$.

    A graded Ore extension of an AS regular algebra by a graded automorphism is again AS regular. Since $B_e$ is AS regular by assumption, this implies that $A_e$ is also AS regular.

    (ii) $\implies$ (i).  Assume that $A_e$ is AS regular.  Let $H \coloneqq \{ g \in G \mid B_g \neq 0 \}$, which is a subgroup of $G$ since $B$ is a domain and $G$ is finite.  Since the $G$-grading of $A = B[z;\sigma]$ is faithful, we see that $G$ is generated by $H$ and $c \coloneqq \deg_G(z)$. We claim that $G$ is in fact the semidirect product of $H$ and $\ang{c}$, in other words, that $H$ is normal in $G$ and that $H \cap \ang{c} = \{e\}$.

    Let $b \in B$ be an $H$-homogeneous element, and let $h \coloneqq \deg_H(b)$. By definition, we have $zb = \sigma(b)z$, and thus
    $$\deg_G(\sigma(b)) = c h c^{-1}.$$
    It follows that $\sigma(B_h) = B_{c h c^{-1}}$ for all $h \in H$, which implies that $c H c^{-1} = H$. We now see that $H \ideal G$.

    Next, we must show that $H \cap \ang{c} = \{e\}$.  Let $n$ be the order of $c$.  Assume, for a contradiction, that there exists an element $h \in (H \cap \ang{c}) \setminus \{e\}$.  This means that $h = c^\ell$ for some $1 \leq \ell < n$.  Since the group $H$ is finite and $B$ is a domain, there exists $b \in B$ such that $\deg_H(b) = h$.  Recalling that $G$ is a dual reflection group for $A$, we saw in Theorem~\ref{thm:KKZ covariant Hilbert series} that $A = \bigoplus_{g \in G} f_g A_e$ for some homogeneous elements $f_g \in A_g$. In particular, since $b$ and $z^{\ell}$ both have degree $h$, they are both contained in $A_h = f_h A_e$.

    Certainly then $b$ and $z^{\ell}$ are contained in $f_h A$.  Write $f = f_h$ for simplicity and note that in the $\mb{N}$-grading, $d \coloneqq \deg(f) \geq 1$ because only $f_e$ can have degree $0$.  Any nonzero element $a$ of $A$ can be written uniquely in the form $a = \sum_{i=i_0}^{i_1} b_i z^i$ where $b_i \in B$, $b_{i_0} \neq 0$, and $b_{i_1} \neq 0$; that is, $i_0$ and $i_1$ are the respective minimum and maximum $z$-degree of terms in the sum.
    If also $0 \neq a' \in A$ and we similarly write $a' = \sum_{i=i'_0}^{i'_1} b_{i}' z^{i}$, then as $A$ is a domain 
    we see that $i_0 +i'_0$ and $i_1 +i'_1$ are the respective minimum and maximum $z$-degrees of terms in $aa'$.
    Now since $z^{\ell} \in f A$, say $z^{\ell} = fa$ with $a \in A$, and the minimum and maximum $z$-degrees of $z^{\ell}$ are equal, it follows that $f$ and $a$ must also have this property.  Thus $f =b'  z^i$ and $a = b'' z^j$ for some $0 \neq b', b'' \in B$ and $i, j \geq 0$ with $i+j = \ell$.   Also $b \in fA = b'z^i A$ and since $b \not \in (z)$, this forces $i = 0$, so $f = b' \in B_d$.  Finally, $z^{\ell} = fa = b' b'' z^{\ell}$ forces $b'$ to be a unit in $B$, but then $b' \in \kk$ and $f \in A_0$, contradicting $d \geq 1$.
    
    We conclude that $H \cap \ang{c} = \{e\}$ as claimed, so that $G = H \rtimes \ang{c}$.  Now the first paragraph of the proof of (i) $\implies$ (ii) applies, and we conclude that $\restr{\sigma^n}{B_e}$ is an automorphism of $B_e$ and $A_e = B_e[z^n; \restr{\sigma^n}{B_e}]$.  By assumption $A_e$ is AS regular.  By \cite[Theorem 7.5.3]{McConnellRobson}, it follows that $B_e$ has finite global dimension. Since the element $z^n$ is a homogeneous normal regular element of $A_e$ of positive degree with $A_e/(z^n) \cong B_e$, we conclude that $B_e$ is AS regular \cite[Lemma 1.10(d)]{KirkmanKuzmanovichZhang08}.
\end{proof}

The following is the main result of this section, which shows that the only nonabelian dual reflection groups of the algebras $A_{-1,q,-q}$, $A_{-1,q,q}$, $A_{1,q,-q}$, and $A_{-1}[z;\sigma_q]$ can be obtained via Proposition~\ref{prop:Ore any dimension}.

\begin{thm}\label{thm:Ore dual reflection}
    Let $q \in \kk^\times$ and let $A$ be one of the algebras $A_{-1,q,-q}$, $A_{-1,q,q}$, $A_{1,q,-q}$, or $A_{-1}[z;\sigma_q]$, written as $A = B[z;\sigma]$ with generators $u,v,z$ as in Table~\ref{tab:classification}, where $B$ is the subalgebra generated by $u$ and $v$. Suppose $A$ is faithfully graded by a finite nonabelian group $G$ refining its $\NN$-grading, with $u,v,z$ $G$-homogeneous of degrees $a,b,c \in G$, respectively.
    
    Then $A_e$ is AS regular if and only if $G = \ang{a,b} \rtimes \ang{c}$ and $B_e$ is AS regular. In this case, $A_e$ is of the form outlined in Table~\ref{tab:Ore dual reflection}.
\end{thm}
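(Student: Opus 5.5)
The plan is to apply Proposition~\ref{prop:Ore any dimension} directly, and then compute $B_e$ using the two-dimensional results. First I will check that its hypotheses hold. In each of the four cases, Table~\ref{tab:classification} presents $A$ with $G$-homogeneous generators $u,v,z$. The subalgebra $B$ generated by $u,v$ is a two-dimensional regular algebra: $A_{-1}$, presented as $\kk\ang{u,v}/(u^2-v^2)$ or $\kk\ang{u,v}/(u^2+v^2)$, or $\kk[u,v]$ for $A_{1,q,-q}$. It is generated by $G$-homogeneous elements, so it is a $G$-graded subalgebra. The last three relations in each row have the form $zu = \lambda\, w z$ with $w \in \{u,v\}$, so $A = B[z;\sigma]$ with $\sigma$ a graded automorphism of $B$ that permutes the lines $\kk u$, $\kk v$ up to scalars. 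Since $B$ is a domain and AS regular, Proposition~\ref{prop:Ore any dimension} applies. It gives immediately that $A_e$ is AS regular if and only if $B_e$ is AS regular and $G = H \rtimes \ang{c}$, where $H = \{g \mid B_g \neq 0\}$.

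Next I will identify $H$ with $\ang{a,b}$. Since $B$ is a domain generated by $u,v$ and $G$ is finite, the set $\{g \mid B_g \neq 0\}$ is a submonoid of a finite group, hence a subgroup. It contains $a$ and $b$ and is contained in $\ang{a,b}$, so $H = \ang{a,b}$. This gives the stated equivalence.

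For the description of $A_e$ in Table~\ref{tab:Ore dual reflection}, I will use the formula $A_e = B_e[z^n;\sigma^n|_{B_e}]$ from Proposition~\ref{prop:Ore any dimension}. The remaining work is to determine, for each possible $H$, when $B_e$ is regular and what it is. If $H$ is nonabelian, then $B\cong A_{-1}$, and by Proposition~\ref{prop:Crawford} we get $H \cong \Gamma_m$ and $B_e = \kk[(uv)^m,(vu)^m]$. If $H$ is abelian, then the grading is an abelian group action on $B$, and the regularity of $B_e$ follows from the two-dimensional results of \cite{KirkmanKuzmanovichZhang}. For example, for $\kk[u,v]$ with $u,v$ swapped by conjugation by $c$, regularity forces $H \cong \ZZ_m\times\ZZ_m$ with $B_e = \kk[u^m,v^m]$. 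I will also verify that the $\ang{c}$-action is compatible with the semidirect structure: conjugation by $c$ must swap $a$ and $b$ (or fix them), as dictated by the maximal grading group.

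Then I will compute $\sigma^n$ on the generators of $B_e$. For instance, $\sigma(u)=qv$ and $\sigma(v)=qu$ give $\sigma^n((uv)^m) = q^{2mn}(uv)^m$ or $q^{2mn}(vu)^m$ according to the parity of $n$. This yields the skew polynomial ring listed in the table. The main obstacle is likely this bookkeeping: matching the parity of $n = \ord(c)$ with the automorphism of $H$ induced by $c$, so that the group is exactly $\Gamma_m\rtimes\ZZ_{2k}$, $\Gamma_m\times\ZZ_k$, or $(\ZZ_m\times\ZZ_m)\rtimes\ZZ_{2k}$. In particular, when $c$ swaps $a$ and $b$, I will need to check that $n$ is even, or otherwise that $c^n$ would lie in $H$. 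The key structural step, however, is already handled by Proposition~\ref{prop:Ore any dimension}.
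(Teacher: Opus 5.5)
Your proposal is correct and follows the paper's proof essentially step for step. You apply Proposition~\ref{prop:Ore any dimension} with $H = \{g \mid B_g \neq 0\} = \ang{a,b}$, then read off $B_e$ from Crawford's classification (Proposition~\ref{prop:Crawford}) or from the Chevalley--Shephard--Todd theorem of \cite{KirkmanKuzmanovichZhang}, with conjugation by $c$ forcing $\ord(c)$ to be even and $n_1 = n_2$ in the rows where $c$ swaps $a$ and $b$. Your separate treatment of abelian $H$ via \cite{KirkmanKuzmanovichZhang} in the $A_{-1}$ rows is, if anything, slightly more careful than the paper. The paper cites Crawford there while also allowing $n = 1$, even though $\Gamma_1 \cong \ZZ_4$ is abelian.
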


In Table~\ref{tab:Ore dual reflection}, we use the change of variables to the homogeneous generators $u,v,z$ (or $x,y,z$ if no change of variables is necessary), as was done in Table~\ref{tab:classification}. The semidirect product groups in the third column are formed by the swap action of $\ZZ_{2m}$ on the generators of $\Gamma_n$ or $\ZZ_n \times \ZZ_n$. In other words, the groups are presented as follows:
\begin{align*}
    \widehat{\Gamma}_{nm} &= \ang{a,b,c \mid a^2 = b^2, ca = bc, cb = ac, (ab)^n = c^{2m} = e}, \\
    \Gamma_n \times \ZZ_m &= \ang{a,b,c \mid a^2 = b^2, ca = ac, cb = bc, (ab)^n = c^m = e}, \\
    (\ZZ_n \times \ZZ_n) \rtimes \ZZ_{2m} &= \ang{a,b,c \mid ba = ab, ca = bc, cb = ac, a^n = b^n = c^{2m} = e}.
\end{align*}
All the identity components of the algebras under the given group gradings are isomorphic to skew polynomial rings of the form $A_{rst}$; the values of $(r,s,t)$ are given in the final column of Table~\ref{tab:Ore dual reflection}.

\begin{table}[ht!]
\centering
\caption{The dual reflection groups obtained from Ore extensions.}
    \begin{tabular}{|c|c|c|c|c|}\hline
        Algebra & $u,v$ & Dual reflection group & Generators of $A_e$ & Value of $(r,s,t)$ \\\hline\hline
        $A_{-1,q,-q}$ & \begin{tabular}{c}
           $x + y$ \\ $x - y$
        \end{tabular} & $\widehat{\Gamma}_{nm} \coloneqq \Gamma_n \rtimes \ZZ_{2m}$ & $(uv)^n, (vu)^n, z^{2m}$ & $(1,q^{4nm},q^{4nm})$\\\hline
        $A_{-1,q,q}$ & \begin{tabular}{c}
           $x + y$ \\ $x - y$
        \end{tabular} & $\Gamma_n \times \ZZ_m$ & $(uv)^n,(vu)^n,z^m$ & $(1,q^{2nm},q^{2nm})$ \\\hline
        $A_{1,q,-q}$ & \begin{tabular}{c}
           $x + y$ \\ $x - y$
        \end{tabular} & $(\ZZ_n \times \ZZ_n) \rtimes \ZZ_{2m}$ & $u^n,v^n,z^{2m}$ & $(1,q^{2nm},q^{2nm})$ \\\hline
        \multirow{5}{*}{$A_{-1}[z;\sigma_q]$} & \begin{tabular}{c}
           $x + iy$ \\ $ix + y$
        \end{tabular} & $\widehat{\Gamma}_{nm}$ & $(uv)^n, (vu)^n, z^{2m}$ & $(1,q^{4nm},q^{4nm})$ \\\cline{2-5}
         & \begin{tabular}{c}
           $x + y$ \\ $x - y$
        \end{tabular} & $\Gamma_n \times \ZZ_m$ & $(uv)^n, (vu)^n, z^m$ & $(1,(-q^2)^{nm},(-q^2)^{nm})$ \\\cline{2-5}
         & \begin{tabular}{c}
           \ \\ \
        \end{tabular} & $(\ZZ_n \times \ZZ_n) \rtimes \ZZ_{2m}$ & $x^n,y^n,z^{2m}$ & $((-1)^n,q^{2nm},q^{2nm})$ \\\hline
    \end{tabular}
    \label{tab:Ore dual reflection}
\end{table}

\begin{proof}[Proof of Theorem~\ref{thm:Ore dual reflection}]
    We are given a nonabelian group $G$ faithfully grading the algebra $A$, and so by Theorem~\ref{thm:quadratic case}, 
    $A_1$ must have homogeneous basis as in one of the first $6$ rows of Table~\ref{tab:classification}, and $G$ must be a finite factor group of the maximal grading group given there.  In particular, $B$ is a $G$-graded subalgebra and $z$ is $G$-homogeneous.   Let $H \coloneqq \{g \in G \mid B_g \neq 0 \}$.  In the case at hand, since the generators of $B$ have degrees $a, b$ it is clear that $H = \ang{a,b}$.  Applying Proposition~\ref{prop:Ore any dimension}, $A_e$ is regular if and only if $B_e$ is regular and $G = \ang{a,b} \rtimes \ang{c}$, as claimed.

    We verify the possibilities in Table~\ref{tab:Ore dual reflection}.  Looking at Table~\ref{tab:classification}, in rows 2 and 5 we have $c$ central, so $G = H \times \ang{c}$, where $\ang{c} = \ZZ_m$ is any finite cyclic group.  In the other rows conjugation by $c$ is the automorphism of $H$ switching $a$ and $b$.  In this case, $\ang{c} = \ZZ_{2m}$ can be any finite cyclic group of even order.

    We must choose $H$ to be a finite dual reflection group for $B$.  Let $u = x$ and $v = y$ in case there is no change of variables. In rows $1$, $2$, $4$, and $5$, $B = \kk \ang{u,v}/(u^2 \pm v^2)$.  In this case we must have
    $$H \cong \Gamma_n = \ang{a, b \mid a^2 = b^2, (ab)^n =e}$$
    for some $n \geq 1$, by Proposition~\ref{prop:Crawford}, and then $B_e = \kk[(uv)^n, (vu)^n]$.  In rows $3$ and $6$ we have $B \cong \kk \ang{u,v}/(uv \pm vu)$.  A dual reflection group must be a product of cyclic groups
    $$\ZZ_{n_1} \times \ZZ_{n_2} = \ang{a, b \mid ab = ba, a^{n_1} = b^{n_2} = e}$$
    in this case, by the Chevalley--Shephard--Todd theorem for group actions on skew polynomial rings \cite[Theorem 1.1]{KirkmanKuzmanovichZhang}.  Since in these rows conjugation by $c$ switches the two factors, we must have $n \coloneqq n_1 = n_2$, $H \cong \ZZ_n \times \ZZ_n$, and $B_e = \kk[u^n, v^n]$.

    The final column of Table~\ref{tab:Ore dual reflection} can be verified by a straightforward computation from the $G$-homogeneous relations of $A$ in Table~\ref{tab:classification}.
\end{proof}

\subsection{Dual reflection groups for Sklyanin algebras}\label{sec:Sklyanin}

Recall that the Sklyanin algebra $S_{q,0,1}$ has relations
$$x^2 + qyz = y^2 + qzx = z^2 + qxy = 0.$$
The algebra $S_{q,0,1}$ is graded by the nonabelian group $\Delta = \ang{a,b,c \mid ab = c^2, bc = a^2, ca = b^2}$, by Theorem~\ref{thm:quadratic case}.  It is easy to see that we also have the presentation
$$\Delta = \ang{a,b,c \mid a^3 = b^3 = c^3 = abc = bca = cab}.$$
In particular, $a^3$ is central in $\Delta$.
Given a positive integer $n$, consider the following quotient of $\Delta$:
$$\Delta_n \coloneqq \ang{a,b,c \mid ab = c^2, bc = a^2, ca = b^2, (acb)^n = e}.$$
In this section, we characterize the nonabelian dual reflection groups for Sklyanin algebras, as follows.

\begin{thm}\label{thm:Sklyanin}
    Let $A = S_{q,0,1}$ be faithfully graded by a finite nonabelian group $G$ refining its $\NN$-grading. Then $A_e$ is AS regular if and only if $G \cong \Delta_n$ for some positive integer $n$.
\end{thm}

We begin by proving some properties of the group $\Delta$.  A very useful tool to determine the structure of this group is a certain embedding in a wreath product.
Let $\ZZ_3$ act on $\ZZ^3$ by cyclically permuting the coordinates, and consider the semidirect product $\ZZ^3 \rtimes \ZZ_3$, with $\ZZ^3$ written additively. In other words, letting $e_1,e_2,e_3$ be generators of $\ZZ^3$, we let $\ZZ_3 = \ang{\sigma}$, where $\sigma(e_i) = e_{i + 1}$ (indices taken modulo $3$). Define
    \begin{align*}
        \Phi \colon \Delta &\to \ZZ^3 \rtimes \ZZ_3 \\
        a &\mapsto (e_1,\sigma), \\
        b &\mapsto (e_2,\sigma), \\
        c &\mapsto (e_3,\sigma).
    \end{align*}
It is straightforward to check that $\Phi$ is a group homomorphism.  We used GAP \cite{GAP4} to initially check many of the claims in the next result, but they can also easily be checked by hand.

\begin{lem}\label{lem:properties of Delta}
    Consider $\Delta$ and the homomorphism $\Phi \colon \Delta \to \ZZ^3 \rtimes \ZZ_3$.  
    Let $H \coloneqq \ang{acb, bac, cba}$ as a subgroup of $\Delta$.
    \begin{enumerate}
        \item $\Phi(\Delta) = D \coloneqq \{(m_1 e_1 + m_2 e_2 + m_3 e_3, \sigma^i) \mid m_1 + m_2 + m_3 \equiv i \pmod{3}\}$.\label{item:image of Delta}
        \item $[\ZZ^3 \rtimes \ZZ_3: D] = 3$.\label{item:index of Delta}
        \item $\Phi$ is injective, so $\Delta \cong D$.\label{item:Delta isomorphism}
        \item The elements $acb, bac, cba$ pairwise commute, and $H$ is normal in $\Delta$.\label{item:H is normal}
        \item $H \cong \ZZ^3$.\label{item:H is Z^3}
        \item Define $H_n = \ang{(acb)^n, (bac)^n, (cba)^n}$ for all $n \geq 1$.  Then $\Delta_n = \Delta/H_n$ and $|\Delta_n| = 27n^3$.\label{item:size of Delta_n}
    \end{enumerate}
\end{lem}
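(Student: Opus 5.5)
The plan is to work entirely inside $\ZZ^3 \rtimes \ZZ_3$, with multiplication $(v,\sigma^i)(w,\sigma^j) = (v + \sigma^i w, \sigma^{i+j})$.

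For \eqref{item:image of Delta}, note that $\Phi(a),\Phi(b),\Phi(c)$ all lie in $D$. The map $(v,\sigma^i) \mapsto (m_1+m_2+m_3) - i \bmod 3$ is a homomorphism to $\ZZ_3$, since $\sigma$ preserves the coordinate sum. Hence $D$ is its kernel, a subgroup of index $3$, which also proves \eqref{item:index of Delta}. For the reverse inclusion $D \subseteq \Phi(\Delta)$, I will compute a few explicit products:
\begin{gather*}
\Phi(a)\Phi(c)\Phi(b) = (e_1 + e_1 + e_1 \cdot\,?, \ldots)
\end{gather*}
Concretely, $(e_1,\sigma)(e_3,\sigma) = (e_1+e_1,\sigma^2)$, and then multiplying by $(e_2,\sigma)$ gives $(2e_1 + e_1, e) = (3e_1,e)$, up to the exact bookkeeping. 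I also expect elements such as $\Phi(ab^{-1}) = (e_1 - e_3, e)$ to appear. Together these show that $\Phi(\Delta) \cap \ZZ^3$ contains the lattice $L = \{m : \sum m_i \equiv 0 \bmod 3\}$, which is generated by $e_1 - e_2$, $e_2 - e_3$ and $3e_1$. Since $\Phi(a)$ maps onto the $\ZZ_3$ factor, this gives $\Phi(\Delta) = D$.

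For injectivity \eqref{item:Delta isomorphism}, I will bound $\Delta$ from above. Let $N$ be the normal subgroup of $\Delta$ generated by $acb$, $bac$, $cba$ together with $ab^{-1}$ and $bc^{-1}$ (or whichever elements map onto generators of $L$). The first step is to show that $N$ is abelian, using the relations $a^3 = b^3 = c^3 = abc = bca = cab$. The second is to show that $\Delta/N$ is cyclic of order $3$, generated by the image of $a$, because all three generators become equal modulo $N$. Then $\Phi|_N$ is a surjective homomorphism onto $L \cong \ZZ^3$ from an abelian group generated by three elements. A surjection $\ZZ^3 \to \ZZ^3$ is injective, so $\Phi|_N$ is injective, and together with the index count $\Phi$ is injective. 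The cleanest way to arrange this is to show that $H = \ang{acb, bac, cba}$ is normal and abelian and has index $27$ in $\Delta$. Since $\Phi(acb)$, $\Phi(bac)$, $\Phi(cba)$ are of the form $(\text{vector}, e)$ and span a sublattice of $\ZZ^3$ of index $9$, hence of index $27$ in $D$, this would finish the argument.

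For \eqref{item:H is normal}, I will check by hand that conjugation by $a$ permutes $\{acb, bac, cba\}$ up to the relations. For example, $a(acb)a^{-1} = a^2 cba^{-1}$, and using $a^2 = bc$ together with the central element $a^3$ this should become $cba$ or a cyclic variant. Commutativity of these elements then follows from the injectivity of $\Phi$ on the subgroup, or it can be verified directly. The index bound $[\Delta : H] \le 27$ comes from the fact that modulo $H$ the group is generated by $a, b, c$ with $a^3 \in H$ and the remaining relations. Alternatively, the coset count can be carried out by a Todd--Coxeter style argument. Item \eqref{item:H is Z^3} then follows from injectivity.

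For \eqref{item:size of Delta_n}, $H_n$ is normal, because conjugation permutes the three generators and therefore their $n$th powers. Consequently the normal closure of $(acb)^n$ is exactly $H_n$, and $\Delta_n = \Delta/H_n$. Finally $|\Delta_n| = [\Delta:H]\,[H:H_n] = 27 n^3$.

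The main obstacle I expect is the upper-bound and injectivity step: showing directly from the presentation that $H$ is normal and abelian with index at most $27$. Everything else reduces to explicit computation in the semidirect product.
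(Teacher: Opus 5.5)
Your plan for items (3)--(6) follows the paper's: show $H$ is normal and abelian, then use the surjection $\ZZ^3 \to H \to \Phi(H) = 3\ZZ^3 \times \{\id\}$ to get $H \cong \ZZ^3$ and $\ker\Phi \cap H = \{e\}$. After that, $[\Delta : H] \le 27 = [D : \Phi(H)]$ forces $\ker\Phi \subseteq H$. Your route to (1)--(2) differs from the paper's induction and is cleaner: you realise $D$ as the kernel of $(v,\sigma^i) \mapsto (m_1+m_2+m_3) - i \bmod 3$, and generate $L$ from $(3e_1,\id)$ and the $\Phi(a)$-conjugates of $\Phi(ab^{-1})$. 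Two small slips: $\Phi(ab^{-1}) = (e_1 - e_2, \id)$, not $(e_1 - e_3,\id)$. Also, $3\ZZ^3$ has index $9$ in $L$, not in $\ZZ^3$.

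The genuine gap is the upper bound $[\Delta : H] \le 27$, on which both injectivity of $\Phi$ and $|\Delta_n| = 27n^3$ rest. The reason you give, that $a^3 \in H$, is false. We have $\Phi(a^3) = (e_1+e_2+e_3, \id)$, which is not in $\Phi(H) = 3\ZZ^3 \times \{\id\}$. Hence $a^3 \notin H$, and in fact $\bar a$ has order $9$ in $\Delta/H$. As a check, $\bar a^3 = e$ would give $\overline{abc} = e = \overline{acb}$, so $\bar b \bar c = \bar c \bar b$ and $\bar a = (\bar b\bar c)^{-1}$. That makes $\Delta/H$ abelian of order at most $9$, contradicting the surjection onto $D/\Phi(H)$, which has order $27$. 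The Todd--Coxeter fallback is only a pointer to a computation, so the bound remains unproved.

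What works, as in the paper, is the following:
\begin{itemize}
\item $(acb)(bac)(cba) = a^9$, so $\bar a^9 = e$.
\item $\overline{a^{-1}ba} = \bar b^{-2}$, so $\langle \bar a\rangle$ normalises $\langle \bar b\rangle$.
\item $c = b^{-1}a^2$ (from $bc = a^2$), so $\Delta/H = \langle\bar a\rangle\langle\bar b\rangle$.
\item $\bar b^3 = \bar a^3$, which gives $|\Delta/H| \le 9 \cdot 3 = 27$.
\end{itemize}

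Separately, deriving commutativity of $acb, bac, cba$ ``from the injectivity of $\Phi$ on the subgroup'' is circular in your scheme. That injectivity comes from the Hopfian argument, which already assumes $H$ is abelian. Commutativity and normality therefore have to be checked directly from the relations, for instance $a(acb)a^{-1} = bac$ and $a^{-1}(acb)a = cba$, as the paper does.
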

\begin{proof}
    \eqref{item:image of Delta} It is easy to see that $\Phi(\Delta)$ is contained in $D$.  The reverse inclusion can be proved by induction on $|m_1| + |m_2| + |m_3|$.
   
    \eqref{item:index of Delta} This is clear.
    
    \eqref{item:H is normal} This can be checked directly using the relations in $\Delta$ (or using GAP).
    
    \eqref{item:H is Z^3} Note that 
    $$\Phi(acb) = (e_1,\sigma)(e_3,\sigma)(e_2,\sigma) = (3e_1,\id).$$
    Similarly, $\Phi(bac) = (3e_2, \id)$ and $\Phi(cba) = (3e_3, \id)$.  It is obvious that the three elements 
    $(3e_1, \id), (3e_2, \id), (3e_3, \id)$ generate a free abelian group, that is, $\Phi(H) \cong \mb{Z}^3$.  Since $acb, bac, cba$ pairwise commute, there is a surjective homomorphism $\pi \colon \mb{Z}^3 \to H$.  Now $\Phi \circ \pi$ is clearly injective, so $\pi$ is an isomorphism.
    
    \eqref{item:size of Delta_n} The unique smallest normal subgroup of $\Delta$ containing $acb$ is $H$, so $\Delta_1 = \Delta/H$.
    Similarly, $\Delta_n = \Delta/H_n$.

    Since $\Phi(H) = \{ (m_1e_1 + m_2 e_2+ m_3e_3, \id) \mid m_1, m_2, m_3 \in 3 \mb{Z} \}$, we have an induced map 
    $\overline{\Phi} \colon \Delta_1 = \Delta/H \to (\ZZ_3)^3 \rtimes \ZZ_3$.  By \eqref{item:image of Delta}, the image of $\overline{\Phi}$ is equal to 
    \[
        \overline{D} = \{(\overline{m}_1 e_1 + \overline{m}_2 e_2 + \overline{m}_3 e_3, \sigma^i) \mid \overline{m}_1 + \overline{m}_2 + \overline{m}_3 = \overline{i} \in \ZZ_3 \},
    \]
    which clearly has order $27$.
    On the other hand, we claim that $|\Delta_1| \leq 27$.  First one may use the relations to verify that  
    $$(acb)(bac)(cba) = a^9,$$
    so $a^9 \in H$.  Write $\overline{g} \coloneqq gH \in \Delta/H$. One may check that $\overline{a}$ normalizes $\langle \overline{b} \rangle$ in $\Delta/H$, namely $\overline{a^{-1}ba} = (\overline{b})^{-2}$.  The group $\Delta/H$ is generated by $\overline{a}$ and $\overline{b}$, and since $\langle \overline{a} \rangle$ normalizes $\langle \overline{b} \rangle$ we have $\Delta/H = \ang{\overline{a}}\ang{\overline{b}}$.  Now since $\overline{b}^3 = \overline{a}^3$ and $\overline{a}^9 = e$, it is clear that 
    $|\Delta/H| \leq 27$.  Thus $\overline{\Phi}$ must be an isomorphism.  
    
    For arbitrary $n \geq 1$, we have
    $$|\Delta_n| = |\Delta/H_n| = |\Delta/H| \ |H/H_n| = |\Delta_1| \ |\ZZ_n^3| = 27n^3,$$
    where we used that $\Delta_n = \Delta/H_n$ and $H \cong \ZZ^3$.

    \eqref{item:Delta isomorphism} Let $K= \ker \Phi$.  We know that $K \cap H = \{e\}$ by \eqref{item:H is Z^3}.  By \eqref{item:size of Delta_n}, we have that $\ker \overline{\Phi} = \{e \}$, so $KH = H$.  Thus $K = K/(K \cap H) \cong KH/H$ is trivial.
\end{proof}

We now compute the identity component of $S_{q,0,1}$ under its $\Delta_n$-grading.
Since $acb = bac = cba = e$ in $\Delta_1$, the following three elements of degree $3$ lie in the identity component of $S_{q,0,1}$ under its $\Delta_1$-grading.
\begin{ntt}
    In $S_{q,0,1}$, we define $u \coloneqq xzy$, $v \coloneqq yxz$, and $w \coloneqq zyx$.
\end{ntt}

We first consider the case $n = 1$, where these three elements already generate the identity component, as we show next.

\begin{prop}\label{prop:Delta1 invariant ring}
    Let $q \in \kk^\times$ and consider the algebra $A \coloneqq S_{q,0,1}$ graded by the nonabelian group $\Delta_1$. Then $A_e$ is generated by the three elements $u$, $v$, and $w$, subject to the relations
    $$vu = p uv, \qquad wu = p^{-1} uw, \qquad wv = p vw,$$
    where $p \coloneqq -q^{-9}$. Consequently, $A_e$ is isomorphic to the skew polynomial ring  $A_{p, p^{-1}, p}$, and is therefore AS regular.
\end{prop}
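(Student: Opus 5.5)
We will first verify the three commutation relations among $u=xzy$, $v=yxz$, $w=zyx$ in $S_{q,0,1}$, then show that the subalgebra $C\coloneqq\kk\ang{u,v,w}$ is all of $A_e$ by a Hilbert series count.

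For the relations, we will use the defining relations in the form $x^2=-q\,yz$, $y^2=-q\,zx$, $z^2=-q\,xy$. They let us move letters past one another at the cost of squares, so the computation is a rewriting. For instance,
\[
vu = yxz\,xzy,\qquad uv = xzy\,yxz = xz\,y^2\,xz = -q\,xz\,zx\,xz,
\]
and we reduce both expressions to a common normal form in degree $6$ using $z^2=-q\,xy$, $x^2=-q\,yz$, and so on. Carrying this through should produce the scalar $p=-q^{-9}$. The cyclic symmetry $x\mapsto y\mapsto z\mapsto x$ of $S_{q,0,1}$ sends $u\mapsto v\mapsto w\mapsto u$. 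It therefore carries the relation $vu=p\,uv$ to $wv=p\,vw$ and then to $uw=p\,wu$, i.e.\ $wu=p^{-1}uw$. So only one relation needs to be computed by hand. Since $A$ is a domain, $u,v,w\neq 0$, and there is a surjection $A_{p,p^{-1},p}\to C$.

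For injectivity and $C=A_e$, we compare Hilbert series. Theorem~\ref{thm:KKZ covariant Hilbert series} gives $H_A(t)=H_{A_e}(t)\,H_{A^\cov}(t)$, because $A$ is free over $A_e$ with basis $\{f_g\}$, where $\deg f_g=\ell(g)$. We will compute the length function on $\Delta_1$ with respect to $\mf R=\{a,b,c\}$, using the isomorphism $\Delta_1\cong\overline D$ from Lemma~\ref{lem:properties of Delta}. The expected Poincaré polynomial is
\[
1+3t+6t^2+7t^3+\dots,
\]
which should equal $(1+t+t^2)^3$, since $(1-t^3)^3/(1-t)^3=(1+t+t^2)^3$, summing to $27$. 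Concretely, we check that there are $3$ elements of length $1$, $6$ of length $2$, $7$ of length $3$, $6$ of length $4$, $3$ of length $5$ and $1$ of length $6$. This gives
\[
H_{A_e}(t)=\frac{1}{(1-t)^3(1+t+t^2)^3}=\frac{1}{(1-t^3)^3},
\]
which is the Hilbert series of a polynomial ring on three degree-$3$ generators.

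It then remains to show $\dim C_{3k}\ge \binom{k+2}{2}$, i.e.\ that the ordered monomials $u^iv^jw^k$ are linearly independent in $A$. If this holds, then $C=A_e$ and the surjection from $A_{p,p^{-1},p}$ is an isomorphism. Independence should follow from $A_e$ being a domain of GK-dimension $3$. The algebra $A_{p,p^{-1},p}$ is a domain of GK-dimension $3$, so any nonzero proper quotient has strictly smaller GK-dimension. Meanwhile $C$ has GK-dimension $3$, because $A$ is a finitely generated module over $A_e$, and $A_e$ agrees with $C$ once generation by $u,v,w$ is established. We will instead establish generation directly: the Hilbert series says $(A_e)_3$ is $3$-dimensional, spanned by $u,v,w$. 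For higher degrees we argue that $A_e$ is generated in degree $3$, checking the equality $C=A_e$ via the Hilbert series together with the GK-dimension argument.

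The main obstacle is the length-function count on $\Delta_1$ (equivalently, the grading check that the Hilbert series matches). A second, computational obstacle is the rewriting that pins down the constant $p=-q^{-9}$.
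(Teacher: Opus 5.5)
\textbf{There is a genuine gap: the Hilbert-series step is circular.} The relation step is fine. The cyclic automorphism $x\mapsto y\mapsto z\mapsto x$ does send $u\mapsto v\mapsto w\mapsto u$, so one rewriting suffices, and it does give $vu=(-q^{-1})^9uv$.

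The problem is the factorization $H_A(t)=H_{A_e}(t)H_{A^\cov}(t)$. Theorem~\ref{thm:KKZ covariant Hilbert series} is stated under the standing assumption that $A_e$ is AS regular. The elements $f_g$ with $A_g=f_gA_e$, and the formula $H_{A^\cov}(t)=\sum_g t^{\ell(g)}$, are consequences of that regularity. Since regularity of $A_e$ is exactly what you are proving, you cannot use the theorem to compute $H_{A_e}$. Your length count on $\Delta_1$ is correct, giving $(1+t+t^2)^3$, but that does not close the gap.

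Your last paragraph is circular too. You get $\GKdim C=3$ from $C=A_e$. But $C=A_e$ would need $H_C=H_{A_e}$, which needs the surjection $A_{p,p^{-1},p}\to C$ to be injective, which you wanted to get from $\GKdim C=3$. Beyond degree $3$, generation of $A_e$ by $u,v,w$ is asserted, not shown.

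\textbf{What is missing is an independent computation of $A_e$.} The paper does this as follows.
\begin{itemize}
\item It checks that $u,v,w$ are normal in $A$.
\item It computes (with Macaulay2) a $27$-element basis of $\overline A=A/(u,v,w)$ and observes that every positive-degree element has nontrivial $\Delta_1$-degree. Hence $\overline A_e=\kk$, so $A_e^+=uA_e+vA_e+wA_e$ and $A_e=C$.
\item Since $\overline A$ is finite-dimensional and $u,v,w$ are normal, $A$ is a finitely generated $C$-module. This gives $\GKdim C=3$, and hence injectivity.
\end{itemize}

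Your Hilbert-series idea can also be made rigorous without KKZ.
\begin{itemize}
\item Under the embedding $\Phi$, words of length $k$ take exactly the $\binom{k+2}{2}$ values $(m,\sigma^k)$ with $m\in\NN^3$ and $m_1+m_2+m_3=k$. Each value is attained by a nonzero monomial, since $A$ is a domain.
\item Because $\dim A_k=\binom{k+2}{2}$, every $\Delta$-homogeneous component of $A$ is then at most one-dimensional.
\item Since $\Delta_1=\Delta/H$ and $\Phi(H)=\{(3m,\id)\mid m\in\ZZ^3\}$, this gives $H_{A_e}(t)=(1-t^3)^{-3}$ directly.
\item The monomials $u^iv^jw^k$ are nonzero with pairwise distinct $\Delta$-degrees $(acb)^i(bac)^j(cba)^k$, because $H\cong\ZZ^3$. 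So they are linearly independent and fill every component of $A_e$.
\end{itemize}
This yields both $C=A_e$ and $C\cong A_{p,p^{-1},p}$ at once, with no computer check and no GK-dimension argument.
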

\begin{proof}
    Let $B$ be the subalgebra of $A$ generated by $u$, $v$, and $w$.  By essentially the same calculation as showing the generators $\{ acb, bac, cba \}$ of $H$ commute, as in Lemma~\ref{lem:properties of Delta}\eqref{item:H is normal}, it is straightforward to check that $u, v, w$ satisfy the relations of $A_{p,p^{-1},p}$.  In other words, we have
    $$vu = p uv, \qquad wu = p^{-1} uw, \qquad wv = p vw.$$  Similarly, one may easily check that $u$, $v$, and $w$ are normal elements of $A$; this is an analog of Lemma~\ref{lem:properties of Delta}\eqref{item:H is normal} that $\{ acb, bac, cba \}$ generate a normal subgroup of $\Delta$.  Thus the right ideal of $A$ generated by $u,v,w$ is equal to the two-sided ideal $(u,v,w)$. 

    Since $acb = bac = cba = e$ in $\Delta_1$, we have $B \subseteq A_e$.  
    Write $\overline{A} \coloneqq A/(u,v,w)$.  Note that
    $$\overline{A}_e = (A/(u,v,w))_e = A_e/(u,v,w)_e = A_e/(uA + vA + wA)_e = A_e/(uA_e + vA_e + wA_e).$$
    But $A_e$ is generated by $u,v,w$ as a $\kk$-algebra (that is, $A_e = B$) if and only if $A_e^+ = uA_e + v A_e + w A_e$, so this is if and only if $\overline{A}_e = \kk$.
    
    Using Macaulay2 \cite{M2}, the following is a $\kk$-basis of $\overline{A}$:
    \begin{multline*}
        \{1, x, y, z, x^2, xy, xz, yx, y^2, zy, x^3, x^2 y, x^2 z, xyx, xy^2, yx^2, y^2x, \\
        x^4, x^3 y, x^3 z, x^2 y^2, xyx^2, y x^2 z, x^5, x^4 y, x^3 y^2, x^6\}.
    \end{multline*}
    All of the above elements of positive degree have nontrivial $\Delta_1$-degree. It follows that $\overline{A}_e = \kk$ and so $A_e = B$.

    We now prove that $B \cong A_{p,p^{-1},p}$. Noting that $A$ is a finitely generated $B$-module and that $\GKdim(A) = 3$, it follows that $\GKdim(B) = 3$ \cite[Proposition 8.2.9(ii)]{McConnellRobson}. 
    The relations of $B$ imply that $B$ is a quotient of $A_{p,p^{-1},p}$. But since $\GKdim(A_{p,p^{-1},p}) = 3 = \GKdim(B)$, the algebra $B$ cannot be a proper quotient of the domain $A_{p,p^{-1},p}$. Therefore, $B \cong A_{p,p^{-1},p}$.
\end{proof}

The general case now follows easily by passing to the (non-faithful) $\Delta_n$-grading of the subalgebra of $S_{q,0,1}$ generated by $u$, $v$, and $w$.

\begin{cor}\label{cor:Delta_n invariant ring}
    Let $q \in \kk^\times$, let $n$ be a positive integer, and consider the algebra $A \coloneqq S_{q,0,1}$ graded by the nonabelian group $\Delta_n$. Then $A_e$ is generated by the three elements $u^n$, $v^n$, and $w^n$. Consequently, $A_e \cong A_{p^{n^2},p^{-n^2},p^{n^2}}$ is AS regular, where $p \coloneqq -q^{-9}$.
\end{cor}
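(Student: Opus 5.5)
The plan is to realize the $\Delta_n$-grading as a coarsening of the $\Delta$-grading and then to restrict attention to the subalgebra $B = \kk\ang{u,v,w} = A^{\Delta_1}_e$ found in Proposition~\ref{prop:Delta1 invariant ring}. Since $H_n \subseteq H$, the identity component of the $\Delta_n$-grading is contained in the identity component of the $\Delta_1$-grading. That is,
$$A^{\Delta_n}_e = \bigoplus_{g \in H_n} A_g \subseteq \bigoplus_{g \in H} A_g = A^{\Delta_1}_e = B,$$
where the subscripts on $A_g$ refer to the $\Delta$-grading of $A$. So it suffices to compute which elements of $B$ have $\Delta$-degree in $H_n$.

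First, $B \cong A_{p,p^{-1},p}$ by Proposition~\ref{prop:Delta1 invariant ring}, so $B$ has the $\kk$-basis of ordered monomials $u^i v^j w^k$. Second, under the $\Delta$-grading these are homogeneous: $u = xzy$, $v = yxz$, $w = zyx$ have $\Delta$-degrees $acb$, $bac$, $cba$ respectively. These elements pairwise commute and generate $H \cong \ZZ^3$ freely, by Lemma~\ref{lem:properties of Delta}\eqref{item:H is Z^3}. Hence $u^i v^j w^k$ has $\Delta$-degree $(acb)^i(bac)^j(cba)^k$, and this lies in $H_n$ if and only if $n \mid i$, $n \mid j$ and $n \mid k$, again by freeness of $H$ on these three generators. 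Consequently the part of $B$ of $\Delta_n$-degree $e$ is spanned by the monomials $u^{ni}v^{nj}w^{nk}$. This is precisely the subalgebra generated by $u^n$, $v^n$ and $w^n$, so $A_e$ is generated by these three elements.

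Finally, in $A_{p,p^{-1},p}$ we have $v^n u^n = p^{n^2} u^n v^n$, and similarly for the other two pairs. The elements $u^n, v^n, w^n$ therefore satisfy the relations of $A_{p^{n^2},p^{-n^2},p^{n^2}}$, and their ordered monomials are linearly independent. Hence $A_e \cong A_{p^{n^2},p^{-n^2},p^{n^2}}$, which is a skew polynomial ring and so AS regular. I expect the only delicate point to be the identification of $\Delta$-degrees with the lattice $H \cong \ZZ^3$, and in particular checking that $H_n$ consists exactly of the elements with all exponents divisible by $n$. This follows because $H_n$ is the subgroup $(n\ZZ)^3$ of $H$, which is already part of Lemma~\ref{lem:properties of Delta}.
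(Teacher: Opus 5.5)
Your proposal is correct and follows essentially the same route as the paper: reduce to $B = \kk\ang{u,v,w}$, the identity component of the $\Delta_1$-grading, and use that the degrees of $u,v,w$ freely generate $H \cong \ZZ^3$ with $H_n = (n\ZZ)^3$, so the $\Delta_n$-identity component is spanned by the monomials $u^{ni}v^{nj}w^{nk}$. You spell out, via $H_n \subseteq H$ and the PBW basis of $B$, what the paper states briefly as ``$A_e = B_e$'' and ``$B$ is faithfully graded by $\overline{H} \cong \ZZ_n^3$.''
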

\begin{proof}
    Let $B$ be the subalgebra of $A$ generated by the elements $u, v, w$. By Proposition~\ref{prop:Delta1 invariant ring}, we know that $B$ is the identity component of the $\Delta_1$-grading of $A$. Note that $A_e$ is equal to $B_e$, the identity component of $B$ under its inherited $\Delta_n$-grading. However, $B$ is not faithfully graded by $\Delta_n$; instead, it is faithfully graded by the subgroup of $\Delta_n$ generated by the $\Delta_n$-degrees of $u, v, w$. In other words, $B$ is faithfully graded by the image of the subgroup $H = \ang{acb, bac, cba}$ in $\Delta_n$. Note that, by Lemma~\ref{lem:properties of Delta}\eqref{item:H is Z^3}, we know that $H \cong \ZZ^3$.

    Letting $\overline{H}$ be the image of $H$ in $\Delta_n$, we see that $\overline{H} \cong \ZZ_n^3$ with $\deg_{\overline{H}}(u) = (1,0,0)$, $\deg_{\overline{H}}(v) = (0,1,0)$ and $\deg_{\overline{H}}(w) = (0,0,1)$. We can now easily conclude that $A_e = B_e$ is generated by $u^n$, $v^n$, and $w^n$. The result now follows by the relations of $B$ from Proposition~\ref{prop:Delta1 invariant ring}.
\end{proof}

\begin{rem}
    Corollary~\ref{cor:Delta_n invariant ring} answers a question of Goetz, Kirkman, Moore, and Vashaw. In \cite[Section 5.1]{GoetzKirkmanMooreVashaw}, they ask whether there are infinite families of dual reflection groups beyond the two families of \cite[Examples 2.1.2 and 2.1.7]{GoetzKirkmanMooreVashaw}. The groups $\Delta_n$ give such a family.
\end{rem}

Corollary~\ref{cor:Delta_n invariant ring} proves one direction of Theorem~\ref{thm:Sklyanin}. For the converse we must show that no other quotient of $\Delta$ is a dual reflection group for $S_{q,0,1}$. We will achieve this by showing that $u^n$, $v^n$, and $w^n$ are minimal generators of $A_e$, so that $A_e$ has no room for further generators. The next two lemmas provide the group-theoretic ingredients.

   \begin{lem}\label{lem:Delta abelian criterion}
    Let $G$ be a quotient of $\Delta$ and denote by $\overline{\delta}$ the image of an element $\delta \in \Delta$ in the quotient group $G$. Write $\overline H \coloneqq \ang{\overline{acb}, \overline{bac}, \overline{cba}}$ for the image of $H$ in $G$. If one of $\overline{a}, \overline{b}, \overline{c}$ lies in $\overline H$, then $G$ is abelian.
\end{lem}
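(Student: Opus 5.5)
The idea is to use the relations of $\Delta$ and the embedding $\Phi$ of Lemma~\ref{lem:properties of Delta}. By the cyclic symmetry $a \mapsto b \mapsto c \mapsto a$ of the presentation, which sends $acb \mapsto bac \mapsto cba \mapsto acb$ and so preserves $H$, we may assume $\overline{a} \in \overline H$. By Lemma~\ref{lem:properties of Delta}\eqref{item:H is normal}, $\overline H$ is abelian, being the image of the abelian group $H$. We will show that $\overline{a}$ is central in $G$. Once this holds, the relations $bc = a^2$, $ca = b^2$, $ab = c^2$ finish the argument. From $\overline{b}\,\overline{c} = \overline{a}^2$ we get $\overline{c} = \overline{b}^{-1}\overline{a}^2$. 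Then $\overline{c}\,\overline{a} = \overline{b}^2$ becomes $\overline{b}^{-1}\overline{a}^3 = \overline{b}^2$, so $\overline{b}^3 = \overline{a}^3$. Similarly, $\overline{a}\,\overline{b} = \overline{c}^2$ gives $\overline{a}\,\overline{b} = \overline{b}^{-1}\overline{a}^2\overline{b}^{-1}\overline{a}^2$. If $\overline{a}$ is central, this reads $\overline{a}\,\overline{b} = \overline{a}^4\overline{b}^{-2}$, so $\overline{b}^3 = \overline{a}^3$ again. In any case $G = \ang{\overline a, \overline b}$ with $\overline a$ central, so $G$ is abelian.

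The key step is therefore to show that $\overline a$ commutes with $\overline b$. Since $\overline a \in \overline H$ and $\overline H$ is abelian and normal, it suffices to compute how conjugation by $b$ (or by $a$, which then acts trivially on $\overline a$) acts on the generators $acb, bac, cba$ of $H$. I would read this off from $\Phi$. We have $\Phi(acb) = (3e_1,\id)$, and conjugation by $(e_i,\sigma)$ acts on $\Phi(H) = (3\ZZ)^3$ by the cyclic shift $\sigma$. So conjugation by each of $a,b,c$ permutes $acb, bac, cba$ cyclically. Tracking the indices: $\Phi(bac) = (3e_2,\id)$ and $\Phi(cba) = (3e_3,\id)$, and conjugating by $(e_i,\sigma)$ sends $3e_j \mapsto 3e_{j+1}$.

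Write $\overline a = \overline{acb}^{\,i}\,\overline{bac}^{\,j}\,\overline{cba}^{\,k}$. Conjugation by $\overline a$ fixes $\overline a$, so applying conjugation by $\overline a$, which is the cyclic shift, gives $\overline a = \overline{acb}^{\,k}\,\overline{bac}^{\,i}\,\overline{cba}^{\,j}$. Conjugation by $\overline b$ is the same cyclic shift, since $a$ and $b$ both map to elements whose $\ZZ_3$-component is $\sigma$ and $H$ is abelian. Hence $\overline b\,\overline a\,\overline b^{-1} = \overline{acb}^{\,k}\,\overline{bac}^{\,i}\,\overline{cba}^{\,j} = \overline a$. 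Therefore $\overline a$ commutes with $\overline b$, and likewise with $\overline c$, so $\overline a$ is central.

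The main obstacle is justifying that conjugation by $a$ and by $b$ induce the same automorphism of $H$ in $\Delta$ itself, not just in $\ZZ^3 \rtimes \ZZ_3$. This follows because $\Phi$ is injective (Lemma~\ref{lem:properties of Delta}\eqref{item:Delta isomorphism}). Concretely, $\Phi(b^{-1}a) = (e_1 - e_2, \id)$ lies in the abelian normal subgroup $\ZZ^3$ and so commutes with $\Phi(H)$. Hence $b^{-1}a$ centralizes $H$, and this passes to the quotient $G$. With this in hand the computation above is routine.
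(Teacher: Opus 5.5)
Your proof is correct, but it reaches the centrality of $\overline a$ by a somewhat different route from the paper's.

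\textbf{The paper's route.} It uses only that $\overline a$ commutes with $\overline{acb}$. Combined with the identities $a^{-1}(acb)a = cba$ and $a(acb)a^{-1} = bac$ in $\Delta$, this forces $\overline{acb} = \overline{bac} = \overline{cba}$. So $\overline H = \ang{\overline{acb}}$ is cyclic. Since conjugation by each generator permutes $acb, bac, cba$, the element $\overline{acb}$ is central, and hence so is $\overline a \in \overline H$.

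\textbf{Your route.} You show that conjugation by $a$ and by $b$ agree on $H$, because $b^{-1}a$ maps into the translation subgroup $\ZZ^3$ and $\Phi$ is injective. Then $\overline b\,\overline a\,\overline b^{-1} = \overline a\,\overline a\,\overline a^{-1} = \overline a$ follows at once. The bookkeeping with the exponents $i,j,k$ and the computation $\overline b^3 = \overline a^3$ are therefore unnecessary. Once $\overline a$ commutes with $\overline b$, the relation $\overline c = \overline b^{-1}\overline a^2$ finishes the argument.

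\textbf{What each buys.} Your version isolates a clean structural fact: the conjugation action of $\Delta$ on $H$ factors through $\Delta \to \ZZ_3$. The cost is invoking Lemma~\ref{lem:properties of Delta}\eqref{item:Delta isomorphism}, whose proof goes through the order computation for $\Delta_1$. The paper's version needs only two identities in $\Delta$ and the normality of $H$, and never compares conjugation by $a$ with conjugation by $b$. You could also avoid injectivity by checking directly from the relations that $b^{-1}a$ commutes with $acb$, $bac$, $cba$.

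\textbf{A minor slip.} With the multiplication $(v,s)(w,t) = (v + s(w), st)$, which is what gives $\Phi(acb) = (3e_1,\id)$, one gets $\Phi(b^{-1}a) = (e_3 - e_1,\id)$. The element $(e_1 - e_2,\id)$ is $\Phi(ab^{-1})$. This is harmless, since you only use that the $\ZZ_3$-component is trivial.
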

\begin{proof}
    Without loss of generality, $\overline{a} \in \overline H$, since the other two cases are identical. As $\overline H$ is abelian by Lemma~\ref{lem:properties of Delta}\eqref{item:H is Z^3}, it follows that $\overline{a}$ commutes with $\overline{acb}$. Note that
    $$a^{-1} (acb) a = cba, \qquad a (acb) a^{-1} = a^2 cb a^{-1} = (bc)cb a^{-1} = b c^2 b a^{-1} = bab^2 a^{-1} = bac.$$
    Since $\overline{a}$ commutes with $\overline{acb}$, we now see that
    $$\overline{acb} = \overline{bac} = \overline{cba}.$$
    Conjugation by each of $\overline{a}, \overline{b}, \overline{c}$ therefore fixes $\overline{acb}$, in other words, $\overline{acb}$ is central in $G$. Since $\overline{a} \in \overline{H} = \ang{\overline{acb}}$, it follows that $\overline{a}$ is central in $G$. But $G$ is generated by $\overline{a}$ and $\overline{b}$, so we conclude that $G$ is abelian.
\end{proof} 

The next lemma analyzes the ways in which $(acb)^n$ can factor as a product of two degrees of nonzero $\Delta$-homogeneous elements of $A$. 

\begin{lem}\label{lem:product equal to (acb)^n}
    Let $A \coloneqq S_{q,0,1}$ graded by $\Delta$. Let $n$ be a positive integer and suppose $\gamma \delta = (acb)^n$ for some $\gamma, \delta \in \Delta$ with $A_\gamma, A_\delta \neq \{0\}$. Then $\gamma$ is of the form $(acb)^k$ or $(acb)^k a$ or $(acb)^k ac$ for some $0 \leq k \leq n$.
\end{lem}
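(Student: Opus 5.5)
The plan is to work entirely inside $\ZZ^3 \rtimes \ZZ_3$ using the embedding $\Phi$ of Lemma~\ref{lem:properties of Delta}\eqref{item:Delta isomorphism}, where positivity of coordinates does all the work.

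\textbf{Step 1: nonzero components have nonnegative images.} Since $A$ is spanned by monomials in $x,y,z$, whose $\Delta$-degrees are words in $a,b,c$, any $\gamma$ with $A_\gamma \neq 0$ is a product $\gamma = h_1 h_2 \cdots h_d$ with each $h_t \in \{a,b,c\}$. Each $\Phi(h_t)$ has the form $(e_{i_t}, \sigma)$. Multiplying out in the semidirect product gives
$$\Phi(\gamma) = \Big(\sum_{t=1}^d \sigma^{t-1}(e_{i_t}),\ \sigma^d\Big).$$
Thus $\Phi(\gamma) = (m, \sigma^d)$, where $m = m_1 e_1 + m_2 e_2 + m_3 e_3$ has all $m_j \geq 0$ and $m_1 + m_2 + m_3 = d$. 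The same holds for $\delta$, say $\Phi(\delta) = (m', \sigma^{d'})$ with $m'$ nonnegative and of total $d'$.

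\textbf{Step 2: impose $\gamma\delta = (acb)^n$.} From the proof of Lemma~\ref{lem:properties of Delta}\eqref{item:H is Z^3} we have $\Phi((acb)^n) = (3n e_1, \id)$. Hence
$$m + \sigma^d(m') = 3n e_1.$$
Since $\sigma^d$ only permutes coordinates, $\sigma^d(m')$ is also nonnegative. A sum of two nonnegative vectors equal to $3ne_1$ forces both summands to be multiples of $e_1$. So $m = d e_1$ with $0 \leq d \leq 3n$, and therefore $\Phi(\gamma) = (d e_1, \sigma^d)$.

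\textbf{Step 3: match with the three candidate forms.} Write $d = 3k + r$ with $r \in \{0,1,2\}$. Direct computation gives
$$\Phi((acb)^k) = (3k e_1, \id), \qquad \Phi((acb)^k a) = ((3k+1)e_1, \sigma),$$
and, using $(e_1,\sigma)(e_3,\sigma) = (e_1 + \sigma e_3, \sigma^2) = (2e_1, \sigma^2)$,
$$\Phi((acb)^k ac) = ((3k+2)e_1, \sigma^2).$$
Since $\Phi$ is injective, $\gamma$ equals $(acb)^k$, $(acb)^k a$ or $(acb)^k ac$ according as $r = 0, 1, 2$. Finally, $3k + r \leq 3n$ gives $0 \leq k \leq n$, with $k < n$ whenever $r > 0$.

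There is no serious obstacle. The only point needing care is the bookkeeping in Step 1, namely checking that the $\sigma$-twists merely permute the standard basis vectors. This is exactly what guarantees nonnegativity of the coordinates, which in turn forces both factors to lie on the $e_1$-axis.
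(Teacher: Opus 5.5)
Your proof is correct and follows essentially the same route as the paper. Both arguments use the embedding $\Phi$, note that a nonzero component has nonnegative first coordinate, and use $\Phi(\gamma)\Phi(\delta) = (3ne_1,\id)$ to force everything onto the $e_1$-axis. The only difference is at the end: the paper reads off each letter directly from $\sigma^{i-1}(\varphi(\gamma_i)) = e_1$ and so never needs injectivity of $\Phi$, whereas you compute $\Phi(\gamma) = (de_1,\sigma^d)$ and invoke injectivity from Lemma~\ref{lem:properties of Delta} to identify $\gamma$.
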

\begin{proof}
    We use the homomorphism $\Phi \colon \Delta \to \ZZ^3 \rtimes \ZZ_3$ defined before Lemma~\ref{lem:properties of Delta}.  We further let $\varphi \colon \Delta \to \ZZ^3$ denote the first component of the map $\Phi$. 

    We have seen in Lemma~\ref{lem:properties of Delta}\eqref{item:H is Z^3} that $\Phi(acb) = (3e_1,\id)$, 
    and thus
    $$\Phi(\gamma)\Phi(\delta) = \Phi((acb)^n) = (3n e_1, \id).$$
    By assumption, there exist $\NN$-homogeneous elements $r,s \in A$ with $\deg_\Delta(r) = \gamma$ and $\deg_\Delta(s) = \delta$.  We may assume that $r, s$ are monomials in $\{x, y, z \}$, and thus write $r = r_1 \dots r_k$ and $s = s_1 \dots s_\ell$, where $r_i, s_i \in \{x,y,z\}$, and let $\gamma_i \coloneqq \deg_\Delta(r_i), \delta_i \coloneqq \deg_\Delta(s_i) \in \{a,b,c\}$. It is now easy to see that
    $$\Phi(\gamma) = \Phi(\gamma_1) \dots \Phi(\gamma_k) = (\varphi(\gamma_1),\sigma) \dots (\varphi(\gamma_k),\sigma),$$
    and thus
    $$\Phi(\gamma) = \left(\sum_{i = 1}^k \sigma^{i - 1}(\varphi(\gamma_i)), \sigma^k\right).$$
    A similar formula holds for $\Phi(\delta)$, so we conclude that
    \begin{align*}
        (3n e_1, \id) &= \Phi(\gamma)\Phi(\delta) = \left(\sum_{i = 1}^k \sigma^{i - 1}(\varphi(\gamma_i)), \sigma^k\right) \left(\sum_{j = 1}^\ell \sigma^{j - 1}(\varphi(\delta_j)), \sigma^\ell\right) \\
        &= \left(\sum_{i = 1}^k \sigma^{i - 1}(\varphi(\gamma_i)) + \sum_{j = 1}^\ell \sigma^{k + j - 1}(\varphi(\delta_j)), \sigma^{k + \ell}\right).
    \end{align*}
    In particular, we have
    \begin{equation}\label{eq:gamma_i and delta_i add up to 3n e_1}
        \sum_{i = 1}^k \sigma^{i - 1}(\varphi(\gamma_i)) + \sum_{j = 1}^\ell \sigma^{k + j - 1}(\varphi(\delta_j)) = 3n e_1.
    \end{equation}
    Note that $\varphi(\gamma_i)$ and $\varphi(\delta_j)$ are elements of $\NN^3$ by definition. Therefore, the only way that \eqref{eq:gamma_i and delta_i add up to 3n e_1} can hold is if $\sigma^{i - 1}(\varphi(\gamma_i)) = \sigma^{k + j - 1}(\varphi(\delta_j)) = e_1$ for all $i,j$. In other words, this means that $\varphi(\gamma_i) = \sigma^{1 - i}(e_1) = e_{2 - i}$, where the index is taken modulo $3$. It follows that $\gamma_1 = a$, $\gamma_2 = c$, $\gamma_3 = b$, $\gamma_4 = a$, and so on, which concludes the proof upon recalling that $\gamma = \gamma_1 \dots \gamma_k$.
\end{proof}

We can now prove Theorem~\ref{thm:Sklyanin}.

\begin{proof}[Proof of Theorem~\ref{thm:Sklyanin}]
    Corollary~\ref{cor:Delta_n invariant ring} proves one direction of the result, so suppose $A_e$ is AS regular.

    By Theorem~\ref{thm:quadratic case}, the only possibility is that $G$ is a quotient of $\Delta$ and that the generators $x,y,z$ of $S_{q,0,1}$ are $G$-homogeneous. Write $G = \Delta/N$, and let $\overline{H}$ be the image of $H = \ang{acb, bac, cba}$ in $G$.

    Write $\overline{\delta}$ for the image of an element $\delta \in \Delta$ in the quotient group $G$, and let $n \coloneqq \ord(\overline{acb})$, so that $G$ is a quotient of $\Delta_n$. Let $X$ be a minimal homogeneous generating set of $A_e$ as a $\kk$-algebra.  We may assume that $X$ consists of words in the homogeneous generators $x,y,z$.  Suppose that $X$ contains at least one element of degree less than $3n$.  Recall that $A_e$ is a three-dimensional AS regular algebra by assumption, so \cite[Proposition 1.1]{Stephenson} implies that $X$ contains at most three elements.  The elements $u^n, v^n, w^n \in A_e$ are distinct words in $x, y,z$ as well.  So one of these elements, $u^n$ say, can be generated by lower degree elements of $A_e$.   This implies that there exist two $\Delta$-homogeneous elements $r,s \in A_e$ of positive $\NN$-degree such that $\deg_\Delta(rs) = \deg_\Delta(u^n) = (acb)^n$.

    Write $\gamma \coloneqq \deg_\Delta(r)$ and $\delta \coloneqq \deg_\Delta(s)$, so that $\gamma\delta = (acb)^n$ in $\Delta$. Note that $\gamma, \delta \in N$ (in other words, $\overline{\gamma} = \overline{\delta} = e$), since $r,s \in A_e$. Now, Lemma~\ref{lem:product equal to (acb)^n} implies that $\gamma = (acb)^k$ or $(acb)^k a$ or $(acb)^k ac$ for some $0 \leq k < n$.

    Since $n$ is, by definition, the order of $\overline{acb}$, we cannot have $\gamma = (acb)^k$. If $\gamma = (acb)^k a$, then $\overline{a} \in \overline{H}$, so Lemma~\ref{lem:Delta abelian criterion} implies that $G$ is abelian, a contradiction. Similarly, if $\gamma = (acb)^k ac$, then $\overline{ac} \in \overline{H}$, and thus $\overline{b} = (\overline{ac})^{-1} \overline{acb} \in \overline{H}$, again yielding a contradiction due to Lemma~\ref{lem:Delta abelian criterion}.  By symmetry, a similar contradiction is reached if either $v^n$ or $w^n$ is a consequence of smaller degree generators.  We conclude that $X$ contains no elements of degree less than $3n$.  In particular, $3n$ is the lowest positive degree containing elements of $A_e$.  But then it is clear that $X$ must contain a basis of $\kk u^n + \kk v^n + \kk w^n$.  Again by \cite[Proposition 1.1]{Stephenson}, since $|X|\leq 3$, we see that $\{u^n, v^n, w^n \}$ is a minimal generating set of $A_e$.  Therefore, $A_e = \kk\ang{u^n, v^n, w^n}$. 
    
    By Corollary~\ref{cor:Delta_n invariant ring}, we know that $\kk\ang{u^n, v^n, w^n}$ is precisely the identity component of $A$ under the $\Delta_n$-grading. Since every element of $\Delta_n$ is the degree of a nonzero element of $A$, grading by a proper quotient of $\Delta_n$ would necessarily enlarge the identity component of the grading. It follows that $G \cong \Delta_n$, which concludes the proof.
\end{proof}

We finish our discussion of the Sklyanin algebras by showing that, while $S_{q,0,1}$ admits dual reflection groups, it does not admit reflection groups for most values of $q \in \kk^\times$. To our knowledge, this is the first example of an algebra which does not admit reflection groups but does admit other reflection Hopf algebras.

\begin{prop}\label{prop:Sklyanin no reflection groups}
    Let $q \in \kk^\times$ such that $q^3 \neq -1$. Then $S_{q,0,1}$ does not admit reflection groups.
\end{prop}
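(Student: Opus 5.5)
Suppose for a contradiction that a finite group $G$ acts faithfully on $A = S_{q,0,1}$, preserving the grading, with $A^G$ AS regular; we may take $G \subseteq \Aut_{\mathrm{gr}}(A) \subseteq \operatorname{GL}(A_1)$. The plan is to invoke the Kirkman--Kuzmanovich--Zhang theory of reflection groups: if $A^G$ is regular, then $G$ contains a \emph{quasi-reflection}, that is, an element $g$ whose trace series has the form $\operatorname{Tr}_A(g,t) = \frac{1}{(1-t)^{2}(1-\xi t)}$ with $\xi \neq 1$. This holds for noetherian AS regular domains of dimension $3$ of the relevant type, via Molien's theorem and the Laurent expansion argument of \cite{KirkmanKuzmanovichZhang08}. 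The first step is therefore to show that $\Aut_{\mathrm{gr}}(S_{q,0,1})$ contains no quasi-reflections when $q^3 \neq -1$.

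The second step is to compute the graded automorphism group. Because $q^3 \neq -1$, the point scheme is a triangle of three lines, and the truncation-shift automorphism permutes these lines cyclically (as recorded in the proof of Corollary~\ref{cor:skew polynomial examples}). Every graded automorphism induces an automorphism of the point scheme commuting with the truncation shift, so it must permute the three lines and hence their three intersection points, which are the coordinate points. Consequently every $g \in \Aut_{\mathrm{gr}}(A)$ acts on $A_1$ by a monomial matrix with respect to $x,y,z$.

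Next we impose the relations $x^2+qyz$, $y^2+qzx$, $z^2+qxy$ case by case:
\begin{itemize}
\item For diagonal $g = \operatorname{diag}(\alpha,\beta,\gamma)$, the relations require $\alpha^2 = \beta\gamma$, $\beta^2 = \gamma\alpha$, $\gamma^2 = \alpha\beta$. This gives $(\alpha,\beta,\gamma) = \alpha(1,\omega,\omega^2)$ with $\omega^3 = 1$.
\item The permutation part must normalize the relation space. The cyclic permutation $x \mapsto y \mapsto z$ does so, while a transposition sends $yz$ to $zy$ and so does not preserve the relations unless the parameter degenerates.
\end{itemize}
So $g$ has the form $\lambda \cdot \operatorname{diag}(1,\omega,\omega^2) \cdot P^i$, where $P$ is the cyclic permutation.

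Finally we check the trace series. A quasi-reflection $g$ must have eigenvalues $1,1,\xi$ on $A_1$, since the degree-one coefficient of the trace series is $2+\xi$. For $g = \lambda\operatorname{diag}(1,\omega,\omega^2)$ the eigenvalues are $\lambda,\lambda\omega,\lambda\omega^2$. These contain $1$ twice only if $\omega = 1$, and then $g$ is scalar with $\lambda = 1$, so $g$ is the identity. If $g$ involves $P^{\pm 1}$, its eigenvalues are $\mu,\mu\zeta,\mu\zeta^2$ with $\zeta$ a primitive cube root of unity, which are distinct. Hence no nontrivial element of the automorphism group is a quasi-reflection, and $A^G$ is regular only when $G$ is trivial.

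The main obstacle is the second step, pinning down the automorphism group. One has to argue carefully that automorphisms preserve the set of vertices of the triangle, and verify that transpositions really are excluded for every $q$ with $q^3 \neq -1$. If the geometric argument fails, the fallback is to parametrize a general $g \in \operatorname{GL}_3$ and solve the condition $g(R) = R$ directly, for instance by computing which linear maps preserve the superpotential $\omega$ up to scalar.
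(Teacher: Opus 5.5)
Your strategy of computing $\Aut_{\mathrm{gr}}(S_{q,0,1})$ and showing it contains no quasi-reflections is a legitimate alternative to the paper's route. However, step~1 misstates the Kirkman--Kuzmanovich--Zhang criterion, and as written the argument misses a case.

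\textbf{The missing case.} In \cite{KirkmanKuzmanovichZhang08} a quasi-reflection is an element whose trace series has a pole of order $n-1=2$ at $t=1$. For a regular algebra with Hilbert series $(1-t)^{-3}$ these come in two kinds:
\begin{itemize}
\item reflections, with $\operatorname{Tr}_A(g,t)=\frac{1}{(1-t)^2(1-\xi t)}$ and eigenvalues $1,1,\xi$ on $A_1$;
\item \emph{mystic reflections}, with $\operatorname{Tr}_A(g,t)=\frac{1}{(1-t)(1+t^2)}$ and eigenvalues $1,i,-i$ on $A_1$.
\end{itemize}
Regularity of $A^G$ only forces $G$ to contain one of these, and your final check excludes only the first kind. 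The claim as you stated it is false in general: a mystic reflection of $\kk_{-1}[x,y][z]$ generates a group with regular invariant ring. This is exactly the case the paper handles with its second computation. The condition ``no nonzero $r,s\in A_1$ with $rs+sr=0$'' in \cite[Theorem~6.2]{KirkmanKuzmanovichZhang08} is there to exclude mystic reflections.

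\textbf{The eigenvalue step.} Your inference ``the degree-one coefficient is $2+\xi$, so the eigenvalues are $1,1,\xi$'' does not follow from one coefficient. For $\xi=-1$ the value $1$ is also the trace of $\{1,i,-i\}$. The eigenvalue description is part of KKZ's classification of quasi-reflections and must be quoted from there.

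\textbf{The fix.} Your step~2 is sound, and your own description of the automorphism group closes the gap quickly.
\begin{itemize}
\item The bilinear forms of the three relations give a $3\times3$ matrix with determinant $(1+q^3)xyz$. So for $q^3\neq-1$, any graded automorphism permutes the components $x=0$, $y=0$, $z=0$ of the point scheme and is monomial in $x,y,z$.
\item A transposition composed with any scaling is excluded for every $q\in\kk^\times$. It produces one of the monomials $xz$, $zy$, $yx$, and none of these occurs in any element of $R$.
\item Hence a finite-order element of $\Aut_{\mathrm{gr}}(A)$ is either a root-of-unity scalar $\lambda$, with trace $3\lambda$ on $A_1$, or has trace $0$ on $A_1$. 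The latter covers $\lambda\operatorname{diag}(1,\omega,\omega^2)$ with $\omega^3=1\neq\omega$, and monomial matrices over a $3$-cycle.
\item A quasi-reflection of either kind has trace $2+\xi$ or $1$ on $A_1$. Comparing absolute values of these sums of roots of unity, neither equals $0$ or $3\lambda$ unless $\xi=1$. So no nontrivial quasi-reflections exist.
\end{itemize}

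\textbf{Comparison with the paper.} The paper never determines the automorphism group. It checks KKZ's two intrinsic degree-one conditions by linear algebra in degree $2$, with $q^3\neq-1$ entering as $p^3\neq1$. Your route instead puts the hypothesis into the shape of the point scheme and yields the whole group $\kk^\times\cdot\ang{\operatorname{diag}(1,\omega,\omega^2),P}$. The cost is the geometric input and the need for the full classification of quasi-reflections, mystic ones included.
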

\begin{proof}
    We will use \cite[Theorem 6.2]{KirkmanKuzmanovichZhang08}, which implies that it suffices to prove that $S_{q,0,1}$ contains no normal elements of degree $1$ and no subalgebra generated in degree $1$ isomorphic to $A_{-1}$.

    We first prove that $S_{q,0,1}$ does not have a normal element of degree $1$. To that end, assume, for a contradiction, that $r = \alpha x + \beta y + \gamma z$ is a nonzero normal element of $S_{q,0,1}$, where $\alpha, \beta, \gamma \in \kk$. To simplify notation, it is convenient to write $p \coloneqq -q^{-1}$, so that the relations of $S_{q,0,1}$ become
    $$yz = p x^2, \qquad zx = p y^2, \qquad xy = p z^2.$$
    Using these relations, we easily compute
    \begin{align*}
        rx &= \alpha x^2 + p \gamma y^2 + \beta yx, & ry &= \beta y^2 + p \alpha z^2 + \gamma zy, & rz &= p \beta x^2 + \gamma z^2 + \alpha xz, \\
        xr &= \alpha x^2 + p \beta z^2 + \gamma xz, & yr &= p \gamma x^2 + \beta y^2 + \alpha yx, & zr &= p \alpha y^2 + \gamma z^2 + \beta zy.
    \end{align*}
    Since $\{x^2, y^2, z^2, yx, zy, xz\}$ is a basis of $(S_{q,0,1})_2$, this forces the pairs $rx$ and $yr$, $ry$ and $zr$, and $rz$ and $xr$ to be scalar multiples of each other. Comparing coefficients of $yx$ in $rx$ and $yr$, we deduce that $\alpha rx = \beta yr$. Proceeding similarly with the other pairs, we conclude that
    $$\alpha rx = \beta yr, \qquad \beta ry = \gamma zr, \qquad \gamma rz = \alpha xr.$$
    It follows that $\alpha, \beta, \gamma$ are all nonzero. Comparing the coefficients of $x^2$, $y^2$, and $z^2$ in the above equalities, we deduce
    $$\alpha^2 = p \beta \gamma, \qquad \beta^2 = p \alpha \gamma, \qquad \gamma^2 = p \alpha \beta.$$
    Multiplying the three equations above, we get
    $$(\alpha \beta \gamma)^2 = p^3(\alpha \beta \gamma)^2,$$
    and thus $p^3 = 1$. In other words, $q^3 = -1$, a contradiction. Therefore, $S_{q,0,1}$ does not have normal elements of degree $1$.

    To conclude the proof, we proceed by contradiction once more: assume that there exist nonzero elements $r,s \in S_{q,0,1}$ of degree $1$ such that $rs + sr = 0$. Write
    $$r = \alpha x + \beta y + \gamma z, \qquad s = \lambda x + \mu y + \nu z,$$
    where $\alpha, \beta, \gamma, \lambda, \mu, \nu \in \kk$. Then
    \begin{align*}
        rs &= (\alpha \lambda + p \beta \nu) x^2 + (\beta \mu + p \gamma \lambda) y^2 + (\gamma \nu + p \alpha \mu) z^2 + \beta \lambda yx + \gamma \mu zy + \alpha \nu xz, \\
        sr &= (\alpha \lambda + p \gamma \mu) x^2 + (\beta \mu + p \alpha \nu) y^2 + (\gamma \nu + p \beta \lambda) z^2 + \alpha \mu yx + \beta \nu zy + \gamma \lambda xz.
    \end{align*}
    Now, since $rs + sr = 0$, we get the following equalities by comparing coefficients of $yx$, $zy$, and $xz$:
    \begin{equation}\label{eq:rs + sr = 0 off-diagonal}
        \alpha \mu + \beta \lambda = \beta \nu + \gamma \mu = \alpha \nu + \gamma \lambda = 0.
    \end{equation}
    Thus, the coefficients of $x^2$, $y^2$, and $z^2$ in the equation $rs + sr = 0$ now give
    \begin{equation}\label{eq:rs + sr = 0 diagonal}
        \alpha \lambda = \beta \mu = \gamma \nu = 0.
    \end{equation}
    Since $r \neq 0$, we know that at least one of $\alpha, \beta, \gamma$ is nonzero. By symmetry, assume that $\alpha \neq 0$. Then \eqref{eq:rs + sr = 0 diagonal} implies that $\lambda = 0$, while \eqref{eq:rs + sr = 0 off-diagonal} gives $\mu = \nu = 0$, contradicting $s \neq 0$. The result follows.
\end{proof}

\begin{rem}
    If $q^3 = -1$, then $S_{q,0,1}$ is isomorphic to the skew polynomial ring $A_{\zeta, \zeta^2, \zeta}$, where $\zeta$ is a primitive third root of unity, as shown in the proof of Corollary~\ref{cor:skew polynomial examples}. The ring $A_{\zeta, \zeta^2, \zeta}$ can easily be shown to admit reflection groups, so the assumption that $q^3 \neq -1$ in Proposition~\ref{prop:Sklyanin no reflection groups} is necessary.
\end{rem}

\section{Some dual reflection groups for four-dimensional AS regular algebras}\label{sec:four-dimensional}

In this last section, we expand on the results of \cite{GoetzKirkmanMooreVashaw} to produce some infinite families of dual reflection groups for four-dimensional AS regular algebras.  We will be rather brief with the details, as our goal is more limited than in previous sections.  In particular, we do not attempt to classify all possible dual reflection groups for these algebras.  We primarily want to record the fact that the dual reflection groups from \cite{GoetzKirkmanMooreVashaw} live in infinite families of dual reflection groups, constructed in a similar way as for the example of Crawford in dimension two (Proposition~\ref{prop:Crawford}) and our example of the Sklyanin algebra $S_{q,0,1}$ in dimension three (Theorem~\ref{thm:Sklyanin}).  As a whole, these examples are highly suggestive of a more general pattern beginning to emerge.

We start by recalling the definitions of the algebras $R,S,T$ from \cite{GoetzKirkmanMooreVashaw}.

\begin{dfn}
    Define the following four-dimensional AS regular algebras:
    \begin{align*}
        R &= \frac{\kk\ang{w, x, y, z}}{(w^2 - z^2, x^2 - y^2, zw - xy, wz - yx, wy - xz, yw - zx)}, \\
        S &= \frac{\kk\ang{w, x, y, z}}{(wx - y^2, z^2 - xw, wy - xz, zw - yx, xy - yw, zx - wz)}, \\
        T &= \frac{\kk\ang{w, x, y, z}}{(wx - y^2, z^2 + xw, wy - xz, zw - yx, xy - yw, zx + wz)}.
    \end{align*}
\end{dfn}

By \cite[Propositions 3.1.2, 3.2.10, and 3.2.11]{GoetzKirkmanMooreVashaw}, the \emph{modular group of order $16$} is a dual reflection group for $R$, while the \emph{quasidihedral group of order $16$} is a dual reflection group for $S$ and $T$. We do not recall their presentations here, and instead opt to define infinite families of dual reflection groups for $R,S,T$, of which the aforementioned groups are members.

\begin{dfn}\label{def:dual reflection groups dimension 4}
    Define the following infinite nonabelian groups:
    \begin{align*}
        \mf{M} &\coloneqq \ang{a,b,c,d \mid a^2 = d^2, b^2 = c^2, da = bc, ad = cb, ac = bd, ca = db}, \\
        Q &\coloneqq \ang{a,b,c,d \mid ab = c^2, ba = d^2, ac = bd, da = cb, bc = ca, db = ad}.
    \end{align*}
    Given a positive integer $n$, define the following quotients of the above:
    \begin{align*}
        \mf{M}_n &\coloneqq \ang{a,b,c,d \mid a^2 = d^2, b^2 = c^2, da = bc, ad = cb, ac = bd, ca = db, (ab)^n = e}, \\
        Q_n &\coloneqq \ang{a,b,c,d \mid ab = c^2, ba = d^2, ac = bd, da = cb, bc = ca, db = ad, a^{2n} = e}.
    \end{align*}
\end{dfn}

By construction, the group $\mf{M}$ is the maximal grading group of $R$, while $Q$ is the maximal grading group of $S$ and $T$, with $\deg_G(w) = a$, $\deg_G(x) = b$, $\deg_G(y) = c$, and $\deg_G(z) = d$ (where $G = \mf{M}$ for $R$, while $G = Q$ for $S$ and $T$).

\begin{rem}
    The modular group of order $16$ is isomorphic to $\mf{M}_1$, while the quasidihedral group of order $16$ is isomorphic to $Q_1$. These are different presentations from those given in \cite{GoetzKirkmanMooreVashaw}, but one can easily check that they yield isomorphic groups. Therefore, we already know that $\mf{M}_1$ and $Q_1$ are dual reflection groups, thanks to the results of \cite{GoetzKirkmanMooreVashaw}.
\end{rem}

\begin{lem}
    Given a positive integer $n$, we have
    $$|\mf{M}_n| = |Q_n| = 16n^4.$$
\end{lem}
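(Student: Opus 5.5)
We mirror the proof of Lemma~\ref{lem:properties of Delta}\eqref{item:size of Delta_n}: find a normal free abelian subgroup $H$ of rank $4$ in $\mf{M}$ (resp.\ $Q$), show that the relation $(ab)^n = e$ (resp.\ $a^{2n} = e$) generates the same normal subgroup as $H_n \coloneqq \{h^n \mid h \in H\}$, and conclude via $|\mf{M}_n| = |\mf{M}/H|\,|H/H_n| = 16 \cdot n^4$, and similarly for $Q_n$. The base case $|\mf{M}_1| = |Q_1| = 16$ is already available from the remark identifying $\mf{M}_1$ and $Q_1$ with the modular and quasidihedral groups of order $16$.

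\emph{Choice of $H$.} For $\mf{M}$, take $H$ generated by the four products $ab, ba, cd, dc$. These are the degrees of $wx, xw, yz, zy$, the natural analogues of $u, v, w$ in the Sklyanin case, and they form a $\Gamma$-type pattern since $a^2 = d^2$ and $b^2 = c^2$. For $Q$, the relation $a^{2n}$ suggests taking $H = \ang{a^2, b^2, c^2, d^2}$. From the relations $ab = c^2$ and $ba = d^2$ one sees that these elements are closely tied to $ab$ and $ba$, so $H$ may instead be generated by $a^2, b^2, ab, ba$; I would fix the correct generating set by checking in GAP which one gives index $16$.

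\emph{Key steps.} First, verify directly from the defining relations that the four generators pairwise commute and that conjugation by each of $a, b, c, d$ permutes them. This makes $H$ abelian and normal, and it also shows that the normal closure of the single relator $(ab)^n$ (resp.\ $a^{2n}$) is exactly $H_n$, because conjugation permutes the generators' $n$-th powers. Second, show $H \cong \ZZ^4$ by constructing a homomorphism $\Phi$ into a semidirect product $\ZZ^4 \rtimes K$ with $K$ a finite group of order $16$ or dividing $16$. Here each generator should map to $(e_i, \kappa_i)$, where $K$ acts by permuting coordinates in a way compatible with how $\mu$ and the relations permute the variables; $\Phi$ would send $H$ onto a finite-index lattice, giving injectivity on $H$ exactly as in part~\eqref{item:H is Z^3}. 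Third, show $|\mf{M}/H| = 16$. Since $(ab)^1 = e$ defines $\mf{M}_1$, which has order $16$ by the remark, we get $\mf{M}/H = \mf{M}_1$ once $H$ is identified with the normal closure of $ab$. Then $|H/H_n| = n^4$ finishes the count.

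\emph{Main obstacle.} The hard part is the second step: constructing the semidirect-product embedding $\Phi$ that certifies $H$ is free abelian of rank $4$, rather than a proper quotient. I would first try $K = \ZZ_2 \times \ZZ_2$ or $\ZZ_4$ acting on $\ZZ^4$ by coordinate permutations read off from the relations; for instance, $da = bc$ and $ad = cb$ suggest a swap pairing $a \leftrightarrow d$ and $b \leftrightarrow c$. I would then check that the relations hold in the image and that $ab, ba, cd, dc$ map to independent translations. If no permutation action works, the fallback is to locate a torsion-free rank-$4$ abelian subgroup through the Hilbert series of $A_e$, using that the identity component should be a polynomial-type ring in four degree-$2$ generators.
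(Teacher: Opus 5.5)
Your treatment of $\mf{M}$ follows the paper's proof. The paper uses exactly $H_1 = \ang{ab, ba, cd, dc}$, asserts via GAP that it is normal and $\cong \ZZ^4$, identifies $\mf{M}/H_1 \cong \mf{M}_1$ and $\mf{M}/H_n \cong \mf{M}_n$, and multiplies. The certificate you propose also exists, with $K = \ZZ_4$. Take $\rho = (1\,2\,3\,4)$ and set $a \mapsto (e_1,\rho^{-1})$, $b \mapsto (e_2,\rho)$, $c \mapsto (e_3,\rho^{-1})$, $d \mapsto (e_4,\rho)$. This map satisfies all six relations and sends $ab, ba, cd, dc$ to the translations $2e_1, 2e_2, 2e_3, 2e_4$.

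The gap is your choice of subgroup for $Q$.

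\emph{Your two candidates are one subgroup, and it is not abelian.} Since $c^2 = ab$ and $d^2 = ba$ are defining relations, $\ang{a^2,b^2,c^2,d^2} = \ang{a^2,b^2,ab,ba}$. To test it, map $Q \to \ZZ^4 \rtimes S_4$ by $a \mapsto (e_1,(2\,3))$, $b \mapsto (e_2,(1\,4))$, $c \mapsto (e_3,(1\,2\,4\,3))$, $d \mapsto (e_4,(1\,3\,4\,2))$; all six relations of $Q$ hold in the image. Then $a^2 \mapsto (2e_1,\id)$, while $c^2 = ab \mapsto (e_1+e_3,(1\,4)(2\,3))$. These two images do not commute, so your first key step, pairwise commutation, fails.

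\emph{It is also not in the kernel of $Q \to Q_1$.} Every conjugate of $a^2$ maps to a pure translation, while $c^2$ does not. So $c^2$ is not in the normal closure of $a^2$, i.e.\ $c^2 \neq e$ in $Q_1$. Hence your claim that the normal closure of $a^{2n}$ is $H_n$ fails too, and the GAP test you propose would simply reject the only subgroup you have.

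\emph{The fix.} The right subgroup, used in the paper, is $K_1 = \ang{a^2, b^2, cd, dc}$. These are the degrees of $w^2, x^2, yz, zy$, which generate $S_e$ and $T_e$ for $n=1$; this is the same heuristic you applied to $\mf{M}$. Under the map above they go to $2e_1,2e_2,2e_3,2e_4$. Conjugation acts through the dihedral group $\ang{(2\,3),(1\,4),(1\,2\,4\,3)}$ of order $8$ (not $\ZZ_2^2$ or $\ZZ_4$), which permutes them transitively. For instance, $ab^2a^{-1} = cd$ follows from $cda = c^2b = ab^2$.

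With this correction the rest of your argument goes through as in the paper. Your embedding then replaces the paper's appeal to GAP for $K_1 \cong \ZZ^4$, once commutation of the generators in $Q$ is checked by hand.
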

\begin{proof}
    For any $n \geq 1$, let $H_n$ be the subgroup of $\mf{M}$ generated by $(ab)^n, (ba)^n, (cd)^n, (dc)^n$, and let $K_n$ be the subgroup of $Q$ generated by $a^{2n}, b^{2n}, (cd)^n, (dc)^n$. It can be easily checked (e.g. using GAP \cite{GAP4}) that $H_1$ and $K_1$ are normal subgroups of $\mf{M}$ and $Q$, and that $H_1 \cong K_1 \cong \ZZ^4$. It is clear that $\mf{M}/H_1 \cong \mf{M}_1$ and $Q/K_1 \cong Q_1$, both of which are groups of order $16$. We further see that $\mf{M}/H_n \cong \mf{M}_n$ and $Q/K_n \cong Q_n$, and thus
    $$|\mf{M}_n| = |\mf{M}/H_n| = |\mf{M}/H_1| \ |H_1/H_n| = |\mf{M}_1| \ |\ZZ_n^4| = 16n^4,$$
    and similarly for $Q_n$.
\end{proof}

The following result shows that the groups $\mf{M}_n$ and $Q_n$ are dual reflection groups.

\begin{prop}
    Let $n$ be a positive integer, and let $A$ be one of the algebras $R,S,T$ graded by the group $\mf{M}_n$ (if $A = R$) or $Q_n$ (if $A = S$ or $T$). Then $A_e$ is AS regular. More precisely:
    \begin{itemize}
        \item $R_e = \kk[(wx)^n, (xw)^n, (yz)^n, (zy)^n]$ is a polynomial ring in four variables.
        \item $S_e = \kk[w^{2n}, x^{2n}, (yz)^n, (zy)^n]$ is a polynomial ring in four variables.
        \item $T_e$ is generated by $w^{2n}, x^{2n}, (yz)^n, (zy)^n$ and is isomorphic to the skew polynomial ring $A_{\*q}$ with
        $$q_{12} = (-1)^n, \quad q_{13} = (-1)^n, \quad q_{14} = 1, \quad q_{23} = 1, \quad q_{24} = (-1)^n, \quad q_{34} = (-1)^n.$$
    \end{itemize}
\end{prop}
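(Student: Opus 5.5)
The plan mirrors the Sklyanin case (Proposition~\ref{prop:Delta1 invariant ring} and Corollary~\ref{cor:Delta_n invariant ring}): treat $n = 1$ first, then pass to general $n$ through a grading by a finite abelian group.

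\textbf{Step 1: the elements lie in $A_e$ and generate a skew polynomial subalgebra.} Take $A = R$ with its $\mf{M}_n$-grading; $S$ and $T$ are handled the same way with $Q_n$. The four proposed generators are $(wx)^n, (xw)^n, (yz)^n, (zy)^n$, with degrees $(ab)^n, (ba)^n, (cd)^n, (dc)^n$. These are exactly the generators of $H_n$, so they have trivial degree in $\mf{M}/H_n \cong \mf{M}_n$.

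Next I check directly from the defining relations that $wx, xw, yz, zy$ are normal elements of $A$ and pairwise commute up to explicit scalars. This is the ring-theoretic shadow of $H_1 \cong \ZZ^4$ being an abelian normal subgroup. For $R$ and $S$ the scalars should all be $1$. For $T$ the sign changes $z^2 = -xw$ and $zx = -wz$ should produce the factors $-1$, so the subalgebra is $A_{\*q}$ with $q_{ij} = \pm 1$ as listed when $n = 1$. Raising to $n$th powers then gives $(-1)^n$.

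\textbf{Step 2: the case $n = 1$, where $A_e$ equals the subalgebra $B$ generated by the four degree-$2$ elements.} Let $J$ be the ideal generated by these four elements. By normality, $J$ is also the right ideal they generate. As in Proposition~\ref{prop:Delta1 invariant ring}, $A_e = B$ holds if and only if $(A/J)_e = \kk$.

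I will compute a $\kk$-basis of $A/J$ in Macaulay2 and check that every positive-degree basis monomial has nontrivial degree in the order-$16$ group. Alternatively, I can appeal to the results of \cite{GoetzKirkmanMooreVashaw} for $n = 1$ and just match generators.

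Then $B$ is a quotient of a $4$-variable skew polynomial ring, which is a domain of GK-dimension $4$. Since $A$ is module-finite over $B$, $\GKdim B = 4$. So $B$ is exactly that skew polynomial ring.

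\textbf{Step 3: general $n$.} The identity component for $\mf{M}_n$ equals the identity component of $B$ under the induced grading by the image of $H_1$ in $\mf{M}_n$. Because $H_1 \cong \ZZ^4$ and its image is $H_1/H_n \cong \ZZ_n^4$, the four generators of $B$ get degrees equal to the standard basis vectors of $\ZZ_n^4$.

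Since $B$ is a skew polynomial ring, the monomials $u_1^{i_1} \cdots u_4^{i_4}$ form a homogeneous basis of $B$. So $B_e$ is spanned by the monomials with every exponent divisible by $n$, giving $A_e = \kk\ang{u_1^n, \dots, u_4^n}$. This is a skew polynomial ring with parameters $q_{ij}^{n^2}$, which yields the stated signs for $T$ and polynomial rings for $R$ and $S$. It is therefore AS regular.

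\textbf{Main obstacle.} The hard part is Step 2: showing there are no further invariants, i.e.\ that $(A/J)_e = \kk$. I expect to settle this by a computer Gröbner basis calculation. As a cross-check, the Hilbert series of $A/J$ should be $(1+t)^4$, of total dimension $16 = |\mf{M}_1|$, consistent with Theorem~\ref{thm:KKZ covariant Hilbert series}.
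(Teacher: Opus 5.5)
Your proposal follows the paper's proof: the case $n=1$ is exactly the result of \cite{GoetzKirkmanMooreVashaw} (your ``alternative'' in Step~2), and general $n$ is deduced as in Corollary~\ref{cor:Delta_n invariant ring} by passing to the $H_1/H_n \cong \ZZ_n^4$-grading of the skew polynomial ring $B$, which gives parameters $q_{ij}^{n^2}$. One small correction applies if you redo $n=1$ by hand: the degree-$2$ generators are not individually normal (for instance $w \cdot wx = zy \cdot w$ in $R$, and the $\mf{M}$-degrees rule out $w\cdot wx \in wx\,A_1$), but conjugation by each of $w,x,y,z$ permutes them up to scalars, mirroring the conjugation action on the generators of $H_1$ (resp.\ $K_1$), so they form a normalizing set and $J$ is still the right ideal they generate, which is all Step~2 uses.
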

\begin{proof}
    The case $n = 1$ is \cite[Propositions 3.1.2, 3.2.10, and 3.2.11]{GoetzKirkmanMooreVashaw}. The general case is similar to Corollary~\ref{cor:Delta_n invariant ring}.
\end{proof}

\end{document}